\documentclass[11pt]{article}
\usepackage[a4paper,margin=1in]{geometry}
\usepackage{amsmath,amssymb,amsthm,mathtools}
\usepackage{enumitem}
\usepackage{needspace}
\usepackage{microtype}
\usepackage{cite}
\usepackage[hidelinks]{hyperref}

\newtheorem{theorem}{Theorem}[section]
\newtheorem{lemma}[theorem]{Lemma}
\newtheorem{proposition}[theorem]{Proposition}
\newtheorem{corollary}[theorem]{Corollary}
\newtheorem{example}[theorem]{Example}
\theoremstyle{remark}
\newtheorem{remark}[theorem]{Remark}
\newtheorem*{remarkstar}{Remark}
\theoremstyle{plain}
\newtheorem*{theoremA}{Theorem A}
\newtheorem*{theoremB}{Theorem B}

\numberwithin{equation}{section}

\newcommand{\R}{\mathbb R}

\DeclareMathOperator{\supp}{supp}

\newcommand{\Sph}{\mathbb S^{d-1}}
\newcommand{\essinf}{\operatorname*{ess\,inf}}
\newcommand{\esssup}{\operatorname*{ess\,sup}}

\title{Sharp thresholds and convergence for a nonlocal bistable equation}
\author{Yuanyang Hu\\[0.5ex]
\normalsize School of Mathematics and Statistics, Henan University\\
\normalsize 475004 Kaifeng, China\\[0.3ex]
\normalsize \texttt{yuanyhu@henu.edu.cn}}
\date{}
\hypersetup{
 pdftitle={Sharp thresholds and convergence for a nonlocal bistable equation},
 pdfauthor={Yuanyang Hu}
}

\begin{document}
\maketitle

\begin{abstract}
We study $u_t=J*u-u+f(u)$ on $\R^d$ with a bistable reaction and
radial nonincreasing probability kernels. Under the stated hypotheses,
every solution from compactly supported data between zero and one
converges to zero, locally to one, or to a single nonzero decaying
stationary state. At every nonzero decaying stationary state between
zero and one, we assume either nondegeneracy up to translations or
analyticity of the reaction near the closure of the state's range.
Decaying limits are strictly radially decreasing, with centers in the
initial support's closed convex hull.
For $u_0=\mathbf1_{B_R}$, we give explicit threshold bounds; analyticity near
$[0,1]$ yields a unique critical radius with a nonzero stationary limit.
\end{abstract}

\medskip
\noindent\textbf{2020 Mathematics Subject Classification.}
35R09, 35B40, 35C07, 35K57.

\smallskip
\noindent\textbf{Keywords.}
Nonlocal diffusion; bistable equation; threshold solution; convergence to equilibrium; traveling wave; \L ojasiewicz--Simon inequality.

\section{Introduction}\label{sec:introduction}

We consider the Cauchy problem
\begin{equation}\label{eq:main-equation}
 u_t=J*u-u+f(u),\qquad t>0,\quad x\in\R^d,
 \qquad u(0,x)=u_0(x),
\end{equation}
where $d\ge1$ is an integer, $u_0\in L^\infty(\R^d)$ satisfies $0\le u_0\le1$ almost everywhere, and
$(J*v)(x)=\int_{\R^d}J(x-y)v(y)\,dy$.
The dispersal kernel satisfies $J\in L^1(\R^d)$, $J\ge0$ almost everywhere on $\R^d$, and $\int_{\R^d}J(z)\,dz=1$.  Thus $J$ is a probability kernel.  Under assumption \textup{(F1)} below, the scalar equation $z'=f(z)$ has asymptotically stable equilibria $0$ and $1$ and an unstable equilibrium $\vartheta\in(0,1)$.

Under \textup{(F1)--(F2)} and \textup{(KR)} below,
\eqref{eq:main-equation} has a unique global solution
$u\in C^1([0,\infty);L^\infty(\R^d))$ with $0\le u\le1$ and
$u(0,\cdot)=u_0$. It depends continuously on $u_0$ in
$L^\infty(\R^d)$, uniformly on bounded time intervals.
Lemma~\ref{lem:semiflow} proves these assertions under \textup{(K0)},
using the comparison argument of Fife and Wang
\cite[Section~2]{FifeWang1998}.
We use this solution concept for bounded measurable data.
A stationary solution is a bounded measurable function $U$ satisfying
$J*U-U+f(U)=0$ almost everywhere.

For $x\in\R^d$, $|x|$ denotes the Euclidean norm. Set
$B_R(x_0)=\{x:|x-x_0|<R\}$ and $B_R=B_R(0)$, and write
$\mathbf1_A$ for the indicator of a measurable set $A$.
For measurable $A\subset\R^d$, $|A|$ denotes Lebesgue measure; in particular, $\omega_d=|B_1|$ and $|B_R|=\omega_dR^d$.  We write $\Sph=\{e\in\R^d:|e|=1\}$ and $\|\cdot\|_p$ for the norm in $L^p(\R^d)$, unless another domain is indicated.  In particular, $\|v\|_\infty=\operatorname{ess\,sup}_{x\in\R^d}|v(x)|$.  For functions defined on $\R$, the same norm notation refers to $L^p(\R)$.  Essential suprema and infima, and almost-everywhere statements, are with respect to Lebesgue measure.  We use standard Sobolev notation on $\R^d$ and $\R$, with $H^1(\R^d)=W^{1,2}(\R^d)$.
We write $\partial_i=\partial/\partial x_i$ for the derivative
in the $i$th coordinate, $i=1,\ldots,d$, understood in the weak sense
for Sobolev functions. We also write $\partial_{ij}=\partial_i\partial_j$.

Inequalities between measurable functions hold almost everywhere.  Local uniform convergence means convergence in $L^\infty(B_r)$ for every $r>0$; for continuous representatives this agrees with uniform convergence on compact sets.  A solution of \eqref{eq:main-equation} becomes extinct if $\|u(t,\cdot)\|_\infty\to0$ as $t\to\infty$, and it spreads if $u(t,\cdot)\to1$ locally uniformly as $t\to\infty$.  Unless another limit is specified, limits along an evolution are taken as $t\to\infty$.  For stationary solutions taking values in $[0,1]$, limits as $|x|\to\infty$ refer to the continuous representatives supplied by Lemma~\ref{lem:steadyregularity}.  A radial nonincreasing function has a representative of the form $v(x)=V(|x|)$ with $V$ nonincreasing on $[0,\infty)$.  For solutions with measurable initial data, this representative is understood up to equality almost everywhere.

For $0\le u_0\le1$ vanishing outside a ball, we ask whether every
solution becomes extinct, spreads to $1$, or converges to a single
stationary state tending to zero at infinity. For each $R>0$, let
$u^R$ be the unique global solution of \eqref{eq:main-equation}
with $u_0=\mathbf1_{B_R}$. Does a unique radius separate extinction
from spreading for this family?
Theorems~\ref{cs:thm:dynamics} and~\ref{thm:main} answer these
questions under the assumptions stated there.

\subsection{Assumptions on the nonlinearity}

\begin{itemize}[leftmargin=2.2em]
\item[(F1)] $f\in C^1(\R)$.  For some $\vartheta\in(0,1)$,
\begin{equation}\label{eq:bistable-signs}
 \begin{gathered}
 f(0)=f(\vartheta)=f(1)=0,\qquad
 f'(0)<0,\quad f'(\vartheta)>0,\quad f'(1)<0,\\
 f(s)<0\quad(0<s<\vartheta),\qquad
 f(s)>0\quad(\vartheta<s<1).
 \end{gathered}
\end{equation}
\item[(F2)]

$
 \kappa_f:=\sup_{0\le s\le1}f'(s)<1, m_f=1-\kappa_f>0.
$

\item[(F3)] $f$ is $C^2$ on an open interval containing $[0,1]$.
\item[(F4)] $f$ is real analytic on an open interval containing $[0,1]$.
\end{itemize}
Assumptions \textup{(F1)--(F2)} are imposed throughout.  The additional regularity assumptions \textup{(F3)} and \textup{(F4)} are stated where they are used; \textup{(F4)} implies \textup{(F3)}.

Define
\begin{equation}
 H(s)=s-f(s),\qquad F(s)=\int_0^s f(r)\,dr.
\end{equation}
By \textup{(F2)},
\begin{equation}\label{eq:reaction-monotonicity}
 H'(s)\ge m_f>0\qquad(0\le s\le1).
\end{equation} Since $H(0)=0$ and $H(1)=1$,
\eqref{eq:reaction-monotonicity} gives an inverse
$H^{-1}:[0,1]\to[0,1]$ with Lipschitz constant at most $m_f^{-1}$.
Hence every stationary solution with $0\le U\le1$ satisfies
$U=H^{-1}(J*U)$ and has a uniformly continuous representative;
see Lemma~\ref{lem:steadyregularity}.

We call $F(1)=0$ the balanced case. Theorem~\ref{thm:main}
treats $F(1)>0$; for $F(1)<0$, Corollary~\ref{cor:main_negative}
uses $v=1-u$. The relation between $F(1)$ and the wave speed is
stated in Theorem~A below.

\subsection{Assumptions on the kernel}

Each of the following kernel assumptions includes the preceding one.

\begin{itemize}[leftmargin=2.2em]
\item[(K0)] $J\in L^1(\R^d)$, $J\ge0$ almost everywhere, and
\begin{equation}\label{eq:kernel-mass}
 \int_{\R^d}J(z)\,dz=1.
\end{equation}

\item[(K1)] In addition to (K0),
\begin{equation}
 J(z)=J(-z)\quad\text{for a.e. }z,\qquad
 J\in W^{2,1}(\R^d),\qquad
 \int_{\R^d}|z|J(z)\,dz<\infty,
\end{equation}
and there is $\rho>0$ such that $J>0$ almost everywhere on $B_\rho(0)$.  

\item[(KR)] In addition to (K1),  $J$ is radial, and
\begin{equation}
 J\in C^2(\R^d)\cap L^\infty(\R^d),\qquad
 \int_{\R^d}|z|^2J(z)\,dz<\infty.
\end{equation}

\item[(KM)] In addition to (KR), 
\begin{equation}
 J(x)=j(|x|)\quad\text{with }j\text{ nonincreasing on }[0,\infty).
\end{equation}
\end{itemize}

\subsection{Previous results}

For local diffusion, Kanel' \cite{Kanel1964} studied extinction and
spreading in combustion problems, Fife and McLeod
\cite{FifeMcLeod1977} proved convergence to one-dimensional bistable
fronts, and Aronson and Weinberger \cite{AronsonWeinberger1978}
developed multidimensional spreading theory. Sharp thresholds for
compactly supported data were studied by Zlato\v{s}
\cite{Zlatos2006}, Du and Matano \cite{DuMatano2010}, and
Pol\'a\v{c}ik \cite{Polacik2011}. In higher dimensions, Du and
Pol\'a\v{c}ik \cite{DuPolacik2015} proved locally uniform convergence
for bounded solutions from nonnegative compactly supported data under
regularity and nondegeneracy assumptions on the reaction.
Muratov and Zhong \cite{MuratovZhong2013,MuratovZhong2017} used energy
methods for symmetric decreasing data; their higher-dimensional
bistable threshold theorem assumes that the central values of radial
nonincreasing ground states form a totally disconnected set
\cite[Theorem~4]{MuratovZhong2017}. Nadin \cite{Nadin2026} studied
perturbations of local threshold solutions through an adjoint
linearized equation.

For convolution equations, Bates, Fife, Ren and Wang \cite{BatesFifeRenWang1997} and Chen \cite{Chen1997} developed the bistable traveling-wave theory, while Fife and Wang \cite{FifeWang1998} constructed radial barriers for higher-dimensional spreading.  Alberti and Bellettini \cite{AlbertiBellettini1998} studied nonlocal phase-transition energies and optimal profiles.  Extinction for initial data with sufficiently small $L^1(\R^d)$ mass, with an explicit sufficient bound, was proved for bounded radial probability kernels by Lim \cite[Theorem~4.10(ii)]{Lim2017}.

Fang and Zhao \cite{FangZhao2015} established existence results for bistable traveling waves of discrete- and continuous-time monotone semiflows in homogeneous and periodic habitats.  Their existence results assume bistability and a counter-propagation condition; their weak compactness formulation on monotone profiles also covers nonlocal dispersal equations.  Zhang and Zhao \cite{ZhangZhao2021} proved uniqueness and stability results for bistable waves of monotone semiflows.

Zhang, Li and Yang \cite{ZhangLiYang2022} studied sharp thresholds for $u_0=\mathbf1_{(-R,R)}$ in one space dimension.  Theorem~1.1 of their revised author manuscript states subsequential uniform convergence of the critical solution to a stationary solution under a $C^2$ bistable reaction and an even nonnegative $C^1$ probability kernel with integrable derivative and finite first moment.  In the sign convention of \eqref{eq:main-equation}, their
integral and monotonicity conditions become $F(1)>0$ and
\textup{(F2)}, respectively.

Besse, Capel, Faye and Fouilh\'e \cite{BesseCapelFayeFouilhe2023} studied extinction, propagation and pinning as the dispersal strength varies, including regimes with discontinuous stationary states.  Alfaro, Ducrot and Kang \cite{AlfaroDucrotKang2023} obtained quantitative extinction and propagation estimates for $u_0=(\vartheta+\varepsilon)\mathbf1_{(-R,R)}$ as $\varepsilon\downarrow0$, with bounds depending on the decay of the kernel at infinity. Related nonlocal equations admit higher-dimensional stationary
states \cite{BatesChmaj1999} and one-dimensional homoclinic solutions
with instability results \cite{ChmajRen1999}; multilayer patterns,
including formal whole-line constructions, were studied in
\cite{ChmajRen2000}.  

The \L ojasiewicz--Simon inequality has also been used to prove convergence for nonlocal equations.  Feireisl, Issard-Roch and Petzeltov\'a \cite{FeireislIssardRochPetzeltova2004} treated nonlocal phase-field systems on bounded domains; the operator assumptions and convergence theorem are stated in \cite[Section~3.4 and Theorem~3.4]{FeireislPetzeltova2008}.  Hutson and Grinfeld \cite{HutsonGrinfeld2006} proved convergence for sufficiently small dispersal on bounded spatial domains by a perturbation argument.

\subsection{Known results used in the proofs}

The proofs use the following one-dimensional traveling-wave theorem and radial spreading estimates.

For a one-dimensional kernel $K$, we use the conditions
\begin{equation}\label{1.10}
\begin{gathered}
 K\in W^{2,1}(\R),\qquad K\ge0,\qquad K(s)=K(-s)\quad\text{for a.e. }s\in\R,\\
 \int_\R K(s)\,ds=1,\qquad \int_\R |s|K(s)\,ds<\infty,\\
 K>0\quad\text{a.e. on }(-\rho_K,\rho_K)\quad\text{for some }\rho_K>0.
\end{gathered}
\end{equation}
\Needspace{12\baselineskip}
\begin{theoremA}[\cite{BatesFifeRenWang1997}]
Assume \textup{(F1)--(F3)}, and let $K$ satisfy \eqref{1.10}.  Then there exist a unique speed $c\in\R$ and, up to translation, a unique decreasing profile $\Phi$ such that
\begin{equation}\label{eq:planar-front}
 -c\Phi'=K*\Phi-\Phi+f(\Phi),\qquad
 \Phi(-\infty)=1,\qquad
 \Phi(+\infty)=0,
\end{equation}
with $0<\Phi<1$, $\Phi\in C^1(\R)$ and $\Phi'\in L^1(\R)\cap L^2(\R)$.  Moreover, the speed satisfies
\begin{equation}\label{eq:front-speed-identity}
 c\int_\R |\Phi'(\xi)|^2\,d\xi=F(1).
\end{equation}
\end{theoremA}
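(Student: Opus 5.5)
Theorem~A is a cited result of Bates, Fife, Ren and Wang, so my plan is to reconstruct its proof in three parts. First, existence of a pair $(c,\Phi)$ by a homotopy/continuation argument. Second, uniqueness of the speed and of the profile up to translation, by sliding and comparison. Third, the speed identity \eqref{eq:front-speed-identity}, by an energy computation.

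\textbf{Speed identity.} I would do this first, since it is the simplest. Given a front, I multiply \eqref{eq:planar-front} by $\Phi'$ and integrate over $\R$. The left side gives $-c\int|\Phi'|^2$. For the reaction term, $\int f(\Phi)\Phi'\,d\xi=F(\Phi(+\infty))-F(\Phi(-\infty))=-F(1)$. For the term $\int (K*\Phi-\Phi)\Phi'\,d\xi$, the symmetry of $K$ makes it vanish. To see this, write it as $\int\!\int K(s)\bigl(\Phi(\xi-s)-\Phi(\xi)\bigr)\Phi'(\xi)\,ds\,d\xi$ and use Fubini. Because $K$ is even, the bilinear form $\langle K*\Phi',\Phi\rangle$ is antisymmetric in a suitable sense, so the integral equals minus itself. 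To justify this rigorously, I would truncate to $[-L,L]$ and control the boundary terms using $\int|s|K<\infty$ and the limits of $\Phi$ at $\pm\infty$. Combining the three terms gives $c\int|\Phi'|^2=F(1)$.

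\textbf{Regularity and monotonicity.} If $c\neq0$, $\Phi\in C^1$ follows directly from the ODE form of \eqref{eq:planar-front}. If $c=0$, I would instead use $\Phi=H^{-1}(K*\Phi)$, which is available because of (F2); together with $K\in W^{1,1}$ this yields a $C^1$ profile. Square integrability $\Phi'\in L^2$ comes from differentiating the equation and using that $\Phi'\in L^1$, which holds because $\Phi$ is monotone and bounded.

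\textbf{Existence and uniqueness.} For existence, I would follow Bates--Fife--Ren--Wang and deform the nonlocal operator $K*\Phi-\Phi$ to $\varepsilon\Phi''$-type operators, or equivalently a family of kernels, and continue in the parameter. Along the way I need a priori bounds on $c$, obtained by comparing with the spatially homogeneous dynamics. I also need translation normalization $\Phi(0)=\vartheta$ to prevent the profile from escaping to infinity. Limits of monotone profiles are then extracted via Helly's theorem. For uniqueness, I would use a sliding argument. Given two fronts $(c_1,\Phi_1)$ and $(c_2,\Phi_2)$, the asymptotic stability coming from $f'(0),f'(1)<0$ provides exponential-type control at the tails. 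Shifting $\Phi_2$ far enough makes it dominate $\Phi_1$. I then decrease the shift to its critical value, where the strong comparison principle for $K*\cdot-\cdot$ forces a touching point. Here positivity of $K$ near the origin spreads the contact, which yields $c_1=c_2$ and $\Phi_1\equiv\Phi_2$ up to translation.

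\textbf{Main obstacle.} I expect the hardest part to be the existence step when $c=0$. In that case the equation loses smoothing, and discontinuous profiles could appear. Condition (F2), through the invertibility of $H$, is exactly what excludes them, so the compactness argument must use (F2) there.
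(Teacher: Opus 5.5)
\textbf{How this differs from the paper.} The paper does not reprove Theorem~A. It cites Bates--Fife--Ren--Wang and does two things of its own.

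First, it puts $f$ into their global framework. It sets $v=2u-1$ and $g(v)=-2f((1+v)/2)$, and uses the $C^2$ extension of Section~\ref{sec:basic} with $g'>0$ outside $[-1,1]$. Then $g$ has only the zeros $-1,2\vartheta-1,1$, and $v+g(v)$ is strictly increasing on all of $\R$; this is where (F2)--(F3) enter. Invariance of $[-1,1]$ leaves the fronts unchanged.

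Second, it gets $\Phi'\in L^1\cap L^2$ from $\|\Phi'\|_1=1$ together with the bound $\|\Phi'\|_\infty\le(1+\max_{[0,1]}|f|)/|c|$ for $c\ne0$, or $\|\Phi'\|_\infty\le m_f^{-1}\|K'\|_1$ for $c=0$.

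Your regularity step is the same as the second point. Your energy computation is a correct proof of \eqref{eq:front-speed-identity}. For $Y=\int_\R(K*\Phi-\Phi)\Phi'\,d\xi$, one integration by parts (the boundary term $(K*\Phi-\Phi)\Phi$ vanishes at $\pm\infty$) and Fubini with $K$ even give $Y=-Y$. No truncation or moment bound is needed.

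Reconstructing existence and uniqueness instead of citing them is a legitimate alternative route. As written, however, both parts have gaps.

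\textbf{Existence.} A priori bounds on $c$, the normalization $\Phi(0)=\vartheta$ and Helly's theorem give at best closedness of the set of homotopy parameters admitting a front. Even that requires excluding degenerate limits: the normalization alone does not stop the fronts from flattening, with Helly limit the constant $\vartheta$.

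The missing ingredient is openness. You need an implicit-function step in the unknowns $(\Phi,c)$. This requires the linearization $\phi\mapsto c\phi'+K*\phi-\phi+f'(\Phi)\phi$ to be Fredholm of index zero with kernel spanned by $\Phi'$. It also requires $\Phi'$ to lie outside its range, so that $c$ can be solved for. Nothing in your plan plays this role.

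Your own tools help here. The speed identity fixes $\operatorname{sgn}c=\operatorname{sgn}F(1)$ along a homotopy of even kernels. Moreover, (F2) gives the $c$-independent bound $\|\Phi'\|_\infty\le m_f^{-1}\|K'\|_1$, by evaluating $-c\Phi''=K'*\Phi-H'(\Phi)\Phi'$ at a maximum point of $|\Phi'|$. That is the precise form of your remark that (F2) keeps limits continuous near $c=0$.

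\textbf{Uniqueness.} Under \eqref{1.10} the kernel need only have a first moment. If $K$ decays algebraically, the fronts have algebraic tails, so tail control cannot come from exponential asymptotics. Also, your statement that shifting $\Phi_2$ makes it dominate $\Phi_1$ is justified only when $\Phi_2$ is the slower front.

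The argument that works is as follows. Label so that $c_2\le c_1$ and put $w=\Phi_2(\cdot-a)-\Phi_1$. Then
\[
 -c_1w'=K*w-w+bw+(c_2-c_1)\,\Phi_2'(\cdot-a),\qquad b=\int_0^1f'(\Phi_1+sw)\,ds,
\]
and the last term is nonnegative. At a negative global minimum where both values lie in $[0,\delta]$, or both in $[1-\delta,1]$, one has $b<0$, which gives a contradiction. This maximum-principle argument yields $w\ge0$ for large $a$. It also prevents order from being lost at infinity as $a$ decreases to the critical shift $a^*$, so a touching point $\xi_0$ exists.

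At $\xi_0$ the equation reduces to $(K*w)(\xi_0)+(c_2-c_1)\Phi_2'(\xi_0-a^*)=0$, with both terms nonnegative. This sign structure, not strong comparison alone, forces $c_1=c_2$; positivity of $K$ near $0$ then gives $w\equiv0$.

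With these repairs your route is self-contained and uses $f$ only near $[0,1]$, since monotone fronts take values there. The paper's route is much shorter, but it needs the global extension to meet the hypotheses of the cited theorem.
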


\begin{remarkstar}
The existence, regularity and uniqueness assertions follow from the traveling-wave results of Bates, Fife, Ren and Wang \cite{BatesFifeRenWang1997}; the change of dependent variable and auxiliary $C^2$ extension in Section~\ref{sec:basic} produce a globally bistable reaction to which their results apply and preserve the original equation for profiles taking values in $[0,1]$.  If $f$ is $C^\infty$ on a neighborhood of $[0,1]$, Chen \cite{Chen1997} gives an alternative existence and uniqueness argument after a smooth auxiliary extension outside $[0,1]$.

By monotonicity and the end states in \eqref{eq:planar-front}, $\|\Phi'\|_1=1$.  If $c\ne0$, \eqref{eq:planar-front} gives $\|\Phi'\|_\infty\le(1+\max_{[0,1]}|f|)/|c|$.  If $c=0$, differentiating $\Phi-f(\Phi)=K*\Phi$ gives $(1-f'(\Phi))\Phi'=K'*\Phi$, so $\|\Phi'\|_\infty\le m_f^{-1}\|K'\|_1$.  Thus $\|\Phi'\|_2^2\le\|\Phi'\|_\infty\|\Phi'\|_1<\infty$.

By \eqref{eq:planar-front}, $u(t,x)=\Phi(x-ct)$ solves
\eqref{eq:main-equation} with $d=1$, $J=K$ and $u_0=\Phi$;
uniqueness follows from Lemma~\ref{lem:semiflow}. By
\eqref{eq:front-speed-identity},
$\operatorname{sgn}c=\operatorname{sgn}F(1)$.

\end{remarkstar}

For $e\in\Sph$, set $e^\perp=\{z\in\R^d:z\cdot e=0\}$ and define $K_e(s)=\int_{e^\perp}J(se+z_\perp)\,dz_\perp$, where $dz_\perp$ is $(d-1)$-dimensional Lebesgue measure on $e^\perp$.  For $d=1$, set $K_e(s)=J(se)$.  Fix $e_1\in\Sph$ and write
\begin{equation}\label{eq:radial-marginal}
 K_*(s)=K_{e_1}(s)=\int_{e_1^\perp}J(se_1+z_\perp)\,dz_\perp,
\end{equation}
with $K_*(s)=J(se_1)$ when $d=1$.
Under \textup{(KR)}, Lemma~\ref{lem:projection} gives
\eqref{1.10} for $K_*$, and radiality makes $K_*$ independent of
$e_1$. If also \textup{(F3)} holds and $F(1)>0$, let
$(c_*,\Phi_*)$ be the speed and a decreasing profile from Theorem~A
for $K_*$. Then $c_*>0$ by \eqref{eq:front-speed-identity}, and
\begin{equation}
	-c_*\Phi_*'=K_* *\Phi_*-\Phi_*+f(\Phi_*),\qquad
	\Phi_*(-\infty)=1,\qquad
	\Phi_*(+\infty)=0.
\end{equation}
\begin{theoremB}[\cite{FifeWang1998}]
Assume \textup{(KR)}, \textup{(F1)--(F3)} and $F(1)>0$.  
For $u_0\in L^\infty(\R^d)$ with $0\le u_0\le1$, let $u$ denote the unique global solution of \eqref{eq:main-equation} with initial datum $u_0$.

\begin{enumerate}[label=\textup{(\roman*)},leftmargin=2.7em]
\item \textit{Inner propagation lower bound.}
For every $\sigma\in(0,c_*)$ and $\eta\in(0,1-\vartheta)$, there exists $L=L(\sigma,\eta)>0$ such that, whenever
\begin{equation}
 0\le u_0\le1,\qquad
 u_0(x)\ge\vartheta+\eta\quad\text{for }|x|\le L,
\end{equation}
the solution $u$ satisfies
\begin{equation}
 \lim_{t\to\infty}\essinf_{|x|\le\sigma t}u(t,x)=1.
\end{equation}
\item \textit{Outer-decay upper bound.}
If $u_0$ is compactly supported and $u(t,\cdot)\to1$ locally uniformly as $t\to\infty$, then
\begin{equation}
 \lim_{t\to\infty}\esssup_{|x|\ge\sigma t}u(t,x)=0
 \qquad\text{for every }\sigma>c_*.
\end{equation}
\end{enumerate}
\end{theoremB}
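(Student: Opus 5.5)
The plan follows Fife--Wang \cite{FifeWang1998}: compare $u$ with radial sub- and supersolutions built from the planar front $(c_*,\Phi_*)$ of Theorem~A for the projected kernel $K_*$. The comparison principle of Lemma~\ref{lem:semiflow} lets us replace any subsolution $w$ by $\max\{w,0\}$, since $0$ is a solution, and any supersolution $\overline w$ by $\min\{\overline w,1\}$.

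\emph{Curvature estimate.} The first step is a lemma comparing $J*\psi(|\cdot|)$ with $K_**\psi$ for a radial profile $\psi$ built from $\Phi_*$. Let $\psi$ be nonincreasing in $r$ with $\psi'$ and $\psi''$ bounded. Writing $x=re_1$ and $y=x+z$, we have $|y|=r+z_1+O(|z_\perp|^2/r)$ for $|z|\le r/2$. The second-moment bound in \textup{(KR)} then gives
\begin{equation*}
\bigl|J*\psi(|\cdot|)(x)-(K_**\psi)(|x|)\bigr|\le C\Bigl(\frac{\|\psi'\|_\infty}{|x|}+\int_{|z|>|x|/2}J(z)\,dz\Bigr).
\end{equation*}
The correction $O(|z_\perp|^2/r)$ enters with the sign of $\psi'$: it lowers the value of a decreasing profile. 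So the error is $O(1/r)$, has a favourable sign for the supersolution, and is harmful but controllable for the subsolution. Near the origin, where this estimate is useless, the functions used below are either $\le0$ or $\equiv1$ for $|x|\le r_0$, so no estimate is needed there.

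\emph{Part (i).} This is done in two steps.

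First, I raise the solution close to $1$ on a large ball. Take $M$ large, with $R_M=\omega$-large, and let $W_M$ be the maximal stationary solution of the truncated problem on $B_M$ (zero exterior values) lying below $\vartheta+\eta$. Because $F(1)>0$, an energy/sliding argument with the front $\Phi_*$ should show that the solution of the truncated problem started from $(\vartheta+\eta)\mathbf1_{B_M}$ does not stall near $\vartheta+\eta$. Instead it increases to a state exceeding $1-\varepsilon$ on $B_{M/2}$.

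I expect this to be the main obstacle. The reason is that a nonlocal loss at the boundary prevents simple ODE comparison. What I would try first is a stationary radial subsolution of the form $\max\{\Phi_*(|x|-M/2)-q,0\}$ with $q$ small, using $\Phi_*$ itself to handle the bistable nonlinearity. This requires the solution to be $\ge1-q$ somewhere first. To get that, I would run the spatially homogeneous ODE $z'=f(z)-\beta_L$. Here $\beta_L$ bounds the mass of $J$ leaking out of $B_{L/2}$ from $B_{L/4}$, which tends to $0$ as $L\to\infty$. The solution of this ODE exceeds $1-q$ in finite time once $\beta_L$ is small compared with $\min_{[\vartheta+\eta,1-q]}f$, and it yields a subsolution on $B_{L/4}$ after a cutoff.

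Second, I spread from that ball. Use the Fife--McLeod ansatz
\begin{equation*}
w(t,x)=\Phi_*(|x|-\xi(t))-q e^{-\mu t},\qquad \xi'(t)=c_*-Aqe^{-\mu t}-\frac{A}{\xi(t)},
\end{equation*}
with $\xi(0)$ large and $\mu<\min\{-f'(0),-f'(1)\}$. Where $\Phi_*$ is near $0$ or $1$, the stability $f'<0$ absorbs the $q$-terms. In the middle region, $\Phi_*'\le-\delta<0$, and the choice of $\xi'$ absorbs both the $q$-error and the curvature error. Since $\xi(t)\ge\sigma t$ eventually for every $\sigma<c_*$, and $\Phi_*(-\infty)=1$, the conclusion follows.

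\emph{Part (ii).} First I need smallness far away at some finite time $T$. From compact support, the mild formulation, and the first moment of $J$, $\esssup_{|x|\ge R}u(T,x)\to0$ as $R\to\infty$. Then I use
\begin{equation*}
\overline w(t,x)=\min\Bigl\{1,\Phi_*(|x|-\zeta(t))+qe^{-\mu t}\Bigr\},\qquad \zeta'(t)=c_*+Aqe^{-\mu t}+\frac{A}{\zeta(t)}.
\end{equation*}
The curvature error has a good sign here, so the $A/\zeta$ term is needed only for the tail integral. Since $\zeta(t)=c_*t+O(\log t)$, we get $u\le\overline w$, which is small on $|x|\ge\sigma t$ for every $\sigma>c_*$. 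The spreading hypothesis is used only to know that $u\le1$ is the relevant cap.
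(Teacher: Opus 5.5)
The paper does not reprove Theorem~B. It imports the radial lower and upper barriers of Fife and Wang and adds only a reduction from measurable to continuous data, so your reconstruction has to stand on its own.

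\textbf{Part (ii) is sound.} It is simpler than you make it. With $e=x/|x|$ one has $|y|\ge y\cdot e$, hence $\Phi_*(|y|-\zeta)\le\Phi_*(y\cdot e-\zeta)$. Therefore $J*\Phi_*(|\cdot|-\zeta)(x)\le(K_* *\Phi_*)(|x|-\zeta)$ holds exactly, with no tail term. Neither the $A/\zeta$ correction nor the spreading hypothesis is needed.

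\textbf{Part (i), propagation step: the barrier fails for admissible kernels.} You claim the barriers are $\le0$ or $\equiv1$ near the origin. This is false for $w=\Phi_*(|x|-\xi(t))-qe^{-\mu t}$, which is close to $1-qe^{-\mu t}$ there. Subtract the front equation. With $s=|x|-\xi$ and $E(x,t)=(K_* *\Phi_*)(s)-\int_{\R^d}J(x-y)\Phi_*(|y|-\xi)\,dy\ge0$, the subsolution inequality at a point where $w>0$ becomes
\[
 |\Phi_*'(s)|(c_*-\xi')+f\bigl(\Phi_*(s)-qe^{-\mu t}\bigr)-f(\Phi_*(s))-\mu qe^{-\mu t}+J*(w^+-w)\ \ge\ E(x,t).
\]
At $x=0$, restricting the integral to $y_1\le-\xi$ gives $E(0,t)\ge c\int_\xi^\infty K_*(s)\,ds$. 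The left side is at most $A|\Phi_*'(-\xi)|(qe^{-\mu t}+\xi^{-1})+(L_f+1)qe^{-\mu t}$.
\begin{itemize}
\item Take $J\propto(1+|z|^2)^{-(d+3)/2}$, which satisfies (KM). Then $K_*(s)\propto(1+s^2)^{-2}$, so $E(0,t)\gtrsim\xi^{-3}$.
\item The front equation gives $|\Phi_*'|\le C(1-\Phi_*)$. The finite second moment forces $1-\Phi_*(-\xi)=o(\xi^{-2})$.
\item So the left side is $o(\xi^{-3})$ plus terms exponentially small in $\xi\le\xi(0)+c_*t$, and the inequality fails for large $t$.
\end{itemize}
The same mismatch occurs wherever $\Phi_*(|x|-\xi)$ is near $0$ or $1$. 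There the curvature and tail errors decay only polynomially and are not proportional to $|\Phi_*'(|x|-\xi)|$. The stability margin, by contrast, is proportional to the exponentially decaying amplitude.

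The barrier does close with a fixed small $q$ and a fixed speed deficit $\xi'=c_*-\varepsilon$, where $L_fq$ is small compared with $\varepsilon\min|\Phi_*'|$ on the transition range and $\xi(0)$ is large. This gives $\liminf\essinf_{|x|\le\sigma t}u\ge1-q$ for $\sigma<c_*-\varepsilon$. Restarting the whole construction from a later time with smaller $q$ then yields the limit $1$. This is consistent with the remark after Theorem~B, where the radial shifts have asymptotic speed below $c_*-\sigma$.

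\textbf{Part (i), generation step: not established.} You rightly call it the main obstacle, but the device you propose does not work. Take a function equal to $z(t)$ on a ball $B$ and $0$ outside. It satisfies $J*\underline u-\underline u=-z\int_{\R^d\setminus B}J(x-y)\,dy$, which is of size about $-z/2$ near $\partial B$, not $-\beta_Lz$. Checking the inequality only on $B_{L/4}$ is not allowed: nonlocal comparison there needs the barrier below $u$ on the rest of its support for all times, which is what must be proved. The stationary bump $\max\{\Phi_*(|x|-M/2)-q,0\}$ is a valid subsolution for small $q$ and large $M$. But its height $1-q$ cannot be placed under data known only to exceed $\vartheta+\eta$, and the energy or sliding argument is not supplied.

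One way to close this with your own tools is to use the same barrier $\max\{\Phi_*(|x|-\xi(t))-q(t),0\}$ in a generation phase:
\begin{itemize}
\item Start with $q(0)=1-\vartheta-\eta/2$ and $\xi(0)=L-\Phi_*^{-1}(q(0))$, so the barrier lies below $u_0$.
\item Let $q$ decrease at a rate below $\frac12\min_{[\vartheta+\eta/4,\,1-q_1]}f$ to a small $q_1$. Positivity of $f(\Phi_*-q)$ near $\Phi_*=1$ allows this.
\item During this bounded time, let the front retreat: take $c_*-\xi'$ so large that $|\Phi_*'|(c_*-\xi')$ absorbs the order-one negative values of $f(\Phi_*-q)-f(\Phi_*)$ on the transition range, where $|\Phi_*'|$ is bounded below.
\end{itemize}
Taking $L$ large keeps $\xi$ large, so the curvature errors stay small, and the barrier then continues as in the propagation step.
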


\begin{remarkstar}
The inner estimate follows from the radial lower barrier of Fife and Wang \cite{FifeWang1998}, and the outer estimate follows from the upper barrier described in their subsequent remark.  In the notation of Fife and Wang \cite{FifeWang1998}, $w=2u-1$ and $g(s)=-2f((1+s)/2)$ for $s\in[-1,1]$.  Their increasing front is $W(\xi)=2\Phi_*(-\xi)-1$, $\xi\in\R$, and has speed $-c_*$.  The time-dependent radial shifts have asymptotic speeds that can be chosen smaller than $c_*-\sigma$ for part \textup{(i)} and smaller than $\sigma-c_*$ for part \textup{(ii)}, giving the respective limits.  For part \textup{(i)}, choose $0<\eta_0<\eta$ sufficiently small for the continuous-data lower estimate, and let $L_0$ be the required radius.  Set $L=L_0+1$ and choose $\zeta\in C_c^\infty(B_L)$ with $0\le\zeta\le1$ and $\zeta=1$ on $B_{L_0}$.  The continuous initial datum $(\vartheta+\eta_0)\zeta$ lies below $u_0$, so comparison gives the estimate for bounded measurable data.  A continuous compactly supported majorant of $u_0$ gives the upper estimate in part \textup{(ii)} in the same way.  In particular, for all sufficiently large $R$, the solution $u^R$ with initial datum $u_0=\mathbf1_{B_R}$ satisfies both estimates in Theorem~B.  Indeed, part \textup{(i)} with one fixed speed gives local convergence to $1$.  For any $0<\sigma<\sigma'<c_*$, applying part \textup{(i)} from a sufficiently large time $T$ at speed $\sigma'$ gives the estimate at speed $\sigma$, since $\sigma t\le\sigma'(t-T)$ for all sufficiently large $t$.  Part \textup{(ii)} gives the outer estimate.
\end{remarkstar}

\subsection{Main results}

We first describe the decaying stationary states and the limits of
solutions with $0\le u_0\le1$ vanishing outside a ball.
When $F(1)>0$, let $\beta$ be the unique zero of $F$ in $(\vartheta,1)$.
Write
\[
 X=L^2(\R^d)\cap L^\infty(\R^d),\qquad
 \|v\|_X=\|v\|_2+\|v\|_\infty.
\]
Using the continuous representatives supplied by
Lemma~\ref{lem:steadyregularity}, define
\begin{equation}\label{cs:eq:stationary-sets}
\begin{split}
 \mathcal S(J,f)&=\{U\in L^\infty(\R^d):0\le U\le1,\ H(U)=J*U\},\\
 \mathcal S_0(J,f)&=\{U\in\mathcal S(J,f):U(x)\to0\text{ as }|x|\to\infty\},\\
 \mathcal S_1(J,f)&=\{U\in\mathcal S(J,f):U(x)\to1\text{ as }|x|\to\infty\}.
\end{split}
\end{equation}
The additional regularity needed below is
\begin{equation}\label{cs:eq:reaction-regularity}
 f\in C^2((0,1)),\qquad
 \sup_{0<s\le1/2}s|f''(s)|<\infty.
\end{equation}
Assumption \textup{(F3)} implies \eqref{cs:eq:reaction-regularity}.

\begin{theorem}\label{cs:thm:stationary-summary}
Assume \textup{(F1)--(F2)}, \textup{(KM)}, $F(1)>0$, and
\eqref{cs:eq:reaction-regularity}.
For each $U\in\mathcal S_0(J,f)\setminus\{0\}$, there is
a pair $(x_*,V)\in\R^d\times C^2([0,\infty))$ such that
\begin{equation}\label{cs:eq:stationary-summary}
 \begin{gathered}
 U(x)=V(|x-x_*|),\qquad
 V'(r)<0\quad(r>0),\qquad \beta<V(0)<1,\\
 U\in L^1(\R^d)\cap H^1(\R^d),\qquad
 \int_{\R^d}|x|^2U(x)\,dx<\infty,\qquad
 \int_{\R^d}f(U(x))\,dx=0.
 \end{gathered}
\end{equation}
The pair $(x_*,V)$ is uniquely determined by $U$.
\end{theorem}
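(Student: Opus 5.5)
The plan is to obtain radial symmetry and strict monotonicity by a moving-plane argument adapted to convolution operators, and then to derive the integrability, the Pohozaev-type identity and the bounds on $V(0)$ from the equation $H(U)=J*U$.

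\textbf{Step 1: decay.} First I show that $U$ decays exponentially. Since $U\to0$ at infinity and $f'(0)<0$, for large $|x|$ we have
\[
 U-f(U)\ge(1-f'(0)-\varepsilon)\,U .
\]
Hence $U\le(1+\delta)^{-1}J*U$ outside a large ball, with $\delta>0$. A comparison with exponential supersolutions $Ae^{-\mu|x|}$ gives exponential decay. If that is awkward because $J$ has only finite second moments, I will instead use a weighted $L^1$ argument: integrating the inequality against $1$ and against $|x|^2$ over the exterior region gives $U\in L^1$ and a finite second moment. I expect the finite second moment of $J$ to be exactly what is needed here.

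\textbf{Step 2: further regularity and the identity $\int f(U)=0$.} Since $J\in W^{2,1}$ and $U\in L^1\cap L^\infty$, differentiating $U=H^{-1}(J*U)$ gives $\nabla U\in L^1\cap L^\infty$, so $U\in H^1$. Integrating $H(U)=J*U$ over $\R^d$ and using $\int J=1$ gives $\int f(U)=0$.

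\textbf{Step 3: symmetry and monotonicity (main obstacle).} I prove radial symmetry and strict monotonicity by moving planes. Fix a direction $e$. For $\lambda\in\R$, set $U_\lambda(x)=U(x^\lambda)$, where $x^\lambda$ is the reflection of $x$ in $\{x\cdot e=\lambda\}$, and set $w_\lambda=U_\lambda-U$ on the half-space $\Sigma_\lambda=\{x\cdot e<\lambda\}$. Because $J$ is radial and nonincreasing, we have $|x-y|\le|x-y^\lambda|$ for $x,y\in\Sigma_\lambda$, and therefore
\[
 H(U_\lambda)-H(U)=\int_{\Sigma_\lambda}\bigl(J(x-y)-J(x-y^\lambda)\bigr)w_\lambda(y)\,dy .
\]
The kernel difference in this integral is nonnegative.

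The start of the procedure comes from decay. Where $U$ is small, $H'\ge1-f'(0)-\varepsilon>1$. At a negative minimum of $w_\lambda$ located where $U$ is small, this gives a contradiction, because the mass of $J$ absorbed by the positive kernel difference is less than $1$. So $w_\lambda\ge0$ for $\lambda\ll0$. I then let $\lambda$ increase to its supremum $\lambda_e$, using the strong maximum principle that comes from $J>0$ on $B_\rho$, and conclude that $w_{\lambda_e}\equiv0$.

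The delicate point is the minimum principle when $U$ is near $\vartheta$. There $H'$ is only bounded below by $m_f$, and that alone does not close the argument. I will handle this by the standard device that a negative minimum can only occur in the region where $U$ is small, combined with a compactness argument: minima escaping to infinity lie in the small-$U$ region, and if the minima stay bounded, strict positivity contradicts maximality of $\lambda_e$.

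Doing this for every direction $e$ gives a center $x_*$ and a profile $V$ that is nonincreasing. Strict decrease $V'<0$ then follows from the strong maximum principle applied to $w_\lambda$ for $\lambda<\lambda_e$. The $C^2$ regularity of $V$ on $(0,\infty)$ and at $0$ follows from $J\in C^2$ and the identity $V=H^{-1}(J*U)$. The pair $(x_*,V)$ is unique because $U$ is strictly radially decreasing about $x_*$, so $x_*$ is its unique maximum point.

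\textbf{Step 4: the bounds $\beta<V(0)<1$.} The bound $V(0)<1$ comes from the strong maximum principle, since $U\not\equiv1$. For $V(0)>\beta$ I use a Pohozaev/energy identity. Multiply the equation by $x\cdot\nabla U$ and integrate, which is justified by the moment bounds from Step 1. Using the symmetry of $J$, the nonlocal term contributes
\[
 \tfrac12\iint\bigl(x\cdot\nabla J\bigr)(x-y)\ldots\ \ge0\quad\text{(by the monotonicity of }j),
\]
while the local terms give $-d\int F(U)$. This yields $\int F(U)>0$. Since $F\le0$ on $[0,\beta]$, this forces $\max U=V(0)>\beta$.
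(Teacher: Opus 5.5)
Your Step~4 is where the proposal breaks. Multiplying the equation by $x\cdot\nabla U$ is the same as differentiating $s\mapsto\mathcal E[U(s\,\cdot)]$ at $s=1$. The resulting identity is
\[
 d\int_{\R^d}F(U)\,dx
 =\iint_{\R^d\times\R^d}\Bigl[\tfrac d2\,J(x-y)+\tfrac14\,(x-y)\cdot\nabla J(x-y)\Bigr]\bigl(U(x)-U(y)\bigr)^2\,dx\,dy .
\]
Under (KM), $z\cdot\nabla J(z)=|z|\,j'(|z|)\le0$. So monotonicity of $j$ pushes in the wrong direction, and the bracket has no sign.

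Nor should you expect $\int F(U)>0$ in general. Take the local limit, say $d=1$ with $f$ replaced by $\varepsilon^2f$ (which keeps every hypothesis) and profiles of width $\varepsilon^{-1}$. The identity then reduces to the classical Pohozaev identity $d\int F(Q)=\tfrac{d-2}{2}\,c\int|\nabla Q|^2$ for $c\Delta Q+f(Q)=0$. This is negative in $d=1$, where in fact $F(Q)=-\tfrac c2(Q')^2$ pointwise. So a global integral identity of this kind cannot yield $V(0)>\beta$; you need a pointwise identity at the peak.

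The paper obtains it along a ray. Because $V$ is decreasing, $(J*U)(re_1)\le(K_**V)(r)$ with the one-dimensional marginal $K_*$. Hence $K_**V-V+f(V)\ge0$ on $(0,\infty)$. Multiplying by $V'\le0$ and integrating over $(0,\infty)$ gives
\[
 F(V(0))\ge\int_0^\infty K_*(h)\int_0^h\bigl[V(0)-V(h-r)\bigr]\bigl[-V'(r)\bigr]\,dr\,dh>0.
\]
This is the nonlocal counterpart of $F(V(0))=(d-1)\int_0^\infty r^{-1}V'(r)^2\,dr$ for $\Delta U+f(U)=0$.

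Step~3 follows the paper's approach in outline, but several points are not justified as written.
\begin{itemize}
\item Your ``device'' is stated backwards: negative minima cannot lie where $U$ is small. Once the negative minima of $w_{\lambda_k}$ are known to stay in a bounded set, their limit may still sit on the hyperplane $\{x\cdot e=\lambda_e\}$, where $w_{\lambda_e}=0$. Interior positivity then gives no contradiction. You need a narrow-region estimate, and here it is cheap. At a negative minimum $x$ at distance $t$ from the plane, the equation forces
\[
 m_f\le a_\lambda(x)\le\int_{\Sigma_\lambda}\bigl(J(x-y)-J(x-y^\lambda)\bigr)\,dy=\int_{-t}^{t}K_e(s)\,ds ,
\]
which bounds $t$ from below. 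The paper instead uses an $L^2$ estimate on a thin strip.
\item The strong maximum principle for the reflected kernel does not follow from $J>0$ on $B_\rho$. The kernel $J(x-y)-J(x-y^\lambda)$ vanishes wherever $j$ is flat on the relevant range of radii. The paper routes positivity through the end of the top plateau of $j$.
\item Interior positivity of $w_\lambda$ gives strict monotonicity of $V$ but not $V'<0$. For that, apply the same strong maximum principle to $-\partial_eU$, which satisfies the differentiated reflected equation.
\end{itemize}

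Finally, in Step~1, exponential decay is false when $J$ has only a finite second moment. Also, integrating $(1+\delta)U\le J*U$ over an exterior region presupposes $U\in L^1$. The iteration $U\le\sum_{n\ge0}q^{n+1}J^{*n}*b$, with $b$ compactly supported and $q<1$, supplies both the $L^1$ bound and the finite second moment.
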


For $U\in\mathcal S(J,f)$, define the bounded operator
$\mathcal L_U$ on $L^2(\R^d)$ by
\begin{equation}\label{eq:main-linearization}
 \mathcal L_U\phi=J*\phi-\phi+f'(U)\phi.
\end{equation}
For $U\in\mathcal S(J,f)$ with $\partial_iU\in L^2(\R^d)$,
$i=1,\ldots,d$, we use the condition
\begin{equation}\label{cs:eq:translation-kernel}
 \ker_{L^2(\R^d)}\mathcal L_U
 =\operatorname{span}\{\partial_1U,\ldots,\partial_dU\}.
\end{equation}
Here
$\ker_{L^2(\R^d)}\mathcal L_U
=\{\phi\in L^2(\R^d):\mathcal L_U\phi=0\text{ in }L^2(\R^d)\}$.
For a closed interval $I\subset[0,1]$, analyticity near $I$ means
that $f$ agrees on $[0,1]\cap O$ with an analytic function defined
on an open neighborhood $O$ of $I$.
For initial data satisfying
\begin{equation}\label{cs:eq:data}
 u_0\in L^\infty(\R^d),~ 0\le u_0\le1,~~~
 u_0=0\ a.e. \text{ on }\R^d\setminus B_{R_0}
 \text{ for some }R_0>0,
\end{equation}
define
\begin{equation}\label{cs:eq:initial-convex-hull}
 \Sigma_0=\Bigl\{x\in\R^d:\int_{B_r(x)}u_0(y)\,dy>0\ \forall r>0\Bigr\},
 \qquad C_0=\overline{\operatorname{co}}(\Sigma_0).
\end{equation}
Here, $C_0$ is the
closed convex hull of $\Sigma_0$.  For $u_0\equiv0$, both sets are empty.

\begin{theorem}\label{cs:thm:dynamics}
Assume \textup{(F1)--(F2)}, \textup{(KM)}, $F(1)>0$, and
\eqref{cs:eq:reaction-regularity}.  Suppose that, for any fixed
$U\in\mathcal S_0(J,f)\setminus\{0\}$, either $f$ is real analytic
near $[0,\|U\|_\infty]$, or \eqref{cs:eq:translation-kernel} holds.
For every $u_0$ satisfying \eqref{cs:eq:data}, the unique global
solution $u$ of \eqref{eq:main-equation} with initial datum $u_0$
satisfies exactly one of
the following alternatives:
\begin{enumerate}[label=\textup{(\roman*)},leftmargin=2.7em]
\item $\|u(t,\cdot)\|_X\to0$ as $t\to\infty$.
\item $\|u(t,\cdot)-1\|_{L^\infty(B_r)}\to0$ as $t\to\infty$
      for every $r>0$.
\item 
 $\|u(t,\cdot)-U\|_X\to0$ as $t\to\infty$ for some $U\in\mathcal S_0(J,f)\setminus\{0\}$.
For the pair $(x_*,V)$ associated with $U$ in
Theorem~\ref{cs:thm:stationary-summary}, we have $x_*\in C_0$.
\end{enumerate}

\end{theorem}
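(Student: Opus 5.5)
The argument combines a Lyapunov energy, compactness of trajectories, a \L ojasiewicz--Simon inequality (or nondegeneracy) near decaying states, and moving-plane/reflection arguments to locate the center. On $X$ we use the energy
\[
E(v)=\frac14\iint J(x-y)\bigl(v(x)-v(y)\bigr)^2\,dx\,dy-\int_{\R^d}F(v)\,dx .
\]
It is well defined for $v\in X$ with $0\le v\le1$, since $|F(s)|\le Cs^2$ near $0$. Along solutions from data satisfying \eqref{cs:eq:data},
\[
\frac{d}{dt}E(u(t))=-\|u_t\|_2^2 .
\]
First we record that $u(t)\in X$ for all $t$: the linear part is a bounded operator and $f(u)\le f'(0)u+Cu^2$, so Gronwall bounds $\|u(t)\|_2$ on bounded intervals.

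\textbf{Dichotomy.} The first step separates the three alternatives using Theorem~B and comparison. If $u(t_0)\ge\vartheta+\eta$ on a ball of radius $L(\sigma,\eta)$ at some time $t_0$ (up to translation), Theorem~B(i) gives (ii). If $\limsup_t\|u(t)\|_\infty<\vartheta$, a small exponentially decaying supersolution built from $f(s)\le -cs$ on $[0,\vartheta-\delta]$ gives exponential decay in $L^\infty$. The $L^2$ norm also decays, since $\frac{d}{dt}\|u\|_2^2\le -2c\|u\|_2^2$ once $u$ is small. This gives (i).

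\textbf{Remaining case: nondecaying and nonspreading.} Here we need uniform tails, meaning $u(t,x)$ is small for $|x|\ge R_1$ uniformly in $t$. To get them, we compare with an upper barrier. Because $u$ does not spread, no time-translate ever dominates the propagation threshold. Combining this with the reflection (moving-plane) argument below, $u(t,\cdot)$ is nonincreasing along rays from $C_0$ outside a neighborhood of $C_0$. This gives uniform tails. Then $\|u_t\|_2\in L^2(0,\infty)$ (energy bounded below, since on the relevant set $E\ge -C\|u\|_2^2$ and $\|u\|_2$ is bounded by the tail estimate plus the exponential-decay supersolution outside $B_{R_1}$). The nonlocal regularization $u=H^{-1}$-type smoothing is missing for parabolic problems, so we use the splitting $u(t)=e^{-t(1-\cdot)}$-part plus a compact part, as in Fife--Wang. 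This shows the $\omega$-limit set in $X$ (modulo the decaying discontinuous part of $u_0$) is nonempty, compact, connected, and contained in $\mathcal S_0(J,f)$. Elements are nonzero because we are not in case (i), and by Theorem~\ref{cs:thm:stationary-summary} each is a radial bump centered at some $x_*$.

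\textbf{Convergence to a single state (main obstacle).} If $f$ is analytic near $[0,\|U\|_\infty]$, we prove a \L ojasiewicz--Simon inequality
\[
|E(v)-E(U)|^{1-\theta}\le C\|J*v-v+f(v)\|_2
\]
for $v$ near $U$ in $X$. The linearization $\mathcal L_U=J*\cdot-(1-f'(U))$ is Fredholm of index zero on $L^2$ near $0$: $1-f'(U)\ge m_f$ by (F2), $f'(U)\to f'(0)<0$ at infinity, and $J*$ is compact relative to the multiplication after localization. A Lyapunov--Schmidt reduction onto the finite-dimensional kernel then gives an analytic reduced functional. In the nondegenerate case \eqref{cs:eq:translation-kernel}, the set of nearby decaying states is the smooth manifold of translates of $U$. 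The energy is constant on it, so we get a \L ojasiewicz inequality with $\theta=1/2$. The standard argument then gives $\int^\infty\|u_t\|_2\,dt<\infty$ and convergence in $L^2$. Convergence in $L^\infty$ follows from compactness in $X$. Making the reduction work despite the pointwise (noncompact) structure of the nonlocal operator is the delicate part. I would handle it by writing stationary equations as $v=H^{-1}(J*v)$, which is a smoothing fixed-point map in $X$.

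\textbf{Center location.} Finally, we show $x_*\in C_0$. If not, there is a hyperplane $T$ separating $x_*$ from $C_0$. Take the closed half-space containing $C_0$ and reflect across $T$. Since $J$ is radial nonincreasing (KM), the reflected solution compared with $u$ on the opposite half-space satisfies a comparison principle: the comparison function is odd across $T$ and satisfies a linear nonlocal inequality. This gives $u(t,x)\ge u(t,x^T)$ for all $t$ and all $x$ on the $C_0$ side. Passing to the limit, $U(x)\ge U(x^T)$ there. But $U$ is strictly radially decreasing about $x_*$, which lies on the other side, a contradiction. The same reflection argument is what supplies the monotonicity used for uniform tails above. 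Exclusivity of the three alternatives is immediate from the limits.
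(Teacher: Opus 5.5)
Your architecture matches the paper's: energy dissipation, compactness of the orbit in $X$, a gradient inequality at the limit (the analytic \L ojasiewicz--Simon reduction, or the translation-kernel case with $\theta=1/2$), and half-space reflection to get $x_*\in C_0$. The convergence and center-location steps are essentially as there. The genuine gap is in the nonspreading case, in the uniform-in-time control at spatial infinity. Your energy lower bound, $\|u_t\|_2\to0$, and compactness in $X$ all depend on it.

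You assert that nonspreading plus reflection makes $u(t,x)$ small for $|x|\ge R_1$, uniformly in $t$. These ingredients give much less.

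\begin{itemize}
\item The spreading criterion only forbids $u(t,\cdot)\ge\vartheta+\eta$ on an entire ball of radius $L(\eta)$. To transfer this to far points you need the ball comparison $u(t,x)\le u(t,y)$ whenever $|x|>|y|+2R_0$, obtained by reflecting in the hyperplane bisecting $x$ and $y$, which misses $\overline B_{R_0}$. Monotonicity along rays from $C_0$ does not suffice.
\item The outcome is only $u(t,x)\le\vartheta+\eta$ for $|x|>L(\eta)+2R_0$. Since $L(\eta)\to\infty$ as $\eta\downarrow0$ (for fixed $L$, the datum $(\vartheta+\eta)\mathbf 1_{B_L}$ goes extinct when $\eta$ is small), you never get a uniform level below $\vartheta$.
\item A level below $\vartheta$, where $f(s)\le-cs$, is exactly what your exponentially decaying supersolution needs. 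Hence your $L^2$ bound is unsupported, and the lower bound $\mathcal E\ge-C\|u\|_2^2$ is circular.
\end{itemize}

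The missing idea is to use the energy \emph{before} smallness. Fix $b\in(\vartheta,\beta)$. Then $u(t,x)\le b$ for $|x|>A_0:=L_b+2R_0$, and $-F(s)\ge c_bs^2$ on $[0,b]$ because $F<0$ on $(0,\beta)$ and $f'(0)<0$. With $F\le F(1)$ on $[0,1]$ this gives
\[
 \mathcal E(u(t,\cdot))\ge-F(1)|B_{A_0}|+c_b\int_{|x|>A_0}u(t,x)^2\,dx .
\]
Since $\mathcal E(u(t,\cdot))\le\mathcal E(u_0)$, this yields a uniform $L^2$ bound, and the energy is automatically bounded below. Only then does the reflection estimate $\esssup_{|x|>r+2R_0}u(t,x)\le|B_r|^{-1/2}\|u(t,\cdot)\|_2$ put the tails uniformly below $\vartheta/2$. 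The mass inequality $M'\le C-\gamma_0M$ then gives a uniform $L^1$ bound and $Cr^{-d}$ tails, which upgrade local compactness to compactness of the orbit in $X$.

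Two smaller points. First, Theorem~B assumes \textup{(F3)}, which \eqref{cs:eq:reaction-regularity} does not imply. Use instead the spreading criterion \eqref{eq:measurable-seed-spreading}, proved under \textup{(F1)} by comparison with a polynomial reaction $h\le f$. Second, local compactness is most cleanly obtained from the translation-difference equation $(w_h)_t+a_hw_h=(\tau_hJ-J)*u$ with $a_h\ge m_f$, which makes the initial roughness decay like $e^{-m_ft}$. Your unspecified ``splitting'' does not obviously apply to the nonlinear term.
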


For $F(1)<0$, the reflection $v=1-u$ gives counterparts of
Theorems~\ref{cs:thm:stationary-summary} and~\ref{cs:thm:dynamics}.
Let $\alpha\in(0,\vartheta)$ be the unique number satisfying
$F(\alpha)=F(1)$, and consider the condition
\begin{equation}\label{cs:eq:reaction-regularity-one}
 f\in C^2((0,1)),\qquad
 \sup_{1/2\le s<1}(1-s)|f''(s)|<\infty.
\end{equation}
As with \eqref{cs:eq:reaction-regularity}, assumption \textup{(F3)}
implies \eqref{cs:eq:reaction-regularity-one}.

\begin{corollary}\label{cs:cor:stationary-negative}
Assume \textup{(F1)--(F2)}, \textup{(KM)} and $F(1)<0$.
Then $\mathcal S_0(J,f)=\{0\}$. If also
\eqref{cs:eq:reaction-regularity-one} holds, then for each
$U\in\mathcal S_1(J,f)\setminus\{1\}$ there is a pair
$(x_*,P)\in\R^d\times C^2([0,\infty))$ such that
\begin{equation}\label{cs:eq:stationary-one-summary}
\begin{gathered}
 U(x)=P(|x-x_*|),\qquad P'(r)>0\ (r>0),\qquad
 0<P(0)<\alpha,\\
 1-U\in L^1(\R^d)\cap H^1(\R^d),\qquad
 \int_{\R^d}|x|^2(1-U(x))\,dx<\infty,\qquad
 \int_{\R^d}f(U(x))\,dx=0.
\end{gathered}
\end{equation}
The pair $(x_*,P)$ is uniquely determined by $U$.
\end{corollary}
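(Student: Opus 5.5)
Set $v=1-u$ and define $g(s)=-f(1-s)$. Then $v_t=J*v-v+g(v)$, and $U\in\mathcal S(J,f)$ if and only if $1-U\in\mathcal S(J,g)$. The constant solutions are unaffected, since $J*1=1$ under \textup{(K0)}.

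\emph{Checking the hypotheses for $g$.} I would show that $g$ satisfies \textup{(F1)--(F2)} with unstable zero $\tilde\vartheta=1-\vartheta$:
\begin{itemize}[leftmargin=2.2em]
\item $g'(s)=f'(1-s)$, so $g'(0)=f'(1)<0$, $g'(1)=f'(0)<0$, $g'(\tilde\vartheta)>0$, and $\sup_{[0,1]}g'=\kappa_f<1$.
\item The sign pattern of $g$ is the reflected one.
\item $G(s)=\int_0^s g=-\int_{1-s}^1 f=F(1-s)-F(1)$. Hence $G(1)=-F(1)>0$.
\item The regularity condition \eqref{cs:eq:reaction-regularity-one} for $f$ is exactly \eqref{cs:eq:reaction-regularity} for $g$, since $g''(s)=-f''(1-s)$ and $s\le1/2$ corresponds to $1-s\ge1/2$.
\item The unique zero $\tilde\beta$ of $G$ in $(\tilde\vartheta,1)$ satisfies $F(1-\tilde\beta)=F(1)$ with $1-\tilde\beta\in(0,\vartheta)$. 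Since $F$ decreases on $[0,\vartheta]$, this forces $1-\tilde\beta=\alpha$, and $\alpha$ is well defined because $F(1)\in(F(\vartheta),0)$.
\end{itemize}

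\emph{Second assertion.} For $U\in\mathcal S_1(J,f)\setminus\{1\}$, the function $W=1-U$ lies in $\mathcal S_0(J,g)\setminus\{0\}$. Theorem~\ref{cs:thm:stationary-summary} applied to $(J,g)$ gives a unique pair $(x_*,V)$ with $W=V(|x-x_*|)$, $V'<0$ and $\tilde\beta<V(0)<1$, together with the integrability properties and $\int g(W)=0$. Set $P=1-V$. Then $P'>0$ and $0<P(0)<1-\tilde\beta=\alpha$. Also $1-U=W\in L^1\cap H^1$, its second moment is finite, and $\int f(U)=-\int g(W)=0$. Uniqueness of $(x_*,P)$ follows from uniqueness of $(x_*,V)$.

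\emph{First assertion, $\mathcal S_0(J,f)=\{0\}$.} This is the main obstacle, because it must hold without \eqref{cs:eq:reaction-regularity-one}, and only \textup{(F1)--(F2)} are available for $f$ itself. I would argue by contradiction. Suppose $U\in\mathcal S_0(J,f)$ with $U\not\equiv0$.

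\begin{enumerate}[label=\textup{(\arabic*)},leftmargin=2.2em]
\item \emph{$U$ is positive and somewhere large.} By the strong maximum principle for $H(U)=J*U$ with $J>0$ near the origin, $U>0$ everywhere. Its maximum is attained, since $U\to0$ at infinity. At a maximum point $x_0$, $f(U(x_0))=U(x_0)-J*U(x_0)\ge0$, so $\max U\ge\vartheta$; in fact $\max U>\vartheta$, since equality would force $U\equiv\vartheta$ on a neighbourhood and then everywhere, contradicting decay.
\item \emph{Decay and the energy identity.} Near infinity $U$ is small and $f'\approx f'(0)<0$. A comparison argument should then give exponential or at least integrable decay, so that $U\in L^1\cap L^2$. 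I would then use the Pohozaev/energy identity obtained by testing the equation against $x\cdot\nabla U$, or more simply by translating $U$ and differentiating the energy
\[
E(U)=\tfrac14\iint J(x-y)\bigl(U(x)-U(y)\bigr)^2\,dx\,dy-\int F(U).
\]
This gives $\int F(U)=\tfrac{d-2}{2d}\cdot(\text{nonnegative term})$-type constraints. This route looks dimension-sensitive, so I would rather use a sliding or comparison argument.
\item \emph{Comparison with the front.} I would take the one-dimensional front of Theorem~A for $K_*$, which has speed $c<0$ when $F(1)<0$, so the state $0$ invades. Only \textup{(F3)} is needed for Theorem~A, and this could be avoided by the auxiliary $C^2$ modification below $[0,1]$ that the Remark mentions. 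Since $U\le1$, the solution starting from $U$ is stationary and lies below the front $\Phi(x\cdot e-ct+L)$ for a large shift $L$. As $c<0$, this front retreats and forces $U\to0$ on a half-space, and hence everywhere. Thus $U\equiv0$, a contradiction.
\end{enumerate}

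The delicate point in step (3) is the initial ordering $U\le\Phi(x\cdot e+L)$. The front is near $1$ on one side, but $U$ is only bounded by $1$ there, so the ordering can fail where $\Phi<1$. To handle this I would use the supersolution $\min\{1,\Phi(\cdot)+q e^{-\mu t}\}$ of Fife--McLeod/Chen type, which absorbs the gap. This construction is where I expect the main difficulty.
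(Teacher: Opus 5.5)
Your reflection and your proof of the second assertion are the same as the paper's: $F^\sharp(1)=-F(1)>0$, $\beta_{f^\sharp}=1-\alpha$, Theorem~\ref{cs:thm:stationary-summary} applied to $1-U$, and $P=1-V$. The gap is in $\mathcal S_0(J,f)=\{0\}$. Step~(3) is left unfinished at the step that carries the argument, and the supersolution you name is not one.

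With $\xi=x\cdot e-ct+L$ and $q=q_0e^{-\mu t}$, the front equation reduces the residual of $\Phi(\xi)+q$ to $-\mu q-[f(\Phi+q)-f(\Phi)]$. This is negative in the transition layer where $f'(\Phi)>0$, and capping at $1$ changes nothing there. The Fife--McLeod/Chen construction also needs a bounded phase drift, $\Phi(\xi-\sigma q_0(1-e^{-\mu t}))+q$ with $\sigma$ large. Its extra term $\sigma\mu q(-\Phi')$ absorbs this defect because $-\Phi'$ has a positive lower bound on compact sets. The ordering at $t=0$ comes from $\max U<1$ and $U\to0$, not from $U\le1$: choose $L$ so negative that $U\le q_0$ wherever $\Phi(x\cdot e+L)<\max U$.

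There is a second problem: Theorem~A requires \textup{(F3)}, while this assertion assumes only \textup{(F1)--(F2)}. The extension in Section~\ref{sec:basic} alters $f$ only outside $[0,1]$, so it cannot provide this. You would instead compare with a smooth bistable $h\ge f$ on $[0,1]$ with $\int_0^1h<0$, for which $U$ is a subsolution. Such an $h$ comes from the polynomial construction behind \eqref{eq:measurable-seed-spreading} with the sign of the correction reversed. With both repairs, $U\le\overline u(t,\cdot)\to0$ pointwise because the speed is negative, and your route closes.

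None of this is needed, because the reflection you already set up also settles the first assertion, and this is how the paper argues. If $Z\in\mathcal S_0(J,f)$, then $1-Z\in\mathcal S(J,f^\sharp)$ tends to $1$ at infinity. Hence $1-Z\ge b$ on some ball $B_{L_b}(x_0)$, with $b\in(1-\vartheta,1)$. The spreading criterion \eqref{eq:measurable-seed-spreading} for $f^\sharp$ then makes the solution from $1-Z$ converge locally uniformly to $1$. That solution is stationary, so $1-Z\equiv1$; this is the second assertion of Lemma~\ref{cs:lem:stationary-values}. The spreading criterion already contains the barrier comparison (the Fife--Wang radial barriers) and the polynomial approximation for $C^1$ reactions, so it needs only \textup{(F1)--(F2)} and \textup{(KM)}. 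Your retreating planar front for $f$ is its mirror image, rebuilt from scratch, and your steps~(1)--(2) are unnecessary.
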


For initial data satisfying
\begin{equation}\label{cs:eq:one-data}
 u_0\in L^\infty(\R^d),\qquad 0\le u_0\le1,\qquad
 u_0=1\ \text{a.e. on }\R^d\setminus B_{R_0}
 \quad\text{for some }R_0>0,
\end{equation}
define
\begin{equation}\label{cs:eq:one-convex-hull}
 \Sigma_0^\sharp=\Bigl\{x\in\R^d:
 \int_{B_r(x)}(1-u_0(y))\,dy>0\ \forall r>0\Bigr\},
 \qquad C_0^\sharp=\overline{\operatorname{co}}(\Sigma_0^\sharp).
\end{equation}
Thus $C_0^\sharp$ is the closed convex hull of the support of
$(1-u_0(x))\,dx$.

\begin{corollary}\label{cs:cor:dynamics-negative}
Assume \textup{(F1)--(F2)}, \textup{(KM)}, $F(1)<0$, and
\eqref{cs:eq:reaction-regularity-one}. Suppose that, for each
$U\in\mathcal S_1(J,f)\setminus\{1\}$, either $f$ is real analytic
near $[\inf_{\R^d}U,1]$, or \eqref{cs:eq:translation-kernel} holds.
For every $u_0$ satisfying \eqref{cs:eq:one-data}, the unique global
solution $u$ of \eqref{eq:main-equation} with initial datum $u_0$
satisfies exactly one of the following alternatives as $t\to\infty$:
\begin{enumerate}[label=\textup{(\roman*)},leftmargin=2.7em]
\item $\|u(t,\cdot)-1\|_X\to0$.
\item $\|u(t,\cdot)\|_{L^\infty(B_r)}\to0$ for every $r>0$.
\item There is a $U\in\mathcal S_1(J,f)\setminus\{1\}$
such that $\|u(t,\cdot)-U\|_X\to0$.
For the pair $(x_*,P)$ associated with $U$ in
Corollary~\ref{cs:cor:stationary-negative}, we have $x_*\in C_0^\sharp$.
\end{enumerate}
\end{corollary}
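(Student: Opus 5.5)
The plan is to reduce Corollary~\ref{cs:cor:dynamics-negative} to Theorem~\ref{cs:thm:dynamics} by the reflection $v=1-u$. Set $g(s)=-f(1-s)$. Then $v$ solves $v_t=J*v-v+g(v)$, since $J*1=1$ by \eqref{eq:kernel-mass}. We first check that $g$ satisfies all the hypotheses of Theorem~\ref{cs:thm:dynamics}.

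\textbf{Hypotheses on $g$.} Assumption \textup{(F1)} holds for $g$ with $\vartheta_g=1-\vartheta$, because $g'(s)=f'(1-s)$ and the sign conditions transfer directly. Assumption \textup{(F2)} holds with the same constant, since $\sup_{[0,1]}g'=\sup_{[0,1]}f'$. Writing $G(s)=\int_0^s g$, a substitution gives $G(s)=F(1-s)-F(1)$. Hence $G(1)=-F(1)>0$. Condition \eqref{cs:eq:reaction-regularity} for $g$ is exactly \eqref{cs:eq:reaction-regularity-one} for $f$: with $s=1-\sigma$, we have $\sigma|g''(\sigma)|=(1-s)|f''(s)|$ for $\sigma\in(0,1/2]$.

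\textbf{Correspondence of objects.}
\begin{itemize}
\item The map $U\mapsto 1-U$ is a bijection from $\mathcal S_1(J,f)\setminus\{1\}$ onto $\mathcal S_0(J,g)\setminus\{0\}$, because $H_g(1-U)=1-H_f(U)$ and $J*(1-U)=1-J*U$.
\item The linearizations agree: $\mathcal L^g_{1-U}\phi=J*\phi-\phi+f'(U)\phi=\mathcal L^f_U\phi$. Their kernels coincide, and $\partial_i(1-U)=-\partial_iU$, so \eqref{cs:eq:translation-kernel} transfers.
\item Analyticity of $f$ near $[\inf U,1]$ is the same as analyticity of $g$ near $[0,1-\inf U]=[0,\|1-U\|_\infty]$.
\item The data condition \eqref{cs:eq:one-data} for $u_0$ becomes \eqref{cs:eq:data} for $v_0=1-u_0$. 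The sets $\Sigma_0,C_0$ built from $v_0$ are precisely $\Sigma_0^\sharp,C_0^\sharp$.
\end{itemize}

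\textbf{Applying the theorem.} Theorem~\ref{cs:thm:dynamics} applied to $v$ yields exactly one of three alternatives, which translate as follows.
\begin{enumerate}[label=\textup{(\roman*)},leftmargin=2.7em]
\item $\|v\|_X\to0$ means $\|u-1\|_X\to0$.
\item $v\to1$ locally uniformly means $u\to0$ locally uniformly.
\item $\|v-W\|_X\to0$ with $W\in\mathcal S_0(J,g)\setminus\{0\}$ gives $U=1-W\in\mathcal S_1(J,f)\setminus\{1\}$ and $\|u-U\|_X=\|v-W\|_X\to0$.
\end{enumerate}
The three alternatives for $u$ are mutually exclusive because those for $v$ are.

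\textbf{Location of the center.} Let $(x_*,V)$ be the pair given by Theorem~\ref{cs:thm:stationary-summary} for $W$, and set $P=1-V$. Then $U(x)=P(|x-x_*|)$ and $P'>0$. The pair $(x_*,P)$ of Corollary~\ref{cs:cor:stationary-negative} for $U$ is unique. The reflected pair $(x_*,1-V)$ has the listed properties, so the two pairs coincide. Therefore $x_*\in C_0^\sharp$.

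\textbf{Main obstacle.} The only step needing care is the consistency of the pairs just used. This means checking that $(x_*,1-V)$ satisfies every property in \eqref{cs:eq:stationary-one-summary}, so that uniqueness applies. In particular we need $0<P(0)<\alpha$, i.e.\ $V(0)>1-\alpha$. This follows from $V(0)>\beta_g$, where $\beta_g$ is the zero of $G$ in $(\vartheta_g,1)$. Indeed, $G(\beta_g)=0$ means $F(1-\beta_g)=F(1)$ with $1-\beta_g\in(0,\vartheta)$. By uniqueness of $\alpha$ in $(0,\vartheta)$ this gives $1-\beta_g=\alpha$.
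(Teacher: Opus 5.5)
Your proposal is correct and follows the paper's proof essentially step for step: reflect via $v=1-u$ with $f^\sharp(s)=-f(1-s)$, check that $f^\sharp$ inherits \textup{(F1)--(F2)}, $F^\sharp(1)>0$ and \eqref{cs:eq:reaction-regularity}, transfer the analyticity or translation-kernel hypothesis through $U\leftrightarrow 1-U$, and read off the three alternatives and the center location from Theorem~\ref{cs:thm:dynamics}. The consistency check you flag as the main obstacle is automatic, because the paper's proof of Corollary~\ref{cs:cor:stationary-negative} defines $(x_*,P)$ exactly as $(x_*,1-V)$ for the reflected state, using the same identity $\beta_{f^\sharp}=1-\alpha$ that you derive.
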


\Needspace{10\baselineskip}
For $u_0=\mathbf1_{B_R}$, we now determine the extinction and
spreading intervals and the limit at the critical radius.

\begin{theorem}\label{thm:main}
Assume \textup{(F1)--(F2)} and $F(1)>0$.  For each $R>0$, let $u^R$ denote the unique global solution of \eqref{eq:main-equation} with initial datum $u_0=\mathbf1_{B_R}$.

\begin{enumerate}[label=\textup{(\roman*)},leftmargin=2.7em]
\item Assume \textup{(KR)}. Then there exist radii
\begin{equation}
 0<R_{\rm ext}\le R_{\rm prop}<\infty
\end{equation}
such that $u^R$ becomes extinct if and only if $0<R<R_{\rm ext}$, and spreads to $1$ if and only if $R>R_{\rm prop}$. With
\begin{equation}\label{eq:threshold-constants}
 T_f=m_f^{-1}\log\frac4\vartheta,
 \qquad
 M_1(J)=\int_{\R^d}|z|J(z)\,dz,
 \qquad
 \omega_d=|B_1|,
\end{equation}
the extinction radius satisfies
\begin{equation}\label{eq:main-extinction-bounds}
 \left[\frac{1}{\omega_d\|J\|_\infty(e^{T_f}-1)}\right]^{1/d}
 \le R_{\rm ext}
 \le \frac{dM_1(J)}{4F(1)}.
\end{equation}

\item Assume \textup{(KM)} and \textup{(F4)}. Then
\begin{equation}
 R_{\rm ext}=R_{\rm prop}=:R_*(f,J).
\end{equation}
At the critical radius $R=R_*(f,J)$, there is a radial nonincreasing
$U_*\in\mathcal S_0(J,f)\setminus\{0\}$ such that
\[
 \|u^{R_*}(t,\cdot)-U_*\|_2+\|u^{R_*}(t,\cdot)-U_*\|_\infty\to0
 \qquad\text{as }t\to\infty.
\]
Moreover, for every nonconstant radial nonincreasing
$U\in\mathcal S_0(J,f)\cap L^2(\R^d)$, the operator $\mathcal L_U$ on $L^2(\R^d)$, defined by
\eqref{eq:main-linearization}, has a positive eigenvalue with a
nonzero nonnegative radial eigenfunction.
\end{enumerate}
\end{theorem}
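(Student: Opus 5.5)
The proof splits into the threshold structure of part (i) and the sharp threshold with convergence in part (ii).

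\emph{Part (i): monotonicity in $R$.} By comparison (Lemma~\ref{lem:semiflow}), $R\mapsto u^R$ is monotone, since $\mathbf1_{B_R}\le\mathbf1_{B_{R'}}$ for $R<R'$. Hence the extinction set and the spreading set are intervals. I would define
\[
 R_{\rm ext}=\sup\{R:u^R\text{ becomes extinct}\},\qquad
 R_{\rm prop}=\inf\{R:u^R\text{ spreads}\}.
\]
Both are finite and positive:
\begin{itemize}
 \item for large $R$, Theorem~B(i) gives spreading;
 \item for small $R$, the lower bound in \eqref{eq:main-extinction-bounds} gives extinction.
\end{itemize}
\emph{Openness.} The extinction set is open. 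Once $\|u^R(T)\|_\infty<\vartheta/2$, one has exponential decay, and this persists under small $L^\infty$ perturbations, because $\mathbf1_{B_{R'}}\to\mathbf1_{B_R}$ only in $L^1$. So I would instead use the $L^1$–$L^\infty$ smoothing of $J*$ on bounded time intervals, via $\|J\|_\infty$. The spreading set is open because spreading implies $u^R(T)\ge\vartheta+\eta$ on $B_L$. Continuity in $L^1$, plus the nonlocal equation's $L^1\to L^\infty$ control after time $T$, then gives the same for nearby $R'$. This yields the ``if and only if'' statements.

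\emph{Part (i): the extinction bounds.} For the lower bound, write $u\le e^{t(\ldots)}$ using the Duhamel form with $w=e^{t}u$. While $u\le\vartheta$ one has $f\le0$, so $u_t\le J*u-u$, and hence
\[
 u(t)\le e^{-t}\textstyle\sum_k t^kJ^{*k}*u_0/k!.
\]
The $k\ge1$ terms are bounded by $(1-e^{-t})\|J\|_\infty|B_R|$. The $k=0$ term $e^{-t}\mathbf1_{B_R}$ becomes small after time $T_f$; the choice $\log(4/\vartheta)/m_f$ is exactly what forces $u$ below $\vartheta/2$ using the linear bound $f(s)\le \kappa_f s$. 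After that, $\|u\|_\infty<\vartheta$ and extinction follows. For the upper bound on $R_{\rm ext}$, I would use an energy argument. The energy
\[
 E(u)=\tfrac14\iint J(x-y)(u(x)-u(y))^2-\int F(u)
\]
is nonincreasing along the flow. At time $0$,
\[
 E(\mathbf1_{B_R})\le\tfrac12|\partial B_R|\,M_1(J)-F(1)|B_R|\cdot\ldots,
\]
since $\iint J|\mathbf1_{B_R}(x)-\mathbf1_{B_R}(y)|\le|\partial B_R|M_1$. This is negative when $R>dM_1/(4F(1))$ up to the stated constant. An extinct solution would have $E\to0$, which contradicts $E<0$.

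\emph{Part (ii).} Suppose $R_{\rm ext}\le R\le R_{\rm prop}$. Then $u^R$ neither becomes extinct nor spreads. By Theorem~\ref{cs:thm:dynamics} it therefore converges in $X$ to some $U\in\mathcal S_0\setminus\{0\}$; the analyticity hypothesis (F4) covers all $U$. Since the data is radial nonincreasing and $J$ satisfies (KM), the limit is radial nonincreasing, so $x_*=0$. The crux is to show $R_{\rm ext}=R_{\rm prop}$, i.e.\ that for $R<R'$ in this interval, both cannot converge to stationary states. For this I would prove the instability claim first. Take $U$ radial nonincreasing nonconstant in $\mathcal S_0$. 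The energy Hessian is $-\mathcal L_U$. Consider the test function $\phi=-x\cdot\nabla U$, i.e.\ a dilation, which is nonnegative radial. The dilation family $U(\cdot/\lambda)$ has energy
\[
 \lambda^{d}\bigl(\text{interaction}(\lambda)\bigr)-\lambda^d\!\int F(U),
\]
and using $\int f(U)=0$ together with a Pohozaev-type identity, the second derivative at $\lambda=1$ is negative. Hence $\sup\sigma(\mathcal L_U)>0$. The essential spectrum lies in $(-\infty,-1+f'(0)]$, which is negative, so this supremum is an isolated eigenvalue. A Perron–Frobenius (positivity-improving) argument for $J*+f'(U)$ restricted to radial functions gives a nonnegative radial eigenfunction. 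I expect this Pohozaev/dilation computation, showing strictly negative second variation, to be the main obstacle; I would first try computing $\langle\mathcal L_U\phi,\phi\rangle$ directly via $\partial_\lambda^2$ of the interaction term and the monotonicity of $j$.

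\emph{Conclusion of part (ii).} Given instability, if $u^R\to U$ and $R'>R$, then $u^{R'}\ge u^R$ and the inequality is strict (by strong positivity), so $u^{R'}(T)\ge u^R(T)+\varepsilon\psi$ for some time $T$. A standard unstable-manifold or sub-solution construction, $U+\delta e^{\mu t}\psi$ with $\psi$ the positive eigenfunction, then shows that $u^{R'}$ leaves every neighborhood of $U$ upward. Iterating over the limits in $\mathcal S_0$, which can be handled by convergence plus monotonicity, forces $u^{R'}$ to spread. The downward argument gives extinction for $R'<R$, so the interval collapses to a single point $R_*$, and $U_*$ is the limit there.
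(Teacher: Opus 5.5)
\textbf{Part (ii) has two genuine gaps.}

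\emph{First gap: the instability argument.} Since $U$ is critical, the second derivative of $\lambda\mapsto\mathcal E[U(\cdot/\lambda)]$ at $\lambda=1$ equals $-\langle\mathcal L_U\phi,\phi\rangle$ with $\phi=-x\cdot\nabla U$. There is no reason for this to be negative. For the local analogue $\frac12\int|\nabla u|^2-\int F(u)$, the Pohozaev identity gives exactly $-(d-2)\int|\nabla U|^2$. This is positive for $d=1$ and zero for $d=2$. With a weak reaction $\mu f$, $\mu\downarrow0$, the nonlocal equation should reduce heuristically, on the profile scale, to that local problem. So for $d=1$ you should expect the sign you need to fail, yet the theorem covers every $d\ge1$.

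The instability comes from translations, not dilations. Differentiating $H(U)=J*U$ gives $\mathcal L_U\partial_iU=0$, and $\partial_iU$ is odd in $x_i$, so it changes sign. You already invoke positivity improvement for the sign of the eigenfunction; use it for the eigenvalue too. Since $e^{t\mathcal L_U}h\ge e^{-Mt}e^{tJ*}h$ for $h\ge0$ and $J>0$ near the origin, $e^{t\mathcal L_U}$ is positivity improving. Take $v=\partial_iU$. If $\sup\sigma(\mathcal L_U)$ were $0$, contractivity and $|v|\le e^{t\mathcal L_U}|v|$ would force $e^{t\mathcal L_U}v_\pm=v_\pm$. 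Then both $v_+$ and $v_-$ would be positive almost everywhere, which is absurd. The Fredholm factorization of $\lambda I-\mathcal L_U$ for $\lambda\ge0$ makes the positive spectral bound an eigenvalue. Passing to $|\phi|$ and averaging over $O(d)$ gives the nonnegative radial eigenfunction.

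Your essential-spectrum bound is also off. Since $\widehat J(0)=1$, the value $f'(0)>-1+f'(0)$ lies in it. The correct bound $\max\{f'(0),-m_f\}<0$ still suffices.

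\emph{Second gap: collapsing the interval.} Even granting a positive eigenvalue, your conclusion that $u^{R'}$ must spread does not follow. Instability of $U_R$ only shows that $u^{R'}$, for $R'>R$, cannot also converge to $U_R$. It may converge to a different decaying state $U_{R'}\ge U_R$. Nothing is known about uniqueness or the order structure of $\mathcal S_0(J,f)$, so ``iterating over the limits'' has no mechanism that ends in spreading.

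Even your first step needs care. Almost-everywhere positivity of $u^{R'}-u^R$ gives no bound $u^{R'}(T)\ge u^R(T)+\varepsilon\psi$ near spatial infinity. This is why Lemma~\ref{lem:unorderedstable} uses the finite-ball eigenfunction $\psi_L$ and the exponential growth of $\int_{B_L}(u^{R'}-u^R)\psi_L$.

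The missing idea is a counting argument:
\begin{itemize}
\item By Lemma~\ref{lem:orderedisolation}, each such $U$ has a neighborhood $\{V:\|V-U\|_{L^\infty(B_L)}<\varepsilon_U\}$ containing no other stationary state comparable to $U$.
\item The map $R\mapsto U_R$ is injective and order preserving on $[R_{\rm ext},R_{\rm prop}]$. Hence its image is totally ordered, and each point of the image is isolated in it.
\item The local uniform topology on continuous radial functions is second countable, so the image is countable.
\end{itemize}
An injective map from a nondegenerate interval into a countable set is impossible.

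\textbf{Part (i)} follows the paper's route, but three points need repair.
\begin{itemize}
\item Theorem~B assumes \textup{(F3)}, which is not assumed here. Spreading for large $R$ therefore needs the comparison with a smooth bistable $h\le f$ behind \eqref{eq:measurable-seed-spreading}.
\item The small-mass bound should use $f(s)\le(1-m_f)s$ on all of $[0,1]$, which gives $u_t\le J*u-m_fu$ from $t=0$. Your inequality $u_t\le J*u-u$ requires $u\le\vartheta$, which fails initially.
\item The interaction term of $\mathcal E[\mathbf1_{B_R}]$ is at most $\frac14|\partial B_R|M_1(J)$, not $\frac12|\partial B_R|M_1(J)$. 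Extinction is excluded because $\mathcal E\ge0$ once $\|u\|_\infty<\vartheta$, not because $\mathcal E\to0$.
\end{itemize}
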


For the case $F(1)<0$, retain \textup{(F1)--(F2)} and \textup{(KR)},
and set $f^\sharp(s)=-f(1-s)$ and
$F^\sharp(s)=\int_0^s f^\sharp(r)\,dr$.
For each $R>0$, let $u_R$ be the unique global solution of
\eqref{eq:main-equation} with $u_0=1-\mathbf1_{B_R}$, and set
$v^R=1-u_R$. By Lemma~\ref{lem:duality}, $v^R$ solves
\eqref{eq:main-equation} with reaction $f^\sharp$ and initial datum
$v^R(0,\cdot)=\mathbf1_{B_R}$, and $F^\sharp(1)=-F(1)>0$.
The sets $\mathcal S(J,f^\sharp)$ and $\mathcal S_0(J,f^\sharp)$
are defined by \eqref{cs:eq:stationary-sets} with $f^\sharp$ in place of $f$.
\begin{corollary}\label{cor:main_negative}
Assume \textup{(F1)--(F2)}, \textup{(KR)} and $F(1)<0$.

\begin{enumerate}[label=\textup{(\roman*)},leftmargin=2.7em]
\item Let $R_{\rm ext}^\sharp$ and $R_{\rm prop}^\sharp$ be the radii
in Theorem~\ref{thm:main}\textup{(i)} for the reaction $f^\sharp$.
Then $0<R_{\rm ext}^\sharp\le R_{\rm prop}^\sharp<\infty$, and
$u_R(t,\cdot)$ converges uniformly to $1$ if and only if
$0<R<R_{\rm ext}^\sharp$, and locally uniformly to $0$ if and only if
$R>R_{\rm prop}^\sharp$.
The bounds \eqref{eq:main-extinction-bounds} hold for $R_{\rm ext}^\sharp$,
with $F(1)$ and $T_f$ replaced by $F^\sharp(1)$ and
$T_{f^\sharp}=m_f^{-1}\log(4/(1-\vartheta))$, respectively.

\item Assume also \textup{(KM)} and \textup{(F4)}. Then $R_{\rm ext}^\sharp=R_{\rm prop}^\sharp=R_*^\sharp$, where $R_*^\sharp=R_*(f^\sharp,J)$.  At the critical radius $R=R_*^\sharp$, there is a radial nonincreasing
$U_*^\sharp\in\mathcal S_0(J,f^\sharp)\setminus\{0\}$ such that
$v^{R_*^\sharp}(t,\cdot)\to U_*^\sharp$ in
$L^2(\R^d)\cap L^\infty(\R^d)$ as $t\to\infty$.
Consequently, $1-U_*^\sharp\in\mathcal S(J,f)$ and
\[
 \|u_{R_*^\sharp}(t,\cdot)-(1-U_*^\sharp)\|_2+
 \|u_{R_*^\sharp}(t,\cdot)-(1-U_*^\sharp)\|_\infty\to0
 \qquad\text{as }t\to\infty.
\]
The spectral conclusion of Theorem~\ref{thm:main}\textup{(ii)} also holds with $f^\sharp$ in place of $f$.
\end{enumerate}
\end{corollary}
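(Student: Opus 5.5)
The plan is to reduce Corollary~\ref{cor:main_negative} to Theorem~\ref{thm:main} for the reflected reaction $f^\sharp(s)=-f(1-s)$. The first step checks that $f^\sharp$ satisfies every hypothesis used there.

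\textbf{Hypotheses on $f^\sharp$.} Since $f\in C^1$, so is $f^\sharp$. Its zeros are $0$, $\vartheta^\sharp=1-\vartheta$ and $1$. Moreover $(f^\sharp)'(s)=f'(1-s)$, so $(f^\sharp)'(0)=f'(1)<0$, $(f^\sharp)'(1-\vartheta)=f'(\vartheta)>0$ and $(f^\sharp)'(1)=f'(0)<0$. The sign pattern of $f^\sharp$ on $(0,1-\vartheta)$ and on $(1-\vartheta,1)$ follows from that of $f$. Hence \textup{(F1)} holds with $\vartheta^\sharp=1-\vartheta$. Since $\sup_{[0,1]}(f^\sharp)'=\kappa_f$, \textup{(F2)} holds with $m_{f^\sharp}=m_f$. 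Consequently $T_{f^\sharp}=m_f^{-1}\log(4/(1-\vartheta))$. A substitution gives
\[
F^\sharp(1)=-\int_0^1 f(1-r)\,dr=-F(1)>0.
\]
Analyticity \textup{(F4)} of $f$ near $[0,1]$ transfers to $f^\sharp$, because $s\mapsto1-s$ maps $[0,1]$ onto itself. The kernel hypotheses are untouched.

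\textbf{Part (i).} Apply Theorem~\ref{thm:main}(i) to $f^\sharp$. It yields $0<R_{\rm ext}^\sharp\le R_{\rm prop}^\sharp<\infty$ together with the bounds \eqref{eq:main-extinction-bounds}, where $F$ and $T_f$ are replaced by $F^\sharp(1)$ and $T_{f^\sharp}$. By Lemma~\ref{lem:duality}, $v^R=1-u_R$ is exactly the solution for $f^\sharp$ with datum $\mathbf1_{B_R}$. Uniqueness of the global solution (Lemma~\ref{lem:semiflow}) identifies it with the $u^R$ of Theorem~\ref{thm:main} for $f^\sharp$. The dictionary is then:
\begin{itemize}[leftmargin=2.2em]
\item extinction $\|v^R(t)\|_\infty\to0$ is equivalent to $u_R\to1$ uniformly;
\item spreading $v^R\to1$ locally uniformly is equivalent to $u_R\to0$ locally uniformly.
\end{itemize}
The ``if and only if'' statements therefore transfer verbatim.

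\textbf{Part (ii).} Apply Theorem~\ref{thm:main}(ii) to $f^\sharp$. This gives $R_{\rm ext}^\sharp=R_{\rm prop}^\sharp=R_*(f^\sharp,J)$, a radial nonincreasing $U_*^\sharp\in\mathcal S_0(J,f^\sharp)\setminus\{0\}$, and convergence $v^{R_*^\sharp}(t)\to U_*^\sharp$ in $L^2\cap L^\infty$. The spectral statement for $\mathcal L_U$ with $f^\sharp$ is exactly the corresponding conclusion of Theorem~\ref{thm:main}(ii).

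It remains to show $1-U_*^\sharp\in\mathcal S(J,f)$. Write $W=1-U_*^\sharp$. Since $J*1=1$, we have $J*W=1-J*U_*^\sharp$, and $f^\sharp(U_*^\sharp)=-f(W)$. Substituting into the stationary equation for $U_*^\sharp$ gives $J*W-W+f(W)=0$, with $0\le W\le1$.

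The final display is an identity rather than an estimate. Indeed $u_{R_*^\sharp}-(1-U_*^\sharp)=-(v^{R_*^\sharp}-U_*^\sharp)$, so its $L^2$ and $L^\infty$ norms coincide with those of $v^{R_*^\sharp}-U_*^\sharp$.

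\textbf{Main obstacle.} There is no substantive difficulty; the one point needing care is the bookkeeping between the two formulations. In particular, the phrase ``converges uniformly to $1$'' for $u_R$ corresponds to $L^\infty$ extinction of $v^R$, and this is the meaning of ``becomes extinct'' fixed in the introduction. One should also confirm that the constant $m_f$ appearing in $T_{f^\sharp}$ is indeed unchanged under the reflection, which the computation of $(f^\sharp)'$ above shows.
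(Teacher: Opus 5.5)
Your proposal is correct and follows the paper's proof essentially verbatim. Like the paper, you reduce to Theorem~\ref{thm:main} for $f^\sharp$, identify $v^R=1-u_R$ as that problem's solution via Lemma~\ref{lem:duality} and uniqueness, and translate extinction and spreading through $u_R=1-v^R$. You also obtain $1-U_*^\sharp\in\mathcal S(J,f)$ and the final convergence from the same reflection identity. The only difference is that you check the hypotheses on $f^\sharp$ and the stationary identity by direct computation, whereas the paper cites Lemma~\ref{lem:duality} and $(f^\sharp)^\sharp=f$.
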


In Theorem~\ref{thm:main}\textup{(ii)} and
Corollary~\ref{cor:main_negative}\textup{(ii)}, respectively,
Theorem~\ref{cs:thm:stationary-summary} applies to $U_*$ with reaction $f$
and to $U_*^\sharp$ with reaction $f^\sharp$:
\textup{(F4)} implies \eqref{cs:eq:reaction-regularity}
and is preserved by Lemma~\ref{lem:duality}.
The next corollary records the one-dimensional consequence of
Theorem~\ref{thm:main}\textup{(ii)}.

\Needspace{7\baselineskip}
\begin{corollary}\label{thm:1dcompletefull}
Assume $d=1$, (KM), \textup{(F1)--(F2)}, \textup{(F4)}, and $F(1)>0$.  Set $L_*=R_*(f,J)$ as in Theorem~\ref{thm:main}\textup{(ii)}.  Let $u^{L_*}$ be the unique global solution of \eqref{eq:main-equation} with initial datum $u_0=\mathbf1_{B_{L_*}}$.  Then $u^{L_*}$ satisfies
\begin{equation}\label{eq:display-1-19}
 \|u^{L_*}(t,\cdot)-U_*\|_2+
 \|u^{L_*}(t,\cdot)-U_*\|_\infty\to0
 \qquad\text{as }t\to\infty,
\end{equation}
where $U_*\in\mathcal S_0(J,f)\setminus\{0\}$ is the critical limit
in Theorem~\ref{thm:main}\textup{(ii)}. It is even and nonincreasing
on $[0,\infty)$, with $0<U_*<1$ on $\R$.
\end{corollary}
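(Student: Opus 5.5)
The statement to prove is Corollary~\ref{thm:1dcompletefull}. Most of it is Theorem~\ref{thm:main}\textup{(ii)} specialized to $d=1$, with the remaining claims read off from Theorem~\ref{cs:thm:stationary-summary}.

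First I will check the hypotheses. With $d=1$, \textup{(KM)}, \textup{(F1)--(F2)}, \textup{(F4)} and $F(1)>0$ are exactly the assumptions of Theorem~\ref{thm:main}\textup{(ii)}, so $R_*(f,J)=L_*$ is well defined. For $d=1$ we have $B_{L_*}=(-L_*,L_*)$. Theorem~\ref{thm:main}\textup{(ii)} then gives a radial nonincreasing $U_*\in\mathcal S_0(J,f)\setminus\{0\}$ with $\|u^{L_*}(t,\cdot)-U_*\|_2+\|u^{L_*}(t,\cdot)-U_*\|_\infty\to0$. This is \eqref{eq:display-1-19}. In one dimension, "radial" means even, and "radial nonincreasing" means nonincreasing on $[0,\infty)$ after taking the continuous representative from Lemma~\ref{lem:steadyregularity}.

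Next comes the bound $0<U_*<1$. Since \textup{(F4)} implies \eqref{cs:eq:reaction-regularity}, Theorem~\ref{cs:thm:stationary-summary} applies to $U_*$. It gives $U_*(x)=V(|x-x_*|)$ with $V'<0$ on $(0,\infty)$ and $\beta<V(0)<1$.

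Then $V(r)\le V(0)<1$ for all $r$, so $U_*<1$ everywhere. For positivity, $V$ is strictly decreasing with limit $0$ at infinity, because $U_*\in\mathcal S_0$. A strictly decreasing function cannot reach its limit at any finite point, so $V(r)>0$ for every $r\ge0$.

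Alternatively, positivity follows from the equation $H(U_*)=J*U_*$. The set $\{U_*>0\}$ is nonempty, and $J>0$ on $(-\rho,\rho)$. So wherever $U_*>0$, we get $J*U_*>0$ on a $\rho$-neighborhood of that set. Since $H^{-1}$ is increasing with $H^{-1}(0)=0$, this forces $U_*>0$ on that neighborhood, and iterating shows $U_*>0$ everywhere.

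It remains to check that the center is consistent with evenness: $x_*=0$. Because $U_*$ is even, $V(|x-x_*|)=V(|x+x_*|)$ for all $x$. By the uniqueness of the pair $(x_*,V)$ in Theorem~\ref{cs:thm:stationary-summary}, or directly from the fact that the unique maximum point of $U_*$ is $x_*$ and is invariant under $x\mapsto -x$, we get $x_*=0$. This step is not strictly needed, since evenness already comes from Theorem~\ref{thm:main}\textup{(ii)}.

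There is no real obstacle. The only care needed is to work consistently with the continuous representative, so that the pointwise statements "$0<U_*<1$ on $\R$" and "nonincreasing on $[0,\infty)$" are meaningful.
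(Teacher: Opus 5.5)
Your proposal is correct and follows the paper's argument. The paper likewise reads off \eqref{eq:display-1-19}, evenness, and monotonicity on $[0,\infty)$ from Theorem~\ref{thm:main}\textup{(ii)} (via Propositions~\ref{prop:sharpradial} and~\ref{prop:sharp1d}), using the continuous representative from Lemma~\ref{lem:steadyregularity}. The only difference is that the paper gets $0<U_*<1$ directly from the trichotomy in Lemma~\ref{cs:lem:stationary-basic}, which is essentially your ``alternative'' connectedness argument, instead of going through $V'<0$ and $V(0)<1$ from Theorem~\ref{cs:thm:stationary-summary}; both routes are valid.
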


For the profile in Corollary~\ref{thm:1dcompletefull},
Theorem~\ref{cs:thm:stationary-summary} gives $U_*(0)>\beta$;
Lemma~\ref{lem:peakidentity} gives the integral identity \eqref{eq:peakidentity}.

For the present convolution equation, reflection arguments and the
finite-dimensional gradient reductions of Simon \cite{Simon1983}
and Chill \cite{Chill2003} yield symmetry and convergence as proved
in Sections~\ref{cs:sec:stationary} and~\ref{cs:sec:compact-data}.

For solutions $u^R$ that neither become extinct nor spread,
\eqref{eq:asymptotic-translation-modulus}, the tail bounds
\eqref{eq:transition-pointwise-tail}--\eqref{eq:transition-l2-tail},
and time continuity from Lemma~\ref{lem:semiflow} give relative
compactness in $L^2(\R^d)\cap L^\infty(\R^d)$ under
\textup{(KM)} and $F(1)>0$. Under \textup{(F4)}, the Fredholm
factorization of Lemma~\ref{lem:fredholm} and the gradient inequality
of Lemma~\ref{lem:lojasiewicz} then give convergence by
Proposition~\ref{prop:transitionconverges}.

The paper is organized as follows. Section~\ref{sec:basic} establishes
the semiflow, energy, front and reflection estimates.
In Section~\ref{cs:sec:stationary}, we prove
Theorem~\ref{cs:thm:stationary-summary} on decaying stationary solutions.
Section~\ref{cs:sec:compact-data} proves
Theorem~\ref{cs:thm:dynamics} by compactness and a local gradient inequality,
and derives Corollaries~\ref{cs:cor:stationary-negative}
and~\ref{cs:cor:dynamics-negative} by $v=1-u$.
In Section~\ref{sec:ballthresholds}, we study the family
$u_0=\mathbf1_{B_R}$ and prove Theorem~\ref{thm:main} and its corollaries.
Section~\ref{sec:remarks} discusses the assumptions and gives examples.

\section{Basic estimates and compactness}\label{sec:basic}

Unless stated otherwise, $u$ denotes the solution of
\eqref{eq:main-equation} with $u_0\in L^\infty(\R^d)$ and $0\le u_0\le1$.
We retain \textup{(F1)--(F2)} throughout and state the kernel
assumptions in each result.

\subsection{The bistable nonlinearity and the semiflow}

\begin{lemma}\label{lem:nonlinearity}
Assume \textup{(F1)--(F2)}, and let $H(s)=s-f(s)$ and $F(s)=\int_0^s f(r)\,dr$.  Then $H'(s)\ge m_f>0$ on $[0,1]$.
Since $H(0)=0$ and $H(1)=1$, the restriction $H:[0,1]\to[0,1]$ is a strictly increasing $C^1$ diffeomorphism, and its inverse is Lipschitz with constant at most $m_f^{-1}$.

If $F(1)>0$, there is a unique $\beta\in(\vartheta,1)$ such that $F(\beta)=0$.
Moreover,
\begin{equation}\label{eq:potential-signs}
 F(s)<0\quad(0<s<\beta),
 \qquad
 F(s)>0\quad(\beta<s\le1),
\end{equation}
and $F(s)\le F(1)$ for $0\le s\le1$.
\end{lemma}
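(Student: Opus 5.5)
The plan is to argue directly from the sign information in \textup{(F1)} and the derivative bound in \textup{(F2)}. No earlier result of the paper is needed.

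\emph{Properties of $H$.} On $[0,1]$ we have $H'(s)=1-f'(s)\ge 1-\kappa_f=m_f>0$ by \textup{(F2)}. Since $f\in C^1$, the restriction $H:[0,1]\to\R$ is $C^1$ and strictly increasing. The values $H(0)=0-f(0)=0$ and $H(1)=1-f(1)=1$ together with the intermediate value theorem show that $H$ maps $[0,1]$ bijectively onto $[0,1]$. Because $H'$ is bounded below by a positive constant, the inverse function theorem makes the inverse $C^1$ on $[0,1]$ (one-sided at the endpoints), so $H$ is a $C^1$ diffeomorphism. For the Lipschitz bound, take $0\le s<s'\le1$. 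The mean value theorem gives $H(s')-H(s)\ge m_f(s'-s)$. Setting $a=H(s)$ and $b=H(s')$, this reads $|H^{-1}(b)-H^{-1}(a)|\le m_f^{-1}|b-a|$.

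\emph{Shape of $F$.} We have $F(0)=0$ and $F'=f$. By \eqref{eq:bistable-signs}, $F$ is strictly decreasing on $[0,\vartheta]$ and strictly increasing on $[\vartheta,1]$. It follows that $F<0$ on $(0,\vartheta]$, and in particular $F(\vartheta)<0$. Since $F(1)>0$ and $F$ is continuous and strictly increasing on $[\vartheta,1]$, there is exactly one $\beta\in(\vartheta,1)$ with $F(\beta)=0$. On $(\vartheta,1]$ we then get $F<0$ on $(\vartheta,\beta)$ and $F>0$ on $(\beta,1]$. This proves \eqref{eq:potential-signs}. The zero $\beta$ is unique in $(\vartheta,1)$ because $F$ is strictly monotone there, and $F$ has no zero in $(0,\vartheta]$ since it is negative on that interval.

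\emph{The bound $F\le F(1)$.} For $s\in[0,\beta]$, $F(s)\le0<F(1)$. For $s\in[\beta,1]$, monotonicity of $F$ on $[\vartheta,1]$ gives $F(s)\le F(1)$. All steps are elementary. The only place needing care is the diffeomorphism claim at the endpoints, which is handled by the uniform lower bound on $H'$.
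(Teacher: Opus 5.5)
Your proposal is correct and follows essentially the same route as the paper. The bound $H'\ge m_f$ together with $H(0)=0$ and $H(1)=1$ gives the diffeomorphism and the $m_f^{-1}$-Lipschitz inverse, and the strict decrease of $F$ on $[0,\vartheta]$ and strict increase on $[\vartheta,1]$ give the unique zero $\beta$, the sign pattern of $F$, and $F\le F(1)$; you simply spell out the mean-value and intermediate-value steps that the paper leaves implicit.
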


\begin{proof}
By \eqref{eq:reaction-monotonicity}, $H'\ge m_f$ on $[0,1]$.
Together with $H(0)=0$ and $H(1)=1$, the inverse-function theorem
gives the diffeomorphism and Lipschitz bound.

If $F(1)>0$, \eqref{eq:bistable-signs} makes $F$ strictly decreasing
from $0$ to $F(\vartheta)<0$ and strictly increasing from
$F(\vartheta)$ to $F(1)>0$. The intermediate value theorem therefore
gives the unique zero $\beta\in(\vartheta,1)$, the signs in
\eqref{eq:potential-signs}, and $F(s)\le F(1)$ on $[0,1]$.
\end{proof}

\begin{remark}
For the equation with a separate dispersal strength,
\begin{equation}
 u_t=\nu(J*u-u)+f(u),\qquad \nu>0,
\end{equation}
the map $s\mapsto\nu s-f(s)$ has derivative at least
$\nu-\kappa_f>0$ on $[0,1]$ when $\nu>\kappa_f$.
Under \textup{(K0)}, a stationary state $0\le U\le1$ satisfies
$\nu U-f(U)=\nu J*U$. Translation continuity of $J$ in $L^1(\R^d)$
makes $J*U$ uniformly continuous; the Lipschitz inverse of
$s\mapsto\nu s-f(s)$ therefore gives a uniformly continuous
representative of $U$. Here $\nu=1>\kappa_f$ by \textup{(F2)}.
For smaller dispersal strengths, nonlocal bistable equations can
exhibit pinning and discontinuous ground states; see
\cite{BesseCapelFayeFouilhe2023}.
\end{remark}

\begin{lemma}\label{lem:semiflow}
Assume (K0) and \textup{(F1)--(F2)}.  For every $u_0\in L^\infty(\R^d)$ with $0\le u_0\le1$ almost everywhere, problem \eqref{eq:main-equation} with initial datum $u_0$ has a unique global solution
\begin{equation}
 u\in C^1([0,\infty);L^\infty(\R^d)).
\end{equation}
The solution satisfies $0\le u(t,\cdot)\le1$ for every $t\ge0$, and the solution map preserves the almost-everywhere order of initial data. Define $S_tu_0=u(t,\cdot)$ and $L_f=\max_{s\in[0,1]}|f'(s)|$.  For every $v_0\in L^\infty(\R^d)$ with $0\le v_0\le1$ almost everywhere, we have
\begin{equation}\label{eq:semiflow-dependence}
 \|S_tu_0-S_tv_0\|_\infty\le e^{L_ft}\|u_0-v_0\|_\infty
 \qquad(t\ge0).
\end{equation}

The following comparison statement also holds.  Let $T>0$, and let
$\underline u,\overline u\in C^1([0,T];L^\infty(\R^d))$ satisfy
\begin{equation}\label{eq:sub-super-inequalities}
 \underline u_t\le J*\underline u-\underline u+f(\underline u),
 \qquad
 \overline u_t\ge J*\overline u-\overline u+f(\overline u).
\end{equation}
Thus $\underline u$ and $\overline u$ are, respectively, a bounded subsolution and a bounded supersolution of \eqref{eq:main-equation} on $[0,T]\times\R^d$.  If $\underline u(0,\cdot)\le\overline u(0,\cdot)$, then $\underline u(t,\cdot)\le\overline u(t,\cdot)$ for every $t\in[0,T]$.  The comparison statement applies to any bounded ranges of $\underline u$ and $\overline u$.  The same conclusion holds if they are continuous in time and $C^1$ on each interval of a finite time partition.

If also $u_0\in L^p(\R^d)$ with $1\le p<\infty$, then $u\in C^1([0,\infty);L^p(\R^d))$ and
\begin{equation}
 \|u(t,\cdot)\|_p\le e^{L_ft}\|u_0\|_p\qquad(t\ge0).
\end{equation}
If $\|u_0\|_\infty<\vartheta$, then $u(t,\cdot)\to0$ uniformly.  If $\essinf_{x\in\R^d}u_0(x)>\vartheta$, then $u(t,\cdot)\to1$ uniformly as $t\to\infty$.
\end{lemma}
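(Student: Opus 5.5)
The plan is to treat \eqref{eq:main-equation} as an ODE in the Banach space $L^\infty(\R^d)$. Fix $u_0$ with $0\le u_0\le1$. Let $\tilde f$ be a globally Lipschitz $C^1$ extension of $f|_{[0,1]}$ that is constant outside $[-1,2]$ and has Lipschitz constant $L_f$ on $[0,1]$; for instance take $\tilde f(s)=f(0)+f'(0)s$ for $s<0$ and the analogous linear extension above $1$, then cut off. The map $G(v)=J*v-v+\tilde f(v)$ is Lipschitz on $L^\infty$ by Young's inequality, $\|J*v\|_\infty\le\|J\|_1\|v\|_\infty$. Picard iteration therefore gives a unique global solution in $C^1([0,\infty);L^\infty)$, and Gronwall yields continuous dependence. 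Invariance of $[0,1]$ then shows that the solution solves the original equation, and uniqueness among solutions with values in $[0,1]$ follows.

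The core step is the comparison principle, following Fife--Wang. Set $w=\underline u-\overline u$ on $[0,T]$. Since both functions are bounded, $f(\underline u)-f(\overline u)=c(t,x)w$ with $|c|\le C$, where $C$ is the Lipschitz constant of $f$ on the joint range. Then $w_t\le J*w-w+cw$. Put $z=e^{-\lambda t}w$ with $\lambda=C+2$ and suppose $M(t)=\operatorname{ess\,sup} z^+(t)$ becomes positive. We get $z_t\le J*z^+ -(1+\lambda-c)z$, and where $z>0$ the coefficient $1+\lambda-c\ge 3$. Integrate this in time pointwise a.e., using $C^1$ into $L^\infty$ to justify $z(t)=z(0)+\int_0^t z_s$ a.e. This gives $M(t)\le\int_0^t (\|J*z^+\|_\infty-M)$-type bounds; more cleanly, $z^+(t)\le\int_0^t \|z^+(s)\|_\infty\,ds\cdot(1+C)$, so Gronwall forces $M\equiv0$. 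I expect this step to be the main technical obstacle, because the essential supremum is not differentiable. To handle it I would first try the integral form
\[
w(t)\le e^{-t}w(0)+\int_0^t e^{-(t-s)}\bigl(J*w(s)+c\,w(s)\bigr)\,ds,
\]
and then bound the positive part by Gronwall on $m(t)=\|w^+(t)\|_\infty$, using $J*w\le J*w^+$ and $cw\le Cw^+$. The piecewise-$C^1$ case follows by iterating over the partition.

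Applying comparison with $\underline u=0$ and $\overline u=1$, which are solutions because $f(0)=f(1)=0$, gives $0\le u\le1$ for the extended problem, hence for the original one. It also gives order preservation. The bound \eqref{eq:semiflow-dependence} follows by Gronwall applied to the difference, with the $L_f$ from $[0,1]$ now available. For $L^p$, note that $G$ is also Lipschitz on $L^p\cap L^\infty$ since $\tilde f(0)=0$ and $|\tilde f(v)|\le L_f|v|$ on $[0,1]$. Rerunning the Picard iteration in $L^p\cap L^\infty$ gives $C^1$ into $L^p$. Differentiating $\|u\|_p^p$, or using the Duhamel formula with $\|J*u\|_p\le\|u\|_p$, gives $\|u(t)\|_p\le e^{L_ft}\|u_0\|_p$.

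The two final claims come from comparing with spatially constant solutions. Let $z(t)$ solve $z'=f(z)$ with $z(0)=\|u_0\|_\infty<\vartheta$. Then $z$ is a solution because $J*z-z=0$, so $0\le u\le z(t)\to0$ by \eqref{eq:bistable-signs}. Similarly, if $z(0)=\operatorname{ess\,inf}u_0>\vartheta$, then $z(t)\le u\le1$ and $z\to1$.
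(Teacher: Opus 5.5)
Your plan follows the paper's route: a Banach-space ODE, a Fife--Wang comparison principle, comparison with the constants $0,1$ and with solutions of $z'=f(z)$, and Duhamel for the $L^p$ bound. However, the comparison step, which you rightly single out as the crux, fails as written.

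In the integral form you use $c\,w\le C\,w^{+}$. This is false wherever $w<0$ and $c<0$: there $c\,w=|c|\,|w|>0=C\,w^{+}$. And $c$ really is negative near $0$, since $f'(0)<0$.

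Your first version has the same defect. Integrating $z_t\le J*z^{+}-(1+\lambda-c)z$ from time $0$ picks up the positive contribution $(1+\lambda-c)|z(s,x)|$ at earlier times where $z(s,x)<0$. So the bound $z^{+}(t)\le(1+C)\int_0^t\|z^{+}(s)\|_\infty\,ds$ does not follow from $z(t)=z(0)+\int_0^t z_s\,ds$. In both versions the contribution of $f(\underline u)-f(\overline u)$ on $\{w<0\}$ is left uncontrolled.

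The missing idea is the shift the paper takes from Fife and Wang. Choose $M\ge C$ so that $s\mapsto f(s)+(M-1)s$ is nondecreasing on the joint range, i.e.\ $c+M\ge0$. Then $(c+M)w\le(C+M)w^{+}$ holds everywhere, and
\[
 \bigl(e^{(1+M)t}w\bigr)_t\le e^{(1+M)t}\bigl(J*w^{+}+(C+M)w^{+}\bigr).
\]
Integrate pointwise a.e.\ and use $w(0)\le0$. This gives $\|w^{+}(t)\|_\infty\le(1+C+M)\int_0^t\|w^{+}(s)\|_\infty\,ds$, so $w^{+}\equiv0$ by Gronwall.

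Alternatively, you may keep your $z$. For a.e.\ $x$ with $z(t,x)>0$, integrate only from the last time $s_0<t$ with $z(s_0,x)\le0$, or use $\partial_t z^{+}=\mathbf 1_{\{z>0\}}z_t$. Then only the region where your coefficient satisfies $1+\lambda-c\ge3$ enters.

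With either repair the rest of the plan goes through. Two small points remain:
\begin{itemize}
\item To get exactly $e^{L_ft}$ in \eqref{eq:semiflow-dependence}, keep $J*-I$ (a contraction semigroup), or the factor $e^{t}$, in the linear part. Gronwall with the full Lipschitz constant $2+L_f$ of the right-hand side gives a worse exponent.
\item Uniqueness in $C^1([0,\infty);L^\infty(\R^d))$ without a range restriction follows from Lipschitz continuity of $f$ on bounded sets, or from your comparison principle applied in both directions. It is not enough to have uniqueness among $[0,1]$-valued solutions.
\end{itemize}
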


\begin{proof}
By \textup{(K0)} and \textup{(F1)}, the map
$\mathcal F:L^\infty(\R^d)\to L^\infty(\R^d)$,
$\mathcal F(u)=J*u-u+f(u)$, is Lipschitz on bounded sets:
$\|J*u\|_\infty\le\|u\|_\infty$, and $f$ is $C^1$.
The Banach-space ODE theorem gives local existence and uniqueness.

The comparison argument of Fife and Wang
\cite[Section~2, Comparison Principle]{FifeWang1998} applies because
$J*$ is positive and bounded, and $s\mapsto f(s)+(M-1)s$ is
nondecreasing on any fixed bounded range for sufficiently large $M$.
Thus $z=e^{Mt}u$ satisfies an order-preserving integral equation.
Successive application gives comparison on a finite time partition.
The argument also allows a bounded positive integral operator in
place of $J*$ and a bounded multiplication coefficient depending
on $(t,x)$.

Comparison gives order preservation and, using the constant
solutions $0$ and $1$, invariance of $[0,1]$. The bounded local
solution therefore extends globally.

On $L^p(\R^d)$, $1\le p\le\infty$, write $J*$ for the bounded
operator $v\mapsto J*v$ and $I$ for the identity. The semigroup
generated by $J*-I$ is
\begin{equation}\label{eq:dispersal-semigroup}
 P_t=e^{t(J*-I)}=e^{-t}\sum_{n=0}^\infty\frac{t^n}{n!}J^{*n}*,
\end{equation}
where $J^{*n}$ is the $n$-fold convolution for $n\ge1$ and
$J^{*0}*v=v$. By unit mass, \eqref{eq:dispersal-semigroup} gives
$\|P_t\|_{L^p(\R^d)\to L^p(\R^d)}\le1$.
Variation of constants and Gronwall's inequality then give
\eqref{eq:semiflow-dependence}. For the $L^p(\R^d)$ assertion,
the extension $\widehat f(s)=f(\min\{1,\max\{0,s\}\})$ is globally
$L_f$-Lipschitz and vanishes at zero. Its integral equation is
well posed in $L^p(\R^d)\cap L^\infty(\R^d)$; uniqueness identifies
its solution with the $[0,1]$-valued solution already constructed.
The contraction bound in \eqref{eq:dispersal-semigroup} gives
$\|u(t,\cdot)\|_p\le e^{L_ft}\|u_0\|_p$, and the evolution equation
gives continuous differentiability in $L^p(\R^d)$.

By \eqref{eq:bistable-signs}, solutions of $z'=f(z)$ converge to $0$
from $z(0)\in[0,\vartheta)$ and to $1$ from $z(0)\in(\vartheta,1]$.
Since spatially constant functions satisfy $J*z=z$, comparison with
$z(0)=\|u_0\|_\infty$ or
$z(0)=\essinf_{x\in\R^d}u_0(x)$, respectively, proves the two
uniform convergence assertions.
\end{proof}

\begin{lemma}\label{lem:strong}
Assume (K0) and $J>0$ almost everywhere on $B_\rho$ for some $\rho>0$.  Let $u_0,v_0\in L^\infty(\R^d)$ satisfy $0\le v_0\le u_0\le1$ almost everywhere, and let $u$ and $v$ be the unique global solutions of \eqref{eq:main-equation} with initial data $u_0$ and $v_0$, respectively.  If $u_0\not\equiv v_0$ in $L^\infty(\R^d)$, then $u$ and $v$ satisfy
\begin{equation}
 u(t,x)>v(t,x)
\end{equation}
for every $t>0$ and almost every $x$.  If the initial data are continuous, the inequality holds at every $x\in\R^d$.
\end{lemma}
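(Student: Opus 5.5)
We prove Lemma~\ref{lem:strong} (strict comparison) by passing to the difference $w=u-v$, which satisfies a linear equation with a bounded coefficient. By Lemma~\ref{lem:semiflow}, $w\ge0$ and
\[
 w_t=J*w-w+c(t,x)w,\qquad c(t,x)=\int_0^1 f'\bigl(v+\tau(u-v)\bigr)\,d\tau,
\]
with $|c|\le L_f$ because $0\le v\le u\le1$. Set $z=e^{(1+L_f)t}w$. Then
\[
 z_t=J*z+(c+L_f)z\ge J*z,
\]
and the coefficient $c+L_f$ is nonnegative. Integrating in $L^\infty$-valued form gives, for a.e.\ $x$,
\[
 z(t,\cdot)\ge w_0+\int_0^t J*z(s,\cdot)\,ds,\qquad w_0=u_0-v_0\ge0.
\]
Iterating this inequality $n$ times, and using $z\ge w_0$, yields
\[
 z(t,\cdot)\ge \frac{t^n}{n!}\,J^{*n}*w_0\qquad\text{for every }n\ge1 .
\]
It is cleaner to derive this directly as a Duhamel series: $z(t)\ge e^{tJ*}w_0=\sum_n \frac{t^n}{n!}J^{*n}*w_0$, by comparison applied to the positive operator in the integral equation. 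In either form, the claim reduces to the kernel bound
\[
 \text{for each fixed }x,\ \text{there is }n\ \text{with}\ (J^{*n}*w_0)(x)>0 .
\]

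To prove the kernel bound, first use the hypothesis $J>0$ a.e.\ on $B_\rho$. Then $J\ge J\mathbf1_{B_\rho}$, and $J^{*n}$ is strictly positive on $B_{n\rho}$ up to null sets. This holds at every point of $B_{n\rho}$, since the convolution of two functions that are a.e.\ positive on $B_a$ and $B_b$ is positive everywhere on $B_{a+b}$: the set $\{y: y\in B_a,\ x-y\in B_b\}$ has positive measure for $|x|<a+b$, and $J^{*n}$ is continuous for $n\ge2$ as a convolution of $L^1$ with $L^1\cap$ (continuous after one convolution with bounded compactly supported data). To avoid continuity issues, argue with $J_\rho=J\mathbf1_{B_\rho}\wedge 1\le J$, which is bounded; then $J_\rho^{*n}$ is continuous for $n\ge2$ and positive on all of $B_{n\rho}$.

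Now take $w_0\ge0$ with $w_0\not\equiv0$. There is a ball $B_r(x_0)$ on which $w_0$ has positive integral. For any $x$ choose $n$ with $n\rho>|x-x_0|+r$. Then
\[
 (J_\rho^{*n}*w_0)(x)=\int J_\rho^{*n}(x-y)w_0(y)\,dy>0,
\]
because the integrand is positive on a positive-measure subset of $B_r(x_0)$. This holds at every $x$, since $J^{*n}_\rho*w_0$ is continuous. Hence $z(t,x)>0$ for all $t>0$ and all $x$, up to the a.e.\ representative of $z$, so $u>v$ almost everywhere.

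For continuous data, $u$ and $v$ remain continuous in $x$: the equation preserves $C_b$, since $J*$ maps $L^\infty$ into uniformly continuous functions and the ODE is well posed in $C_b$. The Duhamel inequality then holds pointwise, and the positive continuous lower bound gives strict inequality at every $x\in\R^d$.

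The main obstacle is justifying the pointwise Duhamel lower bound with the nonnegative time-dependent multiplier $c+L_f$. I would handle it via the integral equation $z(t)=w_0+\int_0^t\bigl(J*z+(c+L_f)z\bigr)\,ds$, whose integrand is order-preserving. Picard iteration from $z\ge0$ then gives $z\ge\sum_{k\le n}\frac{t^k}{k!}J^{*k}*w_0$, discarding the nonnegative multiplicative term at each step.
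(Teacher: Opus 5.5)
Your proof is correct, but it takes a different route from the paper. The paper gives no direct argument. It applies the strict comparison principle of Rodr\'iguez-Bernal and Sastre-G\'omez (their Proposition~9(ii)) on $L^\infty(\R^d)$, with kernel $J(x-y)$, loss term $-u$ and the Lipschitz extension $\widehat f$. It checks the required connectivity of the kernel by noting that any two points are joined by a finite chain of steps shorter than $\rho$. The pointwise statement for continuous data comes from the same result on bounded continuous functions.

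You instead carry out the mechanism by hand. The shift $z=e^{(1+L_f)t}(u-v)$ makes the zero-order coefficient nonnegative, Picard iteration gives $z(t)\ge e^{tJ*}(u_0-v_0)$, and positivity of $J_\rho^{*n}$ on $B_{n\rho}$ concludes. This is the same argument the paper itself uses later for positivity improvement of $e^{t\mathcal L_U}$ in Step~2 of Lemma~\ref{lem:radialunstable}, via \eqref{eq:display-4-73}--\eqref{eq:convolution-semigroup-positive} and \eqref{eq:convolution-positivity}.

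Your version is self-contained and quantitative. It gives $u(t,\cdot)-v(t,\cdot)\ge e^{-(1+L_f)t}\sum_{n\ge1}\frac{t^n}{n!}J_\rho^{*n}*(u_0-v_0)$, a continuous minorant that is positive everywhere and hence bounded below on compact sets. Using this single minorant also removes the issue that your $n$ depends on $x$. The citation is shorter and covers general nonlocal kernels, but it outsources the proof.

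One slip: for $J$ merely in $L^1$, $J^{*n}$ need not be continuous for $n\ge2$; an integrable singularity at the origin can make $J*J$ infinite there. This is harmless, since you switch to the bounded truncation $J_\rho$, and pointwise positivity of a convolution of nonnegative functions needs no continuity anyway.
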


\begin{proof}
Apply the strict comparison principle of Rodr\'iguez-Bernal and
Sastre-G\'omez \cite[Proposition~9(ii)]{RodriguezBernalSastreGomez2022}
on $L^\infty(\R^d)$, with kernel $(x,y)\mapsto J(x-y)$,
loss term $-u$, and the globally Lipschitz extension $\widehat f$ used in
Lemma~\ref{lem:semiflow}. Any two points of $\R^d$ can be joined by a finite chain of
steps shorter than $\rho$, and the kernel has a representative
positive on $B_\rho$. For continuous initial data, the same result on
bounded continuous functions gives the pointwise assertion, since
$x\mapsto J(x-\cdot)$ is continuous into $L^1(\R^d)$.
\end{proof}

Under the hypotheses of Lemma~\ref{lem:strong}, local positivity gives
\begin{equation}\label{eq:convolution-positivity}
 J^{*n}>0\quad\text{a.e. on }B_{n\rho},\qquad n\ge1.
\end{equation}
For $n\ge2$ and $|z|<n\rho$, the intersection
$B_\rho\cap(z-B_{(n-1)\rho})$ has positive measure, which proves
\eqref{eq:convolution-positivity} by induction. For $a,r>0$ and
a measurable set $A\subset B_a$ with $|A|>0$, every integer $n$
with $n\rho>r+a$ gives
\begin{equation}\label{eq:display-2-16}
 (J^{*n}*\mathbf1_A)(x)=\int_AJ^{*n}(x-y)\,dy>0
 \qquad(x\in\overline B_r).
\end{equation}
This convolution is uniformly continuous by translation continuity
in $L^1(\R^d)$, so its minimum on $\overline B_r$ is positive.

\begin{lemma}\label{lem:steadyregularity}
Assume (K0).  Every stationary solution $U$ of \eqref{eq:main-equation} satisfying $0\le U\le1$ has a bounded uniformly continuous representative and satisfies
\begin{equation}\label{eq:stationary-inverse}
 U=H^{-1}(J*U),
\end{equation}
where $H^{-1}$ denotes the inverse of $H$ on $[0,1]$.  More precisely,
\begin{equation}\label{eq:stationary-translation-bound}
 |U(x+h)-U(x)|
 \le \frac{\|U\|_\infty}{m_f}\,\|J(\cdot+h)-J\|_{L^1(\R^d)}.
\end{equation}
If, in addition, $J\in W^{k,1}(\R^d)$ and $f$ is $C^k$ on a neighborhood of $[0,1]$, for an integer $k\ge1$, then the uniformly continuous representative of $U$ belongs to $C^k(\R^d)\cap W^{k,\infty}(\R^d)$.  In particular, if $f$ is $C^\infty$ on a neighborhood of $[0,1]$, $J\in C^\infty(\R^d)$, and all derivatives of $J$ belong to $L^1(\R^d)$, then $U$ is smooth.
\end{lemma}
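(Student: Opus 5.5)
The proof starts from the stationary equation, rewritten through the monotone map $H$. Since $0\le U\le1$ and $H$ is a strictly increasing $C^1$ diffeomorphism of $[0,1]$ onto itself (Lemma~\ref{lem:nonlinearity}), the relation $H(U)=J*U$ a.e. shows that $J*U$ takes values in $[0,1]$ a.e. We may therefore write $U=H^{-1}(J*U)$ a.e., which is \eqref{eq:stationary-inverse}. The function $J*U$ is defined everywhere, and it is uniformly continuous because
\[
 |(J*U)(x+h)-(J*U)(x)|\le\|U\|_\infty\|J(\cdot+h)-J\|_{L^1(\R^d)},
\]
which tends to $0$ as $h\to0$ by translation continuity in $L^1(\R^d)$. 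Since $J*U$ is a continuous function that lies in $[0,1]$ a.e., it lies in $[0,1]$ everywhere. Set $\widetilde U:=H^{-1}(J*U)$ everywhere. This is a representative of $U$, and because $H^{-1}$ is $m_f^{-1}$-Lipschitz, it satisfies
\[
 |\widetilde U(x+h)-\widetilde U(x)|\le\frac{\|U\|_\infty}{m_f}\,\|J(\cdot+h)-J\|_{L^1(\R^d)},
\]
which is \eqref{eq:stationary-translation-bound}. Boundedness is immediate.

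For the higher regularity, use that $J\in W^{k,1}(\R^d)$ and $U\in L^\infty$. Then $J*U\in C^k(\R^d)\cap W^{k,\infty}(\R^d)$, with $\partial^\alpha(J*U)=(\partial^\alpha J)*U$ for $|\alpha|\le k$. To see this, approximate the difference quotients of $J$ in $L^1$: for $J\in W^{1,1}$ we have $\tau_{-h}J-J=\int_0^1 h\cdot\nabla J(\cdot+sh)\,ds$, so the difference quotients of $J*U$ converge uniformly to $(\partial_iJ)*U$. The limits are continuous by the same translation argument, and iterating gives order $k$. Next, $H$ is $C^k$ on a neighborhood of $[0,1]$ with $H'\ge m_f>0$ on $[0,1]$. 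Shrinking the neighborhood, $H$ has a $C^k$ inverse on a neighborhood of $[0,1]$ by the inverse function theorem. The composition $\widetilde U=H^{-1}\circ(J*U)$ is then $C^k$ by the chain rule. The Faà di Bruno formula bounds its derivatives by products of bounded derivatives of $J*U$ and of $H^{-1}$ on $[0,1]$, so $\widetilde U\in W^{k,\infty}$.

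The smooth case follows by applying the above for every $k$. The only delicate point, and it is minor, is checking that the composition uses $H^{-1}$ only on $[0,1]$, where its derivatives are controlled. This holds because $J*U$ takes values in $[0,1]$ everywhere, as shown in the first step.
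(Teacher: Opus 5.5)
Your proof is correct and follows the paper's argument. You rewrite the equation as $U=H^{-1}(J*U)$, get uniform continuity of $J*U$ from Young's inequality and translation continuity of $J$ in $L^1$, and transfer it through the $m_f^{-1}$-Lipschitz inverse. For higher regularity you use $D^\alpha(J*U)=(D^\alpha J)*U$ together with the $C^k$ inverse function theorem. You also spell out details the paper leaves implicit: that $J*U\in[0,1]$ everywhere, the difference-quotient justification for differentiating the convolution, and the Fa\`a di Bruno bound for $W^{k,\infty}$.
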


\begin{proof}
By \eqref{eq:reaction-monotonicity}, $U=H^{-1}(J*U)$ almost
everywhere. Young's inequality \cite[Theorem~4.15]{Brezis2011}
and the $m_f^{-1}$-Lipschitz bound for $H^{-1}$ give
\eqref{eq:stationary-translation-bound}; translation continuity
in $L^1(\R^d)$ supplies the uniformly continuous representative,
for which \eqref{eq:stationary-inverse} holds everywhere.
If $J\in W^{k,1}(\R^d)$, then
$D^\alpha(J*U)=(D^\alpha J)*U$ is bounded and uniformly continuous
for every multi-index $\alpha$ with $|\alpha|\le k$.
Thus $J*U\in C^k(\R^d)\cap W^{k,\infty}(\R^d)$, and the
$C^k$ inverse function theorem in \eqref{eq:stationary-inverse}
gives the same regularity for $U$.
\end{proof}

\begin{remark}
We use the uniformly continuous representatives from
Lemma~\ref{lem:steadyregularity} in pointwise statements about
stationary solutions taking values in $[0,1]$.
\end{remark}

\begin{lemma}\label{lem:duality}
Assume (K0) and \textup{(F1)--(F2)}.  Define $f^\sharp:\R\to\R$ by $f^\sharp(s)=-f(1-s)$.
Let $u$ be the unique global solution of \eqref{eq:main-equation} with initial datum $u_0\in L^\infty(\R^d)$, $0\le u_0\le1$, and set $v=1-u$.  Then $v$ is the unique global solution of \eqref{eq:main-equation} with $f$ replaced by $f^\sharp$ and initial datum $v_0=1-u_0$, satisfying
\begin{equation}\label{eq:reflected-evolution}
 v_t=J*v-v+f^\sharp(v).
\end{equation}
Moreover, $f^\sharp$ satisfies \textup{(F1)--(F2)} with unstable zero $1-\vartheta$ and the same constant $m_f$.  Each of \textup{(F3)} and \textup{(F4)} is preserved
under the replacement of $f$ by $f^\sharp$.  Its potential is
\begin{equation}\label{eq:reflected-potential}
 F^\sharp(s):=\int_0^s f^\sharp(r)\,dr=F(1-s)-F(1),\qquad F^\sharp(1)=-F(1).
\end{equation}
\end{lemma}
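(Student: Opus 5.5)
This is a direct verification from the definitions, with no analytic difficulty. I will check, in turn, the evolution equation, the hypotheses (F1)--(F4) for $f^\sharp$, and the potential identity.

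\emph{Evolution equation.} Set $v=1-u$. Since $u\in C^1([0,\infty);L^\infty(\R^d))$, the same holds for $v$, and $v_t=-u_t$. By the unit mass \eqref{eq:kernel-mass}, $J*1=1$, so
\[
J*v-v=1-J*u-1+u=-(J*u-u).
\]
Therefore
\[
v_t=-(J*u-u)-f(u)=J*v-v-f(1-v)=J*v-v+f^\sharp(v),
\]
which is \eqref{eq:reflected-evolution}. Also $v(0,\cdot)=1-u_0$, and $0\le v_0\le1$. Since $f^\sharp$ will satisfy (F1)--(F2), Lemma~\ref{lem:semiflow} applies to the reaction $f^\sharp$. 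Its uniqueness assertion identifies $v$ with the unique global solution for $f^\sharp$ with datum $v_0$.

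\emph{Hypotheses (F1)--(F4).} The function $f^\sharp$ is $C^1$, with $(f^\sharp)'(s)=f'(1-s)$. Hence $f^\sharp$ vanishes at $0$, $1-\vartheta$ and $1$, and
\[
(f^\sharp)'(0)=f'(1)<0,\qquad (f^\sharp)'(1-\vartheta)=f'(\vartheta)>0,\qquad (f^\sharp)'(1)=f'(0)<0.
\]
For the sign conditions: if $0<s<1-\vartheta$, then $1-s\in(\vartheta,1)$, so $f(1-s)>0$ and $f^\sharp(s)<0$. Likewise, if $1-\vartheta<s<1$, then $f^\sharp(s)>0$. For (F2), the map $s\mapsto1-s$ is a bijection of $[0,1]$, so $\sup_{[0,1]}(f^\sharp)'=\sup_{[0,1]}f'=\kappa_f$, and $m_f$ is unchanged. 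For (F3) and (F4), the map $s\mapsto1-s$ carries an open interval containing $[0,1]$ onto another such interval. Composition with this affine map and multiplication by $-1$ preserve both $C^2$ regularity and real analyticity.

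\emph{Potential.} Substituting $\tau=1-r$ gives
\[
F^\sharp(s)=-\int_0^s f(1-r)\,dr=-\int_{1-s}^1 f(\tau)\,d\tau=F(1-s)-F(1).
\]
Since $F(0)=0$, taking $s=1$ yields $F^\sharp(1)=-F(1)$.

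No step is a genuine obstacle. The only points that need care are using $J*1=1$ from \eqref{eq:kernel-mass}, and checking (F1)--(F2) for $f^\sharp$ before invoking the uniqueness in Lemma~\ref{lem:semiflow}.
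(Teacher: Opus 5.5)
Your proposal is correct and follows essentially the same route as the paper: substituting $v=1-u$ and using $J*1=1$, invoking uniqueness from Lemma~\ref{lem:semiflow}, using $(f^\sharp)'(s)=f'(1-s)$ for the zeros, signs, regularity and the supremum in \textup{(F2)}, and making the change of variables $\tau=1-r$ for the potential. Your explicit check of \textup{(F1)--(F2)} for $f^\sharp$ before invoking uniqueness spells out an ordering the paper leaves implicit.
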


\begin{proof}
By \eqref{eq:kernel-mass}, $J*1=1$. Substitution of $v=1-u$
therefore gives \eqref{eq:reflected-evolution}, and
Lemma~\ref{lem:semiflow} gives uniqueness for the initial datum
$1-u_0$. The identities $f^\sharp(s)=-f(1-s)$ and
$(f^\sharp)'(s)=f'(1-s)$ give the stated zeros, signs and
regularity, while
\begin{equation}\label{eq:reflected-derivative-bound}
 \sup_{0\le s\le1}(f^\sharp)'(s)=\sup_{0\le s\le1}f'(s)<1
\end{equation}
preserves \textup{(F2)} and $m_f$. Finally, integration of
$f^\sharp(s)=-f(1-s)$ gives \eqref{eq:reflected-potential}.
\end{proof}

\subsection{Translation estimates and local compactness}

\begin{lemma}\label{lem:modulus}
Assume (K0), and let $u$ be the unique global solution of \eqref{eq:main-equation} with initial datum $u_0\in L^\infty(\R^d)$ satisfying $0\le u_0\le1$.  Then for every $h\in\R^d$,
\begin{equation}\label{eq:solution-translation-bound}
 \esssup_{x\in\R^d}|u(t,x+h)-u(t,x)|
 \le e^{-m_ft}\esssup_{x\in\R^d}|u_0(x+h)-u_0(x)|
 +\frac{1-e^{-m_ft}}{m_f}\|J(\cdot+h)-J\|_1.
\end{equation}
Consequently,
\begin{equation}\label{eq:asymptotic-translation-modulus}
 \lim_{r\downarrow0}\limsup_{t\to\infty}
 \sup_{|h|\le r}\|u(t,\cdot+h)-u(t,\cdot)\|_\infty=0.
\end{equation}
Under (K1),
\begin{equation}\label{eq:display-2-24}
 \|J(\cdot+h)-J\|_1\le {\|\nabla J\|_{L^1(\R^d;\R^d)}}|h|.
\end{equation}
\end{lemma}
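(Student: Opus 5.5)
The plan is to compare the solution with its own translate. Fix $h\in\R^d$ and set $w(t,x)=u(t,x+h)-u(t,x)$. By translation invariance of \eqref{eq:main-equation}, $u(\cdot,\cdot+h)$ is the solution with datum $u_0(\cdot+h)$. Subtracting the two equations gives
\[
 w_t=J*w-w+f(u(t,x+h))-f(u(t,x)).
\]
Rather than bounding $J*w$ by $\|w\|_\infty$, which would cancel the $-w$ term and leave no decay, I use the stationary idea from Lemma~\ref{lem:steadyregularity}: write $J*w=(J(\cdot+h)-J)*u$. Indeed,
\[
 (J*u(t,\cdot+h))(x)=\int J(x+h-y)u(t,y)\,dy .
\]
Since $0\le u\le1$, this gives $\|J*w\|_\infty\le\|J(\cdot+h)-J\|_1$. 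The equation then becomes $w_t=-(H(u(\cdot+h))-H(u))+g(t,x)$ with $\|g(t)\|_\infty\le\|J(\cdot+h)-J\|_1=:\delta_h$. Writing $H(a)-H(b)=c(t,x)(a-b)$ with $c=\int_0^1H'(\cdot)\ge m_f$ by \eqref{eq:reaction-monotonicity} (the values lie in $[0,1]$), I get
\[
 w_t=-c(t,x)\,w+g,\qquad c\ge m_f .
\]

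Next I obtain the pointwise bound. Because $w\in C^1([0,\infty);L^\infty)$ and the equation is a linear ODE in $t$ for a.e.\ $x$ with bounded measurable coefficient, variation of constants applies:
\[
 w(t,x)=e^{-\int_0^tc}\,w(0,x)+\int_0^te^{-\int_s^tc}\,g(s,x)\,ds .
\]
Using $c\ge m_f$ yields $|w(t,x)|\le e^{-m_ft}|w(0,x)|+\delta_h(1-e^{-m_ft})/m_f$, which is \eqref{eq:solution-translation-bound}. To avoid pointwise-in-$x$ measurability subtleties in the Banach-valued setting, I would instead compare $\pm w$ with the spatially constant function $z(t)=e^{-m_ft}\|w(0)\|_\infty+\delta_h(1-e^{-m_ft})/m_f$. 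This function solves $z'=-m_fz+\delta_h$, so $z-w$ satisfies $(z-w)_t\ge -c(z-w)$ wherever the sign matters. A scalar comparison for linear equations with bounded coefficients (the same Fife--Wang argument noted in the proof of Lemma~\ref{lem:semiflow}, which allows a bounded $(t,x)$-dependent multiplication coefficient) then gives $|w|\le z$. This step is the only mildly delicate point, and the linear comparison handles it.

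For the consequences, letting $t\to\infty$ in \eqref{eq:solution-translation-bound} gives $\limsup_{t\to\infty}\sup_{|h|\le r}\|w\|_\infty\le m_f^{-1}\sup_{|h|\le r}\|J(\cdot+h)-J\|_1$. This bound tends to $0$ as $r\downarrow0$ by continuity of translations in $L^1(\R^d)$, which proves \eqref{eq:asymptotic-translation-modulus}. Finally, under (K1) we have $J\in W^{1,1}$. For smooth compactly supported $J$, the identity $J(x+h)-J(x)=\int_0^1\nabla J(x+sh)\cdot h\,ds$ and Fubini give $\|J(\cdot+h)-J\|_1\le\|\nabla J\|_1|h|$. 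Density of $C_c^\infty$ in $W^{1,1}$ extends this bound to all $J$ satisfying (K1), which is \eqref{eq:display-2-24}.
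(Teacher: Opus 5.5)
Your proposal is correct and follows essentially the same route as the paper. Both rewrite $J*w$ as $(J(\cdot+h)-J)*u$, use a difference-quotient coefficient for $H$ bounded below by $m_f$, apply variation of constants with Young's inequality, and finish with $L^1$ translation continuity and the fundamental theorem of calculus (or density) for the $W^{1,1}$ bound. The only difference is how you handle the pointwise-in-$x$ issue: you offer a comparison of $\pm w$ with the spatially constant function $z(t)$, whereas the paper picks a representative of $u$ that is absolutely continuous in time for a.e.\ $x$ via Fubini. Either device works.
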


\begin{proof}
For $h\in\R^d$, write $(\tau_hv)(x)=v(x+h)$ and set $w_h=\tau_hu-u$.  Translation invariance of convolution gives
\begin{equation}
 (w_h)_t+H(\tau_hu)-H(u)=(\tau_hJ-J)*u
\end{equation}
almost everywhere.  Define
\begin{equation}\label{eq:display-2-26}
 a_h(t,x)=
 \begin{cases}
 \displaystyle\frac{H(u(t,x+h))-H(u(t,x))}{u(t,x+h)-u(t,x)},& w_h(t,x)\ne0,\\[1.2ex]
 H'(u(t,x)),& w_h(t,x)=0.
 \end{cases}
\end{equation}
By \textup{(F2)} and \eqref{eq:display-2-26}, $a_h\ge m_f$.
Integrating a jointly measurable representative of $u_t$ from $u_0$
and applying Fubini gives a representative of $u$ that is absolutely
continuous in time for almost every $x$ on bounded time intervals.
For each fixed $h$, $w_h$ therefore satisfies, for almost every $x$
and almost every time,
\begin{equation}\label{eq:translation-difference-equation}
 (w_h)_t+a_hw_h=((\tau_hJ-J)*u)(x).
\end{equation}
Applying the variation-of-constants formula to \eqref{eq:translation-difference-equation} and using the lower bound on the coefficient gives
\begin{equation}\label{eq:translation-integrating-factor}
 |w_h(t,x)|
 \le e^{-m_ft}|w_h(0,x)|
 +\int_0^t e^{-m_f(t-s)}|((\tau_hJ-J)*u(s,\cdot))(x)|\,ds.
\end{equation}
Since $0\le u\le1$, Young's inequality yields
$\|(\tau_hJ-J)*u(s,\cdot)\|_\infty\le\|\tau_hJ-J\|_1$.
Taking the essential supremum in \eqref{eq:translation-integrating-factor} proves \eqref{eq:solution-translation-bound}.  Taking the supremum over $|h|\le r$ in \eqref{eq:solution-translation-bound}, using $\|\tau_hu_0-u_0\|_\infty\le1$, and then letting $t\to\infty$ and $r\downarrow0$ proves \eqref{eq:asymptotic-translation-modulus}.  Under (K1), \eqref{eq:display-2-24} follows from the fundamental theorem of calculus in $L^1(\R^d)$.
\end{proof}

\begin{lemma}\label{lem:asymcompact}
Assume (K0), let $u$ be the unique global solution of \eqref{eq:main-equation} with initial datum $u_0\in L^\infty(\R^d)$ satisfying $0\le u_0\le1$, and let $t_n\to\infty$.  Then a subsequence of $u(t_n,\cdot)$ converges in $L^\infty(B_r)$ for every $r>0$ to a function with a bounded uniformly continuous representative.
\end{lemma}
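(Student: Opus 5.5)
The plan is to split $u$ into a part that becomes uniformly continuous and a part that decays exponentially, and then apply Arzel\`a--Ascoli together with a diagonal argument. Define $P_t$ as in \eqref{eq:dispersal-semigroup}. Since $u_t=J*u-u+f(u)=J*u-H(u)$ and $H(s)-m_fs$ is nondecreasing on $[0,1]$, this direct splitting is awkward. I would therefore work with the translation estimate of Lemma~\ref{lem:modulus} itself.

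By \eqref{eq:solution-translation-bound}, for $|h|\le r$,
\[
 \|u(t,\cdot+h)-u(t,\cdot)\|_\infty\le e^{-m_ft}+m_f^{-1}\omega_J(r),
 \qquad \omega_J(r)=\sup_{|h|\le r}\|J(\cdot+h)-J\|_1 .
\]
Here $\omega_J(r)\to0$ as $r\downarrow0$ by translation continuity in $L^1(\R^d)$. Thus the sequence $u_n=u(t_n,\cdot)$ is uniformly bounded by $1$ and is asymptotically equicontinuous in the essential sense, but it is not literally equicontinuous. The remedy is mollification. Fix a standard mollifier $\eta_\varepsilon$ and set $u_n^\varepsilon=\eta_\varepsilon*u_n$. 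For each fixed $\varepsilon$ these functions are continuous, bounded by $1$, and genuinely equicontinuous, with modulus controlled by $\|\eta_\varepsilon(\cdot+h)-\eta_\varepsilon\|_1$. Moreover,
\[
 \|u_n^\varepsilon-u_n\|_\infty\le\sup_{|h|\le\varepsilon}\|u_n(\cdot+h)-u_n\|_\infty\le e^{-m_ft_n}+m_f^{-1}\omega_J(\varepsilon).
\]

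Now take $\varepsilon_k=1/k$. By Arzel\`a--Ascoli on $\overline B_k$, a diagonal subsequence (still written $u_n$) makes $u_n^{\varepsilon_k}$ converge uniformly on $\overline B_k$, and hence on every $\overline B_r$ with $r\le k$, for every $k$. It then follows that $(u_n)$ is Cauchy in $L^\infty(B_r)$:
\[
 \|u_n-u_m\|_{L^\infty(B_r)}\le \|u_n^{\varepsilon_k}-u_m^{\varepsilon_k}\|_{L^\infty(B_r)}+e^{-m_ft_n}+e^{-m_ft_m}+2m_f^{-1}\omega_J(\varepsilon_k).
\]
To conclude, first choose $k$ large and then let $n,m\to\infty$. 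The limit $w$ exists in $L^\infty(B_r)$ for every $r$, and the limits are consistent on nested balls.

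For the regularity of $w$, pass to the limit in the displayed translation bound. For almost every $x$ and each $|h|\le r$ we get $|w(x+h)-w(x)|\le m_f^{-1}\omega_J(r)$, because $e^{-m_ft_n}\to0$ and local $L^\infty$ convergence passes to translates. Together with $0\le w\le1$, this gives $\|\tau_hw-w\|_\infty\le m_f^{-1}\omega_J(|h|)$, where $\omega_J(r)\to0$ as $r\downarrow0$. A function with this property has a bounded uniformly continuous representative, for instance the limit of $\eta_\varepsilon*w$, which converges uniformly as $\varepsilon\to0$. The main obstacle is that $u_n$ is only essentially bounded and is not equicontinuous for finite $n$; the mollification step combined with the decaying term $e^{-m_ft}$ is what handles this.
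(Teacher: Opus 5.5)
Your proof is correct and follows essentially the same route as the paper. Both arguments use the translation estimate of Lemma~\ref{lem:modulus}, mollification to get genuinely equicontinuous approximants, and Arzel\`a--Ascoli with a diagonal extraction over an exhaustion of $\R^d$; both then pass to the limit in the translation bound and mollify the limit to produce a uniformly continuous representative. The only difference is cosmetic: you extract the diagonal subsequence for the mollified functions first and then show the original sequence is Cauchy in each $L^\infty(B_r)$, whereas the paper shows $\{u_n|_Q\}$ is totally bounded in $L^\infty(Q)$ and then extracts.
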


\begin{proof}
Set $u_n=u(t_n,\cdot)$.  The asymptotic estimate \eqref{eq:asymptotic-translation-modulus} in Lemma~\ref{lem:modulus} gives
\begin{equation}\label{eq:compactness-translation-hypothesis}
 \lim_{r\downarrow0}\limsup_{n\to\infty}
 \sup_{|h|\le r}\|\tau_hu_n-u_n\|_\infty=0.
\end{equation}
Choose $\rho\in C_c^\infty(B_1)$ with $\rho\ge0$ and $\int_{\R^d}\rho(x)\,dx=1$, and put $\rho_r(x)=r^{-d}\rho(x/r)$.  Then
\begin{equation}\label{eq:mollifier-approximation}
 \|u_n-\rho_r*u_n\|_\infty
 \le \sup_{|h|\le r}\|\tau_hu_n-u_n\|_\infty.
\end{equation}
For fixed $r>0$, Young's inequality gives
\begin{equation}\label{eq:mollifier-gradient-bound}
 \|\rho_r*u_n\|_\infty\le1,
 \qquad
 \|\nabla(\rho_r*u_n)\|_\infty\le\|\nabla\rho_r\|_1,
\end{equation}
uniformly in $n$.

Fix a closed cube $Q\subset\R^d$ and let $\varepsilon>0$.  By \eqref{eq:compactness-translation-hypothesis} and \eqref{eq:mollifier-approximation}, choose $r>0$ and then $N$ so that
\begin{equation}\label{eq:display-2-32}
 \|u_n-\rho_r*u_n\|_\infty<\varepsilon
 \qquad(n\ge N).
\end{equation}
By \eqref{eq:mollifier-gradient-bound} and Arzel\`a--Ascoli,
$\{\rho_r*u_n|_Q:n\ge N\}$ is relatively compact in $C(Q)$.
Since $\varepsilon$ is arbitrary, \eqref{eq:display-2-32} makes
$\{u_n|_Q:n\ge1\}$ totally bounded in $L^\infty(Q)$; finitely many
terms with $n<N$ do not affect this conclusion. Completeness then
gives relative compactness. A diagonal extraction over compact
cubes exhausting $\R^d$ yields a subsequence, still indexed by $n$,
and a bounded function $U$ such that
\begin{equation}\label{eq:local-subsequence-limit}
 \|u_n-U\|_{L^\infty(Q)}\to0
\end{equation}
for each cube in the exhaustion, and hence for every bounded ball.  

For fixed $h$, use \eqref{eq:local-subsequence-limit} on a cube containing both $Q$ and $Q+h$.  Passing to the limit in \eqref{eq:solution-translation-bound}, whose initial-data term tends to zero because $t_n\to\infty$, and then exhausting $\R^d$ by cubes gives
\begin{equation}\label{eq:display-2-34}
 \|\tau_hU-U\|_\infty
 \le \frac1{m_f}\|\tau_hJ-J\|_1.
\end{equation}
By translation continuity in $L^1(\R^d)$, \eqref{eq:display-2-34} gives
\begin{equation}\label{eq:display-2-35}
 \|U-\rho_r*U\|_\infty
 \le m_f^{-1}\sup_{|h|\le r}\|\tau_hJ-J\|_1\to0.
\end{equation}
By \eqref{eq:display-2-35}, the uniformly continuous functions
$\rho_r*U$ are Cauchy in $L^\infty(\R^d)$. Their differences have
equal supremum and essential supremum norms, so their uniform
limit is a bounded uniformly continuous representative of $U$.
\end{proof}

\subsection{Extinction for small initial mass and energy estimates}

Lim \cite[Theorem~4.10(ii)]{Lim2017} proves extinction for initial data with sufficiently small $L^1(\R^d)$ mass for radial kernels under $f(s)\le\gamma s$ on $[0,1]$ for some $\gamma<1$.  A related comparison argument for initial data supported on a sufficiently short interval is given in \cite[Proposition~2.1]{BesseCapelFayeFouilhe2023}.  The next lemma gives an explicit sufficient bound on $\|u_0\|_1$ under (K0) and $J\in L^\infty(\R^d)$; for $u_0=\mathbf1_{B_R}$ this yields the lower bound on $R_{\rm ext}$ in \eqref{eq:main-extinction-bounds}.

\begin{lemma}\label{lem:smallseed}
Assume (K0), $J\in L^\infty(\R^d)$, and \textup{(F1)--(F2)}.  Set
\begin{equation}\label{eq:small-mass-constants}
 T_f=\frac1{m_f}\log\frac4\vartheta,
 \qquad
 \delta_f(J)=\frac{1}{\|J\|_\infty(e^{T_f}-1)}.
\end{equation}
For every initial datum $u_0$ satisfying
\begin{equation}
 0\le u_0\le1,\qquad u_0\in L^1(\R^d),\qquad \|u_0\|_1<\delta_f(J),
\end{equation}
the unique global solution $u$ of \eqref{eq:main-equation} with initial datum $u_0$ satisfies
\begin{equation}
 \|u(t,\cdot)\|_\infty\to0.
\end{equation}
In particular, the unique global solution of \eqref{eq:main-equation} with initial datum $u_0=\mathbf1_A$ becomes extinct whenever $A\subset\R^d$ is measurable and $|A|<\delta_f(J)$.
\end{lemma}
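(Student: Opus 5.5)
The plan is to dominate $u$ by the solution of a linear equation whose growth is explicit, and to show that this linear majorant drops below $\vartheta$ at time $T_f$. The conclusion then follows from the last assertion of Lemma~\ref{lem:semiflow}.

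\emph{Linear majorant.} Since $f(0)=0$ and $f'\le\kappa_f=1-m_f$ on $[0,1]$, the mean value theorem gives $f(s)\le(1-m_f)s$ for $0\le s\le1$. Because $0\le u\le1$, the solution therefore satisfies
\[
 u_t\le J*u-m_fu .
\]
Let
\[
 w(t)=e^{-m_ft}e^{tJ*}u_0=e^{-m_ft}\sum_{n\ge0}\frac{t^n}{n!}J^{*n}*u_0 ,
\]
which solves $w_t=J*w-m_fw$ with $w(0)=u_0$. The difference $z=w-u$ satisfies $z_t\ge J*z-m_fz$ and $z(0)=0$. Variation of constants with the positive semigroup $e^{-m_ft}e^{tJ*}$ then yields $z\ge0$. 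Equivalently, one can invoke the comparison argument of Lemma~\ref{lem:semiflow} in its stated extension to linear equations with a positive integral operator and a bounded coefficient. Either way, $0\le u(t,\cdot)\le w(t)$ for all $t\ge0$. Working with $w$ directly avoids the issue that $w$ may leave $[0,1]$, which would otherwise require extending $f$.

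\emph{Explicit bound.} For $n\ge1$, Young's inequality gives $\|J^{*n}*u_0\|_\infty\le\|J\|_\infty\|J^{*(n-1)}*u_0\|_1\le\|J\|_\infty\|u_0\|_1$. Together with $u_0\le1$, this yields
\[
 \|w(t)\|_\infty\le e^{-m_ft}\bigl(1+\|J\|_\infty\|u_0\|_1(e^t-1)\bigr).
\]
At $t=T_f$ we have $e^{-m_fT_f}=\vartheta/4$. The hypothesis $\|u_0\|_1<\delta_f(J)$ makes the second term in the parentheses smaller than $1$. Hence
\[
 \|u(T_f,\cdot)\|_\infty\le\|w(T_f)\|_\infty<\frac{\vartheta}{2}<\vartheta .
\]
Applying Lemma~\ref{lem:semiflow} to the solution restarted at time $T_f$ gives $\|u(t,\cdot)\|_\infty\to0$. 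For the final assertion, take $u_0=\mathbf1_A$, so that $\|u_0\|_1=|A|$.

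The only step needing care is the comparison with $w$. That is why I compare the difference $z=w-u$ under the linear operator, where positivity of $e^{tJ*}$ is immediate, rather than applying the nonlinear comparison principle to $w$.
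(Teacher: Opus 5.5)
Your proposal is correct and follows the paper's proof essentially step for step: the same linear bound $f(s)\le(1-m_f)s$, the same majorant $e^{-m_ft}e^{tJ*}u_0$, the same series estimate giving $\|u(T_f,\cdot)\|_\infty<\vartheta/2$, and the same restart from $T_f$ via Lemma~\ref{lem:semiflow}. The only cosmetic difference is that you justify the linear comparison directly, by variation of constants applied to the difference, whereas the paper simply cites the comparison principle of Lemma~\ref{lem:semiflow}.
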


\begin{proof}
Since $f(0)=0$ and $f'(s)\le\kappa_f=1-m_f$ on $[0,1]$,
\begin{equation}\label{eq:reaction-linear-bound}
 f(s)=\int_0^s f'(r)\,dr\le(1-m_f)s
 \qquad(0\le s\le1).
\end{equation}
Substitution of \eqref{eq:reaction-linear-bound} into the evolution equation gives
\begin{equation}\label{eq:display-2-40}
 u_t\le J*u-m_fu.
\end{equation}
Let $z\in C^1([0,\infty);L^\infty(\R^d))$ solve
$z_t=J*z-m_fz$ with $z(0,\cdot)=u_0$. Linear comparison from
Lemma~\ref{lem:semiflow}, applied to \eqref{eq:display-2-40}, gives
$u\le z$, hence
\begin{equation}\label{eq:small-mass-semigroup-comparison}
 u(t,\cdot)\le e^{-m_ft}e^{tJ*}u_0.
\end{equation}
Since $e^{tJ*}=e^tP_t$, the series in \eqref{eq:dispersal-semigroup}, together with $0\le u_0\le1$ and $\|J^{*n}\|_\infty\le\|J\|_\infty$ for $n\ge1$, gives
\begin{equation}\label{eq:small-mass-semigroup-bound}
 \|e^{tJ*}u_0\|_\infty
 \le 1+\|u_0\|_1\|J\|_\infty(e^t-1).
\end{equation}
By the definition \eqref{eq:small-mass-constants} of $T_f$, $e^{-m_fT_f}=\vartheta/4$.  If $\|u_0\|_1<\delta_f(J)$, the comparison \eqref{eq:small-mass-semigroup-comparison} together with \eqref{eq:small-mass-semigroup-bound} gives
\begin{equation}\label{eq:small-mass-time-bound}
 \|u(T_f,\cdot)\|_\infty
 \le \frac\vartheta4
 \left[1+\|u_0\|_1\|J\|_\infty(e^{T_f}-1)\right]
 <\frac\vartheta2<\vartheta.
\end{equation}
Since \eqref{eq:small-mass-time-bound} places
$\|u(T_f,\cdot)\|_\infty$ below $\vartheta$, Lemma~\ref{lem:semiflow}
applied from $T_f$ gives extinction.
\end{proof}

\paragraph{Energy dissipation.}
For an even kernel $J$ satisfying (K0) and $v\in L^2(\R^d)\cap L^\infty(\R^d)$, define the energy by
\begin{equation}\label{eq:energy-definition}
 \mathcal E[v]=\frac14\iint_{\R^d\times\R^d}
 J(x-y)(v(x)-v(y))^2\,dx\,dy-\int_{\R^d}F(v(x))\,dx.
\end{equation}

For even $J$, the first variation of \eqref{eq:energy-definition} at $v$ in a direction $h\in L^2(\R^d)\cap L^\infty(\R^d)$ is $\langle v-J*v-f(v),h\rangle_{L^2(\R^d)}$.  Thus \eqref{eq:main-equation} is the $L^2(\R^d)$ gradient flow of $\mathcal E$; see \cite{BatesFifeRenWang1997,ZhangLiYang2022,FifeWang1998}.  For bounded measurable data $0\le u_0\le1$ in $L^2(\R^d)$, let $u$ be the unique global solution of \eqref{eq:main-equation} with initial datum $u_0$.  Then
\begin{equation}\label{eq:energy-dissipation}
 \mathcal E[u(t,\cdot)]-\mathcal E[u(s,\cdot)]
 =-\int_s^t\|u_\tau(\tau,\cdot)\|_2^2\,d\tau,
 \qquad 0\le s\le t.
\end{equation}
By Lemma~\ref{lem:semiflow},
$u\in C^1([0,\infty);L^2(\R^d)\cap L^\infty(\R^d))$.
Since $F(0)=f(0)=0$ and $f'$ is bounded on $[0,1]$,
$|F(s)|\le Cs^2$ and $|f(s)|\le Cs$ there. Taylor's formula,
with remainder bounded by
$C\|u(t+h,\cdot)-u(t,\cdot)\|_2^2=O(h^2)$, therefore gives
$\frac d{dt}\int_{\R^d}F(u(t,x))\,dx
=\langle f(u(t,\cdot)),u_t(t,\cdot)\rangle_{L^2(\R^d)}$.
For the quadratic term, symmetry and unit mass give, with
$u=u(t,\cdot)$,
\begin{equation}\label{eq:energy-quadratic-form}
 \frac14\iint_{\R^d\times\R^d}J(x-y)(u(x)-u(y))^2\,dx\,dy
 =\frac12\langle u-J*u,u\rangle_{L^2(\R^d)}.
\end{equation}
Differentiating \eqref{eq:energy-quadratic-form}, subtracting
the potential derivative, and using \eqref{eq:main-equation} gives
\begin{equation}\label{eq:energy-differential}
 \frac d{dt}\mathcal E[u(t,\cdot)]
 =\langle u(t,\cdot)-J*u(t,\cdot)-f(u(t,\cdot)),u_t(t,\cdot)\rangle_{L^2(\R^d)}
 =-\|u_t(t,\cdot)\|_2^2,
\end{equation}
and integration proves \eqref{eq:energy-dissipation}.

If $\mathcal E[u_0]<0$, \eqref{eq:energy-dissipation} excludes
extinction: otherwise $\|u(T,\cdot)\|_\infty<\vartheta$ for some
$T$, so $F\le0$ on its range and both terms of
\eqref{eq:energy-definition} are nonnegative, contradicting
$\mathcal E[u(T,\cdot)]\le\mathcal E[u_0]<0$.

\begin{lemma}\label{lem:ballenergy}
Assume (K0), $J(z)=J(-z)$ almost everywhere, $M_1(J)<\infty$, and $F(1)>0$, where
\begin{equation}
 M_1(J)=\int_{\R^d}|z|J(z)\,dz,
 \qquad |B_1|=\omega_d.
\end{equation}
For $u_0=\mathbf1_{B_R}$,
\begin{equation}\label{eq:ball-energy-bound}
 \mathcal E[u_0]
 \le \omega_dR^{d-1}
 \left(\frac{dM_1(J)}4-F(1)R\right).
\end{equation}
Consequently
\begin{equation}\label{eq:negative-ball-energy}
 R>\frac{dM_1(J)}{4F(1)}
 \quad\Longrightarrow\quad
 \mathcal E[\mathbf1_{B_R}]<0,
\end{equation}
and, if $\mathcal E[u_0]<0$, the unique global solution $u$ of \eqref{eq:main-equation} with initial datum $u_0=\mathbf1_{B_R}$ cannot become extinct.
\end{lemma}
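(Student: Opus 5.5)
We estimate the two terms of $\mathcal E[\mathbf1_{B_R}]$ in \eqref{eq:energy-definition} separately. The potential term is exact: $F(0)=0$ and $F(1)$ is constant on $B_R$, so
\[
-\int_{\R^d}F(u_0(x))\,dx=-F(1)\,\omega_dR^d .
\]

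For the interaction term, note that $(u_0(x)-u_0(y))^2=\mathbf1_{B_R}(x)+\mathbf1_{B_R}(y)-2\mathbf1_{B_R}(x)\mathbf1_{B_R}(y)$. This equals the indicator that exactly one of $x,y$ lies in $B_R$. Substituting $y=x-z$ and using the evenness of $J$, the term becomes
\[
\frac14\iint J(x-y)(u_0(x)-u_0(y))^2\,dx\,dy=\frac12\int_{\R^d}J(z)\,\bigl|B_R\setminus(B_R+z)\bigr|\,dz .
\]
Here we use $|B_R\,\triangle\,(B_R+z)|=2|B_R\setminus(B_R+z)|$.

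The key geometric input is the bound $|B_R\setminus(B_R+z)|\le|z|\,\mathcal H^{d-1}$ of the projection of $B_R$ onto $z^\perp$, which is $|z|\,\omega_{d-1}R^{d-1}$. This follows because each line parallel to $z$ meets $B_R\setminus(B_R+z)$ in a set of length at most $|z|$. An alternative route avoids $\omega_{d-1}$: write $\mathbf1_{B_R}(x)-\mathbf1_{B_R}(x-z)$ and apply the $BV$ translation estimate $\|\tau_z g-g\|_1\le|z|\,|Dg|(\R^d)$ with $g=\mathbf1_{B_R}$. Since $|Dg|(\R^d)=\operatorname{Per}(B_R)=d\omega_dR^{d-1}$, this gives
\[
|B_R\triangle(B_R+z)|\le d\omega_dR^{d-1}|z| .
\]
Hence the interaction term is at most $\frac14\int J(z)\,d\omega_dR^{d-1}|z|\,dz=\frac{d\omega_dR^{d-1}M_1(J)}4$. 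Adding the potential term yields \eqref{eq:ball-energy-bound}.

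The only delicate point is to get the constant exactly $d/4$. The factor $\frac14$ together with the symmetric difference counted over both orderings is what produces it. The perimeter bound $d\omega_dR^{d-1}$ is the one matching the stated constant, so I will use the $BV$ estimate rather than the projection bound. That estimate is justified by mollifying $g$ and applying $\|\tau_z g_\varepsilon-g_\varepsilon\|_1\le|z|\|\nabla g_\varepsilon\|_1$ together with lower semicontinuity, or directly via the coarea/divergence formula for the ball.

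For \eqref{eq:negative-ball-energy}, observe that if $R>dM_1(J)/(4F(1))$ then the bracket in \eqref{eq:ball-energy-bound} is negative. The final claim is the observation already made after \eqref{eq:energy-differential}. Since $u_0\in L^2\cap L^\infty$, the dissipation identity \eqref{eq:energy-dissipation} applies. Extinction would force $\|u(T,\cdot)\|_\infty<\vartheta$ for some $T$, hence $F(u(T,\cdot))\le0$ and $\mathcal E[u(T,\cdot)]\ge0>\mathcal E[u_0]$, contradicting the monotone decrease of the energy.
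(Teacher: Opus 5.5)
Your proposal is correct and is essentially the paper's proof. The substitution $z=x-y$ turns the interaction term into $\frac14\int_{\R^d}J(z)\,|B_R\triangle(B_R+z)|\,dz$, the $BV$ translation estimate with perimeter $d\omega_dR^{d-1}$ gives the stated constant, the potential term is exactly $-F(1)\omega_dR^d$, and the dissipation identity rules out extinction when $\mathcal E[u_0]<0$. Incidentally, the projection bound you set aside would also suffice: it gives $|B_R\triangle(B_R+z)|\le 2\omega_{d-1}R^{d-1}|z|$, and $2\omega_{d-1}\le d\omega_d$.
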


\begin{proof}
For measurable sets $A$ and $B$, write $A\triangle B=(A\setminus B)\cup(B\setminus A)$ for their symmetric difference and $A+z=\{x+z:x\in A\}$ for a translate.  For $u_0=\mathbf1_{B_R}$, Tonelli's theorem and the change of variables $z=x-y$ give
\begin{equation}\label{eq:ball-symmetric-difference-energy}
 \iint_{\R^d\times\R^d}J(x-y)\bigl(\mathbf1_{B_R}(x)-\mathbf1_{B_R}(y)\bigr)^2\,dx\,dy
 =\int_{\R^d}J(z)|B_R\triangle(B_R+z)|\,dz.
\end{equation}
The translation estimate for functions of bounded variation
\cite[Propositions~8 and~11]{Galerne2011}, applied to
$\mathbf1_{B_R}$, gives
\begin{equation}\label{eq:ball-symmetric-difference-bound}
 |B_R\triangle(B_R+z)|\le d\omega_dR^{d-1}|z|,
 \qquad z\in\R^d,
\end{equation}
since the perimeter of $B_R$ is $d\omega_dR^{d-1}$.
The potential term is $-F(1)|B_R|=-F(1)\omega_dR^d$.
Combining this identity with \eqref{eq:ball-symmetric-difference-energy}
and \eqref{eq:ball-symmetric-difference-bound} gives
\eqref{eq:ball-energy-bound}.

Since $\mathbf1_{B_R}\in L^2(\R^d)$, \eqref{eq:energy-dissipation}
excludes extinction when $\mathcal E[\mathbf1_{B_R}]<0$.
\end{proof}

\subsection{Auxiliary extension and planar fronts}

Unless stated otherwise, solutions of \eqref{eq:main-equation}
in this subsection take values in $[0,1]$.

Under \textup{(F3)}, we extend the reaction to apply Theorem~A
and the radial barriers of Fife and Wang \cite{FifeWang1998}.
In their sign convention, set $\alpha=2\vartheta-1$ and
$g(v)=-2f((1+v)/2)$ on $[-1,1]$. For $a\in\{-1,1\}$, put
$p_a=g'(a)>0$ and $q_a=g''(a)$, and define
\begin{equation}\label{eq:display-3-01}
 g(v)=p_a\int_a^v\exp\!\left(\frac{q_a}{p_a}(r-a)\right)\,dr
\end{equation}
for $v<-1$ when $a=-1$ and $v>1$ when $a=1$.
By \eqref{eq:display-3-01}, the value and first two derivatives
match at each endpoint, and $g'>0$ outside $[-1,1]$. Thus $g$ is
$C^2$ with exactly the zeros $-1,\alpha,1$.
On $[-1,1]$, \textup{(F2)} gives
$1+g'(v)=1-f'((1+v)/2)\ge m_f$; outside, $g'>0$.
Hence $v\mapsto v+g(v)$ is strictly increasing on $\R$.
Setting $v=2u-1$ gives $v_t=J*v-v-g(v)$ and
$v(0,x)=2u_0(x)-1$. Invariance of $[-1,1]$ and agreement of the
reactions there preserve this equation and its front profiles.
Comparison for auxiliary sub- and supersolutions outside $[-1,1]$
uses the extended reaction.

\begin{lemma}\label{lem:projection}
Assume (K1), fix $e\in\Sph$, and let $K_e$ be defined before \eqref{eq:radial-marginal}.  Then $K_e\in W^{2,1}(\R)$ has a $C^1$ representative, which we continue to denote by $K_e$, with $K_e\ge0$, $K_e(s)=K_e(-s)$ for $s\in\R$, and
\begin{equation}
 \int_\R K_e(s)\,ds=1,
 \qquad
 \int_\R |s|K_e(s)\,ds<\infty,
 \qquad
 K_e'\in L^1(\R).
\end{equation}
Moreover, if $\rho$ is as in (K1), then
\begin{equation}\label{eq:display-3-04}
 K_e(s)>0\qquad\text{for a.e. }s\in(-\rho,\rho).
\end{equation}
\end{lemma}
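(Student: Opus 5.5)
The plan is to reduce every assertion to properties of $J$ on $\R^d$ via Fubini/Tonelli, treating $d=1$ separately (there $K_e(s)=J(se)$ and everything is immediate from (K1)). For $d\ge2$, fix an orthonormal frame with first vector $e$ and write $z=se+z_\perp$.

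\emph{Mass, symmetry and first moment.} Nonnegativity of $K_e$ and $\int_\R K_e=1$ follow from Tonelli and \eqref{eq:kernel-mass}. Evenness comes from $J(z)=J(-z)$ a.e. together with the substitution $z_\perp\mapsto -z_\perp$ on $e^\perp$, giving $K_e(-s)=K_e(s)$ for a.e.\ $s$; after the $C^1$ representative is chosen, this holds for every $s$. For the first moment, use $|s|\le|se+z_\perp|$ to get
\[
\int_\R|s|K_e(s)\,ds\le\int_{\R^d}|z|J(z)\,dz<\infty .
\]

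\emph{Sobolev regularity.} For $\varphi\in C_c^\infty(\R)$, set $\Phi(z)=\varphi(z\cdot e)$. Integration by parts is justified by approximating $\Phi$ with cutoffs in $z_\perp$; the error terms vanish because $J,\nabla J\in L^1$. This gives
\[
\int_\R K_e\varphi'\,ds=\int_{\R^d}J\,(e\cdot\nabla\Phi)\,dz=-\int_{\R^d}(e\cdot\nabla J)\Phi\,dz .
\]
Hence the weak derivative of $K_e$ is the marginal of $e\cdot\nabla J$, and likewise the second weak derivative is the marginal of $\sum_{i,j} e_ie_j\partial_{ij}J$. Both marginals lie in $L^1(\R)$ with norms at most $\|\nabla J\|_1$ and $\|D^2J\|_1$. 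Thus $K_e\in W^{2,1}(\R)$. In one dimension $W^{2,1}\subset C^1$, since $K_e'\in W^{1,1}(\R)$ is absolutely continuous; this yields the $C^1$ representative, with $K_e'\in L^1$.

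\emph{Positivity.} For $|s|<\rho$, the slice $\{z_\perp\in e^\perp:|z_\perp|^2<\rho^2-s^2\}$ has positive $(d-1)$-measure. By Fubini, for a.e.\ $s\in(-\rho,\rho)$, $J(se+\cdot)>0$ a.e.\ on that slice, because the exceptional null set in $B_\rho$ has null slices for a.e.\ $s$. Hence $K_e(s)>0$ for a.e.\ $s\in(-\rho,\rho)$, which is \eqref{eq:display-3-04}.

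The only step needing care is justifying the integration by parts for the weak derivatives, which is done with a truncation $\chi(z_\perp/R)$ and $R\to\infty$ using dominated convergence. Everything else is routine.
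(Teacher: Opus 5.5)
Your proposal is correct and follows the paper's proof essentially step by step. Both arguments use Tonelli/Fubini for nonnegativity and mass, the substitution $z_\perp\mapsto -z_\perp$ for evenness, and $|s|\le|z|$ for the first moment; both identify $K_e'$ and $K_e''$ as the marginals of $\partial_eJ$ and $\partial_{ee}J$ by testing the weak-derivative identities, then use the one-dimensional embedding $W^{2,1}(\R)\subset C^1(\R)$ and Fubini on slices of $B_\rho$ for positivity. Your truncation $\chi(z_\perp/R)$ only adds detail the paper leaves implicit: since it depends on $z_\perp$ alone, its $e$-derivative vanishes, so there are no error terms and the limit $R\to\infty$ is just dominated convergence.
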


\begin{proof}
By Fubini, $K_e\ge0$ and $\int_\R K_e=1$.
Evenness of $J$ and the change $z_\perp\mapsto-z_\perp$ give,
for almost every $s$,
$K_e(-s)=\int_{e^\perp}J(-se-z_\perp)\,dz_\perp=K_e(s)$,
while the first-moment assumption in (K1) yields
\begin{equation}
 \int_\R |s|K_e(s)\,ds
 \le\int_{\R^d}|z|J(z)\,dz<\infty.
\end{equation}
Write $\partial_e=e\cdot\nabla$ and $\partial_{ee}=\partial_e^2$.
Testing the weak derivative identities for $J$ and using Fubini gives,
for almost every $s\in\R$,
\begin{equation}\label{eq:marginal-weak-derivatives}
 K_e'(s)=\int_{e^\perp}\partial_eJ(se+z_\perp)\,dz_\perp,
 \qquad
 K_e''(s)=\int_{e^\perp}\partial_{ee}J(se+z_\perp)\,dz_\perp.
\end{equation}
By \eqref{eq:marginal-weak-derivatives}, $K_e\in W^{2,1}(\R)$
and $\|K_e'\|_1\le\|\partial_eJ\|_1$.
The one-dimensional Sobolev embedding gives a $C^1$ representative,
for which evenness holds everywhere. Finally, Fubini and positivity
of $J$ on $B_\rho$ give \eqref{eq:display-3-04}, as required in
Theorem~A.
\end{proof}

\paragraph{Planar fronts obtained from Theorem~A.}
Assume \textup{(K1)}, \textup{(F3)}, and fix $e\in\Sph$.
By Lemma~\ref{lem:projection}, $K_e$ satisfies \eqref{1.10};
Theorem~A therefore gives a unique speed $c_e\in\R$ and, up to
translation, a unique decreasing profile $\Phi_e$ satisfying
\begin{equation}\label{eq:directional-front}
 -c_e\Phi_e'=K_e*\Phi_e-\Phi_e+f(\Phi_e),\qquad
 \Phi_e(-\infty)=1,\qquad
 \Phi_e(+\infty)=0,
\end{equation}
with $0<\Phi_e<1$ and $\Phi_e'\in L^1(\R)\cap L^2(\R)$.  By the definition of $K_e$, $u_e(t,x):=\Phi_e(x\cdot e-c_et)$ satisfies \eqref{eq:main-equation} with initial datum $u_e(0,x)=\Phi_e(x\cdot e)$, since $J*u_e(t,x)=(K_e*\Phi_e)(x\cdot e-c_et)$.  The speed identity in Theorem~A gives
\begin{equation}\label{eq:directional-speed-identity}
 c_e\int_\R|\Phi_e'(\xi)|^2\,d\xi=F(1).
\end{equation}
Under \textup{(KR)}, radiality makes $K_e$ independent of $e$.
Denote the common speed by $c_*$, so $c_e=c_*$ for every direction.
For $F(1)>0$, this is the speed
introduced before Theorem~B.

\subsection{A spreading criterion}

Assume (KR) and $F(1)>0$, and let $u$ be the unique global solution of \eqref{eq:main-equation} with initial datum $u_0\in L^\infty(\R^d)$ satisfying $0\le u_0\le1$.  For every $\eta\in(0,1-\vartheta)$, there is $L=L(\eta)>0$ such that
\begin{equation}\label{eq:measurable-seed-spreading}
 u_0(x)\ge\vartheta+\eta\quad (|x|\le L)
 \quad\Longrightarrow\quad
 u(t,\cdot)\to1\quad\text{locally uniformly in }\R^d.
\end{equation}
To prove \eqref{eq:measurable-seed-spreading} under the $C^1$ assumption \textup{(F1)}, fix $\varepsilon>0$ and choose a polynomial $q$ with $\|q-f'\|_{L^\infty(0,1)}\le\varepsilon/2$.  Define
\[
 p(s)=\int_0^s\left(q(r)-\int_0^1q(a)\,da\right)dr,
 \qquad h(s)=p(s)-3\varepsilon s(1-s).
\]
Since $\int_0^1f'(s)\,ds=0$, we have $p(0)=p(1)=0$ and $\|p'-f'\|_{L^\infty(0,1)}\le\varepsilon$.  Integrating this derivative estimate from either endpoint gives
\[
 |p(s)-f(s)|\le\varepsilon\min\{s,1-s\}
 \le2\varepsilon s(1-s),\qquad 0\le s\le1.
\]
Consequently,
\[
 h(s)\le f(s)-\varepsilon s(1-s),\qquad
 \|h'-f'\|_{L^\infty(0,1)}\le4\varepsilon,\qquad
 \int_0^1h(s)\,ds\ge F(1)-\frac56\varepsilon.
\]
For small $\varepsilon$, the derivative signs in
\eqref{eq:bistable-signs} persist on disjoint neighborhoods of
$0$, $\vartheta$ and $1$,
and uniform convergence preserves the signs of $f$ on the compact
complement. Thus $h$ has exactly one zero $\vartheta_h\in(0,1)$,
with $h'(\vartheta_h)>0$ and $h'(0),h'(1)<0$.
Since $h(\vartheta)<0$ and $h$ is increasing near $\vartheta$,
$\vartheta_h>\vartheta$ and $\vartheta_h\to\vartheta$ as
$\varepsilon\downarrow0$.
Choose $\varepsilon$ with $4\varepsilon<m_f$,
$5\varepsilon/6<F(1)$ and $\vartheta_h<\vartheta+\eta/2$.
Then $h$ satisfies \textup{(F1)--(F3)} with positive integral.

By Theorem~B\textup{(i)} for $h$, height $\vartheta+\eta$ and
speed $c_h/2$, where $c_h>0$ is the wave speed for $(h,J)$, there
is $L=L(\eta)$ such that the solution $w$ with reaction $h$ and
initial datum $u_0$ converges locally uniformly to $1$.
Since $0\le w\le1$ and $h\le f$ on $[0,1]$, we have
$w_t\le J*w-w+f(w)$. Lemma~\ref{lem:semiflow} gives $w\le u\le1$,
proving \eqref{eq:measurable-seed-spreading}.

In particular, \eqref{eq:measurable-seed-spreading} shows that $u^R$ spreads for all sufficiently large $R$.  A one-dimensional propagation criterion for $C^1$ bistable reactions admitting a positive-speed traveling wave is given by Alfaro, Ducrot and Kang \cite{AlfaroDucrotKang2023} under their assumptions on the dispersal kernel.

For $b\in(\vartheta,1)$, set $L_b=L(b-\vartheta)$ in
\eqref{eq:measurable-seed-spreading}; thus $u_0\ge b\mathbf1_{B_{L_b}}$
implies spreading. By uniqueness and translation invariance, the criterion
applies from any initial time and on any translated ball.

\subsection{Reflection and spatial bounds}

For this subsection, assume \textup{(KM)}, and let
$u$ solve \eqref{eq:main-equation} with initial datum \eqref{cs:eq:data}.
Let $\widetilde u$ be the unique global solution of
\eqref{eq:main-equation} with initial datum
$\widetilde u_0\in L^\infty(\R^d)$ satisfying $0\le\widetilde u_0\le1$.
For $1\le p\le\infty$ with $u_0-\widetilde u_0\in L^p(\R^d)$,
\eqref{eq:dispersal-semigroup} and Gronwall's inequality give
\begin{equation}\label{cs:eq:flow-dependence}
 \|u(t,\cdot)-\widetilde u(t,\cdot)\|_p
 \le e^{L_ft}\|u_0-\widetilde u_0\|_p,
 \qquad L_f=\max_{[0,1]}|f'|,\quad 1\le p\le\infty.
\end{equation}
The comparison principle in Lemma~\ref{lem:semiflow} also applies to
ordered reactions on $[0,1]$, since a solution for the smaller reaction
is a subsolution for the larger one.  For any datum satisfying \eqref{cs:eq:data} with $u_0\not\equiv0$, write
$h_0(e)=\sup_{x\in\Sigma_0}x\cdot e$, $e\in\Sph$.
We first prove the reflection estimate
\begin{equation}\label{cs:eq:intro-reflection}
 \esssup_{|x|>r+2R_0}u(t,x)\le\essinf_{|y|<r}u(t,y),
 \qquad r>0,\quad t\ge0.
\end{equation}
\begin{lemma}\label{cs:lem:reflection}
Let \textup{(F1)--(F2)}, \textup{(KM)}, and \eqref{cs:eq:data} hold.
Then \eqref{cs:eq:intro-reflection} holds for every $t\ge0$ and $r>0$.
Consequently, whenever $u(t,\cdot)\in L^p(\R^d)$,
\begin{equation}\label{cs:eq:reflection-integral}
 \esssup_{|x|>r+2R_0}u(t,x)
 \le |B_r|^{-1/p}\|u(t,\cdot)\|_p,\qquad 1\le p<\infty .
\end{equation}
\end{lemma}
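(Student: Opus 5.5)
The plan is a moving-plane / reflection comparison using the monotone radial kernel. Fix a unit vector $e\in\Sph$ and $\lambda\ge R_0$. Let $T_\lambda x=x+2(\lambda-x\cdot e)e$ be the reflection across the hyperplane $\{x\cdot e=\lambda\}$, and set $H_\lambda=\{x\cdot e<\lambda\}$. Define $w(t,x)=u(t,T_\lambda x)$. Since $J$ is radial, $w$ solves \eqref{eq:main-equation} with initial datum $u_0\circ T_\lambda$. The support of $u_0$ lies in $\overline B_{R_0}\subset\overline{H_\lambda}$, so on $H_\lambda$ we have $w(0,\cdot)=0\le u_0$.

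The main claim is that $w\le u$ on $H_\lambda$ for all $t\ge0$. This is the standard half-space comparison, and it relies on (KM). For $x\in H_\lambda$, split the convolution over $H_\lambda$ and its reflection:
\[
(J*u)(x)-(J*w)(x)=\int_{H_\lambda}\bigl(J(x-y)-J(x-T_\lambda y)\bigr)\bigl(u(y)-w(y)\bigr)\,dy .
\]
For $x,y\in H_\lambda$ we have $|x-y|\le|x-T_\lambda y|$, so the kernel difference is nonnegative because $j$ is nonincreasing. Thus $z=u-w$, restricted to $H_\lambda$, satisfies a linear equation
\[
z_t=\mathcal K z-z+c(t,x)z,
\]
where $\mathcal K$ is a positive bounded integral operator on $L^\infty(H_\lambda)$ and $c$ is bounded. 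The initial value satisfies $z(0)\ge0$. The remark in the proof of Lemma~\ref{lem:semiflow} says the Fife--Wang comparison argument allows a positive integral operator and a bounded $(t,x)$-dependent coefficient. Applying it on the half space gives $z\ge0$. I expect this to be the main technical point: one must check that the comparison argument works on $L^\infty(H_\lambda)$ with the kernel $J(x-y)-J(x-T_\lambda y)$. The substitution $e^{Mt}z$ turns the problem into an order-preserving integral equation, so the same iteration should go through.

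Next, derive \eqref{cs:eq:intro-reflection}. Take $x$ with $|x|>r+2R_0$ and $y$ with $|y|<r$. Choose $e=(x-y)/|x-y|$ and $\lambda=\tfrac12(x+y)\cdot e$, so that $T_\lambda$ maps $y$ to $x$. Then
\[
\lambda\ge\tfrac12\bigl(|x-y|-2|y|\bigr)\ \ \text{roughly}\ \ >R_0,
\]
using $x\cdot e-y\cdot e=|x-y|>|x|-|y|>2R_0$ together with a bound relating $y\cdot e$ to $r$. The inequality needs care: $\lambda=y\cdot e+\tfrac12|x-y|$, and $y\cdot e\ge-|y|$, so
\[
\lambda\ge\tfrac12\bigl(|x|-|y|\bigr)-|y| .
\]
This fails for large $r$. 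I would therefore instead compare with reflections across planes $\{x\cdot e=\lambda\}$ where $e=x/|x|$ and $\lambda\ge R_0$, reflecting $x$ to points $T_\lambda x$ with $T_\lambda x\cdot e\ge 2R_0-|x|$. As $\lambda$ ranges over $[R_0,\ldots)$ and $e$ varies slightly, the reflected points $T_\lambda x$ cover the ball $B_{|x|-2R_0}\supset B_r$. Concretely, the reflection of $x$ across the perpendicular bisector of $[x,y]$ has $\lambda\ge R_0$ whenever
\[
|x|^2-|y|^2\ge 2R_0|x-y|,
\]
and this inequality holds when $|x|-|y|\ge2R_0$. So the half-space inequality gives $u(t,x)\le u(t,y)$ for a.e. such pair. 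To handle null sets, I would state it as: for each fixed $(e,\lambda)$ the inequality holds a.e. in $x$, then average over small balls or use a Fubini argument over $(e,\lambda)$ to pass to the essential sup/inf.

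Finally, \eqref{cs:eq:reflection-integral} follows directly. Let $m=\esssup_{|x|>r+2R_0}u(t,x)$. Then $u(t,\cdot)\ge m$ a.e. on $B_r$, so
\[
\|u(t,\cdot)\|_p^p\ge|B_r|\,m^p ,
\]
which rearranges to the stated bound.
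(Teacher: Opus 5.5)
Your argument is correct and is essentially the paper's proof: the same half-space comparison with the nonnegative kernel $J(x-y)-J(x-T_\lambda y)$ via the Fife--Wang argument, the same perpendicular-bisector reflection with $\lambda=(|x|^2-|y|^2)/(2|x-y|)\ge R_0$ (your false start and the covering remark can simply be deleted), and the same $L^p$ bound from $\essinf_{B_r}u\le|B_r|^{-1/p}\|u\|_p$. The only divergence is the null-set step. The paper proves the pointwise inequality for continuous solutions from mollified data $\varrho_\varepsilon*u_0$, supported in $\overline B_{R_0+\varepsilon}$, and then passes to an a.e.\ convergent subsequence. Your Fubini alternative also works once made precise: pull back $\{(x,y):u(t,x)>u(t,y),\ |x|-|y|>2R_0\}$ under the diffeomorphism $(e,\lambda,y)\mapsto(T_\lambda y,y)$; its slices at each fixed $(e,\lambda)$ with $\lambda\ge R_0$ are null by the half-space comparison.
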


\begin{proof}
For continuous $u_0$ supported in $\overline B_{R_0}$,
the construction in Lemma~\ref{lem:semiflow} in the space of bounded
uniformly continuous functions gives continuity of $u(t,\cdot)$.
Fix a unit vector $e$ and $\lambda\ge R_0$, and set
\begin{equation}\label{cs:eq:reflection-def}
 D_{e,\lambda}=\{x:x\cdot e>\lambda\},\qquad
 x^{e,\lambda}=x-2(x\cdot e-\lambda)e,\qquad
 w(t,x)=u(t,x^{e,\lambda})-u(t,x).
\end{equation}
For $x,y\in D_{e,\lambda}$,
\begin{equation}\label{cs:eq:reflection-kernel}
 |x-y|^2\le |x-y^{e,\lambda}|^2,\qquad
 K_{e,\lambda}(x,y):=J(x-y)-J(x-y^{e,\lambda})\ge0 .
\end{equation}
By splitting convolution into the two half-spaces and changing
variables by reflection in one of them, \eqref{eq:main-equation} gives
\begin{equation}\label{cs:eq:reflection-evolution}
 \partial_tw(t,x)
 =\int_{D_{e,\lambda}}K_{e,\lambda}(x,y)w(t,y)\,dy
       -a(t,x)w(t,x),
 \qquad
 a(t,x)=\int_0^1H'\bigl(u(t,x)+s w(t,x)\bigr)\,ds .
\end{equation}
By \eqref{cs:eq:reflection-kernel}, the integral operator in
\eqref{cs:eq:reflection-evolution} is positive on
$L^\infty(D_{e,\lambda})$ with norm at most one.
The coefficient $a$ is bounded by continuity of $H'$ on $[0,1]$,
and $u_0=0$ on $D_{e,\lambda}$ gives $w(0,\cdot)\ge0$.
The comparison argument of Fife and Wang
\cite[Section~2, Comparison Principle]{FifeWang1998}, used in
Lemma~\ref{lem:semiflow}, yields
\begin{equation}\label{cs:eq:half-space-order}
 u(t,x)\le u(t,x^{e,\lambda})
 \qquad(x\in D_{e,\lambda},\ t\ge0).
\end{equation}

Now fix $x,y\in\R^d$ with $|x|>|y|+2R_0$.  For
\begin{equation}\label{cs:eq:bisector}
 e=\frac{x-y}{|x-y|},\qquad
 \lambda=\frac{|x|^2-|y|^2}{2|x-y|},
\end{equation}
the reflection sends $x\in D_{e,\lambda}$ to $y$, and
\begin{equation}\label{cs:eq:bisector-bound}
 \lambda\ge\frac{|x|-|y|}{2}>R_0
\end{equation}
because $|x-y|\le |x|+|y|$.
Thus \eqref{cs:eq:half-space-order} gives $u(t,x)\le u(t,y)$.
Taking the supremum over $|x|>r+2R_0$ and the infimum over $|y|<r$
proves \eqref{cs:eq:intro-reflection} for continuous data.

For a datum satisfying \eqref{cs:eq:data}, choose a nonnegative smooth
function $\varrho$ supported in $B_1$ with integral one and set
$\varrho_\varepsilon(x)=\varepsilon^{-d}\varrho(x/\varepsilon)$ and
$u_{0,\varepsilon}=\varrho_\varepsilon*u_0$.
Let $u_\varepsilon$ solve \eqref{eq:main-equation} with datum
$u_{0,\varepsilon}$.
Then $0\le u_{0,\varepsilon}\le1$,
$\supp u_{0,\varepsilon}\subset\overline{B}_{R_0+\varepsilon}$,
and $u_{0,\varepsilon}\to u_0$ in $L^1$ as
$\varepsilon\downarrow0$.
For each fixed $t$, \eqref{cs:eq:flow-dependence} gives
$u_\varepsilon(t,\cdot)\to u(t,\cdot)$ in $L^1$ and, along a
subsequence, almost everywhere. The continuous-data result gives
$u_\varepsilon(t,x)\le u_\varepsilon(t,y)$ when
$|x|>|y|+2(R_0+\varepsilon)$.
Passing to the subsequence yields $u(t,x)\le u(t,y)$ for almost
every pair with $|x|>|y|+2R_0$.
Its restriction to $\{|x|>r+2R_0\}\times B_r$ proves
\eqref{cs:eq:intro-reflection}.
Finally,
$\essinf_{B_r}u(t,\cdot)\le |B_r|^{-1/p}\|u(t,\cdot)\|_p$,
which gives \eqref{cs:eq:reflection-integral}.
\end{proof}

\section{Decaying stationary solutions}\label{cs:sec:stationary}

Throughout this section, assume \textup{(F1)--(F2)}, \textup{(KM)}
and $F(1)>0$. We use $\mathcal S(J,f)$ and $\mathcal S_0(J,f)$
from \eqref{cs:eq:stationary-sets}.

\subsection{A uniform bound for stationary solutions}

Set
\begin{equation}\label{cs:eq:profile-height}
 a(J,f)=\sup_{U\in\mathcal S_0(J,f)}\|U\|_\infty,\qquad
 \bar a(J,f)=\sup_{U\in\mathcal S(J,f),\ U\not\equiv1}\|U\|_\infty.
\end{equation}

\begin{lemma}\label{cs:lem:uniform-height}
Under \textup{(F1)--(F2)} and \textup{(KM)}, the numbers in
\eqref{cs:eq:profile-height} satisfy
$a(J,f)\le\bar a(J,f)<1$.
\end{lemma}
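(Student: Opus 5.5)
The inequality $a(J,f)\le\bar a(J,f)$ is immediate: every $U\in\mathcal S_0(J,f)$ tends to $0$ at infinity, so $U\not\equiv1$ and $\mathcal S_0(J,f)\subset\{U\in\mathcal S(J,f):U\not\equiv1\}$. For the strict bound $\bar a(J,f)<1$, I would combine three ingredients. The first is the spreading criterion \eqref{eq:measurable-seed-spreading}, which is available under the standing assumptions of this section: (KM) implies (KR), $F(1)>0$, and only the $C^1$ hypothesis (F1) is needed. The second is the uniform equicontinuity of stationary states from Lemma~\ref{lem:steadyregularity}. The third is a propagation-of-closeness-to-one argument based on the identity $H(U)=J*U$.

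Fix $b=(1+\vartheta)/2$ and let $L=L_b$ be the radius from \eqref{eq:measurable-seed-spreading}, valid on any translated ball. Let $U\in\mathcal S(J,f)$ and suppose $U\ge b$ on some ball $B_L(x_0)$. The solution with datum $U$ is $U$ itself, by uniqueness in Lemma~\ref{lem:semiflow}. The criterion then gives $U\to1$ locally uniformly along a constant evolution, so $U\equiv1$. Hence every $U\in\mathcal S(J,f)$ with $U\not\equiv1$ fails to satisfy $U\ge b$ on any ball of radius $L$. The remaining task is to show that $U(x_0)\ge1-\delta$, with $\delta$ small and independent of $U$, forces $U\ge b$ on $B_L(x_0)$.

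\textbf{Local propagation step.} Since $J$ is continuous, radial, nonincreasing, and positive a.e.\ on $B_\rho$, there is $c_0>0$ with $J\ge c_0$ on $B_{\rho/2}$. Suppose $U(x)\ge1-\delta$ at some point $x$. Because $H$ is increasing and $H(1)=1$, we get $(J*U)(x)=H(U(x))\ge1-\|H'\|_{L^\infty(0,1)}\delta$. Using unit mass and $U\le1$, this yields
\[
 \int_{B_{\rho/2}(x)}(1-U(y))\,dy\le c_0^{-1}\|H'\|_{L^\infty(0,1)}\,\delta .
\]
Now use the modulus $\omega(r)=m_f^{-1}\sup_{|h|\le r}\|J(\cdot+h)-J\|_1$ from \eqref{eq:stationary-translation-bound}, which is independent of $U$. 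Suppose $1-U(y)>\varepsilon$ for some $y\in B_{\rho/4}(x)$. Choose $r_\varepsilon\le\rho/4$ with $\omega(r_\varepsilon)\le\varepsilon/2$. Then $1-U>\varepsilon/2$ on $B_{r_\varepsilon}(y)\subset B_{\rho/2}(x)$, so the integral above exceeds $\tfrac{\varepsilon}{2}|B_{r_\varepsilon}|$. Choosing $\delta$ small relative to $\varepsilon$ excludes this. The step therefore produces a function $\varepsilon=\Psi(\delta)$, uniform over $\mathcal S(J,f)$, with $\Psi(\delta)\to0$ as $\delta\to0$, such that $U(x)\ge1-\delta$ implies $U\ge1-\Psi(\delta)$ on $B_{\rho/4}(x)$.

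\textbf{Iteration and conclusion.} I apply the step $N\approx 4L/\rho+1$ times. Each application moves the center by less than $\rho/4$, so any point of $B_L(x_0)$ is reached by a chain of $N$ steps. Choose $\delta_N=1-b$, then $\delta_{k-1}$ with $\Psi(\delta_{k-1})\le\delta_k$, down to $\delta_0>0$. If $U(x_0)\ge1-\delta_0$, the chain gives $U\ge b$ on $B_L(x_0)$, which contradicts $U\not\equiv1$. Consequently every $U\in\mathcal S(J,f)$ with $U\not\equiv1$ satisfies $\sup U<1-\delta_0$, using its continuous representative so that the supremum is attained pointwise up to $\varepsilon$. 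This gives $\bar a(J,f)\le1-\delta_0<1$.

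The main obstacle is making the local step quantitatively uniform in $U$. This rests on the lower bound $J\ge c_0$ near the origin, which comes from (KM) and continuity, and on the $U$-independent modulus of continuity. I would check both carefully before running the finite iteration.
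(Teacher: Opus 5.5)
Your proof is correct, and it follows a genuinely different route from the paper's. The paper argues by contradiction and compactness. It takes $U_n\not\equiv1$ with $\|U_n\|_\infty\to1$ and translates near-maximum points to the origin. The uniform modulus \eqref{cs:eq:uniform-stationary-modulus} and Arzel\`a--Ascoli then give a locally uniform limit $W$ with $H(W)=J*W$ and $W(0)=1$. An open--closed argument, using only $J>0$ a.e.\ on $B_\rho$, gives $W\equiv1$. Hence $W_n\ge b$ on $B_{L_b}$ for large $n$, and the spreading criterion forces $W_n\equiv1$.

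Your argument is a quantitative version of the same two ideas:
\begin{itemize}
\item Your local step ($U(x)\ge1-\delta\Rightarrow U\ge1-\Psi(\delta)$ on $B_{\rho/4}(x)$, uniformly over $\mathcal S(J,f)$) replaces the strong-maximum-principle step that the paper applies to the limit $W$.
\item Your chain of exactly $N$ steps replaces connectedness combined with locally uniform convergence on $B_{L_b}$.
\end{itemize}

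The price is that you need a pointwise bound $J\ge c_0>0$ on a ball centred at the origin. Here you genuinely use the monotonicity in \textup{(KM)}: $J>0$ a.e.\ on $B_\rho$ together with $j$ nonincreasing gives $j>0$ on $[0,\rho)$. The paper's soft argument never uses monotonicity of $j$; a.e.\ positivity near the origin suffices. What you gain is an explicit $\delta_0$, depending only on $c_0$, $\max_{[0,1]}H'$, your modulus $\omega$, $\rho$ and $L_b$, with no limiting procedure.

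A cosmetic point: your conclusion should read $\sup U\le1-\delta_0$, which is all you need for $\bar a(J,f)<1$.
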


\begin{proof}
By \eqref{eq:stationary-translation-bound}, every
$U\in\mathcal S(J,f)$ satisfies
\begin{equation}\label{cs:eq:uniform-stationary-modulus}
 \|\tau_hU-U\|_\infty
 \le m_f^{-1}\|\tau_hJ-J\|_1,\qquad
 (\tau_hv)(x)=v(x+h).
\end{equation}
If $\bar a(J,f)=1$, choose
$U_n\in\mathcal S(J,f)$ with $U_n\not\equiv1$ and
$\|U_n\|_\infty\to1$, and take $x_n$ such that
$U_n(x_n)>\|U_n\|_\infty-1/n$.
Set $W_n(x)=U_n(x+x_n)$.
By \eqref{cs:eq:uniform-stationary-modulus} and Arzel\`a--Ascoli, a subsequence
converges locally uniformly to a continuous $W$ with $0\le W\le1$ and
$W(0)=1$.  Dominated convergence in
$\int J(x-y)W_n(y)\,dy$ then gives $H(W)=J*W$ for each $x$.

At any point $x$ where $W(x)=1$, the stationary equation gives
$\int J(x-y)(1-W(y))\,dy=0$.
Since $J>0$ almost everywhere on $B_\rho$ and $W$ is continuous,
$W=1$ on $B_\rho(x)$.
Thus $\{W=1\}$ is open and closed, and $W(0)=1$ gives
$W\equiv1$ on $\R^d$.
For fixed $b\in(\vartheta,1)$, local uniform convergence gives
$W_n\ge b$ on $B_{L_b}$ for large $n$.
By \eqref{eq:measurable-seed-spreading}, the solution from $W_n$ spreads;
by Lemma~\ref{lem:semiflow}, it is stationary.
Hence $W_n\equiv1$, contradicting $U_n\not\equiv1$ and proving
$\bar a(J,f)<1$.
Since $\mathcal S_0(J,f)\subset\mathcal S(J,f)\setminus\{1\}$,
the definitions give $a(J,f)\le\bar a(J,f)$.
\end{proof}

\subsection{Symmetry and bounds for stationary solutions}

By the unit mass of $J$ and \textup{(F1)}, the constant members
of $\mathcal S(J,f)$ are precisely $0$, $\vartheta$, and $1$.
We begin with the inverse function argument of Chmaj and Ren
\cite[Proposition~3.1]{ChmajRen1999} and its consequences for
bounded stationary solutions on $\R^d$.

For the next lemmas, let $g:[0,1]\to\R$ satisfy
\begin{equation}\label{cs:eq:general-reaction}
 g\in C^1([0,1]),\qquad g(0)=g(1)=0,\qquad g'(0)<0,\qquad
 m_g:=1-\max_{[0,1]}g'>0.
\end{equation}
We write
\begin{equation}\label{cs:eq:general-functions}
 H_g(s)=s-g(s),\qquad F_g(s)=\int_0^s g(r)\,dr.
\end{equation}
In particular, $g=f$ satisfies \eqref{cs:eq:general-reaction}.
The kernel continues to satisfy \textup{(KM)}.

\begin{lemma}\label{cs:lem:stationary-basic}
Let $g$ satisfy \eqref{cs:eq:general-reaction}, and let
$U\in L^\infty(\R^d)$ satisfy
\begin{equation}\label{cs:eq:general-stationary}
 0\le U\le1,\qquad H_g(U)=J*U.
\end{equation}
Then $U$ has a representative in
$C^1(\R^d)\cap W^{1,\infty}(\R^d)$.
Either $U\equiv0$, $U\equiv1$, or $0<U<1$ everywhere.
If $U(x)\to\ell$ as $|x|\to\infty$, then $g(\ell)=0$.
\end{lemma}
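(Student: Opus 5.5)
The plan has three parts, one for each assertion of the lemma: regularity, the trichotomy, and the limit at infinity.

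For regularity, we repeat the argument of Lemma~\ref{lem:steadyregularity} with $H_g$ in place of $H$. By \eqref{cs:eq:general-reaction}, $H_g'\ge m_g>0$ on $[0,1]$, with $H_g(0)=0$ and $H_g(1)=1$. Hence $H_g:[0,1]\to[0,1]$ is a $C^1$ diffeomorphism, and $U=H_g^{-1}(J*U)$ almost everywhere. Under \textup{(KM)} we have $J\in W^{2,1}(\R^d)$, so $J*U\in C^1(\R^d)\cap W^{1,\infty}(\R^d)$ with $\partial_i(J*U)=(\partial_iJ)*U$. Since $H_g^{-1}$ is $C^1$ on $[0,1]$ with derivative at most $m_g^{-1}$, composing gives a representative of $U$ in $C^1(\R^d)\cap W^{1,\infty}(\R^d)$. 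From now on we work with this continuous representative, for which $H_g(U(x))=(J*U)(x)$ holds at every $x$.

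For the trichotomy, we use the strong-maximum-principle argument from the proof of Lemma~\ref{cs:lem:uniform-height}, applied at both ends. Suppose $U(x_0)=0$ at some point. Then $(J*U)(x_0)=H_g(0)=0$, and since $U\ge0$ and $J>0$ almost everywhere on $B_\rho$, continuity forces $U=0$ on $B_\rho(x_0)$. Thus $\{U=0\}$ is open; it is also closed, so by connectedness of $\R^d$ either $U\equiv0$ or $U>0$ everywhere. Symmetrically, suppose $U(x_0)=1$. Then $H_g(1)=1=J*1$ gives $\int J(x_0-y)(1-U(y))\,dy=0$, so $U=1$ on $B_\rho(x_0)$, and the same connectedness argument gives $U\equiv1$ or $U<1$ everywhere. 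Combining the two alternatives yields the trichotomy. (The case where $U$ vanishes somewhere and equals one somewhere is excluded, since each alternative forces $U$ to be constant.)

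For the limit at infinity, suppose $U(x)\to\ell$ as $|x|\to\infty$. Fix $e\in\Sph$ and put $x_n=ne$. Then $U(x_n+\cdot)\to\ell$ pointwise, and $0\le U\le1$. Dominated convergence with the integrable kernel $J$ gives $(J*U)(x_n)\to\ell$ by unit mass, while $H_g(U(x_n))\to H_g(\ell)$. Therefore $\ell-g(\ell)=\ell$, that is, $g(\ell)=0$.

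None of these steps is a real obstacle. The only care needed is to work throughout with the continuous representative so that the pointwise evaluations are legitimate, and to note that positivity of $J$ on $B_\rho$ holds only almost everywhere; this is harmless because the integrand vanishing almost everywhere on $B_\rho(x_0)$ together with continuity of $U$ gives the equality on all of $B_\rho(x_0)$.
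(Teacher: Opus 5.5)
Your proposal is correct and follows the paper's proof step for step. Regularity comes from $U=H_g^{-1}(J*U)$ as in Lemma~\ref{lem:steadyregularity}, the trichotomy from the open--closed argument for $\{U=0\}$ and, using unit mass, for $\{U=1\}$, and the limit at infinity from dominated convergence. The paper also makes one detail explicit that you leave implicit: it extends $g$ (and hence $H_g^{-1}$) in a $C^1$ way beyond $[0,1]$, so that the chain rule applies at points where $J*U$ takes an endpoint value.
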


\begin{proof}
By \eqref{cs:eq:general-reaction}, $H_g'\ge m_g>0$ on $[0,1]$.
The inverse-function argument of Lemma~\ref{lem:steadyregularity}
uses only this bound and $g\in C^1([0,1])$, extended $C^1$ beyond
the endpoints if needed. It therefore gives
$U\in C^1(\R^d)\cap W^{1,\infty}(\R^d)$ and
\begin{equation}\label{cs:eq:stationary-modulus}
 U=H_g^{-1}(J*U),\qquad
 \|\tau_hU-U\|_\infty
 \le m_g^{-1}\|U\|_\infty\|\tau_hJ-J\|_1 .
\end{equation}
If $U(x_0)=0$, then \eqref{cs:eq:general-stationary} gives
$(J*U)(x_0)=0$. Nonnegativity, continuity and positivity of $J$ almost everywhere
on $B_\rho$ imply $U=0$ on $B_\rho(x_0)$.
Thus $\{U=0\}$ is open and closed, and connectedness gives $U\equiv0$.
The same argument applied to $1-U$ shows that $U(x_0)=1$ forces
$U\equiv1$. Finally, if $U(x)\to\ell$ at infinity, dominated
convergence gives $J*U\to\ell$; by
\eqref{cs:eq:general-stationary}, $H_g(\ell)=\ell$ and $g(\ell)=0$.
\end{proof}

\begin{lemma}\label{cs:lem:stationary-regularity}
Under \eqref{cs:eq:reaction-regularity}, \textup{(F1)--(F2)}, and
\textup{(KM)}, every $U\in\mathcal S(J,f)$ belongs to
$C^2(\R^d)\cap W^{2,\infty}(\R^d)$.
\end{lemma}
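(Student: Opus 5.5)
We show $U\in C^2(\R^d)\cap W^{2,\infty}(\R^d)$ by differentiating the identity $U=H^{-1}(J*U)$ from Lemma~\ref{lem:steadyregularity} twice. The one difficulty is that \eqref{cs:eq:reaction-regularity} gives $f\in C^2$ only on the open interval $(0,1)$, not at the endpoints. By Lemma~\ref{cs:lem:stationary-basic} (with $g=f$), $U\in C^1(\R^d)\cap W^{1,\infty}(\R^d)$, and either $U\equiv0$, $U\equiv1$ (both trivially $C^2$), or $0<U<1$ everywhere. We treat the last case.

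\textbf{Step 1.} Under (KM) we have $J\in C^2\cap W^{2,1}$, so $G:=J*U$ satisfies $\partial_{ij}G=(\partial_{ij}J)*U$. Hence $G\in C^2(\R^d)$ and $\|\partial_{ij}G\|_\infty\le\|\partial_{ij}J\|_1$.

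\textbf{Step 2.} Since $0<U<1$ pointwise, $G=H(U)\in(0,1)$ pointwise. On $(0,1)$, $H$ is $C^2$ with $H'\ge m_f$, so $H^{-1}$ is $C^2$ on $(0,1)$. The chain rule then gives $U\in C^2(\R^d)$ with
\[
 \partial_iU=\frac{\partial_iG}{H'(U)},\qquad
 \partial_{ij}U=\frac{\partial_{ij}G}{H'(U)}+\frac{f''(U)\,\partial_iU\,\partial_jU}{H'(U)}.
\]
The second formula uses $(H^{-1})''=-H''/(H')^3$ and $H''=-f''$.

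\textbf{Step 3 (main obstacle).} We need a uniform bound on the second term, because $f''(U)$ may blow up as $U\to0$ or $U\to1$. Where $U\ge1/2$ we would like boundedness of $f''$ on $[1/2,1)$, but \eqref{cs:eq:reaction-regularity} only controls $s|f''(s)|$ for $s\le1/2$. We therefore split into two regions.

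\emph{Region $U\le1/2$.} Here $|f''(U)|\le C/U$, so it suffices to show $|\nabla U|^2\le C'U$. We prove this by a gradient estimate. Write $|\partial_iG|=|(\partial_iJ)*U|$. Since $J$ is radially nonincreasing, $|\nabla J|$ is dominated by the relevant structure of $J$. The plan is to bound $|\partial_iG(x)|\le C\,G(x)^{1/2}$, using that $G\in C^2$ with bounded Hessian and $G>0$. Indeed, for a nonnegative function $\psi$ with $\|D^2\psi\|_\infty\le M$, the classical inequality $|\nabla\psi|^2\le2M\psi$ holds: apply Taylor's formula at $x-t\nabla\psi(x)/|\nabla\psi(x)|$ together with $\psi\ge0$. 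This gives $|\nabla G|^2\le 2\max_{i,j}\|\partial_{ij}J\|_1\,G$. Then $|\nabla U|^2\le m_f^{-2}|\nabla G|^2\le CG=CH(U)\le CU$, since $H(s)\le s$ on the region where $f\ge0$. More simply, $H(s)\le\max H'\cdot s$ on $[0,1]$. Hence $|f''(U)||\nabla U|^2\le C$ on $\{U\le1/2\}$.

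\emph{Region $U\ge1/2$.} By Lemma~\ref{cs:lem:uniform-height}, either $U\equiv1$ or $U\le\bar a(J,f)<1$. So on $\{U\ge1/2\}$ the values of $U$ lie in the compact set $[1/2,\bar a]\subset(0,1)$, where $f''$ is bounded by continuity.

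Combining the two regions gives $\partial_{ij}U\in L^\infty$, hence $U\in W^{2,\infty}(\R^d)$. The delicate point is the nonnegative-function gradient inequality in Step 3 near $U=0$; if it failed for $G$, we would apply the same Taylor argument directly to $U$, using $\partial_{ij}U$ on the set $\{U<1/2\}$ in a bootstrap, but the route through $G$ avoids circularity.
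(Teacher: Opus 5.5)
Your proof is correct and follows the paper's argument essentially step for step: the trichotomy from Lemma~\ref{cs:lem:stationary-basic}, the bound $U\le\bar a(J,f)<1$ from Lemma~\ref{cs:lem:uniform-height}, and the interior inverse function theorem on $(0,1)$. You then control $f''(U)|\nabla U|^2$ through the Taylor inequality $|\nabla Q|^2\le 2BQ$ for the nonnegative function $Q=J*U$, combined with $H(U)\le L_H U$. Your split into $\{U\le 1/2\}$ and $\{U\ge 1/2\}$ is only an unpacked form of the paper's single bound $\sup_{0<s\le\bar a(J,f)}s|f''(s)|<\infty$. One harmless slip: the constant in your Taylor step must bound the Hessian in operator norm (for instance $d\max_{i,j}\|\partial_{ij}J\|_1$), not its largest entry.
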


\begin{proof}
For $U\equiv0$ or $U\equiv1$, the assertion is immediate.
Otherwise Lemmas~\ref{cs:lem:stationary-basic} and
\ref{cs:lem:uniform-height} give
$0<U\le\bar a(J,f)<1$.
Since $f\in C^2((0,1))$, the interior inverse function theorem
applied to $U=H^{-1}(J*U)$ gives $U\in C^2(\R^d)$.
Set $Q=J*U$, and let
$B=\sup_x\|D^2Q(x)\|_{\mathrm{op}}$ and
$L_H=\max_{[0,1]}H'$, where $\|\cdot\|_{\mathrm{op}}$ is the
Euclidean operator norm.
Differentiation of $H(U)=Q$ gives
\begin{equation}\label{cs:eq:stationary-second-derivative}
 \partial_iU=\frac{\partial_iQ}{H'(U)},\qquad
 \partial_{ij}U=
 \frac{\partial_{ij}Q+f''(U)\partial_iU\,\partial_jU}{H'(U)}.
\end{equation}
The kernel assumptions bound $B$ and the first derivatives of $Q$.
If $B=0$, then $Q$ is bounded and affine, hence constant.
For $B>0$, Taylor's inequality at
$y=x-B^{-1}\nabla Q(x)$ gives
$0\le Q(y)\le Q(x)-|\nabla Q(x)|^2/(2B)$.
Hence $|\nabla Q(x)|^2\le2BQ(x)$.
Since $Q=H(U)$, $H(0)=0$, and $m_f\le H'\le L_H$, it follows that
\begin{equation}\label{cs:eq:stationary-gradient-bound}
 |\nabla U|^2
 \le2Bm_f^{-2}H(U)
 \le2BL_Hm_f^{-2}U .
\end{equation}
By \eqref{cs:eq:reaction-regularity}, continuity of $f''$ in $(0,1)$,
and $\bar a(J,f)<1$,
\[
 \sup_{0<s\le\bar a(J,f)}s|f''(s)|<\infty .
\]
Together with \eqref{cs:eq:stationary-gradient-bound}, this bounds
$|f''(U)||\nabla U|^2$; by \eqref{cs:eq:stationary-second-derivative}
and $H'\ge m_f$, the first and second derivatives of $U$ are bounded.

\end{proof}

\begin{lemma}\label{cs:lem:stationary-integrability}
Let $U$ satisfy the assumptions of Lemma~\ref{cs:lem:stationary-basic}
and $U(x)\to0$ as $|x|\to\infty$.
Then
\begin{equation}\label{cs:eq:stationary-integrability}
 U\in L^1(\R^d)\cap H^1(\R^d),\qquad
 \int_{\R^d}|x|^2U(x)\,dx<\infty,\qquad
 \int_{\R^d}g(U(x))\,dx=0.
\end{equation}
Moreover,
\begin{equation}\label{cs:eq:stationary-energy}
 \int_{\R^d}U(x)g(U(x))\,dx
 =\frac12\iint_{\R^d\times\R^d}
 J(x-y)(U(x)-U(y))^2\,dx\,dy.
\end{equation}
For $U\not\equiv0$, the quantity in \eqref{cs:eq:stationary-energy}
is positive.
\end{lemma}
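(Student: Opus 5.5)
Near infinity we use the linearization at zero: since $U\to0$ and $g'(0)<0$, the reaction is strictly dissipative there, which gives exponential decay. Integrability then follows, and the two identities come from integrating the equation, against $1$ and against $U$.

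\emph{Decay.} Set $\mu=-g'(0)/2>0$ and choose $\delta>0$ with $g(s)\le-\mu s$ on $[0,\delta]$. Choose $R_1$ with $U\le\delta$ outside $B_{R_1}$. Then on $|x|>R_1$,
\[
(1+\mu)U\le H_g(U)=J*U .
\]
So $U$ is a subsolution of a linear nonlocal problem with a decay factor $(1+\mu)^{-1}<1$. We compare with a barrier $\psi(x)=Ce^{-\lambda|x|}$, $\lambda>0$ small. By (KM), $J$ is bounded and radial, and (KR) gives $\int|z|^2J<\infty$. We do not have exponential moments, so the barrier $e^{-\lambda|x|}$ may fail.

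We therefore use a polynomial barrier instead, $\psi(x)=C(1+|x|^2)^{-k}$. We need $J*\psi\le(1+\mu)\psi$ for $|x|$ large. This holds if $J*\psi/\psi\to1$, and that follows from a Taylor expansion using the second moment together with the radial monotonicity of $\psi$ on the region where $J$ has mass.

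The comparison itself runs as follows. Let $M=\sup_{|x|>R}(U-\psi)$ and suppose $M>0$. If the supremum is attained at some $x_0$, then
\[
(1+\mu)U(x_0)\le J*U(x_0)\le J*\psi(x_0)+M\le(1+\mu)\psi(x_0)+M ,
\]
which gives $\mu M\le0$, a contradiction. If the supremum is not attained, $U-\psi\to0$ at infinity forces $M\le0$ anyway. Inside $B_R$ we take $C$ large. This yields $U\le C(1+|x|^2)^{-k}$ for every $k$ for which the barrier inequality holds.

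\emph{Integrability.} Choose $k$ with $2k>d+2$. The polynomial bound then gives $U\in L^1$ and $\int|x|^2U<\infty$. For $H^1$, Lemma~\ref{cs:lem:stationary-basic} gives $\nabla U=\nabla(J*U)/H_g'(U)$ with $H_g'\ge m_g$, so
\[
|\nabla U|\le m_g^{-1}|\nabla J|*U ,
\]
which lies in $L^2$ by Young's inequality ($\nabla J\in L^1$, $U\in L^2$).

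\emph{Identities.} Since $U\in L^1$ and $|g(U)|\le CU$, integrating $g(U)=U-J*U$ over $\R^d$ gives $\int g(U)=\int U-\int J*U=0$ by Fubini and unit mass. Multiplying by $U\in L^2$ and integrating gives
\[
\int Ug(U)=\langle U-J*U,U\rangle ,
\]
and this equals the double integral by the symmetric-form identity \eqref{eq:energy-quadratic-form}, which uses the evenness of $J$.

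\emph{Positivity.} If the double integral vanished, then $U(x)=U(y)$ for a.e.\ pair with $|x-y|<\rho$, since $J>0$ on $B_\rho$. So $U$ would be locally constant, hence constant, hence $0$ because $U$ decays.

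The main obstacle is the barrier step, namely proving $J*\psi\le(1+\mu)\psi$ asymptotically for a polynomial $\psi$ using only the second moment. I would split $J*\psi(x)$ into the region $|y|\le|x|/2$ (expand, gaining $1+o(1)$) and the complementary far region. On the far region $\psi(y)$ is bounded and the $J$-mass of the region is at most $4|x|^{-2}\int|z|^2J$ by Chebyshev. This caps the usable exponent at $k\le1$, so it may not give $2k>d+2$.

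If that happens, I would instead iterate. The inequality $U\le(1+\mu)^{-1}J*U$ holds outside $B_{R_1}$, and iterating it expresses $U$ as a geometric sum of $J^{*n}$ applied to $U\mathbf 1_{B_{R_1}}$, which is controlled by the moments of $J^{*n}$. The second-moment bound then follows from $\sum(1+\mu)^{-n}n<\infty$, and $L^1$ from $\sum(1+\mu)^{-n}<\infty$.
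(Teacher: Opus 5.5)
Your fallback iteration is correct and is essentially the paper's proof. The paper makes the exterior inequality global as $(1+a)U\le J*U+b$, with $b=(aU+g(U))_+$ supported in $\overline B_R$; this is equivalent to your $U\le qJ*U+U\mathbf 1_{B_{R_1}}$ with $q=(1+\mu)^{-1}$. It then iterates to $U\le\sum_{n\ge0}q^{n+1}J^{*n}*b$, and gets $L^1$ and the second moment from $M_2(J^{*n}*b)=M_2(b)+nM_2(J)\|b\|_1$, which uses the evenness of $J$. Your $H^1$ bound, the two integral identities and the positivity step then match the paper's.

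The polynomial barrier, however, is a definite dead end, not just a possibly insufficient route. For nonzero $U$ one has $U\ge c\,j(|x|+1)$ with $c>0$, and a kernel satisfying (KM) may have $j(r)\sim r^{-d-2}(\log r)^{-2}$. For such a kernel no power-law bound on $U$ can give $\int_{\R^d}|x|^2U\,dx<\infty$. You should drop the barrier and present the iteration as the proof.
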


\begin{proof}
Since $g'(0)<0$, choose $a,\delta>0$ such that
$g(s)\le-as$ on $[0,\delta]$.
Take $R$ so large that $U\le\delta$ outside $B_R$.
Writing $s_+=\max\{s,0\}$ for $s\in\R$, set
$q=(1+a)^{-1}$ and $b(x)=(aU(x)+g(U(x)))_+$.
Then $b$ is bounded, nonnegative, and supported in $\overline B_R$.
By \eqref{cs:eq:general-stationary},
$(1+a)U\le J*U+b$.
Iteration gives, for every positive integer $N$,
\[
 U\le q^NJ^{*N}*U+\sum_{n=0}^{N-1}q^{n+1}J^{*n}*b,
\]
where $J^{*n}$ is the $n$-fold convolution of $J$ and
$J^{*0}*b=b$.
Since the first term has $L^\infty$ norm at most $q^N\|U\|_\infty$,
letting $N\to\infty$ and applying Tonelli's theorem gives
\begin{equation}\label{cs:eq:stationary-resolvent}
 U\le\sum_{n=0}^\infty q^{n+1}J^{*n}*b,\qquad
 \|U\|_1\le a^{-1}\|b\|_1.
\end{equation}
In particular, $U\in L^2$ because $0\le U\le1$.

For a nonnegative function $h$, write
$M_2(h)=\int_{\R^d}|x|^2h(x)\,dx$.
Evenness and the finite second moment of $J$ give
$\int xJ(x)\,dx=0$.
Expanding $|x+y|^2$ in the convolution integral then yields
\[
 M_2(J^{*n}*b)=M_2(b)+nM_2(J)\|b\|_1.
\]
Multiplication of \eqref{cs:eq:stationary-resolvent} by $|x|^2$
and summation of the geometric series prove
\begin{equation}\label{cs:eq:stationary-moment}
 M_2(U)\le a^{-1}M_2(b)+a^{-2}M_2(J)\|b\|_1<\infty.
\end{equation}

By Young's inequality, $U\in L^2$ and $J\in W^{1,1}$ give
$J*U\in H^1$. Applying the Sobolev chain rule to
$U=H_g^{-1}(J*U)$, with a Lipschitz extension of $H_g^{-1}$, gives $U\in H^1$ and
\begin{equation}\label{cs:eq:stationary-derivative}
 H_g'(U)\partial_iU=J*\partial_iU
 \qquad(1\le i\le d).
\end{equation}
Here $H_g^{-1}(0)=0$ ensures square integrability after composition.
Since $|g(U)|\le C U$, integration of
\eqref{cs:eq:general-stationary} is justified by
\eqref{cs:eq:stationary-resolvent}; the unit mass of $J$ gives
$\int g(U)=0$.
Multiplying the same equation by $U$ and using the symmetry of $J$
proves \eqref{cs:eq:stationary-energy}.
If the right side vanished, continuity of $U$ and positivity of $J$
near zero would give $U(x)=U(y)$ for $|x-y|<\rho$.
Connectedness and decay at infinity would then give $U\equiv0$.
\end{proof}

\begin{lemma}\label{cs:lem:halfspace-positive}
Fix a unit vector $e$ and $\lambda\in\R$, and use
$D_{e,\lambda}$, $x^{e,\lambda}$, and $K_{e,\lambda}$
from \eqref{cs:eq:reflection-def}--\eqref{cs:eq:reflection-kernel}.
Let $w$ be bounded, continuous, and nonnegative on $D_{e,\lambda}$.
Suppose that $a:D_{e,\lambda}\to(0,\infty)$ satisfies
\begin{equation}\label{cs:eq:halfspace-stationary}
 a(x)w(x)=\int_{D_{e,\lambda}}K_{e,\lambda}(x,y)w(y)\,dy
 \qquad(x\in D_{e,\lambda}).
\end{equation}
Then either $w\equiv0$ or $w>0$ throughout $D_{e,\lambda}$.
\end{lemma}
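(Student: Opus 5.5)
The plan is a strong-maximum-principle argument that spreads the positive set of $w$ by the local positivity of $K_{e,\lambda}$. Suppose $w\not\equiv0$. Since $w\ge0$, the zero set $Z=\{x\in D_{e,\lambda}:w(x)=0\}$ is a proper closed subset of $D_{e,\lambda}$, and $D_{e,\lambda}$ is connected. It therefore suffices to show that $Z$ is open. Then $Z=\emptyset$, that is, $w>0$ throughout $D_{e,\lambda}$.

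Fix $x_0\in Z$. Since $a(x_0)>0$ and $w(x_0)=0$, \eqref{cs:eq:halfspace-stationary} gives
\[
\int_{D_{e,\lambda}}K_{e,\lambda}(x_0,y)\,w(y)\,dy=0 .
\]
By \eqref{cs:eq:reflection-kernel} the integrand is nonnegative, so $K_{e,\lambda}(x_0,y)w(y)=0$ for a.e. $y\in D_{e,\lambda}$. The key step is to show that
\[
K_{e,\lambda}(x_0,y)=j(|x_0-y|)-j(|x_0-y^{e,\lambda}|)>0
\]
for a.e. $y$ in a small ball $B_r(x_0)\subset D_{e,\lambda}$. For $x_0,y\in D_{e,\lambda}$ we have the strict inequality $|x_0-y|<|x_0-y^{e,\lambda}|$, because
\[
|x_0-y^{e,\lambda}|^2-|x_0-y|^2=4(x_0\cdot e-\lambda)(y\cdot e-\lambda)>0 .
\]
Here $j$ is only nonincreasing under \textup{(KM)}, so strictness of this distance inequality does not by itself make $K_{e,\lambda}$ positive. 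To get positivity, I would take $r$ small. For $y\in B_r(x_0)$ we have $|x_0-y|<r$, while
\[
|x_0-y^{e,\lambda}|\ge 2(x_0\cdot e-\lambda)-r .
\]
Set $\delta=x_0\cdot e-\lambda>0$. If $r<\min\{\delta/2,\rho\}$ and $r$ is also below the distance from $x_0$ to $\partial D_{e,\lambda}$, then $|x_0-y|<\rho$, and
\[
|x_0-y^{e,\lambda}|\ge 2\delta-r>r>|x_0-y|.
\]
I then need to show that $j(s)>j(s')$ whenever $s<\rho\le$ (or $<$) $s'$, or more generally whenever $s$ is small and $s'$ is large. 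Positivity of $J$ near zero gives $j(s)>0$ for $s<\rho$, but it does not force $j(s')<j(s)$. For example, if $j$ is constant on $[0,2\delta]$, then $K_{e,\lambda}$ vanishes near $x_0$.

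This is the main obstacle, and I would handle it with a chain argument instead of pointwise positivity near $x_0$. Let
\[
s_0=\sup\{s:j(s)=j(0)\}.
\]
Radial monotonicity, $J\in L^1$ and continuity under \textup{(KR)} give $s_0<\infty$ and $j(s)<j(s_0)$ for $s>s_0$. For $y\in D_{e,\lambda}$ with $|x_0-y|\le s_0$ and $|x_0-y^{e,\lambda}|>s_0$, we get $K_{e,\lambda}(x_0,y)=j(|x_0-y|)-j(|x_0-y^{e,\lambda}|)>0$. So $w$ vanishes a.e. on the set
\[
A(x_0)=\{y\in D_{e,\lambda}:|x_0-y|<s_0,\ |x_0-y^{e,\lambda}|>s_0\},
\]
and by continuity of $w$ it vanishes on the closure of $A(x_0)$ within $D_{e,\lambda}$. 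I would then show that iterating $x_0\mapsto A(x_0)$ eventually covers all of $D_{e,\lambda}$.

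\begin{itemize}
\item Moving \emph{toward} the hyperplane, $A(x_0)$ contains the points $y$ near $x_0$ with $y\cdot e\le x_0\cdot e$ and $|x_0-y|<s_0$, once $2\delta$ is compared with $s_0$.
\item Moving \emph{away} from the hyperplane, $A(x_0)$ contains deeper points, since the reflected distance grows.
\end{itemize}

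If $s_0=0$, that is, $j$ is strictly decreasing near $0$, the ball argument above already works directly: $j(|x_0-y|)>j(|x_0-y^{e,\lambda}|)$ for $y$ near $x_0$. So I would first try to verify that \textup{(KM)} with $J\in C^2$ and $J>0$ on $B_\rho$ yields either $s_0=0$ or a workable chain covering.

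If neither route closes, I would fall back on the following argument. Set
\[
\mu=\inf_{x\in D_{e,\lambda},\,w(x)>0}(x\cdot e-\lambda).
\]
At a zero $x_0$ with the positive set nearby, choose $y$ with $w(y)>0$ close to $x_0$ and $|x_0-y|$ strictly less than $|x_0-y^{e,\lambda}|$ by a definite margin. Then use that $j$ cannot be constant on an interval reaching $[\,|x_0-y|,|x_0-y^{e,\lambda}|\,]$ for all such pairs, since $j$ is nonincreasing with $\int j<\infty$, so it must decrease somewhere. This contradicts $K_{e,\lambda}(x_0,\cdot)w=0$ a.e.
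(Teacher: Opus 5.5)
Your proposal has a genuine gap in the only nontrivial case, when $j$ has a plateau, i.e.\ $s_0=\sup\{s:j(s)=j(0)\}>0$. You correctly isolate $s_0$ (the paper's $b_J$) and correctly see that $K_{e,\lambda}(x_0,\cdot)$ can vanish identically near $x_0$. Your case $s_0=0$ is fine. The reason is not that $j$ is strictly decreasing near $0$; it is only that $j(2\delta)<j(0)$, so continuity of $J$ gives $K_{e,\lambda}(x_0,y)>0$ for $y$ near $x_0$.

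For $s_0>0$, however, the ``iteration of $x_0\mapsto A(x_0)$'' is only asserted. Your first bullet also fails exactly where it is needed. If $\delta=x_0\cdot e-\lambda\le s_0/2$, then for $y$ near $x_0$ you have $|x_0-y^{e,\lambda}|\approx2\delta\le s_0$. Hence $K_{e,\lambda}(x_0,y)=j(0)-j(0)=0$ on a whole neighborhood of $x_0$, and no local argument can show that the zero set is open at such shallow points. Your fallback fails for the same reason. For $y$ close to $x_0$ the interval $[\,|x_0-y|,|x_0-y^{e,\lambda}|\,]$ lies inside the plateau $[0,s_0]$. Integrability of $j$ only says that $j$ decreases somewhere, not on that interval.

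The missing idea is a two-region argument that uses the symmetry $K_{e,\lambda}(x,y)=K_{e,\lambda}(y,x)$; this is exactly the paper's proof.
\begin{itemize}
\item \emph{Deep region.} On the connected half-space $D=\{x:x\cdot e-\lambda>s_0/2\}$ one has $K_{e,\lambda}(x,x)=j(0)-j(2(x\cdot e-\lambda))>0$. Continuity gives local positivity, so the zero set of $w$ in $D$ is open and closed in $D$: either empty or all of $D$.
\item \emph{The link.} Take a shallow point $x$ with $\delta_x=x\cdot e-\lambda\in(0,s_0/2]$, and put $y=x+(s_0-\delta_x)e$. Then $y$ lies in $D$, at depth $s_0$, and
\[
K_{e,\lambda}(x,y)=j(s_0-\delta_x)-j(s_0+\delta_x)=j(0)-j(s_0+\delta_x)>0 .
\]
By continuity, $K_{e,\lambda}>0$ on a neighborhood of $(x,y)$.
\item \emph{Passing positivity both ways.} Evaluating the equation at $y$ shows that positivity of $w$ at any shallow point forces $w(y)>0$ at a point of $D$, hence $w>0$ on all of $D$. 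Evaluating it at $x$ shows that positivity on $D$ forces $w(x)>0$ at every shallow point.
\end{itemize}
Your set $A(x_0)$ already contains these links: it contains the points $x_0+te$ with $s_0-2\delta<t<s_0$, which lie in $D$. So the argument is within reach, but as written the proposal does not establish the lemma.
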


\begin{proof}
After a translation and an orthogonal change of coordinates,
take $\lambda=0$ and $e=(0,\ldots,0,1)$.
Because $J(x)=j(|x|)$ has integral one and $j$ is continuous
and nonincreasing, we have
$j(0)>0$ and $j(r)\to0$ as $r\to\infty$.
Consequently $b_J=\sup\{r\ge0:j(r)=j(0)\}$ is finite.
On the connected set $D=\{x:x_d>b_J/2\}$, we have
\[
 K_{e,0}(x,x)=j(0)-j(2x_d)>0.
\]
If $w(x)=0$ at a point of $D$, equation
\eqref{cs:eq:halfspace-stationary} and continuity of its nonnegative
integrand force $w$ to vanish in a neighborhood of that point.
Thus $\{w=0\}\cap D$ is open and closed in $D$, hence empty or equal to $D$.

For $b_J=0$, this already proves the assertion.
For $b_J>0$ and $0<x_d\le b_J/2$, choose
$y=(x_1,\ldots,x_{d-1},b_J)$.
By the definition of $b_J$,
\begin{equation}\label{cs:eq:plateau-connection}
 K_{e,0}(x,y)=j(b_J-x_d)-j(b_J+x_d)>0.
\end{equation}
The kernel $K_{e,0}$ is symmetric.
Hence, if $w$ is positive at any point of the strip
$0<x_d\le b_J/2$, continuity and
\eqref{cs:eq:plateau-connection}, used in the equation at $y$,
give $w(y)>0$ at a point of $D$.
The zero set in $D$ is then empty.
Conversely, positivity on $D$ and
\eqref{cs:eq:plateau-connection}, used in the equation at $x$,
give positivity at every point of the strip.
These two implications prove the claim.
\end{proof}

\begin{lemma}\label{cs:lem:stationary-symmetry}
Let $g$ satisfy \eqref{cs:eq:general-reaction}.
Every nonzero solution $U$ of \eqref{cs:eq:general-stationary} with
$U(x)\to0$ as $|x|\to\infty$ has the form
\begin{equation}\label{cs:eq:stationary-radial}
 U(x)=V(|x-x_*|),\qquad
 0<V(r)<1,\qquad V'(r)<0\quad(r>0),
\end{equation}
for a unique point $x_*\in\R^d$.
\end{lemma}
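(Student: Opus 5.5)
The plan is the method of moving planes for the stationary equation, in the integral form already prepared in Lemma~\ref{cs:lem:halfspace-positive}. Fix a unit vector $e$ and, for $\lambda\in\R$, set $w_\lambda(x)=U(x^{e,\lambda})-U(x)$ on $D_{e,\lambda}$. Splitting the convolution into the two half-spaces exactly as in \eqref{cs:eq:reflection-evolution} gives, for $x\in D_{e,\lambda}$,
\[
 a_\lambda(x)w_\lambda(x)=\int_{D_{e,\lambda}}K_{e,\lambda}(x,y)w_\lambda(y)\,dy,\qquad
 a_\lambda(x)=\int_0^1H_g'\bigl(U(x)+sw_\lambda(x)\bigr)\,ds\ge m_g>0 .
\]
The aim is to show that $w_\lambda\ge0$ on $D_{e,\lambda}$ for all $\lambda\ge\lambda_e$, where $\lambda_e$ is the critical position, and that $w_{\lambda_e}\equiv0$.

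\emph{Step 1: start the planes from far away.} Let $x_0$ be a maximum point of $U$; it exists because $U$ is continuous, decays at infinity, and is positive by Lemma~\ref{cs:lem:stationary-basic}. Choose $\delta>0$ with $g'<0$ on $[0,\delta]$, so that $H_g'>1$ there. Suppose $w_\lambda$ has a negative infimum on $D_{e,\lambda}$. By decay it is attained at some $\bar x$, and there $U(\bar x)>U(\bar x^{e,\lambda})\ge0$. If $\lambda$ is large, $U(\bar x)\le\delta$, so $a_\lambda(\bar x)>1$. Since $K_{e,\lambda}\ge0$ and $\int_{D_{e,\lambda}}K_{e,\lambda}(\bar x,y)\,dy\le1$, the equation gives
\[
 a_\lambda(\bar x)w_\lambda(\bar x)\ge w_\lambda(\bar x)\int_{D_{e,\lambda}} K_{e,\lambda}(\bar x,y)\,dy\ge w_\lambda(\bar x),
\]
which is a contradiction because $w_\lambda(\bar x)<0$ and $a_\lambda(\bar x)>1$. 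The same minimum-point argument works for any $\lambda$ whenever the negative minimum sits where $U\le\delta$. In general one only knows $a_\lambda\ge m_g$, which is below $1$, so the full argument needs this localization.

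\emph{Step 2: continuity down to the critical position.} Set $\lambda_e=\inf\{\mu:w_\lambda\ge0\text{ on }D_{e,\lambda}\ \forall\lambda\ge\mu\}$. This is finite, since $w_\lambda\ge0$ cannot hold for very negative $\lambda$ by decay together with $U\not\equiv0$. At $\lambda_e$, Lemma~\ref{cs:lem:halfspace-positive} gives either $w_{\lambda_e}\equiv0$ or $w_{\lambda_e}>0$ on $D_{e,\lambda_e}$. In the positive case, take $\lambda_k\uparrow\lambda_e$ with negative minima at points $x_k$. A negative minimum must lie where $U(x_k)>\delta$, which is a compact region, so the $x_k$ stay bounded. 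A limit point $\bar x$ satisfies $w_{\lambda_e}(\bar x)=0$, which forces $\bar x\in\partial D_{e,\lambda_e}$.

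The main obstacle is excluding this boundary accumulation without a Hopf lemma. I would first pass to the limit in the minimality relation. At $x_k$, $w_{\lambda_k}(x_k)<0$ and $x_k$ is a minimum, so $\partial_e w_{\lambda_k}(x_k)=0$ when $x_k$ is interior, which it is because $w$ vanishes on the plane. In the limit this gives $\partial_ew_{\lambda_e}(\bar x)=0$, that is, $U$ has vanishing $e$-derivative at the boundary point. Differentiating the integral equation in the normal direction at the boundary, where $w_{\lambda_e}=0$, yields
\[
 a\,\partial_e w_{\lambda_e}(\bar x)=\int_{D_{e,\lambda_e}} \partial_{x_e}K_{e,\lambda_e}(\bar x,y)\,w_{\lambda_e}(y)\,dy .
\]
The right side is strictly positive: $K_{e,\lambda}(x,y)$ vanishes for $x$ on the plane and increases into the half-space by the monotonicity of $j$, and it does so strictly on a set of positive measure by the plateau argument of Lemma~\ref{cs:lem:halfspace-positive}. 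This contradiction gives $w_{\lambda_e}\equiv0$.

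\emph{Step 3: conclusion.} For every $e$, $U$ is symmetric about the hyperplane $x\cdot e=\lambda_e$ and is nonincreasing in $e$ beyond it. The reflections in the coordinate directions determine a point $x_*$, and every hyperplane of symmetry passes through $x_*$, because otherwise $U$ would be invariant under a translation and could not decay. Hence $U(x)=V(|x-x_*|)$ with $V$ nonincreasing, and $0<V<1$ by Lemma~\ref{cs:lem:stationary-basic}. Strict decrease also comes from Lemma~\ref{cs:lem:halfspace-positive}: $w_\lambda>0$ for $\lambda>\lambda_e$, since $w_\lambda\equiv0$ would contradict decay. Then $V'(r)<0$ follows from the derivative argument of Step 2, which gives $\partial_eU<0$ on $\{x\cdot e>\lambda_e\}$. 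Uniqueness of $x_*$ holds because a nonconstant radial function cannot be radial about two distinct centers together with decay.
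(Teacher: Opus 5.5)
Your argument is correct, but your Step~2 takes a genuinely different route from the paper's. The paper rules out $w_{\lambda_*}>0$ with a quantitative narrow-region estimate. For $\lambda$ slightly below $\lambda_*$, the negative part $\zeta=(-w_\lambda)_+$ can only be supported in $\{|x|\ge R\}$, where $a_\lambda\ge1+\gamma$, or in a thin slab $E$ of small measure. The bounds $\gamma\|\zeta\|_2^2\le(1+\gamma-m_g)\int_E\zeta^2$ and $\|\zeta\|_\infty\le m_g^{-1}\|J\|_2\|\zeta\|_2$ then force $\zeta=0$.

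You argue qualitatively instead:
\begin{itemize}
\item negative minima are confined to the compact set $\{U>\delta\}$;
\item they accumulate on the critical plane, where tangency gives $\partial_eU(\bar x)=0$;
\item you then differentiate the reflected identity at the plane, which works as a nonlocal substitute for Hopf's lemma.
\end{itemize}
That identity, $H_g(U(x^{e,\lambda}))-H_g(U(x))=\int_{D_{e,\lambda}}K_{e,\lambda}(x,y)w_\lambda(y)\,dy$, actually holds for every $x\in\R^d$, so a two-sided derivative is available. At the plane one has $\partial_{x_e}K_{e,\lambda}(\bar x,y)=-2j'(|\bar x-y|)(y\cdot e-\lambda)/|\bar x-y|\ge0$.

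The two routes buy different things. The paper's step uses only $J\in L^2$ and the sign of $K_{e,\lambda}$. Yours uses $U\in C^1$, $\nabla J\in L^1$, and strict negativity of $j'$ on a nonempty open set of radii. In return, your Step~3 is shorter: the same computation at every plane with $\lambda>\lambda_e$ gives $\partial_eU<0$ on $\{x\cdot e>\lambda_e\}$ directly. The paper instead needs a separate application of Lemma~\ref{cs:lem:halfspace-positive} to $-\partial_iU$.

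Two details should be tightened.
\begin{itemize}
\item Strict positivity of the integral does not come from the plateau argument. It comes from $j$ being $C^1$, nonincreasing and nonconstant (since $j(0)>0$ and $j(r)\to0$), so $j'<0$ on an open set of radii, combined with $w_{\lambda_e}>0$ on all of $D_{e,\lambda_e}$.
\item Your claim that every symmetry hyperplane passes through $x_*$ needs one more line. The product of the $d$ coordinate reflections is $x\mapsto2x_*-x$. This map sends a symmetry plane missing $x_*$ to a distinct parallel symmetry plane, and the two reflections compose to a nonzero translation, contradicting decay. Alternatively, use the maximum point, as the paper does.
\end{itemize}
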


\begin{proof}
By Lemmas~\ref{cs:lem:stationary-basic} and
\ref{cs:lem:stationary-integrability},
$0<U<1$, $U\in C^1\cap L^2$, and $U$ is uniformly continuous.
Fix a unit vector $e$.
For $\lambda\in\R$ and $x\in D_{e,\lambda}$, put
\begin{equation}\label{cs:eq:stationary-reflection}
 w_\lambda(x)=U(x^{e,\lambda})-U(x),\qquad
 a_\lambda(x)=\int_0^1
 H_g'\bigl(U(x)+s w_\lambda(x)\bigr)\,ds.
\end{equation}
Splitting the convolution across the reflecting hyperplane, as in
\eqref{cs:eq:reflection-evolution}, gives
\begin{equation}\label{cs:eq:stationary-reflection-equation}
 a_\lambda(x)w_\lambda(x)
 =\int_{D_{e,\lambda}}K_{e,\lambda}(x,y)w_\lambda(y)\,dy,
 \qquad a_\lambda\ge m_g.
\end{equation}

Since $H_g'(0)=1-g'(0)>1$, choose $\gamma,\delta>0$ with
\begin{equation}\label{cs:eq:stationary-tail-coefficient}
 H_g'(s)\ge1+\gamma\qquad(0\le s\le\delta),
\end{equation}
and take $R$ so large that $U\le\delta$ on $\R^d\setminus B_R$.
For $\lambda\ge R$, if $w_\lambda(x)<0$ then
$U(x^{e,\lambda})<U(x)\le\delta$, so
$a_\lambda(x)\ge1+\gamma$.
Let $\zeta_\lambda=(-w_\lambda)_+$ on $D_{e,\lambda}$ and extend
it by zero to $\R^d$.
By \eqref{cs:eq:stationary-reflection-equation} and
$K_{e,\lambda}\ge0$,
\begin{equation}\label{cs:eq:stationary-negative-part}
 a_\lambda\zeta_\lambda
 \le \int_{D_{e,\lambda}}
       K_{e,\lambda}(\,\cdot\,,y)\zeta_\lambda(y)\,dy
 \le J*\zeta_\lambda
 \qquad\text{on }D_{e,\lambda}.
\end{equation}
At points where $\zeta_\lambda>0$, the coefficient is at least
$1+\gamma$.
Taking the supremum in
\eqref{cs:eq:stationary-negative-part} gives
$(1+\gamma)\|\zeta_\lambda\|_\infty
\le\|\zeta_\lambda\|_\infty$.
It follows that $w_\lambda\ge0$ for every $\lambda\ge R$.

Define
\[
 \lambda_*=\inf\{\lambda\in\R:
  w_\mu\ge0\text{ on }D_{e,\mu}\text{ for every }\mu\ge\lambda\}.
\]
The set is nonempty by the preceding argument. If $\lambda_*=-\infty$,
the reflection order would make $U$ nonincreasing on every line
parallel to $e$; its zero limits at both ends would give $U\equiv0$.
Hence $\lambda_*$ is finite.
Taking $\lambda\downarrow\lambda_*$ and using continuity of $U$
gives $w_{\lambda_*}\ge0$.

Suppose $w_{\lambda_*}\not\equiv0$.
Lemma~\ref{cs:lem:halfspace-positive} gives
$w_{\lambda_*}>0$ on $D_{e,\lambda_*}$.
Increase $R$ so that $R>|\lambda_*|+1$ while retaining
$U\le\delta$ outside $B_R$.
Since $g(0)=g(1)$ and $g'(0)<0$, the mean value theorem gives
$\max_{[0,1]}g'>0$, hence $m_g<1$.
Choose $\eta\in(0,1)$ so small that
\begin{equation}\label{cs:eq:stationary-strip}
 \left|B_R\cap\{x:\lambda_*-\eta<x\cdot e<\lambda_*+\eta\}\right|
 <\frac{\gamma m_g^2}
 {(1+\gamma-m_g)\|J\|_2^2}.
\end{equation}
On the compact set
$\overline B_R\cap\{x:x\cdot e\ge\lambda_*+\eta\}$,
the strict positivity of $w_{\lambda_*}$ gives a positive minimum
whenever this set is nonempty.
Uniform continuity of $U$ therefore gives
$\varepsilon\in(0,\eta)$ such that $w_\lambda\ge0$ on this set
whenever $\lambda_*-\varepsilon\le\lambda\le\lambda_*$.

For such a $\lambda$, write $\zeta=\zeta_\lambda$ and
\[
 E=B_R\cap\{x:\lambda<x\cdot e<\lambda_*+\eta\}.
\]
By $U\in L^2$, we have $\zeta\in L^2$.
Multiplication of \eqref{cs:eq:stationary-negative-part} by $\zeta$,
followed by Young's inequality, gives
\[
 \int_{D_{e,\lambda}}a_\lambda\zeta^2
 \le\int_{\R^d}\zeta(J*\zeta)\le\|\zeta\|_2^2.
\]
At points outside $E$ where $\zeta>0$, we have $|x|\ge R$ and
$0\le U(x^{e,\lambda})<U(x)\le\delta$.
Thus \eqref{cs:eq:stationary-tail-coefficient} gives
$a_\lambda\ge1+\gamma$ there, whereas $a_\lambda\ge m_g$
on $E$.
It follows that
\begin{equation}\label{cs:eq:stationary-strip-estimate}
 \gamma\|\zeta\|_2^2
 \le(1+\gamma-m_g)\int_E\zeta^2.
\end{equation}
Also \eqref{cs:eq:stationary-negative-part} and
Cauchy--Schwarz give
$\|\zeta\|_\infty\le m_g^{-1}\|J\|_2\|\zeta\|_2$.
By \eqref{cs:eq:stationary-strip-estimate} and
\eqref{cs:eq:stationary-strip}, $\zeta=0$, so $w_\lambda\ge0$ for
$\lambda\in[\lambda_*-\varepsilon,\lambda_*]$. Together with the
order for $\lambda\ge\lambda_*$, this contradicts the definition
of $\lambda_*$. Hence $w_{\lambda_*}\equiv0$.

For every $\lambda>\lambda_*$ the same order holds.
By Lemma~\ref{cs:lem:halfspace-positive}, it is strict unless
$w_\lambda\equiv0$.
Two distinct parallel symmetry planes would make $U$ invariant
under a nonzero translation; iteration and decay at infinity would
give $U\equiv0$. Hence $w_\lambda>0$ for $\lambda>\lambda_*$.
By choosing the plane midway between two points on the same line,
we deduce that $U$ is symmetric about $x\cdot e=\lambda_*$
and strictly decreasing along each line on the side
$x\cdot e>\lambda_*$.

Since $U>0$ and $U(x)\to0$ as $|x|\to\infty$,
it attains its maximum at some $x_*$.
The strict linewise decrease forces $x_*\cdot e=\lambda_*$.
Repeating the argument for every unit vector $e$ gives symmetry
under every reflection through a hyperplane containing $x_*$.
These reflections send any point of a sphere centered at $x_*$
to any other point of that sphere, so $U$ is radial about $x_*$
and strictly decreasing in radius.

To prove $V'(r)<0$ in \eqref{cs:eq:stationary-radial}, translate
$x_*$ to zero.  Fix $i\in\{1,\ldots,d\}$, let $e_i$ be the
$i$th coordinate vector, and put $w=-\partial_iU$ on $\{x_i>0\}$.
Radial decrease gives $w\ge0$; because the radial profile is
nonconstant, $w\not\equiv0$.
By \eqref{cs:eq:stationary-derivative} and oddness of $\partial_iU$
across $\{x_i=0\}$,
\[
 H_g'(U(x))w(x)
 =\int_{y_i>0}\bigl(J(x-y)-J(x-y^{e_i,0})\bigr)w(y)\,dy.
\]
Lemma~\ref{cs:lem:halfspace-positive} gives $w>0$ there.
Evaluation on the positive $i$th coordinate axis proves $V'(r)<0$
for every $r>0$.
In particular, $x_*$ is the unique maximum point and the unique
center in \eqref{cs:eq:stationary-radial}.
\end{proof}

\begin{lemma}\label{lem:peakidentity}
Assume $d=1$, (KM), \textup{(F1)--(F2)}, and $F(1)>0$. Let $U$ be a nonzero even stationary solution of \eqref{eq:main-equation} that is nonincreasing on $[0,\infty)$, satisfies $U(x)\to0$ as $|x|\to\infty$, and has $0\le U\le1$.  Use the continuous representative of $U$ supplied by Lemma~\ref{lem:steadyregularity}, and set $A=U(0)$.  Let $\beta\in(\vartheta,1)$ be the unique zero of $F$ in that interval.  Then
\begin{equation}\label{eq:peakidentity}
 F(A)=\int_0^\infty J(h)\int_0^h
 [A-U(h-x)]\,[-U'(x)]\,dx\,dh\ge0.
\end{equation}
In particular $A\ge\beta$. If $J(h)>0$ for every $h>0$, as for a Gaussian kernel, then $F(A)>0$ and $A>\beta$.
\end{lemma}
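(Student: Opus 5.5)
The plan is to test the stationary equation against $-U'$ on the half-line, which turns the reaction term into $F(A)$. The nonlocal term is then rearranged, using evenness of $J$ and $U$, into the stated double integral.

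\textbf{Setup.} By Lemma~\ref{lem:steadyregularity} with $k=1$ ((K1) gives $J\in W^{1,1}$, and (F1) gives $f\in C^1(\R)$), $U$ is $C^1$ with $U'$ bounded. Since $U$ is nonincreasing on $[0,\infty)$ with $U(0)=A$ and $U(\infty)=0$, we have $-U'\ge0$ there and $\int_0^\infty(-U')\,dx=A$. These facts justify every use of Fubini below, since all integrands are bounded by a constant times $|U'(x)|$ or $J(h)|U'(x)|$.

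\textbf{Step 1: reduce to the nonlocal term.} Write the equation as $f(U)=U-J*U$ and integrate against $-U'$ over $(0,\infty)$. The left side gives $\int_0^\infty f(U)(-U')\,dx=F(A)-F(0)=F(A)$. On the right, $\int_0^\infty U(-U')\,dx=A^2/2=\int_\R J(h)\,\tfrac{A^2}{2}\,dh$, using unit mass. Inserting $(J*U)(x)=\int_\R J(h)U(x-h)\,dh$ and applying Fubini yields
\[
F(A)=\int_\R J(h)G(h)\,dh,\qquad G(h)=\int_0^\infty\bigl(U(x)-U(x-h)\bigr)(-U'(x))\,dx .
\]
Evenness of $J$ then gives $F(A)=\int_0^\infty J(h)\bigl(G(h)+G(-h)\bigr)\,dh$.

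\textbf{Step 2: evaluate $G(h)+G(-h)$ for $h>0$.} Integrate by parts in the $U(x+h)$ term:
\[
\int_0^\infty U(x+h)(-U'(x))\,dx=AU(h)-\int_h^\infty U(y-h)(-U'(y))\,dy ,
\]
where the boundary term at infinity vanishes by decay. Substituting this into
\[
G(h)+G(-h)=A^2-\int_0^\infty U(x+h)(-U'(x))\,dx-\int_0^\infty U(x-h)(-U'(x))\,dx ,
\]
the two $\int_h^\infty$ pieces cancel. What remains is
\[
A\bigl(A-U(h)\bigr)-\int_0^h U(x-h)(-U'(x))\,dx .
\]
Using $A-U(h)=\int_0^h(-U')\,dx$ and evenness $U(x-h)=U(h-x)$, this equals $\int_0^h[A-U(h-x)][-U'(x)]\,dx$, which proves \eqref{eq:peakidentity}.

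\textbf{Step 3: signs and the consequences.} Nonnegativity holds because $A=\max U$ and $-U'\ge0$ on $(0,h)$. Then Lemma~\ref{lem:nonlinearity} gives $F<0$ on $(0,\beta)$, so $A\ge\beta$; here $A>0$ because $U\not\equiv0$.

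For strictness when $J>0$ everywhere, it suffices to show the inner integral is positive for all large $h$. Since $U$ is nonconstant, pick $x_0>0$ with $-U'(x_0)>0$, and hence $-U'>0$ on a neighbourhood of $x_0$. For $h$ large, $h-x$ is large for $x$ near $x_0$, so $U(h-x)<A$ there by decay. This gives $F(A)>0$, and therefore $A>\beta$.

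\textbf{Main obstacle.} The hardest step is the bookkeeping in Step 2: the combination of the $\pm h$ terms and the integration by parts, including vanishing of the boundary terms. The remaining steps are routine.
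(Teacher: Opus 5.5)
Your proof is correct and follows essentially the same route as the paper: you test the stationary equation against $U'$ on $(0,\infty)$, fold the convolution by evenness of $J$ into the second difference $U(x-h)+U(x+h)-2U(x)$ (your $G(h)+G(-h)$), evaluate the inner integral by the same integration by parts/tail cancellation, and get strictness from large $h$, where $U(h-x)$ is small. The only cosmetic difference is that the paper quantifies the strict lower bound as $\tfrac A2\int_a^b(-U')\,dx$ for $h>r_0+b$, which is the same idea as your neighbourhood of $x_0$.
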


\begin{proof}
By (KM) and Lemma~\ref{lem:steadyregularity}, $U\in C^1(\R)$.
Evenness, monotonicity and decay, together with $U\not\equiv0$, give
\begin{equation}\label{eq:display-5-02}
 A=U(0)>0,\qquad U'(x)\le0\quad(x>0),
\end{equation}
and
\begin{equation}\label{eq:profile-total-variation}
 U'\in L^1(0,\infty),\qquad
 \int_0^\infty[-U'(x)]\,dx=A.
\end{equation}
To relate $F(A)$ to the convolution, integrate
$J*U-U+f(U)=0$ against $U'$ on $(0,\infty)$.
All terms are absolutely integrable by \eqref{eq:profile-total-variation}
and boundedness of $J*U-U$ and $f(U)$.
The chain rule gives $\int_0^M f(U)U'\,dx=F(U(M))-F(A)$;
letting $M\to\infty$ and using $U(M)\to0$, we obtain
\begin{equation}\label{eq:display-5-04}
 \int_0^\infty f(U(x))U'(x)\,dx
 =\int_A^0 f(s)\,ds=-F(A).
\end{equation}
Using \eqref{eq:display-5-04} in the integrated stationary equation gives
\begin{equation}\label{eq:peak-potential-representation}
 F(A)=\int_0^\infty (J*U-U)U'\,dx.
\end{equation}

By evenness of $J$ and its unit mass, for $x>0$,
\begin{equation}\label{eq:even-convolution-second-difference}
 (J*U-U)(x)
 =\int_0^\infty J(h)
 [U(x-h)+U(x+h)-2U(x)]\,dh.
\end{equation}
To interchange the integrals in
\eqref{eq:peak-potential-representation}, use $0\le U\le A$ to obtain
\begin{equation}\label{eq:peak-integrability-bound}
 |U(x-h)+U(x+h)-2U(x)|\,|U'(x)|
 \le 2A|U'(x)|,
\end{equation}
which, together with $J\in L^1(\R)$ and
\eqref{eq:profile-total-variation}, justifies Fubini's theorem.
For $h>0$, set
\begin{equation}\label{eq:peak-shift-integral}
 I(h)=\int_0^\infty
 [U(x-h)+U(x+h)-2U(x)]U'(x)\,dx.
\end{equation}
Splitting the first term of \eqref{eq:peak-shift-integral}
at $x=h$ and using $U(x-h)=U(h-x)$ for $x<h$ gives
\begin{equation}\label{eq:peak-shift-splitting}
 \begin{aligned}
 I(h)
 &=\int_0^h U(h-x)U'(x)\,dx
   +\int_h^\infty U(x-h)U'(x)\,dx\\
 &\quad+\int_0^\infty U(x+h)U'(x)\,dx
   -2\int_0^\infty U(x)U'(x)\,dx.
 \end{aligned}
\end{equation}
With $y=x-h$, the second and third integrals in
\eqref{eq:peak-shift-splitting} combine to give
\begin{equation}\label{eq:peak-tail-cancellation}
 \int_0^\infty
 [U(y)U'(y+h)+U(y+h)U'(y)]\,dy
 =-AU(h),
\end{equation}
because $U(y)U(y+h)\to0$ as $y\to\infty$ and, by
\eqref{eq:profile-total-variation}, its derivative has $L^1(0,\infty)$
norm at most $2A\|U'\|_{L^1(0,\infty)}$.
The chain rule also gives $-2\int_0^\infty U(x)U'(x)\,dx=A^2$.
Substituting both identities into \eqref{eq:peak-shift-splitting}, we obtain
\begin{equation}\label{eq:peak-positive-integrand}
 \begin{aligned}
 I(h)
 &=A^2-AU(h)+\int_0^h U(h-x)U'(x)\,dx\\
 &=\int_0^h[A-U(h-x)][-U'(x)]\,dx.
 \end{aligned}
\end{equation}
The integrand in \eqref{eq:peak-positive-integrand} lies between
$0$ and $A[-U'(x)]$, so \eqref{eq:profile-total-variation} gives
$0\le I(h)\le A^2$. By \eqref{eq:peak-potential-representation}
and \eqref{eq:even-convolution-second-difference}, Fubini's theorem
and \eqref{eq:peak-positive-integrand} now yield
\begin{equation}\label{eq:peak-fubini-identity}
 \begin{aligned}
 F(A)
 &=\int_0^\infty J(h)I(h)\,dh\\
 &=\int_0^\infty J(h)\int_0^h
 [A-U(h-x)][-U'(x)]\,dx\,dh,
 \end{aligned}
\end{equation}
which is \eqref{eq:peakidentity} and proves $F(A)\ge0$.

For strict positivity, choose a bounded interval
$(a,b)\subset(0,\infty)$ with $\int_a^b[-U'(x)]\,dx>0$,
as permitted by \eqref{eq:profile-total-variation}.
Choose $r_0>0$ such that $U(r)\le A/2$ for $r\ge r_0$.
For $h>r_0+b$ and $x\in(a,b)$, we have $x<h$ and $h-x\ge r_0$;
restricting \eqref{eq:peak-positive-integrand} to $(a,b)$ gives
\begin{equation}\label{eq:strict-peak-integral}
 \int_0^h[A-U(h-x)][-U'(x)]\,dx
 \ge \frac A2\int_a^b[-U'(x)]\,dx>0.
\end{equation}
If $J(h)>0$ for every $h>0$, its mass on $(r_0+b,\infty)$
is positive; hence \eqref{eq:strict-peak-integral} and
\eqref{eq:peak-fubini-identity} give $F(A)>0$.
Finally, $A>0$ and the signs $F(s)<0$ on $(0,\beta)$ and
$F(\beta)=0$ from \eqref{eq:potential-signs} imply $A\ge\beta$,
with $A>\beta$ when $F(A)>0$.
\end{proof}

\begin{lemma}\label{cs:lem:stationary-peak}
Let $g$ satisfy \eqref{cs:eq:general-reaction}, and let $U$ be a nonzero
solution of \eqref{cs:eq:general-stationary} with
$U(x)\to0$ as $|x|\to\infty$.
Translate its center to zero and write $U(x)=V(|x|)$ as in
\eqref{cs:eq:stationary-radial}.
Set $A=V(0)$ and extend $V$ evenly to $\R$.
Let $e_1=(1,0,\ldots,0)$ and take $K=K_*$ in \eqref{eq:radial-marginal}
in the direction $e_1$.  Define
\[
 D_U(r)=(K*V)(r)-(J*U)(r e_1),\qquad r\ge0,
\]
where $K*V$ is convolution on $\R$.
Then $D_U\ge0$ and
\begin{equation}\label{cs:eq:peak-identity}
\begin{split}
 F_g(A)
 &=\int_0^\infty K(h)\int_0^h
       [A-V(h-r)][-V'(r)]\,dr\,dh\\
 &\quad+\int_0^\infty D_U(r)[-V'(r)]\,dr>0.
\end{split}
\end{equation}
For $d=1$, $D_U\equiv0$.
\end{lemma}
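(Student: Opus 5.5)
The plan is to follow the proof of Lemma~\ref{lem:peakidentity}, evaluating the stationary equation along the ray $\{re_1:r\ge0\}$ and replacing $J$ by its marginal $K=K_*$; the defect $D_U$ absorbs the gap between the two convolutions. Translate the center to $0$. Lemma~\ref{cs:lem:stationary-symmetry} gives $U(x)=V(|x|)$ with $V'(r)<0$ for $r>0$, and Lemma~\ref{cs:lem:stationary-basic} gives $V\in C^1$. Since $V$ decreases from $A$ to $0$, we have $V'\in L^1(0,\infty)$ and $\int_0^\infty[-V'(r)]\,dr=A$, exactly as in \eqref{eq:profile-total-variation}.

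\emph{Sign of $D_U$.} By Fubini and the definition \eqref{eq:radial-marginal} of $K$, and since $V$ is extended evenly,
\[
 (K*V)(r)=\int_{\R^d}J(z)V(|r-z\cdot e_1|)\,dz,
 \qquad
 (J*U)(re_1)=\int_{\R^d}J(z)V(|re_1-z|)\,dz .
\]
Because $|re_1-z|\ge|r-z\cdot e_1|$ and $V$ is nonincreasing on $[0,\infty)$, the integrands are ordered, so $D_U\ge0$. For $d=1$ the two expressions coincide, so $D_U\equiv0$.

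\emph{Identity.} On the ray, the stationary equation reads
\[
 g(V(r))=V(r)-(J*U)(re_1)=V(r)-(K*V)(r)+D_U(r).
\]
We multiply by $V'(r)$ and integrate over $(0,M)$. The chain rule gives $\int_0^M g(V)V'\,dr=F_g(V(M))-F_g(A)$, and letting $M\to\infty$ with $V(M)\to0$ and $F_g(0)=0$ yields $-F_g(A)$. Every term is bounded times the integrable function $V'$, which justifies the limit. This gives
\[
 F_g(A)=\int_0^\infty\bigl((K*V)(r)-V(r)\bigr)V'(r)\,dr+\int_0^\infty D_U(r)[-V'(r)]\,dr .
\]
For the first integral, $K$ is even with unit mass by Lemma~\ref{lem:projection}. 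So we may write $(K*V-V)(r)=\int_0^\infty K(h)[V(r-h)+V(r+h)-2V(r)]\,dh$ and repeat verbatim the Fubini computation \eqref{eq:even-convolution-second-difference}--\eqref{eq:peak-fubini-identity}, with $(J,U)$ replaced by $(K,V)$. That computation used only evenness, monotonicity, decay, and integrability of $V'$. It turns the first integral into $\int_0^\infty K(h)I(h)\,dh$ with $I(h)=\int_0^h[A-V(h-r)][-V'(r)]\,dr$, which is \eqref{cs:eq:peak-identity}.

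\emph{Strict positivity.} This is the only point that needs more than Lemma~\ref{lem:peakidentity}, where strictness required $J>0$ everywhere. Here the strict decrease of $V$ from Lemma~\ref{cs:lem:stationary-symmetry} does the work. Fix $h>0$. For $r\in(0,h)$ we have $0<h-r<h$, so $V(h-r)<A$ and $-V'(r)>0$. The integrand of $I(h)$ is therefore positive and continuous on $(0,h)$, which gives $I(h)>0$ for every $h>0$. By \eqref{eq:display-3-04}, $K>0$ almost everywhere on $(-\rho,\rho)$, so $\int_0^\infty K(h)I(h)\,dh>0$. Since the $D_U$ term is nonnegative, $F_g(A)>0$. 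The main care needed is checking that the Fubini interchange and the tail cancellation \eqref{eq:peak-tail-cancellation} transfer to $(K,V)$; they do, with the same bound $2A|V'(r)|K(h)$.
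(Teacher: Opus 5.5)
Your proposal is correct and follows essentially the same route as the paper. It uses the same pointwise comparison $|re_1-z|\ge|r-z\cdot e_1|$ to get $D_U\ge0$, multiplies the ray equation by $V'$, transfers the one-dimensional Fubini computation to $(K,V)$, and takes strictness from $V'<0$ on $(0,\infty)$. The only cosmetic difference is how you get positive mass of $K$ on $(0,\infty)$: you use $K>0$ almost everywhere near zero, whereas the paper simply uses evenness and unit mass, $\int_0^\infty K(h)\,dh=1/2$.
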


\begin{proof}
By Lemma~\ref{cs:lem:stationary-symmetry}, $V'(r)<0$ for $r>0$.
For $d\ge2$, write $z=(s,z')\in\R\times\R^{d-1}$.
Then
\begin{equation}\label{cs:eq:radial-marginal-comparison}
\begin{split}
 (J*U)(r e_1)
 &=\int_\R\int_{\R^{d-1}}J(s,z')
       V\!\left(\sqrt{(r-s)^2+|z'|^2}\right)\,dz'\,ds\\
 &\le\int_\R\int_{\R^{d-1}}J(s,z')V(|r-s|)\,dz'\,ds
   =(K*V)(r).
\end{split}
\end{equation}
Here the inequality follows from the decrease of $V$; for $d=1$,
the convolutions agree. Thus the stationary equation gives
\begin{equation}\label{cs:eq:peak-axis-equation}
 K*V-V+g(V)=D_U\ge0.
\end{equation}
Since $\int_0^\infty[-V'(r)]\,dr=A$ and the terms
$K*V$, $V$, $g(V)$, and $D_U$ are bounded, their products with
$V'$ are integrable on $(0,\infty)$.
Multiplying \eqref{cs:eq:peak-axis-equation} by $V'$ and integrating,
with $V(0)=A$ and $V(r)\to0$ as $r\to\infty$, gives
\begin{equation}\label{cs:eq:peak-split}
 F_g(A)=\int_0^\infty(K*V-V)V'\,dr
             +\int_0^\infty D_U(r)[-V'(r)]\,dr.
\end{equation}

By $0\le V\le A$ and $\int_0^\infty|V'|=A$, the majorant
$2AK(h)|V'(r)|$ is integrable. The calculation in
\eqref{eq:even-convolution-second-difference}--\eqref{eq:peak-positive-integrand}
therefore applies to $V$ and $K$, giving
\begin{equation}\label{cs:eq:peak-integrand}
 \int_0^\infty(K*V-V)V'\,dr
 =\int_0^\infty K(h)\int_0^h
 [A-V(h-r)][-V'(r)]\,dr\,dh.
\end{equation}
Together with \eqref{cs:eq:peak-split}, this proves
\eqref{cs:eq:peak-identity}. Since $V'<0$ on $(0,\infty)$,
the inner integral is positive for every $h>0$.
The strict inequality follows from $D_U\ge0$ and
$\int_0^\infty K(h)\,dh=1/2$.
\end{proof}

We now return to $f$ satisfying \textup{(F1)--(F2)}.
By Lemma~\ref{cs:lem:stationary-peak}, $F(\max U)>0$ for every
$U\in\mathcal S_0(J,f)\setminus\{0\}$.  Chmaj and Ren
\cite[Proposition~4.1(2)]{ChmajRen1999} obtained the bound
$\max U\ge\beta$ for their continuous even nonincreasing profiles on
$\R$; for $J(x)=\tfrac12e^{-|x|}$, their Proposition~4.1(3)
gives $F(\max U)=\tfrac12f(\max U)^2>0$.

\begin{lemma}\label{cs:lem:stationary-values}
Every $U\in\mathcal S_0(J,f)\setminus\{0\}$ satisfies
\begin{equation}\label{cs:eq:peak-bound}
 \beta<\max_{\R^d}U<1.
\end{equation}
Every $U\in\mathcal S(J,f)$ with $U(x)\to1$ as $|x|\to\infty$
is identically one.
\end{lemma}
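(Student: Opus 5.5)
The plan has three parts: the lower bound $\beta<\max U$ from the peak identity, the upper bound $\max U<1$ from the uniform height bound, and the rigidity of stationary states tending to one via the reflection $v=1-u$.

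\emph{Lower bound.} Fix $U\in\mathcal S_0(J,f)\setminus\{0\}$. Since $f$ satisfies \eqref{cs:eq:general-reaction}, Lemma~\ref{cs:lem:stationary-symmetry} gives $U(x)=V(|x-x_*|)$ with $V$ strictly decreasing, so $\max_{\R^d}U=V(0)=:A$. Lemma~\ref{cs:lem:stationary-peak} with $g=f$ gives $F(A)>0$. Lemma~\ref{lem:nonlinearity} gives $F(s)\le0$ on $[0,\beta]$, hence $A>\beta$.

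\emph{Upper bound.} Lemma~\ref{cs:lem:stationary-basic} excludes $U\equiv1$ and gives $0<U<1$ everywhere. Since the maximum of $U$ is attained at $x_*$, we get $A<1$. Alternatively, Lemma~\ref{cs:lem:uniform-height} gives $A\le a(J,f)<1$ directly.

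\emph{Second assertion.} Let $U\in\mathcal S(J,f)$ with $U\to1$ at infinity and $U\not\equiv1$. Then $U\not\equiv0$, so $0<U<1$. Set $W=1-U$ and $g=f^\sharp$, where $f^\sharp(s)=-f(1-s)$. By Lemma~\ref{lem:duality}, $g$ satisfies \eqref{cs:eq:general-reaction}: indeed $g(0)=g(1)=0$, $g'(0)=f'(1)<0$, and $m_g=m_f$. Because $J*1=1$, the function $W$ solves $H_g(W)=J*W$, and $W\to0$ at infinity with $W\not\equiv0$. Lemma~\ref{cs:lem:stationary-peak} then gives $F_g(\max W)>0$. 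By \eqref{eq:reflected-potential},
\[
 F_g(s)=F(1-s)-F(1).
\]
Lemma~\ref{lem:nonlinearity} gives $F\le F(1)$ on $[0,1]$ when $F(1)>0$. Hence $F_g\le0$ on $[0,1]$, which contradicts $F_g(\max W)>0$. Therefore $U\equiv1$.

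The only point needing care is that the lemmas of this subsection were proved for a general $g$ satisfying \eqref{cs:eq:general-reaction} rather than for $f$ alone. This is exactly what allows the reflected reaction $f^\sharp$ to be used in the last step, so I do not expect a genuine obstacle.
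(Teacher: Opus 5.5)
Your proof is correct. The first assertion follows the paper exactly: Lemma~\ref{cs:lem:stationary-peak} with $g=f$ gives $F(\max U)>0$, and \eqref{eq:potential-signs} together with the strict bound $U<1$ from Lemma~\ref{cs:lem:stationary-basic} then yields \eqref{cs:eq:peak-bound}.

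For the second assertion you take a genuinely different route.

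\emph{The paper's argument is dynamical.} Since $U\to1$ at infinity, $U\ge b$ on some ball $B_{L_b}(x_0)$. The spreading criterion \eqref{eq:measurable-seed-spreading} then makes the solution starting from $U$, which is $U$ itself, converge locally uniformly to $1$. Hence $U\equiv1$.

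\emph{Your argument is variational.} You reflect to $W=1-U$ and check that $f^\sharp$ satisfies \eqref{cs:eq:general-reaction} and that $H_{f^\sharp}(W)=J*W$. You then apply the general-$g$ symmetry lemma and peak identity to get $F^\sharp(\max W)>0$. This contradicts $F^\sharp(s)=F(1-s)-F(1)\le0$ on $[0,1]$.

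What your route buys:
\begin{itemize}
\item It stays entirely inside Section~\ref{cs:sec:stationary}. It avoids the Fife--Wang barriers of Theorem~B and the polynomial approximation behind \eqref{eq:measurable-seed-spreading}.
\item It explains why those lemmas were stated for a general $g$.
\item It uses only $F\le\max\{0,F(1)\}$ on $[0,1]$, so it also works in the balanced case $F(1)=0$, where the spreading criterion is unavailable.
\item The same reasoning without reflection gives $\mathcal S_0(J,f)=\{0\}$ directly whenever $F(1)\le0$. The paper instead obtains that part of Corollary~\ref{cs:cor:stationary-negative} from this lemma's second assertion, and hence from spreading.
\end{itemize}

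What the paper's route buys: it needs only \textup{(KR)} and $F(1)>0$, through the spreading criterion. It uses no moving-plane symmetry of the stationary state, so it does not depend on the radial monotonicity of $J$ in \textup{(KM)}. It is also essentially free, since the criterion is needed elsewhere in the paper anyway.
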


\begin{proof}
Lemma~\ref{cs:lem:stationary-peak} with $g=f$ gives
$F(\max U)>0$.
By \eqref{eq:potential-signs} and the strict upper bound in
Lemma~\ref{cs:lem:stationary-basic}, this is exactly
\eqref{cs:eq:peak-bound}.

For the second assertion, fix $b\in(\vartheta,1)$ and take
$L_b$ from the spreading criterion \eqref{eq:measurable-seed-spreading}.
Since $U(x)\to1$ as $|x|\to\infty$, there is $R>0$ such that
$U\ge b$ on $\R^d\setminus B_R$.
Choose $x_0$ with $|x_0|>R+L_b$.
Then $U\ge b$ on $B_{L_b}(x_0)$, so translation invariance
and the spreading criterion \eqref{eq:measurable-seed-spreading} imply that the solution with
initial value $U$ converges locally uniformly to $1$.
By uniqueness in Lemma~\ref{lem:semiflow}, this solution equals
$U$ at every time.  Consequently $U\equiv1$.
\end{proof}

\begin{proof}[Proof of Theorem~\ref{cs:thm:stationary-summary}]
Fix $U\in\mathcal S_0(J,f)\setminus\{0\}$.
Lemma~\ref{cs:lem:stationary-regularity} gives
$U\in C^2(\R^d)\cap W^{2,\infty}(\R^d)$.
By Lemma~\ref{cs:lem:stationary-symmetry}, there exist
a unique center $x_*\in\R^d$ and an associated function $V$ such that
$U(x)=V(|x-x_*|)$ and $V'(r)<0$ for $r>0$.
The center is unique because it is the unique maximum point of $U$.
For any fixed $e\in\Sph$, the identity $V(r)=U(x_*+re)$ for $r\ge0$
then determines $V$ uniquely.
Thus $V\in C^2([0,\infty))$; Lemma~\ref{cs:lem:stationary-values}
gives $\beta<V(0)<1$.
The remaining assertions in \eqref{cs:eq:stationary-summary} follow
from Lemma~\ref{cs:lem:stationary-integrability} with $g=f$.

\end{proof}

\section{Convergence for compactly supported initial data}\label{cs:sec:compact-data}

We retain \textup{(F1)--(F2)}. Except in
Lemmas~\ref{lem:fredholm} and~\ref{lem:lojasiewicz}, assume
\textup{(KM)} and $F(1)>0$ until
Subsection~\ref{subsec:negative-integral}, where we prove the two
corollaries for $F(1)<0$. Lemmas~\ref{lem:fredholm}
and~\ref{lem:lojasiewicz} use the kernel and reaction hypotheses
in their statements. Until that final subsection,
let $u$ solve \eqref{eq:main-equation} with initial datum
\eqref{cs:eq:data}. We retain $X$, $\Sigma_0$ and $C_0$
from Section~\ref{sec:introduction}; inner products are taken in
$L^2(\R^d)$.

\subsection{Compactness and stationary limits}

We use the energy $\mathcal E$ in \eqref{eq:energy-definition} and set
\begin{equation}\label{cs:eq:energy-gradient}
 G(v)=v-J*v-f(v),\qquad
 D\mathcal E(v)h=\langle G(v),h\rangle_{L^2(\R^d)},\qquad v,h\in X.
\end{equation}
By \textup{(F1)}, $G\in C^1(X;X)$ and $\mathcal E\in C^2(X;\R)$.
When analyticity near $[0,A]$ is used, we choose a $C^1$ extension
agreeing with that analytic function on a neighborhood of $[0,A]$.
The energy and the equation on functions with values in $[0,1]$
are independent of this auxiliary extension.

\begin{lemma}\label{cs:lem:compact-orbit}
Let \textup{(F1)--(F2)}, \textup{(KM)}, and \eqref{cs:eq:data} hold.
Suppose that $u$ does not spread.
Then there are constants $C,A>0$ such that
\begin{equation}\label{cs:eq:mass-bound}
 \sup_{t\ge0}\bigl(\|u(t,\cdot)\|_1+\|u(t,\cdot)\|_2\bigr)\le C
\end{equation}
and, for every $r\ge A$,
\begin{equation}\label{cs:eq:tails}
 \sup_{t\ge0}\esssup_{|x|>r}u(t,x)\le Cr^{-d},
 \qquad
 \sup_{t\ge0}\int_{|x|>r}u(t,x)^2\,dx\le Cr^{-d}.
\end{equation}
The closure of $\{u(t,\cdot):t\ge0\}$ is compact in $X$.
Every limit in $X$ along times tending to infinity belongs to
$\mathcal S_0(J,f)$.  For $0\le s\le t$,
\begin{equation}\label{cs:eq:dissipation}
 \mathcal E(u(t,\cdot))-\mathcal E(u(s,\cdot))
  =-\int_s^t\|\partial_\tau u(\tau,\cdot)\|_2^2\,d\tau,
 \qquad
 \lim_{t\to\infty}\|u_t(t,\cdot)\|_2=0 .
\end{equation}
\end{lemma}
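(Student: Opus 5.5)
The plan is to obtain the uniform $L^1$ bound first, then the tails from the reflection estimate, then compactness and the identification of limits.

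\emph{Uniform $L^1$ bound.} I would argue by contradiction. Suppose $\|u(t_n,\cdot)\|_1\to\infty$ along some sequence. The reflection estimate \eqref{cs:eq:reflection-integral} with $p=1$ controls the solution far from the origin by its mass. I would use it in the reverse direction: \eqref{cs:eq:intro-reflection} gives $\essinf_{B_r}u\ge\esssup_{|x|>r+2R_0}u$. If the mass is large while $u\le1$, then $u$ cannot be small on all of $\{|x|>r+2R_0\}$ for a fixed large $r$. Indeed, mass inside $B_{r+2R_0}$ is at most $|B_{r+2R_0}|$, so the outside must carry a large amount. Pick $b\in(\vartheta,1)$ and $L_b$ from \eqref{eq:measurable-seed-spreading}, and take $r=L_b$.

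\emph{Main obstacle: showing that large mass forces $\esssup_{|x|>L_b+2R_0}u(t_n)\ge b$.} Without this, the seed condition $u\ge b$ on $B_{L_b}$ is not available. I would combine the reflection inequality with a chain argument. By \eqref{cs:eq:intro-reflection}, $u(t,\cdot)$ is ``almost radially nonincreasing'' at scale $2R_0$. Hence if $u(t_n,x)\ge \varepsilon$ at a point with $|x|=\rho$, then $u\ge\varepsilon$ on all of $B_{\rho-2R_0}$. A large mass then forces $u(t_n)\ge\varepsilon_n$ on a huge ball, where $\varepsilon_n$ is comparable to mass divided by volume.

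Next I would compare on a short time interval with the spatially homogeneous ODE. On a ball of radius $\gg L_b$ where $u\ge\varepsilon$, the solution is close to the ODE solution only if $\varepsilon>\vartheta$. So I would instead use the energy. The energy $\mathcal E[u(t)]$ is nonincreasing and bounded below by $-F(1)\|u\|_1$ only; this does not directly help.

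The cleaner route I would try first is the dichotomy via $\omega$-limits. Take any local-uniform subsequential limit $W$ from Lemma~\ref{lem:asymcompact}. Using $\lim\int_t^{t+1}\|u_t\|_2^2=0$, which comes from energy decay once the energy is shown bounded below, $W$ is stationary. If $\max W>\vartheta$ somewhere and $W$ has a large-ball plateau, spreading follows. Otherwise, the reflection monotonicity and Lemma~\ref{cs:lem:uniform-height} give $\sup u(t)\le\bar a<1$ outside a fixed ball, and the comparison $f(s)\le -as$ near $0$ gives exponential-type resolvent control as in Lemma~\ref{cs:lem:stationary-integrability}. I expect this mass step to be the real difficulty.

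\emph{Tails, compactness and limits.} Given \eqref{cs:eq:mass-bound}, \eqref{cs:eq:reflection-integral} with $p=1$ and $r$ replaced by $r/2$ gives $\esssup_{|x|>r}u\le C r^{-d}$ for $r\ge A=4R_0$. Then
\[
\int_{|x|>r}u^2\le \esssup_{|x|>r}u\cdot\|u\|_1\le Cr^{-d},
\]
which gives \eqref{cs:eq:tails}.

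The energy is bounded below by $-F(1)C$, so \eqref{eq:energy-dissipation} yields $\int_0^\infty\|u_t\|_2^2<\infty$. Moreover $u_t=-G(u)$ is uniformly Lipschitz in time in $L^2$, because $\partial_t G(u)=-(J*u_t-u_t+f'(u)u_t)$ is bounded in $L^2$ since $\|u_t\|_2$ is bounded. Together these give $\|u_t\|_2\to0$.

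For compactness, Lemma~\ref{lem:asymcompact} gives local $L^\infty$ convergence along subsequences. The uniform tails upgrade this to convergence in $L^\infty$ and $L^2$, i.e. in $X$. On bounded time intervals, continuity in $X$ from Lemma~\ref{lem:semiflow} handles the remaining times.

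A limit $U$ satisfies $G(U)=\lim(-u_t)=0$ in $L^2$, so $U\in\mathcal S(J,f)$. The tail bound gives $U(x)\to0$, so $U\in\mathcal S_0(J,f)$.
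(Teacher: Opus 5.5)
There is a genuine gap. The uniform bound \eqref{cs:eq:mass-bound} is never proved, and every later step of your proposal depends on it: the tails, the lower bound for the energy, $\|u_t\|_2\to0$, and compactness.

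Your contradiction route tries to show that large mass forces $\esssup_{|x|>L_b+2R_0}u(t_n,\cdot)\ge b$. Nothing forces this. Large mass can be carried by values below $\vartheta$ on a very large region, and the reflection inequality \eqref{cs:eq:intro-reflection}, being a static property of $u(t,\cdot)$, cannot exclude that. Your own heuristic, with $\varepsilon_n$ comparable to mass divided by volume, already shows it. The $\omega$-limit fallback is circular: stationarity of $W$ uses $\int_0^\infty\|u_t\|_2^2\,dt<\infty$, which needs exactly the lower energy bound you do not yet have. Lemma~\ref{cs:lem:uniform-height} concerns stationary solutions, not $u(t,\cdot)$. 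In any case a ceiling $\bar a(J,f)<1$ would not help, since energy coercivity needs a level below $\beta$ and decay of mass needs a level below $\vartheta$.

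The missing idea is to use non-spreading directly, with $b\in(\vartheta,\beta)$ rather than $b\in(\vartheta,1)$.
\begin{itemize}
\item If $\essinf_{B_{L_b}}u(t_0,\cdot)\ge b$ for some $t_0$, then \eqref{eq:measurable-seed-spreading} applied from time $t_0$ gives spreading. Hence $\essinf_{B_{L_b}}u(t,\cdot)<b$ for every $t$. Then \eqref{cs:eq:intro-reflection} with $r=L_b$ gives $u(t,x)\le b$ for a.e.\ $|x|>A_0:=L_b+2R_0$, uniformly in $t$.
\item Since $b<\beta$, the constant $c_b=\min_{0\le s\le b}(-F(s)/s^2)$ is positive. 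Dropping the interaction term and using $F\le F(1)$ on $[0,1]$ gives
\[
 \mathcal E(u_0)\ge\mathcal E(u(t,\cdot))
 \ge-F(1)|B_{A_0}|+c_b\int_{|x|>A_0}u(t,x)^2\,dx,
\]
which is a uniform $L^2$ bound.
\item Next, \eqref{cs:eq:reflection-integral} with $p=2$ gives $u(t,\cdot)\le\vartheta/2$ outside a fixed ball $B_{A_1}$. Choose $\gamma_0>0$ with $f(s)\le-\gamma_0s$ on $[0,\vartheta/2]$. The mass $M(t)=\|u(t,\cdot)\|_1$ then satisfies $M'(t)=\int_{\R^d}f(u)\,dx\le|B_{A_1}|(\max_{[0,1]}f+\gamma_0)-\gamma_0M(t)$, which bounds $M$.
\end{itemize}
So the correct order is the $L^2$ bound via the energy, then pointwise decay, then the $L^1$ bound; not $L^1$ first.

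With \eqref{cs:eq:mass-bound} in hand, your remaining steps are sound and match the paper's argument. One small correction: $-F(1)\|u\|_1$ is not quite a lower bound for $\mathcal E$, because $F(s)\le F(1)s$ fails near $s=1$. Use $F(s)\le F(1)s/\beta$ on $[0,1]$ instead, or the bound displayed above.
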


\begin{proof}
Fix $b\in(\vartheta,\beta)$.  Since $u$ does not spread,
\eqref{eq:measurable-seed-spreading} gives
$\essinf_{B_{L_b}}u(t,\cdot)<b$ for every $t\ge0$.
Thus \eqref{cs:eq:intro-reflection} with $r=L_b$ gives
\begin{equation}\label{cs:eq:exterior-b}
 u(t,x)\le b\qquad
 \text{for almost every }|x|>A_0:=L_b+2R_0,\quad t\ge0 .
\end{equation}
By \eqref{eq:potential-signs} and $f'(0)<0$, the number
\[
 c_b=\min_{0\le s\le b}\frac{-F(s)}{s^2}
\]
is positive, where the value at zero is $-f'(0)/2$.
Also $F(s)\le F(1)$ on $[0,1]$.
By \eqref{eq:energy-definition} and \eqref{cs:eq:exterior-b}, the nonnegativity
of the convolution term gives
\begin{equation}\label{cs:eq:energy-lower}
 \mathcal E(u(t,\cdot))
 \ge-F(1)|B_{A_0}|+
 c_b\int_{\R^d\setminus B_{A_0}}u(t,x)^2\,dx .
\end{equation}
By Lemma~\ref{lem:semiflow}, $u$ is $C^1$ in $X$; hence
\eqref{cs:eq:energy-gradient} and \eqref{eq:main-equation} give
\begin{equation}\label{cs:eq:energy-derivative}
 \frac{d}{dt}\mathcal E(u(t,\cdot))=-\|u_t(t,\cdot)\|_2^2 .
\end{equation}
The potential term is differentiable because its Taylor remainder is
bounded by $C\|u(t+h,\cdot)-u(t,\cdot)\|_2^2$.
Combining \eqref{cs:eq:energy-lower} with the upper bound
$\mathcal E(u(t,\cdot))\le\mathcal E(u_0)$ gives a uniform $L^2$ bound.

By \eqref{cs:eq:reflection-integral} with $p=2$, that bound implies
\[
 \esssup_{|x|>r+2R_0}u(t,x)\le C r^{-d/2}.
\]
Choose $A_1>A_0$ so large that $u(t,x)\le\vartheta/2$ outside
$B_{A_1}$ for every $t\ge0$.  The signs in
\textup{(F1)--(F2)} give $\gamma_0>0$ such that
$f(s)\le-\gamma_0s$ on $[0,\vartheta/2]$.
Since $u_0\in L^1$, Lemma~\ref{lem:semiflow} and Fubini's theorem give,
for $M(t)=\int_{\R^d}u(t,x)\,dx$,
\begin{equation}\label{cs:eq:mass-ode}
 M'(t)=\int_{\R^d}f(u(t,x))\,dx
 \le |B_{A_1}|\bigl(\max_{[0,1]}f+\gamma_0\bigr)-\gamma_0M(t).
\end{equation}
The unit mass of $J$ gives $\int_{\R^d}(J*u-u)=0$.
By scalar comparison in \eqref{cs:eq:mass-ode}, the $L^1$ bound in
\eqref{cs:eq:mass-bound} holds.  Thus
\eqref{cs:eq:reflection-integral} with $p=1$ gives the first bound in
\eqref{cs:eq:tails} for large $A$, and the second follows from
\begin{equation}\label{cs:eq:tail-product}
 \int_{|x|>r}u(t,x)^2\,dx
 \le \esssup_{|x|>r}u(t,x)\,\|u(t,\cdot)\|_1 .
\end{equation}

By Lemma~\ref{lem:asymcompact}, every sequence $t_n\to\infty$
has a subsequence for which $u(t_n,\cdot)$ converges in $L^\infty(B_r)$
for every $r>0$.  The bounds \eqref{cs:eq:tails} pass to the limit
by local convergence and Fatou's lemma.  Splitting $\R^d$ into $B_r$
and its complement and then letting $r\to\infty$ upgrades this
convergence to $X$.  For bounded sequences of times, continuity
of $t\mapsto u(t,\cdot)$ in $X$, supplied by Lemma~\ref{lem:semiflow},
gives a convergent subsequence.  Thus the whole orbit has compact
closure in $X$.

By \eqref{cs:eq:energy-lower} and \eqref{cs:eq:energy-derivative},
$\mathcal E(u(t,\cdot))$ has a finite limit as $t\to\infty$ and
$\int_0^\infty\|u_t(t,\cdot)\|_2^2\,dt<\infty$.
Set $v=u_t$.  Since $G\in C^1(X;X)$, differentiation of
$u_t=-G(u)$ gives
\begin{equation}\label{cs:eq:time-linearization}
 v_t=J*v-v+f'(u)v .
\end{equation}
By \eqref{eq:main-equation}, \eqref{cs:eq:time-linearization}, and
$L_f=\max_{[0,1]}|f'|$,
\[
 \|v(t,\cdot)\|_2\le(2+L_f)\|u(t,\cdot)\|_2,\qquad
 \|v_t(t,\cdot)\|_2\le(2+L_f)\|v(t,\cdot)\|_2.
\]
The uniform $L^2$ bound for $u$ therefore bounds $v$ and $v_t$, so
$t\mapsto\|v(t,\cdot)\|_2^2$ is uniformly continuous.
Its integrability then gives the last assertion in
\eqref{cs:eq:dissipation}: a positive limit superior would produce
disjoint intervals of fixed positive length with a common positive lower bound.

Finally, if $u(t_n,\cdot)\to U$ in $X$ along $t_n\to\infty$,
continuity of $G$ and \eqref{cs:eq:dissipation}, with $u_t=-G(u)$,
give $G(U)=0$.  By Lemma~\ref{lem:steadyregularity} and \eqref{cs:eq:tails},
$U$ has a continuous representative tending to zero at infinity;
hence $U\in\mathcal S_0(J,f)$.
\end{proof}

\subsection{Fredholm operators and the analytic gradient inequality}\label{subsec:analytic-tools}

In Lemmas~\ref{lem:fredholm} and~\ref{lem:lojasiewicz},
$U$ denotes a stationary solution of \eqref{eq:main-equation} satisfying
\begin{equation}\label{eq:display-4-18}
 0\le U\le1,\qquad U\in L^2(\R^d),\qquad U(x)\to0\quad(|x|\to\infty).
\end{equation}
Define
\begin{equation}\label{eq:energy-hessian}
 \mathcal A_U h=h-J*h-f'(U)h
 \quad\text{on }L^2(\R^d).
\end{equation}
A bounded linear operator on $L^2(\R^d)$ is Fredholm if its range is closed and its kernel and cokernel are finite dimensional.  Its index is the dimension of the kernel minus that of the cokernel.

\begin{lemma}\label{lem:fredholm}
Assume (KR) and \textup{(F3)}, and let $U$ be as in \eqref{eq:display-4-18}.  Then $\mathcal A_U$ is bounded, self-adjoint and Fredholm of index zero.  Its kernel $K_U$ is finite dimensional and is contained in $X=L^2(\R^d)\cap L^\infty(\R^d)$.  We write $K_U^\perp$ for the orthogonal complement of $K_U$ in $L^2(\R^d)$.  If $Q_U$ denotes the $L^2(\R^d)$-orthogonal projection onto $K_U^\perp$, then
\begin{equation}\label{eq:hessian-complement-isomorphism}
 \mathcal A_U:K_U^\perp\cap X\to K_U^\perp\cap X
\end{equation}
is an isomorphism with bounded inverse when both spaces are equipped with the norm of $X$.
\end{lemma}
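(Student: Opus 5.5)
The plan is to write $\mathcal A_U=(1-f'(0))I-J*{}+(f'(0)-f'(U))\cdot$. Let $\mu_0=1-f'(0)>1$ and $V_U=f'(0)-f'(U)$. Boundedness and self-adjointness are immediate: $J*$ is bounded on $L^2(\R^d)$ with norm at most one by Young's inequality, and it is self-adjoint because $J$ is even. Multiplication by the bounded real function $f'(U)$ is also bounded and self-adjoint.

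The core claim is that $\mathcal B=\mu_0I-J*$ is invertible on $L^2(\R^d)$. On the Fourier side it is multiplication by $\mu_0-\widehat J(\xi)$, and $\widehat J(\xi)\le1<\mu_0$. It is invertible on $L^\infty(\R^d)$ too, by the Neumann series $\sum_n\mu_0^{-n-1}J^{*n}*$, which converges because $\|J*\|\le1<\mu_0$ on every $L^p$. Hence $\mathcal B$ is invertible on $X$. The perturbation $V_U\cdot$ is multiplication by a bounded continuous function tending to zero at infinity, since $U\to0$ and $f'$ is continuous. So $\mathcal A_U=\mathcal B(I+\mathcal B^{-1}V_U)$.

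The key step, which I expect to be the main obstacle, is to show that $\mathcal B^{-1}V_U$ is not compact, so this factorization alone does not give Fredholmness. I would therefore use a different factorization. Write $I-J*-f'(U)=H'(U)-J*$, with $H'(U)\ge m_f>0$, so that $\mathcal A_U=H'(U)\bigl(I-H'(U)^{-1}J*\bigr)$. The operator $T=H'(U)^{-1}J*$ is bounded. Split $T=\mu_0^{-1}J*+W\,J*$ with $W=H'(U)^{-1}-\mu_0^{-1}$, which vanishes at infinity. The second piece, $WJ*$, is compact on $L^2$: truncate $W$ to a ball $B_r$, and note that $\mathbf1_{B_r}J*$ is compact because $J\in W^{1,1}(\R^d)$ gives an $H^1$ bound on $J*h$, so Rellich applies on $B_r$. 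The tail $\mathbf1_{\R^d\setminus B_r}W$ has small sup norm. Meanwhile $I-\mu_0^{-1}J*$ is invertible. Hence $I-T$ is invertible plus compact, which makes it Fredholm of index zero, and so is $\mathcal A_U$, since multiplication by $H'(U)$ is invertible. Self-adjointness together with index zero gives that the cokernel is $K_U$, the range is $K_U^\perp$, and $K_U$ is finite dimensional.

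For $K_U\subset X$, take $\phi\in K_U$. Then $\phi=H'(U)^{-1}J*\phi$, and $\|J*\phi\|_\infty\le\|J\|_2\|\phi\|_2$, so $\phi\in L^\infty$.

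Next I would prove the isomorphism on $X$. On $K_U^\perp$ in $L^2$, $\mathcal A_U$ is a bijection onto $K_U^\perp$ with bounded inverse, by the closed range and the open mapping theorem. Since $K_U\subset X$, the projection $Q_U$ maps $X$ to $X$, so $K_U^\perp\cap X$ is a closed subspace of $X$. The operator $\mathcal A_U$ preserves $X$, because $f'(U)$ is bounded and $J*$ is bounded on $L^\infty$, and it maps $K_U^\perp$ into $K_U^\perp$ by self-adjointness. It remains to show that the inverse maps $X$ into $X$ with an $X$ bound. Suppose $g\in K_U^\perp\cap X$ and $\mathcal A_U h=g$ with $h\in K_U^\perp$. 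Then $h=H'(U)^{-1}(J*h+g)$, so
\[
\|h\|_\infty\le m_f^{-1}\bigl(\|J\|_2\|h\|_2+\|g\|_\infty\bigr)\le C\|g\|_X,
\]
using the $L^2$ inverse bound $\|h\|_2\le C\|g\|_2$. This gives the bounded inverse in $X$.
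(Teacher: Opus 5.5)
Your argument is correct, and its skeleton matches the paper's proof. You use the same factorization $\mathcal A_U=M_q(I-T)$ with $q=H'(U)=1-f'(U)$, and the same splitting $T=q_\infty^{-1}J*{}+M_pJ*$; your $W$ is the paper's $p$. You also use the Neumann series for $I-q_\infty^{-1}J*$, and the same bootstrap $h=q^{-1}(J*h+g)$ giving $K_U\subset X$ and the bounded inverse on $K_U^\perp\cap X$.

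The one genuine difference is how you show that $M_pJ*$ is compact.
\begin{itemize}
\item The paper uses \textup{(F3)} to make $s\mapsto(1-f'(s))^{-1}$ Lipschitz. Then $U\in L^2(\R^d)$ gives $p\in L^2(\R^d)$, and $M_pJ*$ is Hilbert--Schmidt on all of $\R^d$ with norm at most $\|p\|_2\|J\|_2$. This is a one-line estimate, but it uses \textup{(F3)} and the $L^2$ hypothesis on $U$.
\item You use only continuity of $f'$ and $U(x)\to0$: compactness of the part truncated to a ball, plus a tail of operator norm at most $\sup_{|x|>r}|p(x)|$. This is exactly how the paper later proves Lemma~\ref{cs:lem:fredholm} under \textup{(F1)--(F2)} alone.
\item There the truncated operator is shown to be Hilbert--Schmidt directly. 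Since $p$ is bounded and $J\in L^2(\R^d)$, its Hilbert--Schmidt norm is at most $\|p\|_\infty|B_r|^{1/2}\|J\|_2$. Your route through $J\in W^{1,1}(\R^d)$, an $H^1$ bound and Rellich on $B_r$ is valid but heavier than needed.
\end{itemize}

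Your first-paragraph detour about $\mathcal B^{-1}V_U$ is not actually a step of the proof, and nothing needs to be shown about that operator. In fact that factorization also works. Since $\mathcal B^{-1}=\mu_0^{-1}\sum_{n\ge0}\mu_0^{-n}J^{*n}*$, one gets $I+\mathcal B^{-1}M_{V_U}=\mu_0^{-1}M_{H'(U)}+{}$(compact), and $H'(U)\ge m_f>0$.
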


\begin{proof}
Put
\begin{equation}
 q(x)=1-f'(U(x)),\qquad q_\infty=1-f'(0)>1.
\end{equation}
Write $M_b$ for multiplication by a bounded measurable $b$ on
$L^2(\R^d)$. By Lemma~\ref{lem:nonlinearity}, $q\ge m_f$;
evenness of $J$ makes convolution self-adjoint. Hence
$\mathcal A_U=M_q-J*$
is bounded and self-adjoint.

Factor
\begin{equation}\label{eq:hessian-factorization}
 \mathcal A_U=M_q(I-T),
 \qquad T=M_{q^{-1}}J*.
\end{equation}
By \textup{(F2)--(F3)}, $r\mapsto(1-f'(r))^{-1}$ is
Lipschitz on $[0,1]$, with value $q_\infty^{-1}$ at zero.
Since $U\in L^2(\R^d)$, the function
\begin{equation}
 p(x)=q(x)^{-1}-q_\infty^{-1}
\end{equation}
belongs to $L^2(\R^d)$ and satisfies $\|p\|_2\le C\|U\|_2$.  Moreover $J\in L^1(\R^d)\cap L^\infty(\R^d)$, hence $J\in L^2(\R^d)$.  Thus
\begin{equation}\label{eq:convolution-compact-decomposition}
 T=q_\infty^{-1}J*+M_pJ*.
\end{equation}
In \eqref{eq:convolution-compact-decomposition}, $q_\infty^{-1}J*$ has operator norm at most $q_\infty^{-1}<1$ on $L^2(\R^d)$.  The operator $M_pJ*$ is Hilbert--Schmidt: its integral kernel is $p(x)J(x-y)$, and
\begin{equation}\label{eq:hilbert-schmidt-bound}
 \iint_{\R^d\times\R^d} |p(x)|^2|J(x-y)|^2\,dx\,dy
 =\|p\|_2^2\|J\|_2^2<\infty.
\end{equation}
The Neumann series makes $I-q_\infty^{-1}J*$ invertible on
$L^2(\R^d)$. By \eqref{eq:convolution-compact-decomposition} and
\eqref{eq:hilbert-schmidt-bound}, $I-T$ is its compact perturbation,
hence Fredholm of index zero. Since $q$ is bounded and
$q\ge m_f>0$, $M_q$ is invertible; \eqref{eq:hessian-factorization}
gives the same Fredholm conclusion for $\mathcal A_U$.

Self-adjointness and closed range give
$\operatorname{Ran}\mathcal A_U=K_U^\perp$; Fredholmness makes
$K_U$ finite dimensional. For $h\in K_U$, $qh=J*h$ and
Cauchy--Schwarz give
$\|h\|_\infty\le m_f^{-1}\|J\|_2\|h\|_2$, so $K_U\subset X$.

Let $N=\dim K_U$, and choose an $L^2(\R^d)$-orthonormal basis $\phi_1,\dots,\phi_N$ of $K_U$.  Since each $\phi_j\in X$, the orthogonal projection
\begin{equation}
 Ph=\sum_{j=1}^N\langle h,\phi_j\rangle_{L^2(\R^d)}\phi_j
\end{equation}
is bounded from $L^2(\R^d)$ to $X$ (with $P=0$ if $N=0$); consequently, the complementary projection $Q=Q_U=I-P$ is bounded on $X$.  We write
$QX=\{h\in X:h\perp K_U\text{ in }L^2(\R^d)\}$.
Self-adjointness gives $\mathcal A_U(QX)\subset QX$.

Since $\operatorname{Ran}\mathcal A_U=K_U^\perp$ and
$K_U\cap K_U^\perp=\{0\}$, the restriction to $K_U^\perp$ is a
bounded bijection. The bounded inverse theorem \cite{Kato1995}
gives, for $g\in QX$, a unique $h\in K_U^\perp$ with
$\mathcal A_Uh=g$ and
\begin{equation}\label{eq:hessian-inverse-l2-bound}
 \|h\|_2\le C\|g\|_2.
\end{equation}
The identity $qh=J*h+g$, the lower bound $q\ge m_f$, and \eqref{eq:hessian-inverse-l2-bound} give
\begin{equation}\label{eq:hessian-inverse-linfty-bound}
 \|h\|_\infty
 \le m_f^{-1}\bigl(\|J\|_2\|h\|_2+\|g\|_\infty\bigr)
 \le C\|g\|_X.
\end{equation}
By \eqref{eq:hessian-inverse-linfty-bound}, $h\in QX$;
combining the two inverse estimates gives
\begin{equation}\label{eq:hessian-inverse-x-bound}
 \|h\|_X\le C\|g\|_X.
\end{equation}
Thus \eqref{eq:hessian-inverse-x-bound} completes the proof of
\eqref{eq:hessian-complement-isomorphism}.
\end{proof}

\begin{lemma}\label{lem:lojasiewicz}
Assume (KR) and \textup{(F4)}, and let $U$ be as in \eqref{eq:display-4-18}.  There exist $\rho>0$, $C>0$ and $\theta\in(0,1/2]$ such that every $v\in X$ satisfying $\|v-U\|_X<\rho$ obeys
\begin{equation}\label{eq:local-gradient-inequality}
 |\mathcal E[v]-\mathcal E[U]|^{1-\theta}
 \le C\|v-J*v-f(v)\|_2.
\end{equation}
\end{lemma}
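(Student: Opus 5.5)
We follow the standard Lyapunov--Schmidt reduction of Chill \cite{Chill2003}, using the Fredholm factorization of Lemma~\ref{lem:fredholm} in the space $X$. Write $G(v)=v-J*v-f(v)$ as in \eqref{cs:eq:energy-gradient}, with $f$ replaced by an analytic extension agreeing with the given analytic function on a neighborhood $O$ of $[0,1]$. Since $U$ takes values in $[0,1]$, every $v\in X$ with $\|v-U\|_X<\rho_0$ takes values in $O$. On this ball the Nemytskii map $v\mapsto f(v)$ is real analytic from $X$ to $X$. This holds because $f(v)-f(U)=\sum_k f^{(k)}(U)(v-U)^k/k!$ converges in $L^\infty$ by the uniform Cauchy bounds on $O$, and each term lies in $L^2$, since $(v-U)^k$ is bounded by $\|v-U\|_\infty^{k-1}|v-U|$. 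The potential $\int F(v)$ is analytic on $X$ in the same way, using $F(v)-F(U)$ and the fact that $F(U)\in L^1$ because $|F(s)|\le Cs^2$ and $U\in L^2$. Hence $\mathcal E:X\to\R$ and $G:X\to X$ are analytic near $U$, with $D G(U)=\mathcal A_U$.

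\textbf{Step 1 (reduction).} Let $K_U$, $P$ and $Q=Q_U$ be as in Lemma~\ref{lem:fredholm}. Consider the map $\mathcal N(v)=QG(v)+Pv$ from $X$ to $X$. Its derivative at $U$ is $Q\mathcal A_U+P$, which is an isomorphism of $X$ by \eqref{eq:hessian-complement-isomorphism}. Indeed, $\mathcal A_U$ maps $K_U^\perp\cap X$ onto itself and kills $K_U$, and $P$ is the identity on $K_U$. The analytic inverse function theorem then gives an analytic local inverse $\Psi$ near $\mathcal N(U)=PU$. Define the finite-dimensional function $\Gamma(\xi)=\mathcal E[\Psi(PU+\xi)]$ for $\xi$ in a small ball of $K_U\cong\R^N$. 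This $\Gamma$ is real analytic, so the classical \L ojasiewicz inequality gives $\theta\in(0,1/2]$ with $|\Gamma(\xi)-\Gamma(0)|^{1-\theta}\le C|\nabla\Gamma(\xi)|$ near $0$. If $N=0$, we take $\theta=1/2$ and argue directly from the isomorphism.

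\textbf{Step 2 (transfer).} For $v$ near $U$, set $\xi=Pv-PU$ and $w=\Psi(PU+\xi)$, so that $QG(w)=0$ and $Pw=Pv$. Then $\nabla\Gamma(\xi)=PG(w)$, by differentiating through $\Psi$ and using $QG(w)=0$ together with $D\Psi$ restricted to $K_U$ having $P$-component equal to the identity. We now need three estimates.
\begin{enumerate}[label=\textup{(\alph*)},leftmargin=2.7em]
\item $\|v-w\|_X\le C\|QG(v)\|_X$, from the Lipschitz bound on $\Psi$.
\item $\|G(v)-G(w)\|_2\le C\|v-w\|_2$.
\item $|\mathcal E[v]-\mathcal E[w]|\le C\|G(v)\|_2\,\|v-w\|_2+C\|v-w\|_2^2$, which follows from Taylor expansion along the segment using $D\mathcal E=\langle G,\cdot\rangle$, with the $L^2$ second derivative bounded.
\end{enumerate}
Combining these gives $|\mathcal E[v]-\mathcal E[U]|^{1-\theta}\le C\|G(v)\|+C\|v-w\|^{2(1-\theta)}$, and $2(1-\theta)\ge1$ handles the last term.

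\textbf{Main obstacle.} The difficulty is that (a) controls $v-w$ by the $X$-norm of $QG(v)$, while the target inequality \eqref{eq:local-gradient-inequality} has only the $L^2$ norm of $G(v)$ on the right. We plan to fix this by refining the inversion estimate. We write $v-w$ through the equation $QG(v)-QG(w)=\int_0^1 Q\,DG(w+s(v-w))\,ds\,(v-w)$. The operator $DG(\cdot)=M_{1-f'(\cdot)}-J*$ is, near $U$, a small perturbation of $\mathcal A_U$ in $X$. The argument of \eqref{eq:hessian-inverse-linfty-bound}, namely $qh=J*h+g$ with $\|J*h\|_\infty\le\|J\|_2\|h\|_2$, then yields an $L^2\to L^2$ bound for the inverse on $K_U^\perp$. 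Since $P(v-w)=0$, this gives $\|v-w\|_2\le C\|QG(v)\|_2\le C\|G(v)\|_2$, with $\|v-w\|_X$ kept small for the perturbation to be valid. With this $L^2$ version of (a), the estimates assemble into \eqref{eq:local-gradient-inequality}. We also use that $\|PG(v)\|$ is equivalent in all norms on the finite-dimensional space $K_U$.
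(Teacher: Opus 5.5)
Your proposal is correct and follows essentially the paper's route. Your inverse-function formulation via $QG+P$ is equivalent to the paper's implicit-function graph $\xi\mapsto U+\xi+\Psi(\xi)$, and the rest matches: the finite-dimensional \L ojasiewicz inequality on $K_U$, then the bound $\|v-w\|_2\le C\|G(v)\|_2$ from inverting the averaged Hessian on $K_U^\perp$, exactly as in Step~3 of the paper. One small correction: that $L^2\to L^2$ inverse bound comes from the $L^2$ inverse \eqref{eq:hessian-inverse-l2-bound}, the smallness of $\|M_{f'(y)-f'(U)}\|_{L^2\to L^2}=\|f'(y)-f'(U)\|_\infty$, and a Neumann series, not from the $L^\infty$ estimate \eqref{eq:hessian-inverse-linfty-bound}.
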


\begin{proof}
\smallskip
\noindent\textit{Step 1. Analyticity of the energy and its gradient.}
Define $G:X\to X$ by $G(v)=v-J*v-f(v)$.
By \textup{(F4)}, choose a compact interval $I$ with
$[0,1]\subset\operatorname{int}I$ and $f$ analytic near $I$.
A sufficiently small $X$-neighborhood of $U$ has essential ranges
in $I$. Compactness gives $a,C>0$ such that
\begin{equation}\label{eq:analytic-derivative-bound}
 \sup_{s\in I}|f^{(k)}(s)|\le Ck!a^{-k}
 \qquad(k\ge0).
\end{equation}
Since $f(0)=0$, $f(v)\in X$ for $v\in X$ with range in $I$.
For $v$ in the chosen neighborhood and small $h\in X$ with
$\|h\|_\infty<a$, the Taylor series converges in $X$, because
\begin{equation}\label{eq:power-norm-bounds}
 \|h^k\|_2\le\|h\|_\infty^{k-1}\|h\|_2,
 \qquad
 \|h^k\|_\infty\le\|h\|_\infty^k.
\end{equation}
By \eqref{eq:analytic-derivative-bound}--\eqref{eq:power-norm-bounds},
the Nemytskii map $v\mapsto f(v)$ is analytic from this neighborhood
to $X$, with $k$th Fr\'echet derivative, $k\ge1$,
\begin{equation}
 (h_1,\ldots,h_k)\longmapsto f^{(k)}(v)h_1\cdots h_k.
\end{equation}
Since $F(0)=F'(0)=0$, there is a constant $C_I>0$ such that $|F(s)|\le C_Is^2$ and $|f(s)|\le C_I|s|$ on $I$.  Thus $\int_{\R^d}F(v(x))\,dx$ is finite.  Its Taylor expansion is justified in $L^1(\R^d)$: the linear term satisfies $\|f(v)h\|_1\le C_I\|v\|_2\|h\|_2$, and, for $k\ge2$,
\begin{equation}\label{eq:potential-taylor-bound}
 \|F^{(k)}(v)h^k\|_1
 \le \sup_I|F^{(k)}|\,\|h\|_\infty^{k-2}\|h\|_2^2.
\end{equation}
By \eqref{eq:analytic-derivative-bound} and
\eqref{eq:potential-taylor-bound}, the series converges absolutely
for small $\|h\|_\infty$, so the potential integral is analytic near
$U$. The interaction term is the continuous quadratic form
$\frac12\langle v-J*v,v\rangle_{L^2(\R^d)}$; hence $\mathcal E$
is analytic on an $X$-neighborhood of $U$.
Writing $D$ for Fr\'echet differentiation,
\begin{equation}\label{eq:energy-frechet-derivative}
 D\mathcal E(v)h=\langle G(v),h\rangle_{L^2(\R^d)},
\end{equation}
and $DG(U):X\to X$ is the restriction of the operator $\mathcal A_U$ in \eqref{eq:energy-hessian} to $X$.

\smallskip
\noindent\textit{Step 2. Reduction to the kernel of the Hessian.}
Let $K=K_U$, and let $P$ and $Q=I-P$ be the projections from Lemma~\ref{lem:fredholm}.  By \eqref{eq:hessian-complement-isomorphism}, the restriction $\mathcal A_U:QX\to QX$ is an isomorphism.  Since $Q:X\to QX$ is bounded, the map
\begin{equation}
 (\xi,\eta)\longmapsto QG(U+\xi+\eta),
 \qquad \xi\in K,\quad \eta\in QX,
\end{equation}
is analytic near $(0,0)$ in $K\times QX$ with values in $QX$, and its derivative with respect to $\eta$ at $(0,0)$ is $\mathcal A_U|_{QX}$.  The analytic implicit-function theorem (see \cite[Section~2.1.1]{FeehanMaridakis2020}) therefore gives an open neighborhood $V\subset K$ of $0$ and an analytic map $\Psi:V\to QX$.  Since $G(U)=0$ and $\mathcal A_U\xi=0$ for $\xi\in K$, this map satisfies
\begin{equation}\label{eq:display-4-37}
 \Psi(0)=0,\qquad D\Psi(0)=0,
\end{equation}
and
\begin{equation}\label{eq:implicit-graph-equation}
 QG(U+\xi+\Psi(\xi))=0
 \qquad (\xi\in V).
\end{equation}
Define $\varphi:V\to\R$ by
\begin{equation}
 \varphi(\xi)=\mathcal E[U+\xi+\Psi(\xi)].
\end{equation}
Use the inherited $L^2(\R^d)$ inner product for gradients on
$K$, and set $w(\xi)=U+\xi+\Psi(\xi)$. Then $\varphi$ is analytic,
and \eqref{eq:implicit-graph-equation} gives $G(w(\xi))\in K$.
Since $D\Psi(\xi)K\subset QX\subset K^\perp$,
\eqref{eq:energy-frechet-derivative} gives, for $h\in K$,
\begin{equation}\label{eq:reduced-energy-derivative}
 D\varphi(\xi)h
 =\langle G(w(\xi)),h+D\Psi(\xi)h\rangle_{L^2(\R^d)}
 =\langle G(w(\xi)),h\rangle_{L^2(\R^d)}.
\end{equation}
The identity \eqref{eq:reduced-energy-derivative} identifies the gradient as
\begin{equation}\label{eq:reduced-gradient}
 \nabla\varphi(\xi)=G(w(\xi))=PG(w(\xi)).
\end{equation}
Since $\nabla\varphi(0)=G(U)=0$ by \eqref{eq:reduced-gradient}, the finite-dimensional \L ojasiewicz gradient inequality \cite[Theorem~2.2(1)]{FeehanMaridakis2020} gives $C_0>0$ and $\theta\in(0,1/2]$ such that
\begin{equation}\label{eq:finite-dimensional-gradient-inequality}
 |\varphi(\xi)-\varphi(0)|^{1-\theta}
 \le C_0\|G(w(\xi))\|_2
\end{equation}
for $\xi$ sufficiently small.  To ensure $1-\theta\ge1/2$, one may increase the exponent supplied by that theorem after restricting to $|\varphi(\xi)-\varphi(0)|\le1$.  If $K=\{0\}$, \eqref{eq:finite-dimensional-gradient-inequality} is trivial and we take $\theta=1/2$.  See \cite{Simon1983,Chill2003} for the corresponding \L ojasiewicz--Simon convergence argument.

\smallskip
\noindent\textit{Step 3. Comparison with the reduced energy.}
For $v\in X$ sufficiently close to $U$ in the norm of $X$, write
\begin{equation}\label{eq:display-4-43}
 v=U+\xi+\eta,
 \qquad \xi=P(v-U),\quad \eta=Q(v-U).
\end{equation}
Boundedness of $P:X\to K$ from Lemma~\ref{lem:fredholm}
ensures $\xi\in V$ for small $\|v-U\|_X$. Set
\begin{equation}\label{eq:graph-point-and-error}
 w=U+\xi+\Psi(\xi),
 \qquad z=\eta-\Psi(\xi)\in QX.
\end{equation}
By \eqref{eq:display-4-37} and continuity of $\Psi$,
\eqref{eq:display-4-43}--\eqref{eq:graph-point-and-error} make
$\|w-U\|_X$ and $\|z\|_X$ small with $\|v-U\|_X$.
We also use $Q$ as the $L^2(\R^d)$ projection onto $K^\perp$.
Write $\mathcal L(Y)$ for bounded linear operators on $Y$ with the
operator norm. For nearby $y\in X$,
$DG(y)h=h-J*h-f'(y)h$ extends to a bounded operator on $L^2(\R^d)$.
Using this extension, define on $K^\perp$
\begin{equation}\label{eq:averaged-hessian}
 B_{w,z}
 =\int_0^1 QDG(w+sz)\big|_{K^\perp}\,ds.
\end{equation}
The integral is in operator norm; Lipschitz continuity of $f'$
on $I$ makes the integrand continuous. At $(w,z)=(U,0)$ it equals
$\mathcal A_U|_{K^\perp}$, invertible by Lemma~\ref{lem:fredholm}.
Moreover,
\begin{equation}\label{eq:display-4-46}
 DG(y)-DG(U)=-M_{f'(y)-f'(U)},
\end{equation}
so boundedness of $f''$ on $I$ gives
$\|DG(y)-DG(U)\|_{\mathcal L(L^2(\R^d))}\le C\|y-U\|_\infty$.
With $A_Q=\mathcal A_U|_{K^\perp}$, \eqref{eq:averaged-hessian}
therefore yields
\begin{equation}\label{eq:averaged-hessian-perturbation}
 \|B_{w,z}-A_Q\|_{\mathcal L(QL^2(\R^d))}
 \le C\bigl(\|w-U\|_\infty+\|z\|_\infty\bigr).
\end{equation}
After shrinking the $X$-neighborhood of $U$, the right-hand side of \eqref{eq:averaged-hessian-perturbation} is smaller than $\|A_Q^{-1}\|^{-1}/2$.  The Neumann-series argument therefore gives invertibility of $B_{w,z}$ on $QL^2(\R^d)$ and the uniform bound
\begin{equation}\label{eq:averaged-hessian-inverse-bound}
 \|B_{w,z}^{-1}\|_{\mathcal L(K^\perp)}\le C_1.
\end{equation}
By \eqref{eq:implicit-graph-equation}, $QG(w)=0$, and the mean-value formula gives
$QG(v)=B_{w,z}z$,
so the inverse bound \eqref{eq:averaged-hessian-inverse-bound} yields
\begin{equation}\label{eq:graph-error-bound}
 \|z\|_2\le C_1\|QG(v)\|_2
 \le C_1\|G(v)\|_2.
\end{equation}
A second application of the mean-value formula gives
\begin{equation}\label{eq:gradient-mean-value}
 G(v)-G(w)=\int_0^1DG(w+sz)z\,ds.
\end{equation}
The $L^2(\R^d)$ extensions of $DG(y)$ are uniformly bounded
for nearby $y\in X$. Thus \eqref{eq:gradient-mean-value} and
\eqref{eq:graph-error-bound} give
\begin{equation}\label{eq:graph-gradient-bound}
 \|G(w)\|_2
 \le \|G(v)\|_2+C\|z\|_2
 \le C_2\|G(v)\|_2.
\end{equation}

Finally, the derivative formula \eqref{eq:energy-frechet-derivative} gives
\begin{equation}
 \mathcal E[v]-\mathcal E[w]
 =\int_0^1\langle G(w+sz),z\rangle_{L^2(\R^d)}\,ds.
\end{equation}
Since $G(w)\in K$ and $z\in K^\perp$, $\langle G(w),z\rangle_{L^2(\R^d)}=0$.  If $M$ bounds the $L^2(\R^d)$ operator norm of $DG$ on this neighborhood, then
\[
 \begin{aligned}
 |\mathcal E[v]-\mathcal E[w]|
 &\le \int_0^1\|G(w+sz)-G(w)\|_2\,\|z\|_2\,ds\\
 &\le M\int_0^1s\,ds\,\|z\|_2^2
 =\frac M2\|z\|_2^2.
 \end{aligned}
\]
Combining this estimate with \eqref{eq:graph-error-bound} gives
\begin{equation}\label{eq:graph-energy-error-bound}
 |\mathcal E[v]-\mathcal E[w]|
 \le C_3\|z\|_2^2
 \le C_4\|G(v)\|_2^2.
\end{equation}
Combining \eqref{eq:graph-energy-error-bound}, \eqref{eq:finite-dimensional-gradient-inequality}, and \eqref{eq:graph-gradient-bound} yields
\begin{equation}\label{eq:gradient-inequality-combination}
 \begin{aligned}
 |\mathcal E[v]-\mathcal E[U]|^{1-\theta}
 &\le C|\mathcal E[w]-\mathcal E[U]|^{1-\theta}
      +C|\mathcal E[v]-\mathcal E[w]|^{1-\theta}\\
 &\le C\|G(v)\|_2+C\|G(v)\|_2^{2(1-\theta)}.
 \end{aligned}
\end{equation}
Shrink the neighborhood so that $\|G(v)\|_2\le1$.
Since $2(1-\theta)\ge1$, \eqref{eq:gradient-inequality-combination}
then implies \eqref{eq:local-gradient-inequality}.
\end{proof}

\subsection{A local gradient inequality and convergence}

\begin{lemma}\label{cs:lem:fredholm}
Assume \textup{(F1)--(F2)} and \textup{(KM)}, and let
$U\in\mathcal S_0(J,f)$.  Define
\begin{equation}\label{cs:eq:hessian}
 a_U(x)=1-f'(U(x)),\qquad a_\infty=1-f'(0)>1,\qquad
 A_Uh=a_Uh-J*h .
\end{equation}
Then $A_U$ is a bounded self-adjoint Fredholm operator of index zero
on $L^2(\R^d)$.
Its kernel $K_U=\ker A_U$ is contained in $X$.
Let $P_U$ be the $L^2$ orthogonal projection onto $K_U$ and set
$Q_U=I-P_U$.
The restriction $A_U:Q_UX\to Q_UX$ is an isomorphism.
\end{lemma}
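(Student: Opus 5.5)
The plan is to repeat the factorization argument of Lemma~\ref{lem:fredholm}, replacing the Lipschitz bound that used \textup{(F3)} by a weaker decay statement for $a_U^{-1}-a_\infty^{-1}$. We assume only \textup{(F1)--(F2)}, so $f'$ is merely continuous. Under \textup{(KM)}, $J$ is bounded and integrable, hence $J\in L^2(\R^d)$. By \textup{(F2)}, $a_U\ge m_f>0$ and $a_U\le 1+L_f$, so $M_{a_U}$ is bounded and invertible on $L^2(\R^d)$. Evenness of $J$ makes $J*$ self-adjoint, so $A_U=M_{a_U}-J*$ is bounded and self-adjoint.

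We factor $A_U=M_{a_U}(I-T)$ with $T=M_{a_U^{-1}}J*$, and split
\[
 T=a_\infty^{-1}J*+M_pJ*,\qquad p=a_U^{-1}-a_\infty^{-1}.
\]
The first piece has norm at most $a_\infty^{-1}<1$, so $I-a_\infty^{-1}J*$ is invertible by the Neumann series. The main point is compactness of $M_pJ*$ without \textup{(F3)}. Since $U\in\mathcal S_0(J,f)$ is continuous and $U(x)\to0$, and $s\mapsto(1-f'(s))^{-1}$ is continuous on $[0,1]$, the function $p$ is bounded, continuous, and tends to zero at infinity. We do not expect $p\in L^2(\R^d)$, so we approximate instead. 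Set $p_R=p\mathbf 1_{B_R}$. Then $M_{p_R}J*$ is Hilbert--Schmidt, with norm bounded by $\|p_R\|_2\|J\|_2$ exactly as in \eqref{eq:hilbert-schmidt-bound}. Moreover
\[
 \|M_pJ*-M_{p_R}J*\|\le\sup_{|x|\ge R}|p(x)|\to0 .
\]
Hence $M_pJ*$ is a norm limit of compact operators and is compact. This approximation step is the only genuine change from Lemma~\ref{lem:fredholm}. It follows that $I-T$ is a compact perturbation of an invertible operator, hence Fredholm of index zero, and so is $A_U$.

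The remaining assertions copy Lemma~\ref{lem:fredholm}. Self-adjointness and closed range give $\operatorname{Ran}A_U=K_U^\perp$. If $h\in K_U$, then $a_Uh=J*h$, and Cauchy--Schwarz gives $\|h\|_\infty\le m_f^{-1}\|J\|_2\|h\|_2$, so $K_U\subset X$. Choosing an orthonormal basis of the finite-dimensional space $K_U$ shows that $P_U$ maps $L^2(\R^d)$ boundedly into $X$, so $Q_U$ is bounded on $X$. Self-adjointness gives $A_U(Q_UX)\subset Q_UX$.

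For surjectivity with bounds, take $g\in Q_UX$. The bounded inverse theorem on $K_U^\perp$ gives a unique $h\in K_U^\perp$ with $A_Uh=g$ and $\|h\|_2\le C\|g\|_2$. The identity $a_Uh=J*h+g$ then yields
\[
 \|h\|_\infty\le m_f^{-1}\bigl(\|J\|_2\|h\|_2+\|g\|_\infty\bigr)\le C\|g\|_X .
\]
Thus the restriction $A_U:Q_UX\to Q_UX$ is a bijection with bounded inverse in the $X$ norm.
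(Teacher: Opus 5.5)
Your proposal is correct and follows the paper's proof essentially step for step. The paper uses the same factorization $A_U=M_{a_U}\bigl(I-a_\infty^{-1}J*-M_{p_U}J*\bigr)$, the same compactness argument (Hilbert--Schmidt truncations $M_{p_U\mathbf 1_{B_R}}J*$ plus the tail bound $\sup_{|x|>R}|p_U(x)|$, which replaces the $L^2$ bound on $p$ that needed \textup{(F3)}), and the same $L^\infty$ bootstrap from $a_Uh=J*h+g$ to obtain both $K_U\subset X$ and the bounded inverse on $Q_UX$.
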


\begin{proof}
Evenness of $J$ and the bounds $m_f\le a_U\le\max_{[0,1]}(1-f')$
give boundedness and self-adjointness.  As in Lemma~\ref{lem:fredholm},
write $M_b$ for multiplication by $b$ and factor
\begin{equation}\label{cs:eq:fredholm-factor}
 A_U=M_{a_U}\bigl(I-a_\infty^{-1}J*-M_{p_U}J*\bigr),
 \qquad p_U=a_U^{-1}-a_\infty^{-1}.
\end{equation}
Continuity of $f'$ and $U(x)\to0$ imply $p_U(x)\to0$ at infinity.
For each $R>0$, $M_{p_U\mathbf1_{B_R}}J*$ is Hilbert--Schmidt,
with squared kernel norm $\|p_U\mathbf1_{B_R}\|_2^2\|J\|_2^2$.
Moreover,
\begin{equation}\label{cs:eq:compact-multiplier}
 \|M_{p_U\mathbf1_{\R^d\setminus B_R}}J*\|_{L^2\to L^2}
 \le\sup_{|x|>R}|p_U(x)|\to0\qquad(R\to\infty).
\end{equation}
Thus $M_{p_U}J*$ is compact, while $a_\infty>1$ makes
$I-a_\infty^{-1}J*$ invertible by a Neumann series.
By \eqref{cs:eq:fredholm-factor}, $A_U$ has Fredholm index zero;
self-adjointness then gives $\operatorname{Ran}A_U=K_U^\perp$.
For $h\in K_U$, $a_Uh=J*h$ gives
$\|h\|_\infty\le m_f^{-1}\|J\|_2\|h\|_2$, so $K_U\subset X$.
The bounded inverse on $K_U^\perp\subset L^2(\R^d)$ and
$a_Uh=J*h+g$ yield
\begin{equation}\label{cs:eq:inverse-bounds}
 \|h\|_2\le C\|g\|_2,\qquad
 \|h\|_\infty\le m_f^{-1}(\|J\|_2\|h\|_2+\|g\|_\infty),
 \qquad A_Uh=g\in Q_UX.
\end{equation}
These estimates prove the asserted isomorphism on $Q_UX$.
\end{proof}

\begin{lemma}\label{cs:lem:gradient}
Assume \textup{(F1)--(F2)} and \textup{(KM)}, and let
$U\in\mathcal S_0(J,f)\cap X$.
If $U\not\equiv0$, assume that either $f$ is real analytic near
$[0,\|U\|_\infty]$ or \eqref{cs:eq:translation-kernel} holds.
Then there are $\delta_U>0$, $C_U>0$, and
$\theta_U\in(0,1/2]$ such that, for $v\in X$,
\begin{equation}\label{cs:eq:gradient-inequality}
 |\mathcal E(v)-\mathcal E(U)|^{1-\theta_U}
 \le C_U\|G(v)\|_2
 \qquad\text{if }0\le v\le1,\quad \|v-U\|_X<\delta_U .
\end{equation}
If $U\equiv0$ or \eqref{cs:eq:translation-kernel} holds, one may take
$\theta_U=1/2$.
\end{lemma}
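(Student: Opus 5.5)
We split into three cases: $U\equiv0$, $U\not\equiv0$ with \eqref{cs:eq:translation-kernel}, and $U\not\equiv0$ with $f$ analytic near $[0,\|U\|_\infty]$. All three reduce to the Lyapunov--Schmidt scheme of Lemma~\ref{lem:lojasiewicz}. The Fredholm input is now supplied by Lemma~\ref{cs:lem:fredholm}, which needs only (KM) and $U\in\mathcal S_0(J,f)$ rather than (F3) and $U\in L^2$. The restriction $0\le v\le1$ keeps every function in the range $[0,1]$, where $f\in C^1$. Since $\|U\|_\infty\le\bar a(J,f)<1$ by Lemma~\ref{cs:lem:uniform-height}, a small $X$-neighborhood of $U$ has values in $[0,\|U\|_\infty+\delta]$, strictly inside $[0,1)$.

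\emph{Case $U\equiv0$.} Here $K_U=\{0\}$, because $a_0=a_\infty>1$ and $A_0\ge(a_\infty-1)I$. For $0\le v\le1$ small in $L^\infty$, the bounds $F(s)\le0$ and $|F(s)|\le Cs^2$ give $|\mathcal E(v)|\le C\|v\|_2^2$. To bound $\|v\|_2$ by $\|G(v)\|_2$, write $\langle G(v),v\rangle=\langle v-J*v,v\rangle-\int f(v)v$. Then $f(s)s\le -cs^2$ on a small interval gives $\langle G(v),v\rangle\ge c\|v\|_2^2$, hence $\|v\|_2\le c^{-1}\|G(v)\|_2$. This yields \eqref{cs:eq:gradient-inequality} with $\theta=1/2$.

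\emph{Case \eqref{cs:eq:translation-kernel}.} Here $K_U=\operatorname{span}\{\partial_iU\}$, and translations give an explicit critical manifold $\xi\mapsto U(\cdot+\xi)$, $\xi\in\R^d$. By Lemma~\ref{cs:lem:stationary-regularity}, $U\in C^2\cap W^{2,\infty}$, and Theorem~\ref{cs:thm:stationary-summary} gives $U\in H^1$. Hence $\xi\mapsto\tau_\xi U$ is $C^1$ into $X$ with derivative spanning $K_U$ at $0$, provided $\partial_iU\in L^2\cap L^\infty$ and $\nabla U$ is uniformly continuous. The first step is to decompose $v$ near $U$ as $v=\tau_\xi U+z$ with $z\perp K_{\tau_\xi U}$, either by the implicit function theorem or by minimizing $\|v-\tau_\xi U\|_2$. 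On this manifold $\mathcal E(\tau_\xi U)=\mathcal E(U)$ and $G(\tau_\xi U)=0$. The second step repeats Step~3 of Lemma~\ref{lem:lojasiewicz} with $w=\tau_\xi U$. This uses the invertibility of $A_{\tau_\xi U}$ on $K^\perp$ (translation-conjugate to $A_U$) and the mean-value formula, which needs only $f'$ continuous on the range, since the Hessian perturbation bound is applied in operator norm. This gives $\|z\|_2\le C\|G(v)\|_2$ and $|\mathcal E(v)-\mathcal E(U)|\le C\|z\|_2^2$, so $\theta=1/2$.

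The main obstacle is that $f$ is only $C^1$ near $0$, by \eqref{cs:eq:reaction-regularity}. The bound $\|DG(y)-DG(U)\|\le C\|y-U\|_\infty$ used earlier would require $f''$. I would instead rely on uniform continuity of $f'$ on $[0,1]$, which already makes the averaged Hessian close to $A_Q$ and keeps the Neumann-series step valid. The energy estimate $|\mathcal E(v)-\mathcal E(w)|=o(\|z\|_2)\|z\|_2$ then follows from $\langle G(w),z\rangle=0$ together with continuity of $DG$.

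\emph{Analytic case.} I would run Lemma~\ref{lem:lojasiewicz} verbatim, with two changes. First, replace $f$ by an auxiliary $C^1$ extension that equals the analytic function on a neighborhood of $[0,\|U\|_\infty]$, as arranged before Lemma~\ref{cs:lem:compact-orbit}. Second, replace Lemma~\ref{lem:fredholm} with Lemma~\ref{cs:lem:fredholm}. For $v$ near $U$ in $X$ the range lies in that neighborhood, so $\mathcal E$ and $G$ are analytic there. The reduction, the finite-dimensional \L ojasiewicz inequality and the comparison step then go through unchanged.
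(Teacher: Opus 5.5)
Your argument is correct, and in two of the three cases it takes a different route from the paper. The paper runs one Lyapunov--Schmidt reduction with the fixed projection $P_U$ in every case. It uses a $C^1$ implicit-function map $\Psi$ (only uniform continuity of $f'$ is needed), the reduced manifold $w(\xi)=U+\xi+\Psi(\xi)$, and the Step~3 comparison of Lemma~\ref{lem:lojasiewicz}. The case $U\equiv0$ is absorbed as $K_U=\{0\}$. Under \eqref{cs:eq:translation-kernel}, the inverse function theorem for $\Gamma(a)=P_U(U(\cdot-a)-U)$, together with uniqueness of $\Psi$, shows that each $w(\xi)$ is a translate of $U$, so the reduced energy is constant. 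You instead handle $U\equiv0$ by direct coercivity, $\langle G(v),v\rangle\ge c\|v\|_2^2$ and $0\le\mathcal E(v)\le C\|v\|_2^2$, which is shorter and self-contained. In the translation case you modulate along the orbit $\tau_\xi U$ with a moving projection onto $K_{\tau_\xi U}^\perp$. This dispenses with $\Psi$ and with identifying the reduced manifold, and the inverse bounds are uniform in $\xi$ by translation conjugacy; the cost is a $\xi$-dependent splitting. Your analytic case is the paper's argument.

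Three small repairs are needed.

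First, of your two ways to obtain the modulation, use the minimization. The implicit-function version needs $\xi\mapsto\langle v,\tau_\xi\partial_iU\rangle$ to be $C^1$, i.e.\ second derivatives of $U$ in $L^2$, and these are unavailable when $f$ is only $C^1$. Minimizing $\|v-\tau_\xi U\|_2^2=\|v\|_2^2+\|U\|_2^2-2\langle v,\tau_\xi U\rangle$ over a small closed ball gives a minimizer that tends to $0$ as $v\to U$, because $\|\tau_\xi U-U\|_2>0$ for $\xi\ne0$. Its first-order condition, together with $\langle U,\partial_iU\rangle=0$, is exactly $z\perp K_{\tau_\xi U}$.

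Second, Lemma~\ref{cs:lem:stationary-regularity} and Theorem~\ref{cs:thm:stationary-summary} require \eqref{cs:eq:reaction-regularity}, which is not a hypothesis of this lemma. The facts you need come instead from Lemma~\ref{cs:lem:stationary-integrability} and from $\partial_iU=(\partial_iJ)*U/(1-f'(U))$, which shows that $\partial_iU\in X$ is uniformly continuous.

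Third, the energy error is $O(\|z\|_2^2)$, not $o(\|z\|_2)\|z\|_2$, since its leading term is $\tfrac12\langle A_{w}z,z\rangle$. With $G(w)=0$, the $O(\|z\|_2^2)$ bound follows from the $L^2$-Lipschitz bound for $G$ on $[0,1]$-valued functions, and it is all that $\theta_U=1/2$ requires.
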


\begin{proof}
Retain the notation of Lemma~\ref{cs:lem:fredholm}.
In the analytic case, use the analytic extension of $f$ on a neighborhood
of $[0,\|U\|_\infty]$ to define $G$ and $\mathcal E$ throughout this proof.
For $0\le v\le1$ sufficiently close to $U$, these maps agree with the
original gradient and energy.
By uniform continuity of $f'$ on compact intervals, the integral
formula for the first-order remainder gives
\begin{equation}\label{cs:eq:composition-derivative}
 \|f(v+h)-f(v)-f'(v)h\|_X=o(\|h\|_X)
 \quad(\|h\|_X\to0),\qquad
 DG(v)h=(1-f'(v))h-J*h .
\end{equation}
The same uniform continuity gives continuity of $DG$ in operator
norm on $X$ and of its extension as an operator on $L^2$.

For $\xi\in K_U$ and $\eta\in Q_UX$, consider
$Q_UG(U+\xi+\eta)=0$.
By \eqref{cs:eq:inverse-bounds}, its derivative in $\eta$ at $(0,0)$,
$A_U|_{Q_UX}$, is invertible.  The $C^1$ implicit-function theorem
therefore gives a $C^1$ map $\Psi$ from a neighborhood of $0$ in $K_U$ to $Q_UX$.
Since $G(U)=0$ and $A_UK_U=0$,
\begin{equation}\label{cs:eq:reduced-map}
 \Psi(0)=0,\qquad D\Psi(0)=0,\qquad
 Q_UG(U+\xi+\Psi(\xi))=0 .
\end{equation}
Set $w(\xi)=U+\xi+\Psi(\xi)$ and
$\varphi(\xi)=\mathcal E(w(\xi))$.
By \eqref{cs:eq:reduced-map}, $G(w(\xi))\in K_U$, whereas
$D\Psi(\xi)$ takes values in $Q_UX$.
Differentiating with \eqref{cs:eq:energy-gradient} gives
\begin{equation}\label{cs:eq:finite-gradient}
 \nabla\varphi(\xi)=G(w(\xi))\in K_U ,
\end{equation}
where the gradient uses the $L^2$ inner product on $K_U$.

If $K_U=\{0\}$, then $w=U$ and the reduced energy is constant.
This includes $U\equiv0$, because $A_0=a_\infty I-J*$ is invertible.
For the remaining cases we prove
\begin{equation}\label{cs:eq:finite-lojasiewicz}
 |\mathcal E(w(\xi))-\mathcal E(U)|^{1-\theta_U}
 \le C\|G(w(\xi))\|_2
\end{equation}
with $\theta_U\in(0,1/2]$.

In the analytic case, the estimates \eqref{eq:analytic-derivative-bound}--\eqref{eq:potential-taylor-bound},
applied on a compact interval whose interior contains $[0,\|U\|_\infty]$,
show that $G$ and $\mathcal E$ are analytic. The analytic implicit-function
theorem therefore makes $\Psi$ analytic, so
\eqref{cs:eq:finite-gradient} and
\cite[Theorem~2.2(1)]{FeehanMaridakis2020} give
\eqref{cs:eq:finite-lojasiewicz} with $\theta_U\in(0,1/2]$,
after increasing the exponent if necessary.
Now suppose that \eqref{cs:eq:translation-kernel} holds and $U\not\equiv0$.
Differentiating $H(U)=J*U$ gives
\[
 \partial_iU=\frac{(\partial_iJ)*U}{1-f'(U)} .
\]
Since $U\in X$, $\partial_iJ\in L^1$, and $1-f'(U)\ge m_f$,
each $\partial_iU$ belongs to $X$ and is uniformly continuous.
Translations are continuous in $L^2$, and uniform continuity gives
continuity of translations of $\partial_iU$ in $L^\infty$.
The integral formula for increments therefore shows that
$a\mapsto U(\cdot-a)$ is $C^1$ from $\R^d$ into $X$.
Its derivative at zero in the direction $a$ is $-a\cdot\nabla U$.
The functions $\partial_1U,\ldots,\partial_dU$ are linearly independent:
$a\cdot\nabla U=0$ with $a\ne0$ would make $U$ constant on lines
parallel to $a$, and decay at infinity would give $U\equiv0$.

Define
\[
 \Gamma(a)=P_U\bigl(U(\cdot-a)-U\bigr),\qquad a\in\R^d .
\]
By \eqref{cs:eq:translation-kernel}, $D\Gamma(0)$ is an isomorphism
from $\R^d$ onto $K_U$.
The finite-dimensional inverse function theorem and uniqueness
in \eqref{cs:eq:reduced-map} imply that every $w(\xi)$ for small $\xi$
is a translate of $U$.
Translation invariance of the equation and of \eqref{eq:energy-definition}
therefore gives
\[
 G(w(\xi))=0,\qquad \mathcal E(w(\xi))=\mathcal E(U).
\]
Hence \eqref{cs:eq:finite-lojasiewicz} holds with $\theta_U=1/2$.
The same choice applies when $K_U=\{0\}$.

For a general $v$ near $U$, set $\xi=P_U(v-U)$,
$w=w(\xi)$ and $z=v-w\in K_U^\perp\cap X$.
By \eqref{cs:eq:composition-derivative} and
Lemma~\ref{cs:lem:fredholm}, the averaged operator in
\eqref{eq:averaged-hessian} is uniformly invertible on
$K_U^\perp\subset L^2(\R^d)$ for nearby $w,z$.
Indeed, the perturbation argument in Step~3 of
Lemma~\ref{lem:lojasiewicz} uses only continuity of $DG$ in the
$L^2$ operator norm, established above.
The estimates \eqref{eq:graph-error-bound},
\eqref{eq:graph-gradient-bound} and
\eqref{eq:graph-energy-error-bound} therefore give
\begin{equation}\label{cs:eq:reduction-errors}
 \|z\|_2\le C\|G(v)\|_2,\qquad
 \|G(w)\|_2\le C\|G(v)\|_2 .
\end{equation}
By \eqref{eq:graph-energy-error-bound},
$|\mathcal E(v)-\mathcal E(w)|\le C\|G(v)\|_2^2$.
Combining these bounds with \eqref{cs:eq:finite-lojasiewicz},
and using $2(1-\theta_U)\ge1$, proves
\eqref{cs:eq:gradient-inequality} after restricting to
$\|G(v)\|_2\le1$.
\end{proof}

\begin{lemma}\label{cs:lem:full-convergence}
Assume the hypotheses of Lemma~\ref{cs:lem:compact-orbit}.
Let $t_n\to\infty$ and suppose that $u(t_n,\cdot)\to U$ in $X$.
If \eqref{cs:eq:gradient-inequality} holds at $U$ for some
$\delta_U,C_U>0$ and $\theta_U\in(0,1/2]$, then
\begin{equation}\label{cs:eq:full-convergence}
 \lim_{t\to\infty}\|u(t,\cdot)-U\|_X=0 .
\end{equation}
\end{lemma}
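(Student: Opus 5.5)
The argument is the standard Łojasiewicz--Simon convergence scheme, using the energy identity \eqref{cs:eq:dissipation} and the compactness of the orbit in $X$ from Lemma~\ref{cs:lem:compact-orbit}. First, $t\mapsto\mathcal E(u(t,\cdot))$ is nonincreasing and, by \eqref{cs:eq:energy-lower}, bounded below, so it has a limit $\mathcal E_\infty$. Since $u(t_n,\cdot)\to U$ in $X$ and $\mathcal E$ is continuous on $X$, we get $\mathcal E_\infty=\mathcal E(U)$, and hence $\mathcal E(u(t,\cdot))\ge\mathcal E(U)$ for all $t$. If $\mathcal E(u(t_0,\cdot))=\mathcal E(U)$ for some finite $t_0$, then $u_t=0$ for $t\ge t_0$, the solution is stationary, and it equals $U$; so we may assume the inequality is strict. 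Set $\delta=\delta_U$ and $\theta=\theta_U$, and write $\Theta(t)=(\mathcal E(u(t,\cdot))-\mathcal E(U))^\theta$.

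On any interval $[s,t]$ on which $\|u(\tau,\cdot)-U\|_X<\delta$, the values of $u$ lie in $[0,1]$, so \eqref{cs:eq:gradient-inequality} applies. Using $u_t=-G(u)$ and \eqref{cs:eq:energy-derivative}, we obtain
\[
 -\frac{d}{d\tau}\Theta(\tau)=\theta(\mathcal E(u)-\mathcal E(U))^{\theta-1}\|u_\tau\|_2^2\ge\frac{\theta}{C_U}\|u_\tau\|_2 .
\]
This yields the $L^2$ length bound $\int_s^t\|u_\tau\|_2\,d\tau\le C_U\theta^{-1}\Theta(s)$.

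The main obstacle is that the gradient inequality is only available in an $X$-neighborhood, whereas the length estimate controls only the $L^2$ norm. Leaving the neighborhood could happen through the $L^\infty$ part of the norm. To handle this, I will upgrade $L^2$ control to $X$ control using the equation itself. The difference $w=u(t,\cdot)-u(s,\cdot)$ satisfies $w_t=J*w-w+(f(u(t))-f(u(s)))$ in integrated form; more simply, $u(t)-u(s)=\int_s^tu_\tau\,d\tau$ and $\|u_\tau\|_\infty$ should be controlled by $\|u_\tau\|_2$. To get that control, I use \eqref{cs:eq:time-linearization}: $v=u_t$ satisfies $v_t+(1-f'(u))v=J*v$, and $\|J*v\|_\infty\le\|J\|_2\|v\|_2$. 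Variation of constants with the coefficient $1-f'(u)\ge m_f$ then gives
\[
 \|v(t)\|_\infty\le e^{-m_f(t-s)}\|v(s)\|_\infty+m_f^{-1}\|J\|_2\sup_{[s,t]}\|v\|_2 .
\]
Because $\|u_t\|_2\to0$ and $\|v\|_\infty$ is bounded, this shows $\|u_t\|_\infty\to0$. That alone does not give integrability in time of $\|u_t\|_\infty$. The fallback is an interpolation argument. Integrating the $v$-equation, $\|u(t)-u(s)\|_\infty$ is bounded by $\|u(t)-u(s)\|_2$ plus a term small in $\|v\|$ over short windows. More concretely, write $u(t)-u(s)$ through the stationary-type relation $H(u)-J*u=-u_t$. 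This gives
\[
 m_f\|u(t)-u(s)\|_\infty\le\|J\|_2\|u(t)-u(s)\|_2+\|u_t(t)\|_\infty+\|u_t(s)\|_\infty ,
\]
since $H$ has derivative at least $m_f$. Since the last two terms tend to $0$, $X$-distance is controlled by $L^2$-distance plus $o(1)$. This is the key step, and it avoids any need to integrate $\|u_t\|_\infty$.

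To finish, choose $n$ so large that $\|u(t_n)-U\|_X<\epsilon$, that $C_U\theta^{-1}\Theta(t_n)<\epsilon$, and that $\sup_{t\ge t_n}\|u_t\|_\infty<\epsilon$, with $\epsilon\ll\delta$. Let $T$ be the first exit time after $t_n$ from the $\delta$-ball. On $[t_n,T]$ the length bound gives $\|u(t)-u(t_n)\|_2<\epsilon$, and the displayed estimate then gives $\|u(t)-U\|_X\le C\epsilon<\delta$, contradicting the definition of $T$ if $T<\infty$. Hence $T=\infty$, and $\int^\infty\|u_t\|_2<\infty$. Consequently $u(t)$ is Cauchy in $L^2$, and by the same estimate it is Cauchy in $X$. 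Its limit must be $U$, which proves \eqref{cs:eq:full-convergence}.
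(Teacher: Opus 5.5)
Your proof is correct, but you bridge the gap between the $L^2$ length bound and the $X$-neighborhood differently from the paper. The paper never estimates $\|u_t\|_\infty$ in this lemma. It takes the orbit closure $\mathcal K$, which is compact in $X$ by Lemma~\ref{cs:lem:compact-orbit}, and observes that the $X$ and $L^2$ topologies coincide on $\mathcal K$. So the part of $\mathcal K$ at $X$-distance at least $\delta_U/2$ from $U$, if nonempty, has a positive $L^2$ distance $d_a$ from $U$. The $L^2$ length bound from the gradient inequality then prevents the trajectory from ever reaching that part.

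You instead make the passage from $L^2$ to $L^\infty$ quantitative. The identity $H(u)-J*u=-u_t$ together with $H'\ge m_f$ gives
\[
 m_f\|u(t)-u(s)\|_\infty\le\|J\|_2\|u(t)-u(s)\|_2+\|u_t(t)\|_\infty+\|u_t(s)\|_\infty .
\]
The variation-of-constants bound for $v=u_t$ from \eqref{cs:eq:time-linearization} then upgrades $\|u_t\|_2\to0$ to $\|u_t\|_\infty\to0$.

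The paper's route is shorter and purely topological once compactness of the orbit is available. Yours needs only the energy lower bound, $\|u_t\|_2\to0$ and the given subsequence, not compactness of the orbit in $X$. It also gives an explicit trapping radius, and the Cauchy property in $X$ comes directly from your estimate rather than from subsequence extraction.

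Your argument is close in spirit to Steps~1--2 of Proposition~\ref{prop:transitionconverges}. There the paper integrates the variation-of-constants formula over $[t_n,t]$ before taking any supremum, and so obtains $\int\|u_t\|_\infty\,dt<\infty$ and finite $X$-length. So your remark that this formula ``does not give integrability'' is too pessimistic, although your pointwise inversion of $H$ makes that integrability unnecessary.
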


\begin{proof}
Set $\mathcal K=\overline{\{u(t,\cdot):t\ge0\}}^{\,X}$.
By Lemma~\ref{cs:lem:compact-orbit}, $\mathcal K$ is compact and
$U\in\mathcal K\cap\mathcal S_0(J,f)$.  Energy continuity and
\eqref{cs:eq:dissipation} therefore give
\begin{equation}\label{cs:eq:energy-gap}
 e(t):=\mathcal E(u(t,\cdot))-\mathcal E(U)\ge0,\qquad
 \lim_{t\to\infty}e(t)=0,\qquad
 e'(t)=-\|u_t(t,\cdot)\|_2^2 .
\end{equation}
If $e$ vanishes at a finite time, \eqref{cs:eq:energy-gap}
and continuity of $u_t$ give a stationary trajectory thereafter,
whose value is $U$.

Suppose therefore that $e(t)>0$ for all $t\ge0$, and choose
$\delta_U,C_U,\theta_U$ from the assumed gradient inequality.
As long as $\|u(t,\cdot)-U\|_X<\delta_U$,
\eqref{cs:eq:gradient-inequality} and
\eqref{cs:eq:energy-gap} imply
\begin{equation}\label{cs:eq:length}
 -\frac{d}{dt}e(t)^{\theta_U}
 =\theta_U e(t)^{\theta_U-1}\|u_t(t,\cdot)\|_2^2
 \ge\frac{\theta_U}{C_U}\|u_t(t,\cdot)\|_2 .
\end{equation}
Since $\mathcal K$ is compact in $X$ and the inclusion
$X\hookrightarrow L^2$ is continuous and injective, the $X$ and
$L^2$ topologies agree on $\mathcal K$.
Consequently, for $a=\delta_U/2$, the set
$\{v\in\mathcal K:\|v-U\|_X\ge a\}$ is either empty or has a
positive $L^2$ distance $d_a$ from $U$.
In the empty case the gradient inequality holds for the entire
trajectory, and \eqref{cs:eq:length} directly gives its finite
$L^2$ length on $[T,\infty)$ for every $T\ge0$.

In the other case, choose $n$ so large that
\begin{equation}\label{cs:eq:trapping}
 \|u(t_n,\cdot)-U\|_X<a/2,\qquad
 \|u(t_n,\cdot)-U\|_2+
       \frac{C_U}{\theta_U}e(t_n)^{\theta_U}<d_a .
\end{equation}
If the trajectory first reached $X$ distance $a$ from $U$
at time $T>t_n$, integration of \eqref{cs:eq:length} up to $T$ would
give
\[
 \|u(T,\cdot)-U\|_2
 \le \|u(t_n,\cdot)-U\|_2+
       \frac{C_U}{\theta_U}e(t_n)^{\theta_U}<d_a ,
\]
contrary to the definition of $d_a$.
Thus it stays in the $X$ ball of radius $a$, and
\eqref{cs:eq:length} gives
$\int_{t_n}^{\infty}\|u_t(t,\cdot)\|_2\,dt<\infty$.
In either case, finite $L^2$ length gives an $L^2$ limit, identified as
$U$ by the given subsequence.  By compactness of $\mathcal K$, every
sequence of times tending to infinity has an $X$-convergent subsequence;
its $L^2$ limit is $U$.  This proves \eqref{cs:eq:full-convergence}.
\end{proof}

\begin{proof}[Proof of Theorem~\ref{cs:thm:dynamics}]
Uniqueness gives alternative (i) when $u_0\equiv0$.
Assume $u_0\not\equiv0$, with $C_0$ as in
\eqref{cs:eq:initial-convex-hull}.
Spreading gives (ii). Otherwise Lemma~\ref{cs:lem:compact-orbit}
gives $t_n\to\infty$ and $U\in\mathcal S_0(J,f)\cap X$ with
$u(t_n,\cdot)\to U$ in $X$.
For $U\not\equiv0$, either $f$ is real analytic near
$[0,\|U\|_\infty]$ or \eqref{cs:eq:translation-kernel} holds,
by the hypothesis of Theorem~\ref{cs:thm:dynamics}.
In either case, Lemma~\ref{cs:lem:gradient} gives
\eqref{cs:eq:gradient-inequality}. At $U\equiv0$, the lemma applies directly.
Lemma~\ref{cs:lem:full-convergence} then gives $u(t,\cdot)\to U$ in $X$.
For $U\equiv0$ this is (i); for $U\not\equiv0$,
Theorem~\ref{cs:thm:stationary-summary} supplies the unique
pair $(x_*,V)$ and its stated properties. It remains to prove $x_*\in C_0$.
The alternatives are mutually exclusive, since their local limits
differ and $0<U<1$ in (iii).

To prove $x_*\in C_0$ in (iii), fix a unit vector $e$ and
$\lambda>h_0(e)$. By \eqref{cs:eq:initial-convex-hull},
$u_0=0$ almost everywhere on $D_{e,\lambda}$.
The reflected difference satisfies \eqref{cs:eq:reflection-evolution}
with nonnegative initial data, so the comparison argument in
Lemma~\ref{cs:lem:reflection} gives
\[
 u(t,x^{e,\lambda})\ge u(t,x)
 \quad\text{for almost every }x\in D_{e,\lambda}.
\]
Passing to the uniform limit as $t\to\infty$ in alternative (iii), then using
continuity of $U$, yields
\begin{equation}\label{cs:eq:limit-reflection-order}
 U(x^{e,\lambda})\ge U(x)
 \qquad(x\in D_{e,\lambda}).
\end{equation}
If $x_*\cdot e>h_0(e)$, choose $h_0(e)<\lambda<x_*\cdot e$.
Then \eqref{cs:eq:limit-reflection-order} gives
$U(x_*^{e,\lambda})\ge U(x_*)$, contradicting the unique maximum
in Lemma~\ref{cs:lem:stationary-symmetry}.
Hence $x_*\cdot e\le h_0(e)$ for every unit vector $e$.
The representation of $C_0$ as an intersection of supporting half-spaces
gives $x_*\in C_0$.
\end{proof}

\subsection{Existence of a nonzero decaying stationary solution}

\begin{lemma}\label{cs:lem:stationary-existence}
Under \textup{(F1)--(F2)} and \textup{(KM)},
$\mathcal S_0(J,f)\setminus\{0\}$ is nonempty.
\end{lemma}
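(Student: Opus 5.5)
The plan is a continuity (shooting) argument on the family $u^R$ of solutions with $u_0=\mathbf1_{B_R}$. I will find a radius at which $u^R$ neither becomes extinct nor spreads, and then extract a nonzero decaying stationary limit from Lemma~\ref{cs:lem:compact-orbit}. No gradient inequality or convergence of the full orbit is needed, only a subsequential limit.

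\emph{Step 1: a one-sided difference estimate.} Let $u,v$ be solutions with $0\le v_0\le u_0\le1$ and set $d=u-v\ge0$. Then
\[
d_t=J*d-d+a\,d,\qquad a=\frac{f(u)-f(v)}{u-v}\le\kappa_f=1-m_f,
\]
so $d_t\le J*d-m_fd$. Linear comparison as in the proof of Lemma~\ref{lem:smallseed}, together with \eqref{eq:small-mass-semigroup-bound} (which uses $J\in L^\infty$ from (KM)), gives
\begin{equation}\label{eq:plan-difference}
\|d(T,\cdot)\|_\infty\le e^{-m_fT}\bigl(1+\|d_0\|_1\|J\|_\infty(e^{T}-1)\bigr).
\end{equation}
The point is that the $L^\infty$ part of the initial difference, which is $1$ for indicators of nested balls, is damped by $e^{-m_fT}$. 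The remaining part is controlled by $\|d_0\|_1=\omega_d|R^d-R'^d|$.

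\emph{Step 2: both sets of radii are open and nonempty.} Let $\mathcal X=\{R>0:u^R\text{ becomes extinct}\}$ and $\mathcal P=\{R>0:u^R\text{ spreads}\}$. By Lemma~\ref{lem:smallseed}, $\mathcal X$ contains all small $R$. By \eqref{eq:measurable-seed-spreading}, $\mathcal P$ contains all large $R$. The two sets are clearly disjoint.

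To see that $\mathcal X$ is open, suppose $R\in\mathcal X$. Choose $T$ with $e^{-m_fT}<\vartheta/4$ and $\|u^R(T,\cdot)\|_\infty<\vartheta/4$. Then choose $R'>R$ so close to $R$ that $e^{-m_fT}\|d_0\|_1\|J\|_\infty e^T<\vartheta/4$. By \eqref{eq:plan-difference}, $\|u^{R'}(T,\cdot)\|_\infty<3\vartheta/4<\vartheta$, so Lemma~\ref{lem:semiflow}, applied from time $T$, gives extinction. For $R'<R$, extinction follows directly from order preservation.

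To see that $\mathcal P$ is open, fix $b\in(\vartheta,1)$ and $L_b$ from \eqref{eq:measurable-seed-spreading}, and suppose $R\in\mathcal P$. Pick $T$ with $u^R(T,\cdot)\ge(1+b)/2$ on $B_{L_b}$ and $e^{-m_fT}$ small. Then take $R'<R$ close to $R$ and apply \eqref{eq:plan-difference} to $u^R-u^{R'}$. This gives $u^{R'}(T,\cdot)\ge b$ on $B_{L_b}$, so $u^{R'}$ spreads by the criterion applied from time $T$. For $R'>R$, spreading follows from order preservation.

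Since $(0,\infty)$ is connected, there is $R_0\notin\mathcal X\cup\mathcal P$.

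\emph{Step 3: extracting the limit.} The solution $u^{R_0}$ does not spread, so Lemma~\ref{cs:lem:compact-orbit} applies. It gives $t_n\to\infty$ and $U\in\mathcal S_0(J,f)$ with $u^{R_0}(t_n,\cdot)\to U$ in $X$. If $U\equiv0$, then $\|u^{R_0}(t_n,\cdot)\|_\infty<\vartheta$ for some $n$, and Lemma~\ref{lem:semiflow} would give extinction, a contradiction. Hence $U\in\mathcal S_0(J,f)\setminus\{0\}$.

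The main obstacle is Step 2. Continuous dependence in $L^\infty$ is useless here, because the indicator data differ by $1$ in sup norm. The observation that makes the argument work is that the linearized growth rate is at most $1-m_f$, so the sup-norm discrepancy decays like $e^{-m_fT}$ while the $L^1$-small part is spread out by $J\in L^\infty$. This is what lets me pick the time $T$ first and the radius $R'$ afterwards.
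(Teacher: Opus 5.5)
Your argument is correct, and its skeleton is the same as the paper's: you separate an open extinction set from an open spreading set by connectedness, apply Lemma~\ref{cs:lem:compact-orbit} at a parameter in neither set, and exclude the zero limit via Lemma~\ref{lem:semiflow}. The difference is the one-parameter family. The paper fixes a radius $R$ for which $\mathbf1_{B_R}$ spreads and shoots in the amplitude, using data $a\mathbf1_{B_R}$ with $a\in[0,1]$. Since $\|a\mathbf1_{B_R}-a'\mathbf1_{B_R}\|_\infty=|a-a'|$, openness of both sets follows directly from the $L^\infty$ Lipschitz dependence \eqref{cs:eq:flow-dependence}. The obstacle you identify therefore never arises there, and no damping estimate or bound on $\|J\|_\infty$ is needed.

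Your radius family does need Step~1. That step is exactly Lemma~\ref{lem:radiuscontinuity}, which the paper uses for Proposition~\ref{prop:radialinterval}. In return, your nonzero state is obtained as a limit along the ball family at a radius in $[R_{\rm ext},R_{\rm prop}]$. This essentially reproduces Propositions~\ref{prop:radialinterval} and~\ref{prop:transitionomega}, so it ties existence directly to the threshold problem.

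Both versions use the spreading criterion \eqref{eq:measurable-seed-spreading} and Lemma~\ref{cs:lem:compact-orbit}. Hence both rely on the section's standing hypothesis $F(1)>0$. It is not listed in the lemma statement, but it is indispensable: for $F(1)<0$ one has $\mathcal S_0(J,f)=\{0\}$ by Corollary~\ref{cs:cor:stationary-negative}.
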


\begin{proof}
Chmaj and Ren \cite[Theorem~2.1 and Remark~2.10]{ChmajRen1999}
established existence in dimension one. We use
Lemma~\ref{cs:lem:compact-orbit} to give a proof valid for every $d\ge1$.

By the spreading criterion \eqref{eq:measurable-seed-spreading}, choose $R$ so large that the solution
with initial value $\mathbf1_{B_R}$ spreads.
For $a\in[0,1]$, let $u^a$ solve \eqref{eq:main-equation} with
$u^a(0,\cdot)=a\mathbf1_{B_R}$, and define
\[
 E=\{a\in[0,1]:\lim_{t\to\infty}\|u^a(t,\cdot)\|_\infty=0\},\qquad
 P=\{a\in[0,1]:u^a\text{ spreads}\}.
\]
These sets are disjoint, and $0\in E$, $1\in P$.
They are relatively open in $[0,1]$, as follows.

If $a\in E$, choose $T$ with
$\|u^a(T,\cdot)\|_\infty<\vartheta/2$.
By \eqref{cs:eq:flow-dependence}, the same norm is less than
$3\vartheta/4$ for all nearby parameters.
The scalar solution of $q'=f(q)$ with $q(T)=3\vartheta/4$
decreases to zero as $t\to\infty$ by \textup{(F1)--(F2)}.
Comparison with this spatially constant solution puts every such
parameter in $E$.

If $a\in P$, fix $b\in(\vartheta,1)$ and take $L_b$ from
the spreading criterion \eqref{eq:measurable-seed-spreading}.
At some finite time $T$,
$u^a(T,\cdot)>(1+b)/2$ on $B_{L_b}$.
Finite-time dependence in $L^\infty$ gives
$u^{a'}(T,\cdot)\ge b$ on this ball for $a'$ close to $a$.
The spreading criterion \eqref{eq:measurable-seed-spreading} then puts $a'$ in $P$.

Since $E$ and $P$ are disjoint, nonempty, and relatively open,
connectedness gives $a_*\in(0,1)\setminus(E\cup P)$.
Since $a_*\notin P$, Lemma~\ref{cs:lem:compact-orbit} gives a sequence
$t_n\to\infty$ and $U\in\mathcal S_0(J,f)$ such that
$u^{a_*}(t_n,\cdot)\to U$ in $X$.
If $U\equiv0$, then $\|u^{a_*}(t_n,\cdot)\|_\infty<\vartheta$ for
large $n$.  Lemma~\ref{lem:semiflow}, applied from that finite time,
gives $a_*\in E$, a contradiction.
Hence $U\not\equiv0$, which proves the assertion.
\end{proof}

\subsection{Corollaries for a negative reaction integral}\label{subsec:negative-integral}

In this subsection, retain \textup{(F1)--(F2)} and \textup{(KM)},
and suppose that $F(1)<0$.

\begin{proof}[Proof of Corollary~\ref{cs:cor:stationary-negative}]
By Lemma~\ref{lem:duality}, $f^\sharp(s)=-f(1-s)$ satisfies
\textup{(F1)--(F2)} and $F^\sharp(1)>0$.
If $Z\in\mathcal S_0(J,f)$, then $1-Z$ is stationary for
$f^\sharp$ and tends to $1$ at infinity. The second assertion of
Lemma~\ref{cs:lem:stationary-values}, applied to $f^\sharp$, gives
$1-Z\equiv1$, hence $Z\equiv0$.

Under \eqref{cs:eq:reaction-regularity-one},
$(f^\sharp)''(s)=-f''(1-s)$ shows that $f^\sharp$ satisfies
\eqref{cs:eq:reaction-regularity}. Moreover,
$F(\vartheta)<F(1)<F(0)$ and strict decrease of $F$ on
$(0,\vartheta)$ give the unique $\alpha$, and
\eqref{eq:reflected-potential} gives $\beta_{f^\sharp}=1-\alpha$.
For $U\in\mathcal S_1(J,f)\setminus\{1\}$, apply
Theorem~\ref{cs:thm:stationary-summary} to
$1-U\in\mathcal S_0(J,f^\sharp)\setminus\{0\}$.
Writing $1-U(x)=V(|x-x_*|)$ and $P=1-V$ proves
\eqref{cs:eq:stationary-one-summary} and uniqueness of the pair
$(x_*,P)$. The last integral is absolutely convergent since
$|f(U)|\le C(1-U)$ and $1-U\in L^1(\R^d)$.
\end{proof}

\begin{proof}[Proof of Corollary~\ref{cs:cor:dynamics-negative}]
Set $v=1-u$ and $v_0=1-u_0$. By Lemma~\ref{lem:duality},
$v$ solves \eqref{eq:main-equation} with reaction $f^\sharp$,
positive integral $F^\sharp(1)$, and initial datum $v_0$
satisfying \eqref{cs:eq:data}. The preceding proof gives
\eqref{cs:eq:reaction-regularity} for $f^\sharp$.
For $W\in\mathcal S_0(J,f^\sharp)\setminus\{0\}$, set $U=1-W$.
Reflection takes $[0,\|W\|_\infty]$ to $[\inf_{\R^d}U,1]$, while
$(f^\sharp)'(W)=f'(U)$ and $\partial_iW=-\partial_iU$ show that
the linearized operators and the spans of their translation modes
agree. Thus the hypothesis of Theorem~\ref{cs:thm:dynamics} holds
for every such $W$. Applying that theorem to $v$ gives the three
alternatives, since $u-1=-v$ and $u-U=-(v-W)$.
In the third case, Corollary~\ref{cs:cor:stationary-negative}
gives \eqref{cs:eq:stationary-one-summary}, and
\eqref{cs:eq:one-convex-hull} identifies the center set with the
closed convex hull of the support of $v_0(x)\,dx$.
\end{proof}

\section{Threshold radii and critical solutions}\label{sec:ballthresholds}

We retain \textup{(F1)--(F2)} and the solutions $u^R$ defined in
Section~\ref{sec:introduction}. Kernel and additional reaction assumptions
are stated in each result. Inner products without a subscript are
taken in $L^2(\R^d)$.

\subsection{Dependence on the radius}

\begin{lemma}\label{lem:radialmonotone}
Assume (KM).  Let $u_0\in L^\infty(\R^d)$ satisfy $0\le u_0\le1$, and let $u$ be the unique global solution of \eqref{eq:main-equation} with initial datum $u_0$.  If $u_0$ is radial and nonincreasing as a function of $|x|$, then $u(t,\cdot)$ is radial and nonincreasing as a function of $|x|$ for every $t\ge0$.  In particular, $u^R(t,\cdot)$ is radial nonincreasing for every $R>0$ and $t\ge0$.
\end{lemma}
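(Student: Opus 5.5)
Radiality and monotonicity will be handled separately. Radiality follows from uniqueness: for any orthogonal matrix $O$, the function $u(t,Ox)$ solves \eqref{eq:main-equation}, because $J$ is radial and so $J*(v\circ O)=(J*v)\circ O$. Its initial datum is $u_0(Ox)=u_0(x)$, so Lemma~\ref{lem:semiflow} gives $u(t,Ox)=u(t,x)$ almost everywhere for each $t$. To make pointwise statements meaningful, I would first treat continuous data, for which the construction of Lemma~\ref{lem:semiflow} in bounded uniformly continuous functions yields continuous $u(t,\cdot)$, and approximate afterwards.

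For the monotonicity I would use the half-space reflection from Lemma~\ref{cs:lem:reflection}, now with $\lambda\ge0$ instead of $\lambda\ge R_0$. Fix $e\in\Sph$ and $\lambda>0$, and set $w(t,x)=u(t,x^{e,\lambda})-u(t,x)$ on $D_{e,\lambda}$. For $x\in D_{e,\lambda}$ we have $|x^{e,\lambda}|\le|x|$, since $|x^{e,\lambda}|^2=|x|^2-4\lambda(x\cdot e-\lambda)$. Because $u_0$ is radial nonincreasing, this gives $w(0,\cdot)\ge0$. Under (KM), \eqref{cs:eq:reflection-kernel} gives $K_{e,\lambda}\ge0$, and $w$ satisfies the linear equation \eqref{cs:eq:reflection-evolution} with bounded coefficient $a$. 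The Fife--Wang comparison argument, exactly as in Lemma~\ref{cs:lem:reflection}, then gives $w(t,\cdot)\ge0$ on $D_{e,\lambda}$ for all $t$, so $u(t,x)\le u(t,x^{e,\lambda})$.

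Now take two points $x$ and $y$ with $|x|>|y|$. The reflection through the perpendicular bisector of the segment from $y$ to $x$ uses $e=(x-y)/|x-y|$ and $\lambda=(|x|^2-|y|^2)/(2|x-y|)>0$, as in \eqref{cs:eq:bisector}. It maps $x\in D_{e,\lambda}$ to $y$, so $u(t,x)\le u(t,y)$. Together with radiality, this makes $u(t,\cdot)$ radial and nonincreasing in $|x|$.

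For general measurable radial nonincreasing $u_0$, I would mollify with a radial mollifier $\varrho_\varepsilon$. The convolution of a radial nonincreasing function with a radial nonincreasing mollifier is again radial nonincreasing, which is standard via rearrangement or a layer-cake argument, since a convolution of two centred balls' indicators is radial nonincreasing. Then \eqref{cs:eq:flow-dependence} in $L^1_{\rm loc}$, or better $L^1$ after multiplying by a weight, gives $u_\varepsilon(t,\cdot)\to u(t,\cdot)$ almost everywhere along a subsequence. The pairwise inequality $u(t,x)\le u(t,y)$ for almost every pair with $|x|>|y|$ then passes to the limit, and this yields a radial nonincreasing representative. Since $u_0$ need not be integrable, I would instead use local $L^1$ convergence. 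This follows from the $L^\infty$-independent estimate: the difference $u_\varepsilon-u$ satisfies a linear equation with bounded coefficients, and \eqref{eq:dispersal-semigroup} with a weighted $L^1$ norm $\int|v|e^{-|x|}\,dx$ controls it.

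The main obstacle is this approximation step for nonintegrable data. The reflection argument itself is a direct reuse of Lemma~\ref{cs:lem:reflection}. For $u^R$ the datum $\mathbf1_{B_R}$ is integrable, so \eqref{cs:eq:flow-dependence} applies directly.
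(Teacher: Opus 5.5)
Your argument is correct in outline, but it takes a genuinely different route from the paper, and one technical step fails as written.

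\textbf{Comparison of routes.} The paper does not reflect at all. It writes the equation as
$u(t)=e^{-at}u_0+\int_0^te^{-a(t-s)}[J*u(s)+Q(u(s))]\,ds$, with $a=1+\max_{[0,1]}|f'|$ and $Q(s)=(a-1)s+f(s)$ nondecreasing on $[0,1]$. It then observes that three operations each preserve the class of radial nonincreasing functions: convolution with a radial nonincreasing $J$ (layer cake plus monotonicity of the overlap of two balls), composition with $Q$, and integration against the positive time weight. Finally it passes to the limit in the Picard iterates, since this class is closed in $L^\infty(\R^d)$. That handles measurable data directly, with no continuity and no approximation.

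Your Alexandrov-reflection argument uses (KM) only through $K_{e,\lambda}\ge0$. Your identity $|x^{e,\lambda}|^2=|x|^2-4\lambda(x\cdot e-\lambda)<|x|^2$ correctly gives $w(0,\cdot)\ge0$ for every $\lambda>0$. The payoff is the more informative half-space inequality $u(t,x)\le u(t,x^{e,\lambda})$, in the spirit of Lemma~\ref{cs:lem:reflection} and the center argument in Theorem~\ref{cs:thm:dynamics}.

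The price is the continuity and approximation layer. Moreover, your radial mollification step relies on exactly the convolution-monotonicity fact on which the paper's whole proof rests. Once that fact is admitted, the Picard route is shorter. Your passage from ``$u(t,x)\le u(t,y)$ for a.e.\ pair with $|x|>|y|$'' to a radial nonincreasing representative is correct but deserves a line. For example, show $u(t,x)=m(|x|)$ for a.e.\ $x$, where $m(r)=\essinf_{|y|<r}u(t,y)$.

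\textbf{The step that fails: the weight $e^{-|x|}$.} Since $J$ is even, $\int(J*v)e^{-|x|}\,dx=\int v\,(J*e^{-|\cdot|})\,dx$. Hence $J*$, and therefore $P_t$, is bounded on $L^1(e^{-|x|}dx)$ only if $J*e^{-|\cdot|}\le Ce^{-|\cdot|}$, which forces an exponential moment of $J$. (KM) gives only a second moment. For instance, $J(x)=c(1+|x|^2)^{-(d+3)/2}$ satisfies (KM), yet $J*e^{-|\cdot|}$ decays only polynomially.

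Replace the weight by a tail splitting. Linear comparison gives $|u_\varepsilon(t)-u(t)|\le e^{L_ft}P_tz_0$, where $z_0=|u_{0,\varepsilon}-u_0|\le1$. Write $z_0\le z_0\mathbf1_{B_\rho}+\mathbf1_{\R^d\setminus B_\rho}$.
\begin{itemize}
\item The first part contributes at most $e^{L_ft}\|z_0\|_{L^1(B_\rho)}$ in $L^1$.
\item For $r<\rho$, the second part is bounded on $B_r$ by $e^{L_ft}e^{-t}\sum_{n\ge1}\frac{t^n}{n!}\int_{|z|>\rho-r}J^{*n}(z)\,dz$, which tends to $0$ as $\rho\to\infty$.
\end{itemize}
Letting $\varepsilon\downarrow0$ and then $\rho\to\infty$ gives the $L^1_{\rm loc}$ convergence you need. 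Alternatively, use the $J$-adapted weight $\varphi=\sum_{n\ge0}2^{-n}J^{*n}*e^{-|\cdot|^2}$. It is integrable, positive, and satisfies $J*\varphi\le2\varphi$.
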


\begin{proof}
By the layer-cake representation and monotonicity of the
overlap of two balls, convolution with $J$ maps bounded nonnegative
radial nonincreasing functions to radial nonincreasing functions; see
\cite[Chapter~3]{LiebLoss2001}.

Set $a=1+\max_{[0,1]}|f'|$ and $Q(s)=(a-1)s+f(s)$.
Then $Q'\ge0$ on $[0,1]$; since $Q(0)=0$ and $Q(1)=a-1$,
we have $0\le Q\le a-1$ there. The integral equation is
\begin{equation}\label{eq:display-3-14}
 u(t,\cdot)=e^{-at}u_0+\int_0^t e^{-a(t-s)}
 \bigl[J*u(s,\cdot)+Q(u(s,\cdot))\bigr]\,ds.
\end{equation}
Starting from $u^{(0)}(t,\cdot)=u_0$, Picard iteration in
\eqref{eq:display-3-14} stays in $[0,1]$, since its integrand lies
between $0$ and $a$. Convolution with $J$, composition with $Q$,
and integration against the nonnegative time weight preserve radial
monotonicity. Because $J*$ and $Q$ are Lipschitz, the integral map
is a contraction on $[0,1]$-valued functions in
$C([0,T];L^\infty(\R^d))$ for small $T$.
Its iterates converge to the solution of Lemma~\ref{lem:semiflow}.
The class of radial nonincreasing functions is closed in
$L^\infty(\R^d)$, so passage to the limit and iteration in time
prove the assertion.
\end{proof}

\begin{lemma}\label{lem:radiuscontinuity}
Assume (K0) and $J\in L^\infty(\R^d)$.  If $0<R<R'$, then for every $t\ge0$ we have
\begin{equation}
 0\le u^{R'}(t,x)-u^R(t,x)
\end{equation}
and
\begin{equation}\label{eq:radius-comparison-bound}
 \|u^{R'}(t,\cdot)-u^R(t,\cdot)\|_\infty
 \le e^{-m_ft}
 \left[1+\|J\|_\infty |B_{R'}\setminus B_R|(e^t-1)\right].
\end{equation}
Consequently, for every $R>0$ and $\varepsilon>0$ there is $T_0>0$ such that, for every fixed $T\ge T_0$, there is $\delta=\delta(R,\varepsilon,T)>0$ such that, for every $R'>0$,
\begin{equation}\label{eq:radius-large-time-continuity}
 |R'-R|<\delta\quad\Longrightarrow\quad
 \|u^{R'}(T,\cdot)-u^R(T,\cdot)\|_\infty<\varepsilon.
\end{equation}

\end{lemma}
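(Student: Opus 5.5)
The plan has two parts: ordering from the comparison principle, then the quantitative bound by reusing the linear comparison from Lemma~\ref{lem:smallseed}. Since $\mathbf1_{B_R}\le\mathbf1_{B_{R'}}$, Lemma~\ref{lem:semiflow} gives $u^R\le u^{R'}$, which is the first inequality.

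For \eqref{eq:radius-comparison-bound}, set $w=u^{R'}-u^R\ge0$. Subtracting the equations gives
\[
w_t=J*w-w+f(u^{R'})-f(u^R).
\]
By the mean value theorem and $f'\le\kappa_f=1-m_f$ on $[0,1]$, together with $w\ge0$, we have $f(u^{R'})-f(u^R)\le(1-m_f)w$. Hence $w_t\le J*w-m_fw$. Linear comparison (Lemma~\ref{lem:semiflow}, which allows a bounded positive integral operator and bounded coefficients) then gives
\[
w(t)\le e^{-m_ft}e^{tJ*}\mathbf1_{B_{R'}\setminus B_R}.
\]
Expanding $e^{tJ*}$ in the series \eqref{eq:dispersal-semigroup} and arguing exactly as in \eqref{eq:small-mass-semigroup-bound}, with $w_0=\mathbf1_{B_{R'}\setminus B_R}$ whose $L^1$ norm is $|B_{R'}\setminus B_R|$ and $\|J^{*n}\|_\infty\le\|J\|_\infty$, gives the bracket. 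One small point is that $w$ need not be $C^1$ valued separately from $u$; it is, since both $u^R$ and $u^{R'}$ are.

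For the consequence, fix $R$ and $\varepsilon$. The leading term $e^{-m_ft}$ does not vanish as $R'\to R$, which is the only subtlety. So choose $T_0$ with $e^{-m_fT_0}<\varepsilon/2$. For a fixed $T\ge T_0$, the remaining term is $e^{-m_fT}\|J\|_\infty(e^T-1)\,|B_{R'}\triangle B_R|$. This tends to zero as $R'\to R$, since $|B_{R'}\triangle B_R|=\omega_d|R'^d-R^d|$, so pick $\delta$ making it less than $\varepsilon/2$. For $R'<R$, swap roles: apply the bound with $(R',R)$ in place of $(R,R')$; it is symmetric in form. This yields \eqref{eq:radius-large-time-continuity}.

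Nothing here is a real obstacle. The only care needed is the one-sided linearization, which uses $w\ge0$ and the upper bound $\kappa_f$ rather than $|f'|$.
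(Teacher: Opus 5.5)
Your proposal is correct and follows the paper's proof essentially step for step. Both use order preservation for $w\ge0$, the one-sided slope bound $f'\le\kappa_f=1-m_f$ to dominate $w$ by the solution of $z_t=J*z-m_fz$, and the series estimate of Lemma~\ref{lem:smallseed}; both then choose $T_0$ before $\delta$, swapping the radii when $R'<R$.
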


\begin{proof}
Assume first that $R'>R$ and set $w=u^{R'}-u^R$.  Order preservation in Lemma~\ref{lem:semiflow} gives $w\ge0$.  The mean value theorem and the slope bound \textup{(F2)} then give
\begin{equation}\label{eq:display-3-19}
 w_t=J*w-q(t,x)w,
\qquad
 q(t,x)=1-\int_0^1 f'\bigl(u^R(t,x)+\theta w(t,x)\bigr)\,d\theta\ge m_f.
\end{equation}
The solution of $z_t=J*z-m_fz$ with
$z(0,\cdot)=w(0,\cdot)=\mathbf1_{B_{R'}\setminus B_R}$ bounds $w$
from above by Lemma~\ref{lem:semiflow} and
\eqref{eq:display-3-19}. Thus
\begin{equation}\label{eq:radius-semigroup-comparison}
 0\le w(t,\cdot)\le e^{-m_ft}e^{tJ*}w(0,\cdot).
\end{equation}
By \eqref{eq:dispersal-semigroup},
\begin{equation}\label{eq:radius-semigroup-series}
 e^{tJ*}w(0,\cdot)=w(0,\cdot)+\sum_{n=1}^\infty\frac{t^n}{n!}J^{*n}*w(0,\cdot).
\end{equation}
Since $0\le w(0,\cdot)\le1$,
$\|w(0,\cdot)\|_1=|B_{R'}\setminus B_R|$, and unit mass gives
$\|J^{*n}\|_\infty\le\|J\|_\infty$ for $n\ge1$,
\eqref{eq:radius-semigroup-series} yields
\begin{equation}\label{eq:radius-semigroup-bound}
 \|e^{tJ*}w(0,\cdot)\|_\infty
 \le1+\|J\|_\infty |B_{R'}\setminus B_R|(e^t-1).
\end{equation}
Combining \eqref{eq:radius-semigroup-comparison} with \eqref{eq:radius-semigroup-bound} proves \eqref{eq:radius-comparison-bound}.  The case $R'<R$ follows by interchanging the two radii.

Fix $R>0$ and $\varepsilon>0$. Choose $T_0$ with
$e^{-m_fT_0}<\varepsilon/2$, then fix $T\ge T_0$.
Since $|B_{R'}\triangle B_R|\to0$ as $R'\to R$, choose
$\delta=\delta(R,\varepsilon,T)>0$ making the second term in
\eqref{eq:radius-comparison-bound} less than $\varepsilon/2$,
interchanging the radii when $R'<R$. This proves
\eqref{eq:radius-large-time-continuity}. The time must be chosen
first, since $\|\mathbf1_{B_{R'}}-\mathbf1_{B_R}\|_\infty=1$
for $R'\ne R$.
\end{proof}

Under \textup{(KR)} and $F(1)>0$, define $R_{\rm ext}=\sup\{R>0:u^R\text{ becomes extinct}\}$ and $R_{\rm prop}=\inf\{R>0:u^R\text{ spreads}\}$.  We write $R_{\rm ext}(f,J)$ and $R_{\rm prop}(f,J)$ when the dependence on $f$ and $J$ is needed.

\begin{proposition}\label{prop:radialinterval}
Assume (KR), \textup{(F1)--(F2)}, and $F(1)>0$.  Define
\begin{equation}\label{eq:extinction-set}
 \mathcal E_d=\left\{R>0:\ \|u^R(t,\cdot)\|_\infty\to0\right\}
\end{equation}
and
\begin{equation}\label{eq:propagation-set}
 \mathcal P_d=\left\{R>0:\ u^R(t,\cdot)\to1\ \text{locally uniformly in }\R^d\right\}.
\end{equation}
Then there exist finite radii
\begin{equation}
 0<R_{\rm ext}\le R_{\rm prop}<\infty
\end{equation}
such that
\begin{equation}\label{eq:threshold-open-intervals}
 \mathcal E_d=(0,R_{\rm ext}),\qquad
 \mathcal P_d=(R_{\rm prop},\infty).
\end{equation}
Consequently the set of radii $R$ for which $u^R$ neither becomes extinct nor spreads is the nonempty closed interval
\begin{equation}\label{eq:transition-interval}
 [R_{\rm ext},R_{\rm prop}].
\end{equation}
Moreover, with $T_f$, $M_1(J)$ and $\omega_d$ as in \eqref{eq:threshold-constants}, one has
\begin{equation}\label{eq:extinction-radius-bounds}
 \left[\frac{1}{\omega_d\|J\|_\infty(e^{T_f}-1)}\right]^{1/d}
 \le R_{\rm ext}
 \le \frac{dM_1(J)}{4F(1)}.
\end{equation}

\end{proposition}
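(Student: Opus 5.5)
The argument combines monotonicity in $R$, openness of the extinction and spreading sets, and the explicit bounds already proved. Note that only (KR) is assumed, not (KM), so the reflection lemmas of Section~\ref{cs:sec:stationary} are not needed.

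\emph{Monotonicity.} If $R<R'$ then $\mathbf1_{B_R}\le\mathbf1_{B_{R'}}$, so order preservation in Lemma~\ref{lem:semiflow} gives $u^R\le u^{R'}$. Hence $\mathcal E_d$ is a down-set and $\mathcal P_d$ is an up-set in $(0,\infty)$. They are disjoint, since extinction and local convergence to $1$ are incompatible. It follows that each is an interval of the form $(0,\cdot)$ or $(\cdot,\infty)$, possibly with the endpoint included.

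\emph{Nonemptiness and the bounds.} Lemma~\ref{lem:smallseed} applied to $A=B_R$ shows that $R\in\mathcal E_d$ whenever $\omega_dR^d<\delta_f(J)$. This gives $\mathcal E_d\ne\emptyset$ and the lower bound in \eqref{eq:extinction-radius-bounds}. Lemma~\ref{lem:ballenergy} gives $\mathcal E[\mathbf1_{B_R}]<0$ for $R>dM_1(J)/(4F(1))$, and such $u^R$ cannot become extinct; this gives the upper bound. The spreading criterion \eqref{eq:measurable-seed-spreading}, with any $\eta$ and $R\ge L(\eta)$, shows that $\mathcal P_d\ne\emptyset$, so $R_{\rm prop}<\infty$. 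Disjointness together with monotonicity then gives $R_{\rm ext}\le R_{\rm prop}$.

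\emph{Openness.} This is the key step, and it uses Lemma~\ref{lem:radiuscontinuity}, where the time must be fixed before the radius is perturbed.

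For $\mathcal E_d$: let $R\in\mathcal E_d$. Choose $T$ large with $\|u^R(T)\|_\infty<\vartheta/4$, and also with $T\ge T_0(R,\vartheta/4)$. By \eqref{eq:radius-large-time-continuity} there is $\delta>0$ such that $\|u^{R'}(T)\|_\infty<\vartheta/2$ for $|R'-R|<\delta$. Lemma~\ref{lem:semiflow}, applied from time $T$, then gives extinction of $u^{R'}$.

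For $\mathcal P_d$: fix $b\in(\vartheta,1)$ and the radius $L_b$. Let $R\in\mathcal P_d$. Local uniform convergence gives a time $T$, which may be taken $\ge T_0(R,(1-b)/2)$, such that $u^R(T)\ge(1+b)/2$ on $B_{L_b}$. By \eqref{eq:radius-large-time-continuity}, $u^{R'}(T)\ge b$ on $B_{L_b}$ for $R'$ near $R$. The spreading criterion, applied from time $T$, shows that $u^{R'}$ spreads.

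Both sets are therefore open, which yields \eqref{eq:threshold-open-intervals}. In particular $R_{\rm ext}\notin\mathcal E_d$ and $R_{\rm prop}\notin\mathcal P_d$. The complement of $\mathcal E_d\cup\mathcal P_d$ is therefore $[R_{\rm ext},R_{\rm prop}]$, a nonempty closed interval.
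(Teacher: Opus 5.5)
Your proof is correct and follows the paper's argument essentially step for step. It uses comparison for monotonicity in $R$, Lemmas~\ref{lem:smallseed} and~\ref{lem:ballenergy} for nonemptiness and the two bounds, the spreading criterion for $\mathcal P_d\neq\varnothing$, and Lemma~\ref{lem:radiuscontinuity} (time fixed before perturbing the radius) for openness of both sets. The only cosmetic differences are that you perturb $R$ on both sides instead of one side plus the up/down closure, and you invoke $T_0$ directly rather than imposing smallness of $e^{-m_fT}$ by hand.
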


\begin{proof}
Comparison in Lemma~\ref{lem:semiflow}, applied to $\mathbf1_{B_{R_1}}\le\mathbf1_{B_{R_2}}$ for $R_1<R_2$, shows that $\mathcal E_d$ is downward closed and $\mathcal P_d$ is upward closed.  Lemma~\ref{lem:smallseed} gives $\mathcal E_d\ne\varnothing$, since $|B_R|\to0$ as $R\downarrow0$, while the spreading criterion \eqref{eq:measurable-seed-spreading} gives $\mathcal P_d\ne\varnothing$.  The definitions \eqref{eq:extinction-set} and \eqref{eq:propagation-set} show that the two sets are disjoint.

To prove openness of the extinction set, let $R\in\mathcal E_d$ and choose $T$ so large that
\begin{equation}\label{eq:display-3-31}
 \|u^R(T,\cdot)\|_\infty<\frac\vartheta4,\qquad e^{-m_fT}<\frac\vartheta4.
\end{equation}
By Lemma~\ref{lem:radiuscontinuity}, applied at this $T$ with $\varepsilon=\vartheta/2$, if $R'>R$ is sufficiently close to $R$, then
\begin{equation}\label{eq:display-3-32}
 \|u^{R'}(T,\cdot)-u^R(T,\cdot)\|_\infty<\frac\vartheta2.
\end{equation}
By \eqref{eq:display-3-31}--\eqref{eq:display-3-32},
$\|u^{R'}(T,\cdot)\|_\infty<\vartheta$.
Lemma~\ref{lem:semiflow}, applied from $T$, gives
$R'\in\mathcal E_d$. Downward closure then proves that
$\mathcal E_d$ is open.

For $R\in\mathcal P_d$, fix $\eta\in(0,(1-\vartheta)/4)$
and take $L=L(\eta)$ from \eqref{eq:measurable-seed-spreading}.
By convergence to $1$ in $L^\infty(B_L)$, for large $T$,
\begin{equation}\label{eq:display-3-33}
 u^R(T,x)\ge \vartheta+2\eta\qquad (|x|\le L).
\end{equation}
Increase $T$, if necessary, so that $e^{-m_fT}<\eta/2$.  Applying Lemma~\ref{lem:radiuscontinuity} at this $T$ with $\varepsilon=\eta$, and using \eqref{eq:display-3-33}, gives, for $R'<R$ sufficiently close to $R$,
\begin{equation}\label{eq:display-3-34}
 u^{R'}(T,x)\ge \vartheta+\eta\qquad (|x|\le L).
\end{equation}
By the semiflow property in Lemma~\ref{lem:semiflow},
\eqref{eq:measurable-seed-spreading} applies to
\eqref{eq:display-3-34} from time $T$, so
$u^{R'}(T+s,\cdot)\to1$ locally uniformly as $s\to\infty$.
Thus $R'\in\mathcal P_d$; upward closure proves that
$\mathcal P_d$ is open.

By \eqref{eq:extinction-set} and \eqref{eq:propagation-set}, the threshold radii defined above satisfy
\begin{equation}\label{eq:threshold-radii-definition}
 R_{\rm ext}=\sup\mathcal E_d,\qquad
 R_{\rm prop}=\inf\mathcal P_d.
\end{equation}
Nonemptiness, disjointness and the respective downward and
upward closure give $0<R_{\rm ext}\le R_{\rm prop}<\infty$ in
\eqref{eq:threshold-radii-definition}. Openness yields
\eqref{eq:threshold-open-intervals}, whose complement in
$(0,\infty)$ is \eqref{eq:transition-interval}.

For the lower bound on $R_{\rm ext}$, recall from \eqref{eq:small-mass-constants} that $\delta_f(J)=[\|J\|_\infty(e^{T_f}-1)]^{-1}$.  Lemma~\ref{lem:smallseed} applies because
$|B_R|=\omega_dR^d<\delta_f(J)$
whenever $R<[\omega_d\|J\|_\infty(e^{T_f}-1)]^{-1/d}$.  For the upper bound, the negative-energy criterion \eqref{eq:negative-ball-energy} in Lemma~\ref{lem:ballenergy} shows that no radius larger than $dM_1(J)/(4F(1))$ can belong to $\mathcal E_d$, which gives the upper bound for $R_{\rm ext}$.
\end{proof}

For each $R>0$, let $\omega(u^R)$ denote the set of all functions $U\in L^\infty_{\rm loc}(\R^d)$ for which $u^R(t_n,\cdot)\to U$ in $L^\infty(B_r)$ for every $r>0$ along some sequence $t_n\to\infty$.

\begin{proposition}\label{prop:transitionomega}
Assume (KM), \textup{(F1)--(F2)}, $F(1)>0$, and let
\begin{equation}
 R\in[R_{\rm ext},R_{\rm prop}].
\end{equation}
Then $\{u^R(t,\cdot):t\ge0\}$ is bounded in $L^2(\R^d)$ and
\begin{equation}\label{eq:transition-time-derivative-decay}
 \|u_t^R(t,\cdot)\|_2\to0\qquad\text{as }t\to\infty.
\end{equation}
Moreover, $\omega(u^R)$, with the topology of convergence in $L^\infty(B_r)$ for every $r>0$, is a nonempty compact connected set of nonzero stationary solutions $U$ of \eqref{eq:main-equation} satisfying
\begin{equation}
 0\le U\le1,\qquad U\in L^2(\R^d),
\end{equation}
with $U$ radial nonincreasing and $U(x)\to0$ as $|x|\to\infty$.
\end{proposition}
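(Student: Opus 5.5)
The plan is to reduce everything to Lemma~\ref{cs:lem:compact-orbit}, applied to the initial datum $u_0=\mathbf1_{B_R}$. This datum satisfies \eqref{cs:eq:data} with $R_0=R$. Since $R\in[R_{\rm ext},R_{\rm prop}]$, Proposition~\ref{prop:radialinterval} tells us that $u^R$ neither spreads nor becomes extinct. Lemma~\ref{cs:lem:compact-orbit} then gives three things at once: the uniform $L^1$ and $L^2$ bounds \eqref{cs:eq:mass-bound}, relative compactness of the orbit in $X$, and the decay $\|u_t(t,\cdot)\|_2\to0$ from \eqref{cs:eq:dissipation}. This settles the first two assertions. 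By Lemma~\ref{lem:asymcompact}, $\omega(u^R)$ is nonempty. Each element of $\omega(u^R)$ is a local uniform limit along some $t_n\to\infty$. By orbit compactness we pass to a further subsequence converging in $X$, and the limit must agree with the local limit. Hence every element of $\omega(u^R)$ is an $X$-limit, and so lies in $\mathcal S_0(J,f)\cap L^2$ with $0\le U\le1$.

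Radial monotonicity of each $U$ comes from Lemma~\ref{lem:radialmonotone}. Every $u^R(t,\cdot)$ is radial nonincreasing, and this class is closed under local uniform convergence, so the limits inherit it. Decay at infinity is already part of membership in $\mathcal S_0$, and also follows from the uniform tail bound \eqref{cs:eq:tails}.

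Next, nonzero-ness. Suppose $U\equiv0$ belongs to $\omega(u^R)$. The convergence holds in $X$, so $\|u^R(t_n,\cdot)\|_\infty<\vartheta$ for some $n$. Lemma~\ref{lem:semiflow}, applied from time $t_n$, then forces extinction, which contradicts $R\notin\mathcal E_d$. Strictly speaking, convergence is in $L^\infty(B_r)$ locally, but the upgrade to $X$ just described makes it global.

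Finally, compactness and connectedness. Compactness in the local topology follows from the compactness of the orbit closure $\mathcal K$ in $X$. Indeed, $\omega(u^R)=\bigcap_{T}\overline{\{u^R(t,\cdot):t\ge T\}}^{X}$, a decreasing intersection of nonempty compact sets, and the $X$ topology is finer than the local one. Connectedness follows by the standard argument: $t\mapsto u^R(t,\cdot)$ is continuous into $X$ (Lemma~\ref{lem:semiflow}), so each tail closure is connected, and a decreasing intersection of compact connected sets is connected. Connectedness in $X$ transfers to the coarser local topology via the continuous identity map.

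The main point needing care is the identification of the local-topology $\omega$-limit set with the $X$-limit set, together with checking that compactness in $X$ implies compactness for the local topology. Both follow from the tail bounds \eqref{cs:eq:tails} and the continuity of the inclusion, so I expect no real obstacle.
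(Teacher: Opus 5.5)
Your proof is correct, but it takes a different route from the paper's.

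\textbf{Your route.} You feed $u_0=\mathbf1_{B_R}$ into the general Lemma~\ref{cs:lem:compact-orbit}, whose exterior height bound comes from the reflection estimate of Lemma~\ref{cs:lem:reflection}. That single lemma gives you at once:
\begin{itemize}
\item the uniform $L^1$ and $L^2$ bounds;
\item the decay $\|u_t\|_2\to0$;
\item compactness of the orbit in $X$;
\item membership of every $X$-limit in $\mathcal S_0(J,f)$.
\end{itemize}
You then identify $\omega(u^R)$ with $\bigcap_T\overline{\{u^R(t,\cdot):t\ge T\}}^{X}$. After that, nonemptiness, compactness and connectedness (pushed to the coarser local topology) are routine. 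Non-zeroness also follows, via global $L^\infty$ convergence and Lemma~\ref{lem:semiflow}.

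\textbf{The paper's route.} The paper does not invoke Lemma~\ref{cs:lem:compact-orbit}; it argues directly for the radial solution.
\begin{itemize}
\item It uses Lemma~\ref{lem:radialmonotone} to get $u^R\le b=(\vartheta+\beta)/2$ outside $B_L$. A value above $b$ on a positive-measure exterior set would force $u^R\ge b$ on $B_L$, and hence spreading.
\item It then repeats the energy and dissipation argument.
\item From there it works only in the local topology. Lemma~\ref{lem:asymcompact} gives local limits, and Fatou gives $U\in L^2(\R^d)$.
\item It passes to the limit in the equation to get stationarity.
\item For non-zeroness it uses radial monotonicity to identify $\|u^R(t_n,\cdot)\|_{L^\infty(B_1)}$ with $\|u^R(t_n,\cdot)\|_\infty$.
\item Compactness and connectedness come from a complete metric on $L^\infty_{\rm loc}$.
\end{itemize}

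\textbf{Comparison.} Your route is shorter, and it already yields the $X$-compactness that the paper proves separately later in Lemma~\ref{lem:transitiontight}. The paper's route keeps the ball-family analysis self-contained: it uses only the radial monotonicity of $u^R$, not the reflection lemma or the compact-data machinery of Section~\ref{cs:sec:compact-data}. It also needs only local compactness at this stage.
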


\begin{proof}
By Lemma~\ref{lem:radialmonotone}, $u^R(t,\cdot)$ is radial
nonincreasing. Take $b=(\vartheta+\beta)/2$, with $\beta$ from
Lemma~\ref{lem:nonlinearity}, and set $\eta=b-\vartheta$ and
$L=L(\eta)$ as in \eqref{eq:measurable-seed-spreading}.
Proposition~\ref{prop:radialinterval} gives
$R\notin\mathcal E_d\cup\mathcal P_d$; in particular,
\begin{equation}\label{eq:transition-exterior-height}
 u^R(t,x)\le b\qquad\text{for a.e. }|x|>L,\quad t\ge0.
\end{equation}
Indeed, if $u^R(t_0,\cdot)>b$ on a positive-measure set outside
$B_L$, radial monotonicity gives $u^R(t_0,\cdot)\ge b$ almost
everywhere on $B_L$. Applying \eqref{eq:measurable-seed-spreading}
from $t_0$ with initial datum $u^R(t_0,\cdot)$ would give
$R\in\mathcal P_d$, a contradiction.

By the potential signs \eqref{eq:potential-signs}, $F(s)<0$ for $0<s<\beta$, so the number
\begin{equation}
 c_b=\min_{0\le s\le b}\frac{-F(s)}{s^2},
\end{equation}
with the quotient at zero defined by its limit $-f'(0)/2$, is positive.  Lemma~\ref{lem:nonlinearity} also gives $F(s)\le F(1)$ on $[0,1]$.  Using these bounds and \eqref{eq:transition-exterior-height}, and dropping the nonnegative interaction part of the energy \eqref{eq:energy-definition}, we obtain
\begin{equation}\label{eq:transition-energy-lower-bound}
 \mathcal E[u^R(t,\cdot)]
 \ge-F(1)|B_L|+c_b\int_{\R^d\setminus B_L}u^R(t,x)^2\,dx.
\end{equation}
The energy identity \eqref{eq:energy-dissipation} gives $\mathcal E[u^R(t,\cdot)]\le \mathcal E[u^R(0,\cdot)]$; combining this with \eqref{eq:transition-energy-lower-bound} yields the explicit bound
\begin{equation}\label{eq:transition-l2-exterior-bound}
 \int_{\R^d\setminus B_L}u^R(t,x)^2\,dx
 \le
 \frac{\mathcal E[u^R(0,\cdot)]+F(1)|B_L|}{c_b}.
\end{equation}
By \eqref{eq:transition-l2-exterior-bound} and $0\le u^R\le1$
on $B_L$, $\|u^R(t,\cdot)\|_2$ is uniformly bounded.
Moreover, \eqref{eq:transition-energy-lower-bound} bounds the energy
below by $-F(1)|B_L|$. Hence \eqref{eq:energy-dissipation} gives
\begin{equation}\label{eq:transition-dissipation-integral}
 \int_0^\infty\|u_t^R(t,\cdot)\|_2^2\,dt<\infty.
\end{equation}
Set $v=u_t^R$.  Lemma~\ref{lem:semiflow} gives $u^R\in C^1([0,\infty);L^2(\R^d)\cap L^\infty(\R^d))$.  Let $\omega_{f'}$ be the modulus of continuity of $f'$ on $[0,1]$, defined by $\omega_{f'}(\delta)=\sup\{|f'(a)-f'(b)|:a,b\in[0,1],\ |a-b|\le\delta\}$.  By the integral form of Taylor's formula, for $a,b\in L^2(\R^d)\cap L^\infty(\R^d)$ with $0\le a,a+b\le1$,
\[
 \|f(a+b)-f(a)-f'(a)b\|_2
 \le\omega_{f'}(\|b\|_\infty)\|b\|_2.
\]
Since $\omega_{f'}(\delta)\to0$ as $\delta\downarrow0$, this estimate and the
$C^1$ time regularity of $u^R$ in $L^2(\R^d)\cap L^\infty(\R^d)$
justify the $L^2(\R^d)$ chain rule. Differentiating
\eqref{eq:main-equation} gives
\begin{equation}\label{eq:transition-linearized-evolution}
 v_t=J*v-v+f'(u^R)v.
\end{equation}
Since $f(0)=0$, $|f(s)|\le L_fs$ on $[0,1]$, with $L_f$
from Lemma~\ref{lem:semiflow}. Here $C>0$ is independent of $t$
and may increase between estimates. The uniform $L^2(\R^d)$ bound
on $u^R$ gives
\begin{equation}\label{eq:display-3-45}
 \|v(t,\cdot)\|_2
 \le \|J*u^R(t,\cdot)\|_2+\|u^R(t,\cdot)\|_2+\|f(u^R(t,\cdot))\|_2
 \le C.
\end{equation}
By \eqref{eq:transition-linearized-evolution}, the bound \eqref{eq:display-3-45} and boundedness of $f'$ on $[0,1]$ give $\|v_t(t,\cdot)\|_2\le C$.  Consequently
\begin{equation}
 \left|\frac d{dt}\|v(t,\cdot)\|_2^2\right|
 \le2\|v(t,\cdot)\|_2\|v_t(t,\cdot)\|_2\le C.
\end{equation}
Thus $t\mapsto\|v(t,\cdot)\|_2^2$ is uniformly continuous and,
by \eqref{eq:transition-dissipation-integral}, integrable.
If it did not tend to zero, uniform continuity would give disjoint
intervals of a fixed positive length with a fixed positive lower
bound, contradicting integrability. This proves
\eqref{eq:transition-time-derivative-decay}.

Let $t_n\to\infty$.  Lemma~\ref{lem:asymcompact} gives a subsequence, still indexed by $n$, and a bounded uniformly continuous function $U$ such that
\begin{equation}\label{eq:transition-local-limit}
 u^R(t_n,\cdot)\to U
\end{equation}
locally uniformly.  By \eqref{eq:transition-local-limit}, a further subsequence converges to $U$ almost everywhere.  Fatou's lemma and the uniform $L^2(\R^d)$ bound therefore give $U\in L^2(\R^d)$.  For every $r,A>0$,
\begin{equation}\label{eq:transition-convolution-limit-bound}
 \|J*u^R(t_n,\cdot)-J*U\|_{L^\infty(B_r)}
 \le \|u^R(t_n,\cdot)-U\|_{L^\infty(B_{r+A})}
      +\int_{|z|>A}J(z)\,dz.
\end{equation}
In \eqref{eq:transition-convolution-limit-bound}, first letting $n\to\infty$ and then $A\to\infty$ gives local uniform convergence of the convolutions.  Thus
\begin{equation}\label{eq:transition-stationary-residual-limit}
 J*u^R(t_n,\cdot)-u^R(t_n,\cdot)+f(u^R(t_n,\cdot))
 \to J*U-U+f(U)
\end{equation}
locally uniformly.  By \eqref{eq:main-equation} and
\eqref{eq:transition-time-derivative-decay}, the left-hand side of
\eqref{eq:transition-stationary-residual-limit} tends to zero in
$L^2(\R^d)$. Its locally uniform limit is therefore zero almost
everywhere; continuity gives
\begin{equation}
 J*U-U+f(U)=0\qquad\text{on }\R^d,
\end{equation}
and $U$ is a stationary solution of \eqref{eq:main-equation}.

By \eqref{eq:transition-local-limit}, $U$ is radial
nonincreasing; since $U\in L^2(\R^d)$, its continuous
representative tends to zero at infinity. If $U\equiv0$,
\eqref{eq:transition-local-limit} gives
$\|u^R(t_n,\cdot)\|_{L^\infty(B_1)}\to0$.
Radial monotonicity identifies this essential supremum with
$\|u^R(t_n,\cdot)\|_\infty$. For large $n$, it is below
$\vartheta$, so Lemma~\ref{lem:semiflow}, applied from $t_n$,
would give $R\in\mathcal E_d$, contradicting
Proposition~\ref{prop:radialinterval}.

On $L^\infty_{\rm loc}(\R^d)$, with functions identified almost everywhere, we use the metric
\begin{equation}\label{eq:local-essential-metric}
 d_{\rm loc}(v,w)=\sum_{k=1}^\infty 2^{-k}\min\{1,\|v-w\|_{L^\infty(B_k)}\}.
\end{equation}
Completeness of $L^\infty(B_k)$ gives compatible limits on
nested balls, so \eqref{eq:local-essential-metric} is complete.
Lemma~\ref{lem:asymcompact} makes $\omega(u^R)$ nonempty.
For each $T>0$, any sequence of times in $[T,\infty)$ has a
subsequence tending to a finite time or to infinity.
Lemma~\ref{lem:semiflow} gives convergence in the first case, and
Lemma~\ref{lem:asymcompact} gives local convergence in the second.
Thus the closure of $\{u^R(t,\cdot):t\ge T\}$ is compact.
It is connected as the closure of the continuous image of
$[T,\infty)$. Intersecting these nested nonempty compact connected
sets over $T>0$ gives $\omega(u^R)$, hence its compactness and
connectedness.
\end{proof}

\subsection{Compactness for radii between the thresholds}\label{sec:convergence}

\begin{lemma}\label{lem:transitiontight}
Assume (KM), \textup{(F1)--(F2)}, $F(1)>0$, and let $R\in[R_{\rm ext},R_{\rm prop}]$.  Then there is a constant $C_R>0$ such that
\begin{equation}
 \sup_{t\ge0}\bigl(\|u^R(t,\cdot)\|_1+\|u^R(t,\cdot)\|_2\bigr)\le C_R.
\end{equation}
Moreover, for every $r\ge1$,
\begin{equation}\label{eq:transition-pointwise-tail}
 \sup_{t\ge0}\esssup_{|x|\ge r}u^R(t,x)\le C_R r^{-d}
\end{equation}
and
\begin{equation}\label{eq:transition-l2-tail}
 \sup_{t\ge0}\int_{|x|\ge r}u^R(t,x)^2\,dx\le C_R r^{-d}.
\end{equation}
Consequently, if $t_n\to\infty$, then a subsequence of $u^R(t_n,\cdot)$ converges in $X$
to a nonzero radial stationary solution of \eqref{eq:main-equation} tending to zero at spatial infinity.
\end{lemma}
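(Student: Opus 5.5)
The proof follows the pattern of Lemma~\ref{cs:lem:compact-orbit}, specialized to $u_0=\mathbf1_{B_R}$ with $R_0=R$. By Proposition~\ref{prop:radialinterval}, $u^R$ does not spread, so Lemma~\ref{cs:lem:compact-orbit} applies directly. It gives the uniform bound \eqref{cs:eq:mass-bound} on the $L^1$ and $L^2$ norms, and the tail bounds \eqref{cs:eq:tails} for $r\ge A$ with some $A>0$. Alternatively, the argument can be run self-contained.

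\textbf{Step 1 ($L^2$ bound).} Proposition~\ref{prop:transitionomega} already supplies the uniform $L^2$ bound via \eqref{eq:transition-l2-exterior-bound}.

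\textbf{Step 2 ($L^1$ bound).}
\begin{itemize}
\item Since $u^R(t,\cdot)$ is radial nonincreasing by Lemma~\ref{lem:radialmonotone}, we have $\esssup_{|x|\ge r}u^R(t,x)\le|B_r|^{-1/2}\|u^R(t,\cdot)\|_2$. This gives a radius $A_1$ with $u^R\le\vartheta/2$ outside $B_{A_1}$ for all $t$.
\item On $[0,\vartheta/2]$ we have $f(s)\le-\gamma_0 s$. The mass $M(t)=\|u^R(t,\cdot)\|_1$ then satisfies the differential inequality \eqref{cs:eq:mass-ode}.
\item Scalar comparison gives $M(t)\le\max\{M(0),C/\gamma_0\}$.
\end{itemize}

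\textbf{Step 3 (tails).} Radial monotonicity gives the pointwise bound $\esssup_{|x|\ge r}u^R\le|B_r|^{-1}\|u^R\|_1=C r^{-d}$ for all $r>0$, in particular for $r\ge1$. The $L^2$ tail bound then follows from \eqref{cs:eq:tail-product}. The constant $C_R$ collects $\omega_d^{-1}$, the mass bound, and the $L^2$ bound.

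\textbf{Step 4 (convergence of subsequences).} Let $t_n\to\infty$.
\begin{itemize}
\item Lemma~\ref{lem:asymcompact} gives a subsequence converging in $L^\infty(B_r)$ for every $r$.
\item Proposition~\ref{prop:transitionomega} identifies the limit $U$ as a nonzero radial nonincreasing stationary solution in $L^2$ that tends to zero at infinity.
\item To upgrade to convergence in $X$, split $\R^d=B_r\cup(\R^d\setminus B_r)$. On $\R^d\setminus B_r$, use the tail bounds, which pass to $U$ by Fatou's lemma and local uniform convergence:
\[
 \|u^R(t_n,\cdot)-U\|_\infty\le\|u^R(t_n,\cdot)-U\|_{L^\infty(B_r)}+2C_Rr^{-d},
\]
and similarly
\[
 \|u^R(t_n,\cdot)-U\|_2^2\le|B_r|\,\|u^R(t_n,\cdot)-U\|_{L^\infty(B_r)}^2+4C_Rr^{-d}.
\]
\item Let $n\to\infty$ and then $r\to\infty$.
\end{itemize}

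The main obstacle is the uniform-in-time $L^1$ bound. The $L^2$ and sup-tail estimates are cheap consequences of energy and radial monotonicity, but mass control needs the uniform smallness outside a fixed ball, which comes from Step 1, together with the linear dissipation of $f$ near zero. Care is needed that $\int(J*u-u)=0$ and that $M$ is differentiable, which follows from the $C^1$ regularity into $L^1$ in Lemma~\ref{lem:semiflow}.
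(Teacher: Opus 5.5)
Your proposal is correct and follows the paper's proof essentially step by step: you take the uniform $L^2$ bound from Proposition~\ref{prop:transitionomega}, use radial monotonicity to get $u^R\le\vartheta/2$ outside a fixed ball, close the mass inequality $M'\le C_1-\gamma_fM$, use radial monotonicity again for the $r^{-d}$ tails, and split into $B_r$ and its exterior to upgrade local convergence to convergence in $X$. Your shortcut through Lemma~\ref{cs:lem:compact-orbit} (with $R_0=R$) is also valid, provided you enlarge the constant to cover $1\le r<A$; this is immediate from $0\le u^R\le1$ and the uniform $L^2$ bound.
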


\begin{proof}
By Proposition~\ref{prop:transitionomega}, there is a constant $C_2>0$, independent of $t$, such that
\begin{equation}\label{eq:transition-uniform-l2}
 \|u^R(t,\cdot)\|_2\le C_2\qquad (t\ge0).
\end{equation}
By Lemma~\ref{lem:radialmonotone}, write
$u^R(t,x)=\mathsf u^R(t,|x|)$ with $\mathsf u^R(t,\cdot)$
nonincreasing. Radial monotonicity and
\eqref{eq:transition-uniform-l2} give, for $r>0$,
\begin{equation}\label{eq:radial-l2-pointwise-bound}
 |B_r|\,{\mathsf u^R(t,r)}^2
 \le \int_{B_r}u^R(t,x)^2\,dx
 \le C_2^2.
\end{equation}
In this proof, constants denoted by $C$ are independent of $t$ and may increase between estimates.  It follows from \eqref{eq:radial-l2-pointwise-bound} that $\esssup_{|x|\ge r}u^R(t,x)\le Cr^{-d/2}$ uniformly in $t$, so we may choose $r_0$ large enough that
\begin{equation}\label{eq:transition-small-exterior-height}
 u^R(t,x)\le \frac\vartheta2
 \qquad\text{for a.e. }|x|\ge r_0,\ t\ge0.
\end{equation}
Since $f(s)<0$ on $(0,\vartheta)$ and $f'(0)<0$, the number
$\gamma_f:=\min_{0\le s\le\vartheta/2}\frac{-f(s)}s$,
with the quotient at $s=0$ interpreted as $-f'(0)$, is positive.  Hence
\begin{equation}\label{eq:reaction-exterior-decay}
 f(s)\le-\gamma_f s\qquad(0\le s\le\vartheta/2).
\end{equation}
Since $u_0=\mathbf1_{B_R}\in L^1(\R^d)$, Lemma~\ref{lem:semiflow} gives $u^R\in C^1([0,\infty);L^1(\R^d))$.  Since $u^R\ge0$ and
\begin{equation}\label{eq:display-4-09}
 \int_{\R^d}(J*u^R-u^R)\,dx=0,
\end{equation}
integration of \eqref{eq:main-equation} and the cancellation in \eqref{eq:display-4-09} yield
\begin{equation}\label{eq:transition-mass-derivative}
 \frac d{dt}\|u^R(t,\cdot)\|_1=\int_{\R^d}f(u^R(t,x))\,dx.
\end{equation}
Set $M(t)=\|u^R(t,\cdot)\|_1$, $f_+(s)=\max\{f(s),0\}$,
$C_0=|B_{r_0}|\max_{[0,1]}f_+$ and
$C_1=C_0+\gamma_f|B_{r_0}|$.
The integral over $B_{r_0}$ is at most $C_0$; by
\eqref{eq:transition-small-exterior-height} and
\eqref{eq:reaction-exterior-decay}, the exterior contribution is
at most $-\gamma_f\int_{\R^d\setminus B_{r_0}}u^R(t,x)\,dx$.
Thus \eqref{eq:transition-mass-derivative} yields
\begin{equation}\label{eq:transition-mass-inequality}
 M'(t)
 \le C_0-\gamma_f\int_{\R^d\setminus B_{r_0}}u^R(t,x)\,dx
 \le C_1-\gamma_f M(t).
\end{equation}
Comparing \eqref{eq:transition-mass-inequality} with
$y'=C_1-\gamma_fy$, $y(0)=M(0)=|B_R|$, gives
\begin{equation}\label{eq:transition-uniform-l1}
 \sup_{t\ge0}M(t)\le C_1':=\max\{|B_R|,C_1/\gamma_f\}.
\end{equation}

By radial monotonicity and the mass bound \eqref{eq:transition-uniform-l1}, for every $r>0$ and $t\ge0$,
\begin{equation}\label{eq:radial-l1-pointwise-bound}
 |B_r|\,{\mathsf u^R(t,r)}
 \le\int_{B_r}u^R(t,x)\,dx
 \le C_1'.
\end{equation}
The estimate \eqref{eq:radial-l1-pointwise-bound} gives $\esssup_{|x|\ge r}u^R(t,x)\le Cr^{-d}$; combining it with \eqref{eq:transition-uniform-l1}, we obtain
\begin{equation}\label{eq:transition-tail-from-mass}
 \int_{|x|\ge r}u^R(t,x)^2\,dx
 \le \esssup_{|x|\ge r}u^R(t,x)\,\|u^R(t,\cdot)\|_1
 \le Cr^{-d}.
\end{equation}

For $t_n\to\infty$, Lemma~\ref{lem:asymcompact} gives a
subsequence converging locally uniformly to $U$.
By Proposition~\ref{prop:transitionomega}, $U$ is a nonzero radial
nonincreasing stationary solution tending to zero at infinity;
use its uniformly continuous representative from
Lemma~\ref{lem:steadyregularity}. For fixed $r\ge1$ and $M>r$,
local convergence passes the bound in
\eqref{eq:radial-l1-pointwise-bound} to $U$ on $B_M\setminus B_r$.
Letting $M\to\infty$ gives
\begin{equation}\label{eq:display-4-15}
 \esssup_{|x|\ge r}U(x)\le C_Rr^{-d}.
\end{equation}
Combining \eqref{eq:display-4-15} with \eqref{eq:transition-pointwise-tail} for $u^R(t_n,\cdot)$ gives
\begin{equation}\label{eq:global-uniform-convergence-tail}
 \esssup_{|x|\ge r}|u^R(t_n,x)-U(x)|\le2C_Rr^{-d}.
\end{equation}
Local convergence and \eqref{eq:global-uniform-convergence-tail},
first with $n\to\infty$ and then $r\to\infty$, give convergence in
$L^\infty(\R^d)$. Along a further almost-everywhere convergent
subsequence, Fatou and \eqref{eq:transition-tail-from-mass} give
$\int_{|x|\ge r}U(x)^2\,dx\le C_Rr^{-d}$.
Combining the two $L^2$ tail bounds yields
\begin{equation}
 \int_{|x|\ge r}|u^R(t_n,x)-U(x)|^2\,dx\le 4C_Rr^{-d}.
\end{equation}
For each fixed $r\ge1$,
\[
 \|u^R(t_n,\cdot)-U\|_2^2
 \le |B_r|\,\|u^R(t_n,\cdot)-U\|_{L^\infty(B_r)}^2
       +4C_Rr^{-d}.
\]
Taking first $n\to\infty$ and then $r\to\infty$ proves convergence in $L^2(\R^d)$.  Together with the $L^\infty(\R^d)$ convergence, this proves convergence in $X$.  
\end{proof}

\subsection{Convergence for each radius between the thresholds}

\begin{proposition}\label{prop:transitionconverges}
Assume (KM), \textup{(F1)--(F2)}, \textup{(F4)}, $F(1)>0$, and let $R\in[R_{\rm ext},R_{\rm prop}]$.  Then there is a radial nonincreasing
$U_R\in\mathcal S_0(J,f)\setminus\{0\}$ such that
\begin{equation}\label{eq:whole-orbit-convergence}
 \|u^R(t,\cdot)-U_R\|_2+\|u^R(t,\cdot)-U_R\|_\infty\to0
 \qquad\text{as }t\to\infty.
\end{equation}
Moreover, for some $T>0$,
\begin{equation}
 \int_T^\infty
 \bigl(\|u_t^R(t,\cdot)\|_2+\|u_t^R(t,\cdot)\|_\infty\bigr)\,dt<\infty.
\end{equation}
\end{proposition}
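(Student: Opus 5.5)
Because $R\in[R_{\rm ext},R_{\rm prop}]$, the orbit is precompact in $X$, so the plan is to feed that compactness into the \L ojasiewicz--Simon machinery already developed for Theorem~\ref{cs:thm:dynamics}. Lemma~\ref{lem:transitiontight} applied to any $t_n\to\infty$ gives a subsequence along which $u^R(t_n,\cdot)\to U$ in $X$. Here $U$ is a nonzero radial stationary solution tending to zero at infinity, and by Proposition~\ref{prop:transitionomega} it is nonincreasing with $0\le U\le1$ and $U\in L^2$. Hence $U\in\mathcal S_0(J,f)\setminus\{0\}$, and by Theorem~\ref{cs:thm:stationary-summary} it is strictly radially decreasing about the origin. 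Under \textup{(F4)}, $f$ is analytic near $[0,1]\supset[0,\|U\|_\infty]$. Since (KM) includes (KR) and \textup{(F4)} implies \textup{(F3)}, Lemma~\ref{lem:lojasiewicz} (or equivalently Lemma~\ref{cs:lem:gradient} in the analytic case) yields the gradient inequality
\[
|\mathcal E[v]-\mathcal E[U]|^{1-\theta}\le C\|G(v)\|_2
\]
on an $X$-ball around $U$.

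Next I will run the argument of Lemma~\ref{cs:lem:full-convergence} verbatim, with $u^R$ in place of $u$. That lemma needs only three inputs. The first is precompactness of the orbit in $X$, which follows from Lemma~\ref{lem:transitiontight} together with continuity in time on bounded intervals from Lemma~\ref{lem:semiflow}. The second is the energy identity \eqref{eq:energy-dissipation} with $\mathcal E[u^R(t)]\downarrow\mathcal E[U]$, which holds by monotonicity and continuity of $\mathcal E$ on $X$ along the subsequence. The third is the gradient inequality above. Writing $e(t)=\mathcal E[u^R(t)]-\mathcal E[U]\ge0$, this produces the differential inequality $-\frac{d}{dt}e^\theta\ge(\theta/C)\|u^R_t\|_2$ while the trajectory stays near $U$. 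The trapping argument, based on the agreement of the $X$ and $L^2$ topologies on the compact orbit closure, keeps the trajectory near $U$ after some time $T$. It follows that $\int_T^\infty\|u^R_t\|_2\,dt<\infty$, and therefore $u^R(t)\to U=:U_R$ in $L^2$. Compactness then upgrades this to convergence in $X$, proving \eqref{eq:whole-orbit-convergence}; radial monotonicity of $U_R$ is inherited from Lemma~\ref{lem:radialmonotone}. If $e$ vanishes at some finite time, the solution is stationary from then on and everything is trivial.

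It remains to bound $\int_T^\infty\|u^R_t\|_\infty\,dt$, and I expect this to be the main point. My plan is to control the $L^\infty$ norm of $v=u^R_t$ by its $L^2$ norm using the linearized equation \eqref{eq:transition-linearized-evolution}, $v_t=J*v-(1-f'(u^R))v$. Set $q=1-f'(u^R)\ge m_f$. Variation of constants pointwise gives
\[
|v(t)|\le e^{-m_f(t-s)}|v(s)|+\int_s^te^{-m_f(t-\tau)}|J*v(\tau)|\,d\tau,
\]
and Cauchy--Schwarz gives $\|J*v\|_\infty\le\|J\|_2\|v\|_2$. Taking $s=t-1$ yields
\[
\|v(t)\|_\infty\le e^{-m_f}\|v(t-1)\|_\infty+\|J\|_2\int_{t-1}^t\|v(\tau)\|_2\,d\tau.
\]
Rather than iterating this directly, I will integrate the pointwise bound in $t$ over $[T',\infty)$, with $s=T'$ fixed, and apply Fubini to the convolution-in-time term. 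Since $\int_{T'}^\infty e^{-m_f(t-T')}\,dt<\infty$, this gives
\[
\int_{T'}^\infty\|v\|_\infty\,dt\le m_f^{-1}\|v(T')\|_\infty+m_f^{-1}\|J\|_2\int_{T'}^\infty\|v\|_2\,dt<\infty.
\]
The only care needed is measurability of $t\mapsto\|v(t)\|_\infty$ and the choice of pointwise representatives. The continuity of $v$ in $L^\infty$ from Lemma~\ref{lem:semiflow} handles both.
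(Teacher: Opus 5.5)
Your proposal is correct, but the mechanism that keeps the orbit inside the neighborhood where the gradient inequality holds differs from the paper's.

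\emph{The paper's route.} It never uses compactness of the whole orbit closure for this step. It first shows $\|u_t^R(t,\cdot)\|_\infty\to0$ from the smoothing estimate \eqref{eq:time-derivative-linfty-bound} with $s=t-1$. It then chooses $t_n$ so that $\|u(t_n,\cdot)-U\|_X$, $e(t_n)$ and $\|u_t(t_n,\cdot)\|_\infty$ are all small, as in \eqref{eq:orbit-trapping-smallness}. Up to a hypothetical first exit time from the $X$-ball it controls the full $X$-length: the $L^2$ part by the \L ojasiewicz inequality, the $L^\infty$ part by the integrated estimate \eqref{eq:orbit-length-linfty-bound}. Finite $X$-length then makes the orbit Cauchy in $X$ directly.

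\emph{Your route.} You trap using only the $L^2$-length, relying on the fact that the $X$ and $L^2$ topologies agree on the compact orbit closure. You get $L^2$ convergence and upgrade it to $X$ by compactness. Only afterwards do you integrate the smoothing estimate to get the $L^\infty$-length. That is the same Fubini computation as \eqref{eq:orbit-length-linfty-bound}, but it needs only boundedness of $\|u_t(T',\cdot)\|_\infty$, so you skip the preliminary decay of $\|u_t\|_\infty$.

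You do not need to rerun Lemma~\ref{cs:lem:full-convergence} \emph{verbatim}. The datum $\mathbf 1_{B_R}$ satisfies \eqref{cs:eq:data}, and $u^R$ does not spread for $R\le R_{\rm prop}$. So that lemma applies as stated, with the gradient inequality from Lemma~\ref{lem:lojasiewicz} or Lemma~\ref{cs:lem:gradient}.

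The paper's argument is more quantitative and uses compactness only to produce one convergent subsequence. Yours is more modular, but it depends on full orbit compactness, which comes from Lemma~\ref{lem:transitiontight} together with time continuity from Lemma~\ref{lem:semiflow}.
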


\begin{proof}
\smallskip
\noindent\textit{Step 1. A stationary limit and decay of the time derivative.}
Let $u=u^R$ be the solution for the fixed radius $R\in[R_{\rm ext},R_{\rm prop}]$ in the statement.  By Proposition~\ref{prop:transitionomega},
$\|u_t(t,\cdot)\|_2\to0$.
By Lemma~\ref{lem:transitiontight}, there are $U\in X$ and a sequence $t_n\to\infty$ for which
\begin{equation}\label{eq:orbit-subsequence-x-convergence}
 \|u(t_n,\cdot)-U\|_X\to0.
\end{equation}
By Proposition~\ref{prop:transitionomega}, $U$ is a nonzero
radial nonincreasing stationary solution of \eqref{eq:main-equation},
with $0\le U\le1$ and $U(x)\to0$ as $|x|\to\infty$.
By \eqref{eq:energy-dissipation} and
\eqref{eq:transition-energy-lower-bound}, define the finite limit
\begin{equation}\label{eq:limiting-energy}
 E_\infty=\lim_{t\to\infty}\mathcal E[u(t,\cdot)].
\end{equation}
Continuity of $\mathcal E$ near $U$ from Step~1 of
Lemma~\ref{lem:lojasiewicz}, together with
\eqref{eq:orbit-subsequence-x-convergence}--\eqref{eq:limiting-energy},
gives $\mathcal E[U]=E_\infty$.

Put $v=u_t$.  For every $T>0$, the map $w\mapsto J*w-w+f(w)$ is continuously differentiable on an open subset of $X$ containing $\{u(t,\cdot):0\le t\le T\}$.  Thus $u$ is $C^2$ in time with values in $X$ on bounded time intervals, and differentiation gives
\begin{equation}\label{eq:time-derivative-equation}
 v_t=J*v-q(t,x)v,
 \qquad
 q(t,x)=1-f'(u(t,x))\ge m_f.
\end{equation}
Using representatives absolutely continuous in time, as in
the proof of Lemma~\ref{lem:modulus}, apply an integrating factor to
\eqref{eq:time-derivative-equation} for almost every $x$.
Since $q\ge m_f$ and $\|J*v\|_\infty\le\|J\|_2\|v\|_2$,
for $0\le s\le t$,
\begin{equation}\label{eq:time-derivative-linfty-bound}
 \|v(t,\cdot)\|_\infty
 \le e^{-m_f(t-s)}\|v(s,\cdot)\|_\infty
 +\|J\|_2\int_s^t e^{-m_f(t-\tau)}\|v(\tau,\cdot)\|_2\,d\tau.
\end{equation}
Since $v=J*u-u+f(u)$ and $0\le u\le1$, $\|v(t,\cdot)\|_\infty$ is uniformly bounded, while Proposition~\ref{prop:transitionomega} gives $\|v(t,\cdot)\|_2\to0$.  Taking $s=t-1$ for $t\ge1$ and writing
$L=\limsup_{t\to\infty}\|v(t,\cdot)\|_\infty$,
the integral term tends to zero because
\[
 \int_{t-1}^t e^{-m_f(t-\tau)}\|v(\tau,\cdot)\|_2\,d\tau
 \le \frac{1-e^{-m_f}}{m_f}
       \sup_{\tau\ge t-1}\|v(\tau,\cdot)\|_2\to0.
\]
Hence \eqref{eq:time-derivative-linfty-bound} gives $L\le e^{-m_f}L$.  Since $m_f>0$, the inequality forces $L=0$, that is, $\|u_t(t,\cdot)\|_\infty\to0$.

\smallskip
\noindent\textit{Step 2. The orbit remains near the stationary limit.}
Let $\rho,C,\theta$ be given by Lemma~\ref{lem:lojasiewicz} at $U$, and set
$e(t)=\mathcal E[u(t,\cdot)]-E_\infty\ge0$.
If $e(t_0)=0$ at some finite time, then
\begin{equation}
 0=e(t_0)=\int_{t_0}^\infty\|u_t(\tau,\cdot)\|_2^2\,d\tau,
\end{equation}
so continuous differentiability in $L^2(\R^d)$ makes $u$
stationary for $t\ge t_0$. By
\eqref{eq:orbit-subsequence-x-convergence}, its value is $U$,
and both conclusions follow with $U_R=U$ and $T>t_0$.
Hence assume $e(t)>0$ for every $t\ge0$.

By \eqref{eq:orbit-subsequence-x-convergence},
\eqref{eq:limiting-energy} and Step~1, choose $n$ so large that
$\|u(t_n,\cdot)-U\|_X<\rho/4$ and
\begin{equation}\label{eq:orbit-trapping-smallness}
 \frac{\|u_t(t_n,\cdot)\|_\infty}{m_f}
 +\left(1+\frac{\|J\|_2}{m_f}\right)
   \frac C\theta e(t_n)^\theta<\frac\rho2.
\end{equation}
As long as $\|u(t,\cdot)-U\|_X<\rho$, the gradient inequality \eqref{eq:local-gradient-inequality} and the energy identity \eqref{eq:energy-differential} give
\begin{equation}\label{eq:energy-gap-gradient-and-dissipation}
 e(t)^{1-\theta}\le C\|u_t(t,\cdot)\|_2,
 \qquad
 e'(t)=-\|u_t(t,\cdot)\|_2^2.
\end{equation}
It follows from \eqref{eq:energy-gap-gradient-and-dissipation} that
\begin{equation}\label{eq:energy-gap-power-decay}
 -\frac d{dt}e(t)^\theta
 =\theta e(t)^{\theta-1}\|u_t(t,\cdot)\|_2^2
 \ge \frac{\theta}{C}\|u_t(t,\cdot)\|_2.
\end{equation}
Integrating \eqref{eq:energy-gap-power-decay} from $t_n$ to $t$, while $u(\tau,\cdot)$ remains in $\{v\in X:\|v-U\|_X<\rho\}$ for $t_n\le\tau\le t$, yields
\begin{equation}\label{eq:orbit-length-l2-bound}
 \int_{t_n}^t\|u_t(\tau,\cdot)\|_2\,d\tau
 \le \frac{C}{\theta}e(t_n)^\theta.
\end{equation}
Taking $s=t_n$ in \eqref{eq:time-derivative-linfty-bound}, integrating over $[t_n,t]$, and applying Fubini's theorem gives
\begin{equation}\label{eq:orbit-length-linfty-bound}
 \int_{t_n}^t\|u_t(\tau,\cdot)\|_\infty\,d\tau
 \le \frac1{m_f}\|u_t(t_n,\cdot)\|_\infty
 +\frac{\|J\|_2}{m_f}
   \int_{t_n}^t\|u_t(\tau,\cdot)\|_2\,d\tau.
\end{equation}
By \eqref{eq:orbit-length-l2-bound},
\eqref{eq:orbit-length-linfty-bound} and
\eqref{eq:orbit-trapping-smallness}, the length in $X$ is less than
$\rho/2$ as long as the orbit stays in the $\rho$-neighborhood.
The $X$ continuity from Lemma~\ref{lem:semiflow} permits passage
to a hypothetical first exit time $T>t_n$, where
$\|u(T,\cdot)-U\|_X=\rho$. Letting $t\uparrow T$ in the two
length estimates gives
\begin{equation}
 \|u(T,\cdot)-U\|_X
 \le \|u(t_n,\cdot)-U\|_X
   +\int_{t_n}^{T}\bigl(\|u_t(\tau,\cdot)\|_2+\|u_t(\tau,\cdot)\|_\infty\bigr)\,d\tau
 <\frac{3\rho}{4},
\end{equation}
a contradiction.  Thus $\|u(t,\cdot)-U\|_X<\rho$ for every $t\ge t_n$, and \eqref{eq:orbit-length-l2-bound} and \eqref{eq:orbit-length-linfty-bound} hold for all $t\ge t_n$.

\smallskip
\noindent\textit{Step 3. Finite length and convergence.}
Letting $t\to\infty$ in \eqref{eq:orbit-length-l2-bound} and
\eqref{eq:orbit-length-linfty-bound} gives
$u_t\in L^1((t_n,\infty);X)$. Thus the orbit is Cauchy in the
complete space $X$; \eqref{eq:orbit-subsequence-x-convergence}
identifies its limit as $U$. Taking $U_R=U$ and $T=t_n$ proves
both conclusions.
\end{proof}

\subsection{Radial instability and critical radii}

Chmaj and Ren \cite[Theorem~2.1]{ChmajRen2002} used differentiation of the stationary equation and comparison with the absolute value of a derivative in a bounded-interval instability argument.  We prove the existence of a positive eigenvalue on $\R^d$ and of a positive eigenvalue for the operator on a sufficiently large ball.

\begin{lemma}\label{lem:radialunstable}
Assume (KR) and \textup{(F3)}, and let $U$ be as in \eqref{eq:display-4-18}, nonconstant, radial and nonincreasing.  Then the operator $\mathcal L_U$ defined in \eqref{eq:main-linearization} has a positive eigenvalue with a nonzero nonnegative radial eigenfunction in $L^2(\R^d)$. Moreover, there exist $L>0$, $\lambda_L>0$, and a nonzero radial function $\psi_L\ge0$ in $L^2(B_L)$ such that
\begin{equation}\label{eq:finite-ball-eigenfunction}
 \int_{B_L}J(x-y)\psi_L(y)\,dy-\psi_L(x)+f'(U(x))\psi_L(x)
 =\lambda_L\psi_L(x)
\end{equation}
for almost every $x\in B_L$.
\end{lemma}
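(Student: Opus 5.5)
The plan is to produce a radial, nonnegative test function with a positive Rayleigh quotient for the self-adjoint operator $\mathcal L_U$. Then we pass to the top of the spectrum, using the Fredholm structure to see that this top is an eigenvalue. The natural candidate comes from the derivative of $U$.

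By Lemma~\ref{lem:steadyregularity}, $U\in C^2\cap W^{2,\infty}$. Differentiating $H(U)=J*U$ gives $\mathcal L_U\partial_iU=0$, and as in Lemma~\ref{cs:lem:stationary-integrability} we have $\partial_iU\in L^2$. Write $U(x)=V(|x|)$ with $V'\le0$ and $V'\not\equiv0$ (nonconstant), and set $\phi=-V'(|x|)=|\nabla U|\ge0$. This is radial, nonzero and in $L^2$.

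\textbf{Key inequality.} We need $\mathcal L_U\phi\ge0$, with strict inequality on a set of positive measure. Use $|\partial_iU|$ together with the identity $(1-f'(U))\partial_iU=J*\partial_iU$. Taking absolute values gives $(1-f'(U))|\partial_iU|\le J*|\partial_iU|$, that is, $\mathcal L_U|\partial_iU|\ge0$; this is the Chmaj--Ren comparison with the absolute value of a derivative. To get a radial function, I would rather work directly with $\phi$. From $\nabla U=(x/|x|)V'$, one gets $\nabla(J*U)=J*\nabla U$, so
\[
(1-f'(U(x)))\phi(x)=-\frac{x}{|x|}\cdot(J*\nabla U)(x)\le J*|\nabla U|(x)=(J*\phi)(x).
\]
Thus $\mathcal L_U\phi\ge0$ pointwise. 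Strictness holds wherever the vectors $\nabla U(y)$, $y$ near $x$, are not all parallel to $x$, with $J>0$ on $B_\rho$. Since $\nabla U(y)$ points radially, this happens at any $x$ for which $\phi$ is nonzero somewhere in a ball around $x$ off the ray through $x$. This is true on an open set, for instance near a point where $V'(r)<0$, for $d\ge2$.

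\textbf{Passing to an eigenvalue.} From the key inequality, $\langle\mathcal L_U\phi,\phi\rangle>0$ for $d\ge2$. For $d=1$, $\phi=|U'|$ and strictness fails, since $\phi$ is then a kernel element. There I would perturb: take $\phi_\varepsilon=\phi+\varepsilon\chi$ and use $\mathcal L_U\phi\ge0$ with equality plus a second-order argument. Alternatively, I would use that $U'$ changes sign while $|U'|$ in the kernel would force, by the strong positivity argument of Lemma~\ref{cs:lem:halfspace-positive} or connectedness, $|U'|$ and $U'$ to be simultaneously eigenfunctions for the top eigenvalue $0$. That contradicts simplicity, or orthogonality to the positive ground state. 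This $d=1$ (and generally equality-case) issue is the main obstacle, and I would handle it by this Perron--Frobenius contradiction: if $\sup\sigma(\mathcal L_U)\le0$, then $\phi$ and $|\partial_iU|$ would be maximizers of the Rayleigh quotient, hence kernel elements. The positivity-improving property of $J^{*n}$ makes the top eigenspace spanned by a positive function, while the odd $\partial_iU$ also lies there, which is impossible.

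Given $\lambda_0=\sup\sigma(\mathcal L_U)>0$, note that $\mathcal L_U+a_\infty$ minus the compact part $M_pJ*$ has spectrum below $0$ after shifting. By Lemma~\ref{lem:fredholm}, the essential spectrum of $\mathcal L_U=-\mathcal A_U$ lies in $(-\infty,\,1-a_\infty]\subset(-\infty,0)$, because $J*-a_\infty$ has spectrum in $[-1-a_\infty,1-a_\infty]$ and the perturbation is compact. So $\lambda_0$ is an isolated eigenvalue. Restrict to the closed invariant subspace of radial functions (which $\mathcal L_U$ preserves since $J,U$ are radial), maximize the Rayleigh quotient there, and replace a maximizer $\psi$ by $|\psi|$, which does not decrease the quotient because $J\ge0$. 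This gives a nonnegative radial eigenfunction. For the ball statement, truncate: $\langle\mathcal L_U\psi,\psi\rangle>0$ persists for $\psi\mathbf 1_{B_L}$ with $L$ large by $L^2$ continuity. The truncated operator on $L^2(B_L)$ is compact plus multiplication, and its top spectrum above $\max(f'(U)-1)$ is discrete. Indeed, $\lambda>-m_f\ge f'(U)-1$ needs checking, but the operator $J*$ on $B_L$ is Hilbert--Schmidt, so any spectral point above $\sup(f'(U)-1)$ is an eigenvalue. The Rayleigh supremum exceeds $0>\sup(f'(U)-1)$, so we obtain $\lambda_L>0$ with eigenfunction $\psi_L\ge0$ radial by the same absolute-value argument.
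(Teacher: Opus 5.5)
Your argument is correct in outline and takes a genuinely different route from the paper, but one step has to be replaced.

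\textbf{How the routes differ.} The paper gets $s_U=\sup\sigma(\mathcal L_U)>0$ from the odd zero mode $v=\partial_iU$. It shows that $e^{t\mathcal L_U}$ is positivity improving, via $e^{t\mathcal L_U}h\ge e^{-Mt}e^{tJ*}h$ and positivity of $J^{*n}$. So if $s_U=0$, both $v_+$ and $v_-$ would be invariant and hence positive a.e., which is impossible. It then makes the eigenfunction radial by taking $|\phi|$ and averaging over $O(d)$.

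You instead use the radial test function $\phi=|\nabla U|$ and the identity
\[
 \mathcal L_U\phi(x)=\int_{\R^d}J(x-y)\,\phi(y)\Bigl(1-\frac{x\cdot y}{|x|\,|y|}\Bigr)dy\ \ge0\qquad(x\ne0).
\]
For $d\ge2$ this is strictly positive near any point where $V'<0$. That gives $\langle\mathcal L_U\phi,\phi\rangle>0$ without any semigroup, and because $\phi$ is radial you can stay in the invariant radial subspace instead of averaging.

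\textbf{The case $d=1$.} Your discussion here is muddled. Contrary to what you say, $|U'|$ is never in $\ker\mathcal L_U$. Your own framework settles this uniformly in $d$. Suppose $\sup\sigma(\mathcal L_U)$ on radial functions were $\le0$. Then $\langle\mathcal L_U\phi,\phi\rangle\ge0$ forces $\mathcal L_U\phi=0$, that is, $(1-f'(U))\phi=J*\phi$. Since $\phi\ge0$ is continuous and $J>0$ a.e.\ on $B_\rho$, the set $\{\phi=0\}$ is open and closed. It contains $0$ because $\nabla U(0)=0$, so $U$ would be constant. This route avoids positivity improvement entirely. The paper's route, by contrast, is the same mechanism that later yields simplicity and strict positivity of the principal eigenfunction in Lemma~\ref{cs:lem:stationary-instability}.

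\textbf{The step that fails: the essential spectrum.} The difference $\mathcal L_U-(J*-a_\infty)=M_{f'(U)-f'(0)}$ is a nonzero multiplication operator, which is never compact, so Weyl's theorem does not apply. The bound $\sigma_{\rm ess}(\mathcal L_U)\subset(-\infty,f'(0)]$ is also false in general. Oscillating Weyl sequences concentrating at a point $x_0$, using $\widehat J(\xi)\to0$, put $f'(U(x_0))-1$ in $\sigma_{\rm ess}(\mathcal L_U)$. For $f(s)=3s(s-\tfrac14)(1-s)$, at a point where $U=\vartheta$, this gives $-\tfrac7{16}>f'(0)=-\tfrac34$.

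What you actually need is the paper's Step~3. With $a_U=1-f'(U)$, for every $\lambda\ge0$,
\[
 \lambda I-\mathcal L_U=M_{a_U+\lambda}\bigl(I-(a_\infty+\lambda)^{-1}J*-M_{p_\lambda}J*\bigr),\qquad p_\lambda=(a_U+\lambda)^{-1}-(a_\infty+\lambda)^{-1}\in L^2(\R^d).
\]
This is a strict contraction plus a Hilbert--Schmidt term, so $\lambda I-\mathcal L_U$ is Fredholm of index zero, also on the reducing radial subspace. Hence the positive top of the radial spectrum is an eigenvalue, and your absolute-value step applies. The ball step is fine as you wrote it, since there the integral part really is Hilbert--Schmidt on $L^2(B_L)$.
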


\begin{proof}
\smallskip
\noindent\textit{Step 1. A sign-changing eigenfunction for eigenvalue zero.}
Since $U\in L^2(\R^d)$ and $\partial_iJ\in L^1(\R^d)$, Young's inequality gives $J*U\in H^1(\R^d)$ with $\partial_i(J*U)=(\partial_iJ)*U\in L^2(\R^d)$.  Recall that $H(s)=s-f(s)$.  By Lemma~\ref{lem:nonlinearity}, $H'\ge m_f>0$, so $H^{-1}$ is Lipschitz on $H([0,1])$.  Applying the Sobolev chain rule (see \cite[Chapter~9]{Brezis2011}) to the stationary identity \eqref{eq:stationary-inverse}, written as $U=H^{-1}(J*U)$, gives $U\in H^1(\R^d)$ and
\begin{equation}\label{eq:stationary-weak-derivative}
 H'(U)\partial_iU=(\partial_iJ)*U
\end{equation}
almost everywhere.  Since $J*\partial_iU=(\partial_iJ)*U$ in $L^2(\R^d)$, \eqref{eq:stationary-weak-derivative} yields
\begin{equation}\label{eq:translation-zero-mode}
 \mathcal L_U(\partial_iU)=0.
\end{equation}
Since $U$ is nonconstant, choose a coordinate $i$ for which $\partial_iU\not\equiv0$, and set $v=\partial_iU$.  Radiality makes $v$ odd with respect to the hyperplane $x_i=0$, so $v$ changes sign.

\smallskip
\noindent\textit{Step 2. Positivity of the spectral bound.}
Let $s_U:=\sup\sigma(\mathcal L_U)$,
where $\sigma(\mathcal L_U)$ denotes the spectrum on $L^2(\R^d)$.  The operator $\mathcal L_U=-\mathcal A_U$ is self-adjoint by Lemma~\ref{lem:fredholm}.  The nonzero function $v$ satisfying $\mathcal L_Uv=0$ in \eqref{eq:translation-zero-mode} gives $s_U\ge0$.  We claim that in fact $s_U>0$.  We first prove that, for every $t>0$, $e^{t\mathcal L_U}$ maps every nonzero nonnegative $h\in L^2(\R^d)$ to a function positive almost everywhere on $\R^d$.  Write
\begin{equation}
 \mathcal L_U=J*+c(x),
 \qquad c(x)=-1+f'(U(x)),
\end{equation}
and choose $M>0$ such that $c+M\ge0$.  For $h\in L^2(\R^d)$ with $h\ge0$, set $z(t)=e^{t\mathcal L_U}h$ for $t\ge0$.  Then $\widetilde z(t)=e^{Mt}z(t)$ satisfies
\begin{equation}\label{eq:display-4-73}
 \widetilde z_t=J*\widetilde z+(c+M)\widetilde z,
 \qquad \widetilde z(0)=h.
\end{equation}
Picard iteration in \eqref{eq:display-4-73}, using the positive
semigroup $e^{tJ*}$ and $c+M\ge0$, gives $\widetilde z(t)\ge0$ and
\begin{equation}\label{eq:display-4-74}
 \widetilde z(t)\ge e^{tJ*}h.
\end{equation}
Returning to the original semigroup, \eqref{eq:display-4-74} gives
\begin{equation}\label{eq:linearized-positive-semigroup-bound}
 e^{t\mathcal L_U}h\ge e^{-Mt}e^{tJ*}h.
\end{equation}
For nonzero $h\in L^2(\R^d)$ with $h\ge0$, choose a bounded
measurable set $A$ of positive measure and $\delta>0$ such that
$h\ge\delta\mathbf1_A$. By \eqref{eq:convolution-positivity},
a sufficiently high convolution power satisfies
$J^{*n}*\mathbf1_A>0$ on any prescribed ball, as in
\eqref{eq:display-2-16}. The positive series in
\eqref{eq:dispersal-semigroup} thus gives
\begin{equation}\label{eq:convolution-semigroup-positive}
 e^{tJ*}h>0\quad\text{a.e. on }\R^d
\end{equation}
for every $t>0$. Together with
\eqref{eq:linearized-positive-semigroup-bound}, this proves that
$e^{t\mathcal L_U}$ is positivity improving.

Suppose for contradiction that $s_U=0$.  Since $\mathcal L_U$ is self-adjoint, $e^{t\mathcal L_U}$ is then a contraction on $L^2(\R^d)$.  Since $\mathcal L_Uv=0$ and the semigroup is positive,
\begin{equation}\label{eq:absolute-zero-mode-inequality}
 |v|=|e^{t\mathcal L_U}v|
 \le e^{t\mathcal L_U}|v|.
\end{equation}
Taking $L^2(\R^d)$ norms in \eqref{eq:absolute-zero-mode-inequality} gives
\begin{equation}\label{eq:absolute-zero-mode-norm}
 \|v\|_2\le\|e^{t\mathcal L_U}|v|\|_2,
\end{equation}
whereas contractivity gives the reverse inequality.
The nonnegative functions in \eqref{eq:absolute-zero-mode-inequality}
are ordered and have equal $L^2(\R^d)$ norms, so they agree almost
everywhere. Therefore
\begin{equation}
 e^{t\mathcal L_U}|v|=|v|.
\end{equation}
Together with \eqref{eq:translation-zero-mode}, which gives $e^{t\mathcal L_U}v=v$, and
\begin{equation}
 v_+=\frac{|v|+v}{2},\qquad
 v_-=\frac{|v|-v}{2},
\end{equation}
this gives
\begin{equation}\label{eq:positive-parts-invariance}
 e^{t\mathcal L_U}v_+=v_+,
 \qquad
 e^{t\mathcal L_U}v_-=v_-.
\end{equation}
Both functions are nonzero because $v$ changes sign.  Their invariance in \eqref{eq:positive-parts-invariance} and positivity improvement force both $v_+$ and $v_-$ to be strictly positive almost everywhere, contradicting $v_+v_-=0$ almost everywhere.  Thus
$s_U>0$.

\smallskip
\noindent\textit{Step 3. Attainment of the spectral bound.}
For every $\lambda\ge0$, set
\begin{equation}
 q_\lambda(x)=1-f'(U(x))+\lambda,
 \qquad q_{\infty,\lambda}=1-f'(0)+\lambda>1.
\end{equation}
Then
\begin{equation}\label{eq:spectral-fredholm-factorization}
 -\mathcal L_U+\lambda I
 =M_{q_\lambda}\bigl(I-M_{q_\lambda^{-1}}J*\bigr).
\end{equation}
The factorization \eqref{eq:spectral-fredholm-factorization} has the form treated in the proof of Lemma~\ref{lem:fredholm}, applied to the present stationary solution $U$ under (KR).  As in Lemma~\ref{lem:fredholm}, the Lipschitz continuity of $s\mapsto(1-f'(s)+\lambda)^{-1}$ on $[0,1]$ and $U\in L^2(\R^d)$ give
$q_\lambda^{-1}-q_{\infty,\lambda}^{-1}\in L^2(\R^d)$,
so $M_{q_\lambda^{-1}}J*$ is the sum of the strict contraction
$q_{\infty,\lambda}^{-1}J*$ and a Hilbert--Schmidt operator.  The factorization \eqref{eq:spectral-fredholm-factorization} therefore makes $\mathcal L_U-\lambda I$ Fredholm of index zero for every $\lambda\ge0$.
Since $\mathcal L_U$ is bounded and self-adjoint,
$s_U\in\sigma(\mathcal L_U)$. The Fredholm operator
$\mathcal L_U-s_UI$ has index zero. A trivial kernel would make it
surjective and boundedly invertible, a contradiction.
Thus $s_U$ is an eigenvalue with finite-dimensional eigenspace.

Let $\phi$ be an eigenfunction for $s_U$.  Replacing $\phi$ by $|\phi|$ does not decrease its Rayleigh quotient because $J\ge0$, while the multiplication part of $\mathcal L_U$ is unchanged.  Since $s_U$ is the spectral supremum, the quotient for $|\phi|$ must still equal $s_U$.  Equivalently,
\begin{equation}\label{eq:display-4-85}
 \langle(s_UI-\mathcal L_U)|\phi|,|\phi|\rangle=0.
\end{equation}
By \eqref{eq:display-4-85} and the spectral theorem for the nonnegative self-adjoint operator $s_UI-\mathcal L_U$, this operator annihilates $|\phi|$, so $|\phi|$ is again an eigenfunction for $s_U$ and we may take $\phi\ge0$.  The identity $e^{t\mathcal L_U}\phi=e^{ts_U}\phi$ and the positivity improvement obtained from \eqref{eq:linearized-positive-semigroup-bound} and \eqref{eq:convolution-semigroup-positive} give $\phi>0$ almost everywhere.  Let $O(d)$ be the group of orthogonal linear maps of $\R^d$, and let $\mu$ be its normalized Haar measure.  Define the average as an $L^2(\R^d)$ integral:
\begin{equation}\label{eq:radial-eigenfunction-average}
 \phi_{\rm rad}(x)=\int_{O(d)}\phi(Ax)\,d\mu(A).
\end{equation}
The map $A\mapsto\phi(A\cdot)$ is continuous in $L^2(\R^d)$, as follows by approximation by continuous compactly supported functions.  Orthogonal transformations are isometries on $L^2(\R^d)$, so this integral exists and $\|\phi_{\rm rad}\|_2\le\|\phi\|_2$.  By \eqref{eq:main-linearization}, radiality of $J$ and $U$ makes orthogonal transformations commute with $\mathcal L_U$, so the average remains an eigenfunction for $s_U$.  Invariance of $\mu$ in \eqref{eq:radial-eigenfunction-average} makes the average radial.  It is nonnegative and nonzero: for any ball on which $\int_{B_r}\phi>0$, Fubini's theorem gives $\int_{B_r}\phi_{\rm rad}=\int_{B_r}\phi>0$.  In dimension one the average is $(\phi(x)+\phi(-x))/2$.  Thus $s_U>0$ is the principal eigenvalue of $\mathcal L_U$: here the principal eigenvalue means the largest eigenvalue on $L^2(\R^d)$, with the sign convention in \eqref{eq:main-linearization}.

\smallskip
\noindent\textit{Step 4. A positive eigenvalue on a bounded ball.}
For $L>0$, let $\chi_L=\mathbf1_{B_L}$ and set $\phi_L=\chi_L\phi_{\rm rad}$.  Since $\phi_L\to\phi_{\rm rad}$ in $L^2(\R^d)$ and $\mathcal L_U$ is bounded,
\begin{equation}\label{eq:finite-ball-rayleigh}
 \frac{\langle\mathcal L_U\phi_L,\phi_L\rangle}{\|\phi_L\|_2^2}
 \to s_U>0.
\end{equation}
By \eqref{eq:finite-ball-rayleigh}, the quotient is positive for
large $L$. Since $\phi_L$ vanishes outside $B_L$, it is also the
Rayleigh quotient of $\phi_L|_{B_L}$ for
\begin{equation}\label{eq:finite-ball-linearization}
 \mathcal L_{U,L}\psi
 =\int_{B_L}J(x-y)\psi(y)\,dy-\psi+f'(U)\psi
\end{equation}
on $L^2(B_L)$.  When comparing \eqref{eq:finite-ball-linearization} with $\mathcal L_U$ on $L^2(\R^d)$, we extend the argument $\psi$ by zero outside $B_L$.  Since $J\in L^\infty(\R^d)$ and $|B_L|<\infty$, the kernel $J(x-y)$ belongs to $L^2(B_L\times B_L)$.  Thus the integral part of \eqref{eq:finite-ball-linearization} is Hilbert--Schmidt, and hence compact, on $L^2(B_L)$.  The multiplication part satisfies
\begin{equation}\label{eq:negative-multiplication-bound}
 -1+f'(U)\le-m_f<0.
\end{equation}
Here the essential spectrum means the spectrum after removal of isolated eigenvalues of finite multiplicity.  In view of \eqref{eq:negative-multiplication-bound}, the spectrum of the multiplication operator, and hence its essential spectrum, is contained in $(-\infty,-m_f]$.  Weyl's theorem for compact self-adjoint perturbations (see \cite{Kato1995}) therefore gives
\begin{equation}\label{eq:finite-ball-essential-spectrum}
 \sigma_{\rm ess}(\mathcal L_{U,L})\subset(-\infty,-m_f].
\end{equation}
Fix any $L$ sufficiently large that the quotient in \eqref{eq:finite-ball-rayleigh} is positive.  The spectral supremum of $\mathcal L_{U,L}$ is then positive and lies above the essential spectrum in \eqref{eq:finite-ball-essential-spectrum}; hence it is an isolated eigenvalue $\lambda_L$ of finite multiplicity.  Let $\psi$ be an eigenfunction for $\lambda_L$.  Replacing $\psi$ by $|\psi|$ does not decrease the Rayleigh quotient, while no quotient can exceed $\lambda_L$; hence $|\psi|$ is again an eigenfunction for $\lambda_L$.  The operator $\mathcal L_{U,L}$ commutes with orthogonal transformations, so averaging $|\psi|$ over $O(d)$ as in \eqref{eq:radial-eigenfunction-average} gives a nonzero radial nonnegative eigenfunction $\psi_L$ with the same eigenvalue.
\end{proof}

\begin{lemma}\label{lem:unorderedstable}
Assume (KM), \textup{(F1)--(F3)}, and $F(1)>0$. Let $0<R_1<R_2$, and assume that $u^{R_1}$ and $u^{R_2}$ neither become extinct nor spread. The solutions $u^{R_1}$ and $u^{R_2}$ cannot both converge locally uniformly to the same nonconstant radial stationary solution $U$ of \eqref{eq:main-equation} tending to zero as $|x|\to\infty$.
\end{lemma}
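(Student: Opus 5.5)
Suppose, for contradiction, that $u^{R_1}$ and $u^{R_2}$ both converge locally uniformly to the same $U$. The plan has three steps: turn the strict order $u^{R_2}>u^{R_1}$ into a nonnegative limit object, then use the positive finite-ball eigenfunction of Lemma~\ref{lem:radialunstable} to force a quantity to grow, and finally contradict the common limit.

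\emph{Step 1: the difference and the finite-ball eigenfunction.} Since $\mathbf1_{B_{R_1}}\le\mathbf1_{B_{R_2}}$ with $\mathbf1_{B_{R_1}}\not\equiv\mathbf1_{B_{R_2}}$, Lemma~\ref{lem:strong} gives $w=u^{R_2}-u^{R_1}>0$ almost everywhere for $t>0$. The difference satisfies
\[
 w_t=J*w-w+c(t,x)w,\qquad c=\int_0^1 f'\bigl(u^{R_1}+sw\bigr)\,ds .
\]
By hypothesis, $c\to f'(U)$ locally uniformly. Proposition~\ref{prop:transitionomega} and the tail bounds of Lemma~\ref{lem:transitiontight} apply to both radii, because the proofs only use that the solution neither becomes extinct nor spreads. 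Since $U$ is nonconstant, radial, nonincreasing and decaying, it also lies in $L^2$, so it satisfies \eqref{eq:display-4-18}. Lemma~\ref{lem:radialunstable} then supplies $L$, $\lambda_L>0$ and a nonnegative radial $\psi_L\not\equiv0$ on $B_L$ satisfying \eqref{eq:finite-ball-eigenfunction}.

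\emph{Step 2: growth of the tested mass.} Set
\[
 m(t)=\int_{B_L}w(t,x)\psi_L(x)\,dx .
\]
Discarding the part of $J*w$ coming from outside $B_L$, which is nonnegative, and using the symmetry of $J$, we get
\[
 m'(t)\ge\int_{B_L}w\bigl(\mathcal L_{U,L}\psi_L\bigr)\,dx+\int_{B_L}\bigl(c-f'(U)\bigr)w\psi_L\,dx
 \ge\bigl(\lambda_L-\|c(t,\cdot)-f'(U)\|_{L^\infty(B_L)}\bigr)m(t).
\]
For $t\ge T$ with $T$ large, the coefficient exceeds $\lambda_L/2$. Moreover $m(T)>0$, because $w>0$ almost everywhere and $\psi_L\not\equiv0$. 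Hence $m(t)\ge m(T)e^{\lambda_L(t-T)/2}$ grows without bound.

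\emph{Step 3: contradiction.} On the other hand, $0\le w\le1$, so $m(t)\le\|\psi_L\|_1<\infty$ is bounded. Alternatively, $w\to U-U=0$ locally uniformly, so $m(t)\to0$. Either bound contradicts the exponential growth.

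The main point to check is the lower bound in Step 2. It needs $J\ge0$ and $w\ge0$ outside $B_L$ so that the truncation loses only a nonnegative term. It also needs the symmetric integral identity
\[
 \int_{B_L}\psi_L\int_{B_L}J(x-y)w(y)\,dy\,dx=\int_{B_L}w\int_{B_L}J(x-y)\psi_L(y)\,dy\,dx ,
\]
which is Fubini combined with evenness of $J$. The eigenvalue equation holds only almost everywhere, which is harmless inside the integral. Finally, $m$ must be absolutely continuous, which follows from $w\in C^1([0,\infty);L^\infty(\R^d))$. No other estimates are required; radial symmetry is not even needed for this argument.
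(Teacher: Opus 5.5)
Your proposal is correct and follows essentially the same route as the paper: you get strict order from Lemma~\ref{lem:strong}, take the finite-ball eigenfunction from Lemma~\ref{lem:radialunstable}, test the truncated linear inequality on $B_L$ against $\psi_L$ to obtain exponential growth of $\int_{B_L}w\psi_L$, and contradict the bound $\|\psi_L\|_{L^1(B_L)}$. One small point of justification: cite $U\in L^2(\R^d)$ from Proposition~\ref{prop:transitionomega} (uniform $L^2$ bound plus Fatou), since radial monotonicity and decay alone do not give square integrability.
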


\begin{proof}
Strong comparison in Lemma~\ref{lem:strong} gives
\begin{equation}\label{eq:ordered-orbit-difference}
 w(t,x):=u^{R_2}(t,x)-u^{R_1}(t,x)>0
\end{equation}
for almost every $x\in\R^d$ and every $t>0$.  By subtracting the two equations \eqref{eq:main-equation}, the difference $w$ satisfies
\begin{equation}\label{eq:display-4-91}
 w_t=J*w-w+c(t,x)w,
\qquad
 c(t,x)=\int_0^1f'\bigl(u^{R_1}(t,x)+\theta w(t,x)\bigr)\,d\theta.
\end{equation}
Suppose both solutions converge locally uniformly to $U$.  Since neither becomes extinct nor spreads, Proposition~\ref{prop:radialinterval} places $R_1,R_2$ in $[R_{\rm ext},R_{\rm prop}]$.  Proposition~\ref{prop:transitionomega} gives $U\in L^2(\R^d)$, $0\le U\le1$, and radial monotonicity, so Lemma~\ref{lem:radialunstable} supplies $L$, $\lambda_L$ and the radial nonnegative eigenfunction $\psi_L$.  By local uniform convergence of both solutions and uniform continuity of $f'$ on $[0,1]$, there is $T>0$ such that, for every $t\ge T$,
\begin{equation}\label{eq:ordered-orbit-coefficient-bound}
 c(t,x)\ge f'(U(x))-\frac{\lambda_L}{2}
 \qquad (x\in B_L).
\end{equation}
Write $\mathcal L_{U,L}$ for the operator in \eqref{eq:finite-ball-linearization}, with zero extension outside $B_L$.  Since $w\ge0$ on $\R^d$, discarding the exterior contribution to $J*w$ in \eqref{eq:display-4-91} and using \eqref{eq:ordered-orbit-coefficient-bound} gives the following inequality for the restriction $w(t,\cdot)|_{B_L}$:
\begin{equation}\label{eq:ordered-orbit-ball-inequality}
 w_t\ge\mathcal L_{U,L}w-\frac{\lambda_L}{2}w.
\end{equation}
Multiplying \eqref{eq:ordered-orbit-ball-inequality} by $\psi_L$ and using self-adjointness and \eqref{eq:finite-ball-eigenfunction}, we obtain
\begin{equation}\label{eq:ordered-orbit-unstable-moment}
 \frac d{dt}\int_{B_L}w(t,x)\psi_L(x)\,dx
 \ge\frac{\lambda_L}{2}
 \int_{B_L}w(t,x)\psi_L(x)\,dx.
\end{equation}
The integral is positive by \eqref{eq:ordered-orbit-difference},
since $\psi_L\ge0$ is nonzero. Hence
\eqref{eq:ordered-orbit-unstable-moment} forces exponential growth
at rate at least $\lambda_L/2$, whereas $0<w\le1$ bounds it by
$\|\psi_L\|_{L^1(B_L)}$. This contradiction proves the claim.
\end{proof}

\begin{lemma}\label{lem:orderedisolation}
Assume (KR) and \textup{(F3)}, and let $U$ be as in \eqref{eq:display-4-18}, nonconstant, radial and nonincreasing. There exist $L>0$ and $\varepsilon_U>0$ such that no stationary solution $V\ne U$ of \eqref{eq:main-equation} with $0\le V\le1$, comparable to $U$ on $\R^d$, satisfies
\begin{equation}
 \|V-U\|_{L^\infty(B_L)}<\varepsilon_U.
\end{equation}
Here comparable means either $V\ge U$ almost everywhere on $\R^d$ or $V\le U$ almost everywhere on $\R^d$.
\end{lemma}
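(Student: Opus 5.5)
The idea is to reuse the finite-ball principal eigenfunction from Lemma~\ref{lem:radialunstable}. Take $L>0$, $\lambda_L>0$ and the nonzero radial $\psi_L\ge0$ in $L^2(B_L)$ satisfying \eqref{eq:finite-ball-eigenfunction}. Choose $\varepsilon_U>0$ so small that $\omega_{f'}(\varepsilon_U)<\lambda_L$, where $\omega_{f'}$ is the modulus of continuity of $f'$ on $[0,1]$. Suppose $V\ne U$ is a stationary solution with $0\le V\le1$, comparable to $U$, and $\|V-U\|_{L^\infty(B_L)}<\varepsilon_U$. By replacing the pair with $(V,U)$ if needed, set $w=\pm(V-U)\ge0$ with $w\not\equiv0$.

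Subtracting the stationary equations gives
\[
 0=J*w-w+c(x)w,\qquad c(x)=\int_0^1f'\bigl(U+\theta(V-U)\bigr)\,d\theta .
\]
On $B_L$ we have $|c-f'(U)|\le\omega_{f'}(\varepsilon_U)<\lambda_L$.

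First, $w>0$ everywhere. Continuous representatives come from Lemma~\ref{lem:steadyregularity}. At a zero $x_0$ of $w$ the equation forces $\int J(x_0-y)w(y)\,dy=0$, so $w$ vanishes on $B_\rho(x_0)$. By connectedness this would give $w\equiv0$, which is excluded. This is the same open–closed argument as in Lemma~\ref{cs:lem:stationary-basic}, and it can be applied because $q=1-c\ge m_f$ holds, by \textup{(F2)} together with convexity of $[0,1]$.

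Next, restrict to $B_L$. Because $w\ge0$, dropping the exterior part of the convolution gives
\[
 0\ge \mathcal L_{U,L}w+(c-f'(U))w>\mathcal L_{U,L}w-\lambda_Lw
\]
on $B_L$, where the strict inequality uses $w>0$. Pairing with $\psi_L\ge0$, $\psi_L\not\equiv0$, and using the self-adjointness of $\mathcal L_{U,L}$ (from the evenness of $J$) together with \eqref{eq:finite-ball-eigenfunction}, we obtain
\[
 0\ge\lambda_L\int_{B_L}w\psi_L+\int_{B_L}(c-f'(U))w\psi_L
 \ge(\lambda_L-\omega_{f'}(\varepsilon_U))\int_{B_L}w\psi_L>0 .
\]
This is a contradiction, and it is the stationary analogue of the moment argument in Lemma~\ref{lem:unorderedstable}.

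The step needing care is to guarantee $\int_{B_L}w\psi_L>0$, which is why strict positivity of $w$ matters and why the case split on the sign of $V-U$ is harmless. The remaining details are routine: $w$ is bounded, so all pairings on the bounded ball $B_L$ are finite, and no decay of $V$ is needed.
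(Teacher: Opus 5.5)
Your proposal is correct and follows the paper's proof. You take $L,\lambda_L,\psi_L$ from Lemma~\ref{lem:radialunstable}, choose $\varepsilon_U$ by uniform continuity of $f'$, discard the exterior part of $J*w$ on $B_L$, and test against $\psi_L$ using self-adjointness of $\mathcal L_{U,L}$.

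The only difference is minor. You get $w>0$ from the direct open--closed argument of Lemma~\ref{cs:lem:stationary-basic}, whereas the paper invokes the strong comparison Lemma~\ref{lem:strong}; both work, and the bound $1-c\ge m_f$ you cite is not actually needed for that step.
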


\begin{proof}
Choose $L$, $\lambda_L>0$ and $\psi_L\ge0$ from Lemma~\ref{lem:radialunstable}. Uniform continuity of $f'$ on $[0,1]$ gives $\varepsilon_U>0$ such that $\|V-U\|_{L^\infty(B_L)}<\varepsilon_U$ implies
\begin{equation}\label{eq:display-4-96}
 b(x):=\int_0^1 f'\bigl(U(x)+s(V(x)-U(x))\bigr)\,ds
 \ge f'(U(x))-\lambda_L/2\qquad(x\in B_L).
\end{equation}
If $V\ne U$ is comparable to $U$, set $w=V-U$ when $V\ge U$ and $w=U-V$ when $V\le U$, so that $w=|V-U|\ge0$.  Subtracting the stationary equation for the smaller solution from that for the larger solution gives
\begin{equation}\label{eq:display-4-97}
 0=J*w-w+b(x)w.
\end{equation}
Applying Lemma~\ref{lem:strong} to the time-independent solutions $U$ and $V$ of \eqref{eq:main-equation} gives $w>0$ almost everywhere on $\R^d$.  Since $w\ge0$ outside $B_L$, restricting \eqref{eq:display-4-97} to $B_L$, discarding the nonnegative exterior contribution to $J*w$, and using \eqref{eq:display-4-96} yields the following inequality for $w|_{B_L}$, with $\mathcal L_{U,L}$ as in \eqref{eq:finite-ball-linearization}:
\begin{equation}\label{eq:ordered-equilibrium-ball-inequality}
 0\ge\mathcal L_{U,L}w-\frac{\lambda_L}{2}w.
\end{equation}
Since $w$ is bounded and $\psi_L\in L^2(B_L)\subset L^1(B_L)$,
we may test \eqref{eq:ordered-equilibrium-ball-inequality} by
$\psi_L$. Self-adjointness and \eqref{eq:finite-ball-eigenfunction}
give
\begin{equation}
 0\ge\frac{\lambda_L}{2}\int_{B_L}w(x)\psi_L(x)\,dx>0,
\end{equation}
a contradiction. 
\end{proof}

\begin{proposition}\label{prop:sharpradial}
Assume (KM), \textup{(F1)--(F2)}, \textup{(F4)}, and $F(1)>0$. Then
\begin{equation}
 R_{\rm ext}=R_{\rm prop}=:R_*.
\end{equation}
At $R=R_*$, there is a radial nonincreasing
$U_*\in\mathcal S_0(J,f)\setminus\{0\}$ such that $u^{R_*}(t,\cdot)\to U_*$ in $L^2(\R^d)\cap L^\infty(\R^d)$ as $t\to\infty$.
\end{proposition}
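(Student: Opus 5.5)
We argue by contradiction: suppose $R_{\rm ext}<R_{\rm prop}$. By Proposition~\ref{prop:radialinterval}, every $R\in[R_{\rm ext},R_{\rm prop}]$ gives a solution $u^R$ that neither becomes extinct nor spreads. By Proposition~\ref{prop:transitionconverges}, which uses (F4), each such $u^R$ converges in $X$ to a radial nonincreasing $U_R\in\mathcal S_0(J,f)\setminus\{0\}$. By Theorem~\ref{cs:thm:stationary-summary}, each $U_R$ is nonconstant.

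The key step is to show that $R\mapsto U_R$ is injective on $[R_{\rm ext},R_{\rm prop}]$. This follows directly from Lemma~\ref{lem:unorderedstable}: two radii $R_1<R_2$ in the interval cannot produce the same nonconstant radial decaying limit.

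Next we use ordering. For $R_1<R_2$, comparison gives $u^{R_1}\le u^{R_2}$, hence $U_{R_1}\le U_{R_2}$, so the family is totally ordered. Lemma~\ref{lem:orderedisolation} then shows that each $U_R$ is isolated among the comparable stationary states: there are $L_R,\varepsilon_R>0$ such that every other $U_{R'}$ satisfies $\|U_{R'}-U_R\|_{L^\infty(B_{L_R})}\ge\varepsilon_R$.

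We now obtain a contradiction by counting. Choose a countable dense set of balls $B_k$ and rationals. Each $U_R$ is isolated with respect to some rational pair $(L,\varepsilon)$ in the local metric $d_{\rm loc}$ of \eqref{eq:local-essential-metric}. The family $\{U_R\}$ is a subset of the separable metric space $(L^\infty_{\rm loc},d_{\rm loc})$, restricted to the relatively compact set of uniformly equicontinuous functions bounded by one; this restriction is needed because $L^\infty$ itself is not separable. All $U_R$ share the modulus \eqref{eq:stationary-translation-bound}, so the set is relatively compact for local uniform convergence, hence separable. In a separable metric space, a set in which every point is isolated is countable. Injectivity then makes the interval $[R_{\rm ext},R_{\rm prop}]$ countable, which contradicts $R_{\rm ext}<R_{\rm prop}$. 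Hence $R_{\rm ext}=R_{\rm prop}=R_*$.

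The convergence claim at $R_*$ is then the case $R=R_*$ of Proposition~\ref{prop:transitionconverges}, with $U_*=U_{R_*}$.

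The main obstacle is making the isolation argument rigorous. Lemma~\ref{lem:orderedisolation} requires comparability with $U_R$ on all of $\R^d$. That holds here because the limits are monotone in $R$ and each $U_{R'}\ne U_R$ lies on one side of $U_R$. If separability turns out to be awkward, there is a fallback. Monotonicity in $R$ together with the ordered isolation gives, at each $R$, a jump of size at least $\varepsilon_R$ in $\sup_{B_{L_R}}U_{R'}$ as $R'$ crosses $R$. The values $U_{R'}(0)$ are increasing in $R'$ and bounded by one, so only countably many such jumps can occur. This requires showing that $\sup_{B_L}$ jumps control the value at $0$. Both $U_R$ and $U_{R'}$ are radial nonincreasing, so $\sup_{B_L}$ is attained at $0$, and the $L^\infty(B_L)$ gap of $U_{R'}-U_R\ge0$ should be compared with the gap at the center. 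The fallback would therefore use strict monotonicity of $R\mapsto U_R(0)$, which follows from injectivity and ordering together with Lemma~\ref{lem:strong}, to index the states by an injective increasing real function with isolated points.
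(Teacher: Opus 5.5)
Your proof is correct and follows the paper's argument: injectivity of $R\mapsto U_R$ from Lemma~\ref{lem:unorderedstable}, total order from comparison, isolation of each $U_R$ within the ordered family from Lemma~\ref{lem:orderedisolation}, and countability of a discrete subset of a separable metric space, with Proposition~\ref{prop:transitionconverges} then giving convergence at $R_*$. The only difference is how you get separability: you use the common modulus \eqref{eq:stationary-translation-bound}, which gives relative compactness in the local uniform topology, whereas the paper uses a countable dense family of piecewise-linear radial profiles in $C_{\rm rad}(\R^d)$. So your fallback is unnecessary, which is just as well, since as written it treats the gap $\|U_{R'}-U_R\|_{L^\infty(B_L)}$ as a jump in $U_{R'}(0)$, and these need not agree.
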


\begin{proof}
Let $I=[R_{\rm ext},R_{\rm prop}]$.  For each $R\in I$, let $U_R$ be the stationary limit supplied by Proposition~\ref{prop:transitionconverges}.  Each $U_R$ belongs to $X$, is nonzero and radial nonincreasing, and tends to zero at infinity; in particular, $U_R$ is nonconstant.  For $R_1,R_2\in I$ with $R_1<R_2$, Lemma~\ref{lem:semiflow} gives $u^{R_1}(t,\cdot)\le u^{R_2}(t,\cdot)$.  Passing to the limits supplied by Proposition~\ref{prop:transitionconverges} gives
\begin{equation}\label{eq:ordered-stationary-limits}
 R_1<R_2\quad\Longrightarrow\quad U_{R_1}\le U_{R_2},
\end{equation}
and Lemma~\ref{lem:unorderedstable} shows that $U_{R_1}\ne U_{R_2}$. Thus $R\mapsto U_R$ is injective on $I$, and \eqref{eq:ordered-stationary-limits} makes its image $\mathcal C:=\{U_R:R\in I\}$ totally ordered by pointwise comparison.

Every element of $\mathcal C$ is continuous by Lemma~\ref{lem:steadyregularity}.  Consider the space $C_{\rm rad}(\R^d)$ of continuous radial functions with the local uniform topology, metrized by
\begin{equation}\label{eq:radial-local-metric}
 d_{\rm loc}(U,V)=\sum_{n=1}^\infty 2^{-n}
 \min\{1,\|U-V\|_{L^\infty(B_n)}\}.
\end{equation}
Radial extensions of continuous piecewise linear profiles on $[0,\infty)$, determined by finitely many rational nodes including $0$, rational values at the nodes, and a constant continuation beyond the last node, form a countable dense subset for the metric \eqref{eq:radial-local-metric}.  Thus $C_{\rm rad}(\R^d)$ is separable and, being metrizable, second countable.

For each $U\in\mathcal C$, choose $L$ and $\varepsilon_U$ from Lemma~\ref{lem:orderedisolation}, and set $\mathcal N_U=\{V\in C_{\rm rad}(\R^d):\|V-U\|_{L^\infty(B_L)}<\varepsilon_U\}$.  This set is open in the local uniform topology.  Since $\mathcal C$ is totally ordered by \eqref{eq:ordered-stationary-limits}, Lemma~\ref{lem:orderedisolation} gives $\mathcal N_U\cap\mathcal C=\{U\}$.  Enumerate a countable basis as $\mathfrak B=\{\mathcal B_k:k\ge1\}$ and let $\mathcal O_U$ be the first basis element satisfying
$U\in\mathcal O_U\subset\mathcal N_U$.
Distinct $U,V\in\mathcal C$ have distinct
$\mathcal O_U,\mathcal O_V$: a common basic open set would contain
$V$ and lie in $\mathcal N_U$, contradicting
$\mathcal N_U\cap\mathcal C=\{U\}$.
Thus $U\mapsto\mathcal O_U$ injects $\mathcal C$ into the countable
basis $\mathfrak B$.

If $R_{\rm ext}<R_{\rm prop}$, injectivity of $R\mapsto U_R$
would map the uncountable interval $I$ into the countable set
$\mathcal C$, a contradiction. Thus $I=\{R_*\}$, and
Proposition~\ref{prop:transitionconverges} gives convergence in $X$
to $U_*:=U_{R_*}\in\mathcal S_0(J,f)\setminus\{0\}$.
\end{proof}

\begin{remark}
Under the assumptions of Proposition~\ref{prop:sharpradial}, the limits $U_R$ of $u^R$ form a totally ordered set.  Together with time continuity from Lemma~\ref{lem:semiflow}, Lemma~\ref{lem:transitiontight} gives relative compactness in $X$ by combining the asymptotic translation estimate with uniform estimates at spatial infinity, and the gradient inequality gives convergence of $u^R(t,\cdot)$ for each $R\in[R_{\rm ext},R_{\rm prop}]$.  Strong comparison and a positive eigenvalue on a bounded ball then isolate each limit within this ordered set, as used in Proposition~\ref{prop:sharpradial}.
\end{remark}

For the following spectral result, assume \textup{(F1)--(F2)},
\textup{(KM)} and $F(1)>0$.

The next lemma gives the spectral conclusion associated with
Chmaj and Ren's instability theorem
\cite[Theorem~5.1 and its proof, pp.~38--40]{ChmajRen1999}.
We give the argument for each positive decaying stationary
solution under \textup{(F1)--(F2)} and \textup{(KM)},
using positivity of convolution powers and the Fredholm
factorization from Lemma~\ref{cs:lem:fredholm}.

For $U\in\mathcal S_0(J,f)$, set
\begin{equation}\label{cs:eq:stationary-linearization}
 \lambda_U=\sup\sigma_{L^2(\R^d)}(\mathcal L_U),
\end{equation}
where $\mathcal L_U$ is defined in \eqref{eq:main-linearization}.

\begin{lemma}\label{cs:lem:stationary-instability}
For every $U\in\mathcal S_0(J,f)\setminus\{0\}$, the number
$\lambda_U$ in \eqref{cs:eq:stationary-linearization} is a positive
simple eigenvalue of $\mathcal L_U$.  It has an eigenfunction
$\phi_U\in C(\R^d)\cap L^2(\R^d)\cap L^\infty(\R^d)$
which is positive everywhere.
\end{lemma}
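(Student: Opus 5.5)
The plan is to repeat the argument of Lemma~\ref{lem:radialunstable}, with Lemma~\ref{cs:lem:fredholm} in place of Lemma~\ref{lem:fredholm}, so that only (KM) and (F1)--(F2) are used. Then we add simplicity and regularity of the eigenfunction.

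\emph{Step 1: a sign-changing element of the kernel.} Fix $U\in\mathcal S_0(J,f)\setminus\{0\}$. Lemma~\ref{cs:lem:stationary-integrability} gives $U\in H^1(\R^d)$ together with \eqref{cs:eq:stationary-derivative}. Hence $\mathcal L_U\partial_iU=0$. By Lemma~\ref{cs:lem:stationary-symmetry}, $U$ is radial about $x_*$ with $V'<0$. Therefore $v=\partial_1U$ is nonzero and odd across $\{x_1=x_*\cdot e_1\}$, so it changes sign. This gives $\lambda_U\ge0$.

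\emph{Step 2: $\lambda_U>0$.} We write $\mathcal L_U=J*+c$ with $c=-1+f'(U)$ bounded, as in Step~2 of Lemma~\ref{lem:radialunstable}. The bound \eqref{eq:linearized-positive-semigroup-bound} and the positivity \eqref{eq:convolution-semigroup-positive}, which uses \eqref{eq:convolution-positivity} from (K1), show that $e^{t\mathcal L_U}$ is positivity improving. If $\lambda_U=0$, the contraction argument forces $v_\pm$ to be invariant and both strictly positive almost everywhere. This contradicts $v_+v_-=0$.

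\emph{Step 3: $\lambda_U$ is an eigenvalue.} For $\lambda\ge0$ we factor $\lambda I-\mathcal L_U=M_{q_\lambda}(I-M_{q_\lambda^{-1}}J*)$ with $q_\lambda=1-f'(U)+\lambda$. Exactly as in Lemma~\ref{cs:lem:fredholm}, $p_\lambda=q_\lambda^{-1}-q_{\infty,\lambda}^{-1}\to0$ at infinity. Truncating $p_\lambda$ to balls then gives compactness of $M_{p_\lambda}J*$. This only needs continuity of $f'$ and $U\to0$, not (F3). The operator $q_{\infty,\lambda}^{-1}J*$ is a strict contraction. So $\mathcal L_U-\lambda_U I$ is Fredholm of index zero, and since $\lambda_U\in\sigma(\mathcal L_U)$, it has nontrivial kernel. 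As in Lemma~\ref{lem:radialunstable}, the Rayleigh quotient argument shows that $|\phi|$ is an eigenfunction whenever $\phi$ is. Positivity improvement then gives $|\phi|>0$ almost everywhere.

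\emph{Simplicity.} I expect this to be the main new point. Suppose $\phi$ is a real eigenfunction; we may take real parts since the operator is real. Then $|\phi|$ is also an eigenfunction, and so are $\phi_\pm=(|\phi|\pm\phi)/2$ whenever they are nonzero. Each nonzero one is strictly positive almost everywhere by positivity improvement, so $\phi$ cannot change sign. If the eigenspace had dimension at least two, it would contain two orthogonal functions that are both positive almost everywhere, which is impossible; alternatively, some combination of them would change sign. Hence the eigenvalue is simple.

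\emph{Regularity and pointwise positivity.} From $(\lambda_U+1-f'(U))\phi=J*\phi$ and $\lambda_U+1-f'(U)\ge m_f$, we get $\|\phi\|_\infty\le m_f^{-1}\|J\|_2\|\phi\|_2$. Since $J*\phi$ is continuous by $L^2$ translation continuity and $U$ is continuous, $\phi$ has a continuous representative. Finally, $J*\phi(x)>0$ at every $x$: $\phi>0$ almost everywhere and $J>0$ on $B_\rho$, so the integral is positive. Therefore $\phi_U=J*\phi/(\lambda_U+1-f'(U))>0$ everywhere.
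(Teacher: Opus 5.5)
Your proposal is correct and follows the paper's proof essentially step by step. The translation zero mode gives $\lambda_U\ge0$, and positivity improvement of $e^{t\mathcal L_U}$ rules out $\lambda_U=0$. The Fredholm factorization of Lemma~\ref{cs:lem:fredholm}, with $\lambda$ added, makes $\lambda_U$ an eigenvalue. The absolute-value and positivity argument then gives a bounded, continuous, everywhere positive eigenfunction and simplicity. The only cosmetic difference is how you get the sign change of the zero mode: you use the radial symmetry from Lemma~\ref{cs:lem:stationary-symmetry} for $\partial_1U$, whereas the paper argues more elementarily that a one-signed $\partial_iU$ would make $U$ monotone along lines with zero limits at both ends.
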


\begin{proof}
By Lemma~\ref{cs:lem:stationary-integrability} and
\eqref{cs:eq:stationary-derivative} with $g=f$,
$U\in H^1(\R^d)$ and $\mathcal L_U\partial_iU=0$.
At least one derivative $v=\partial_iU$ is nonzero.
It changes sign: if it had one sign, the $C^1$ function $U$
would be monotone on each line parallel to $e_i$, whereas its
limits at both ends are zero.
Thus $\lambda_U\ge0$.

Set $T_t=e^{t\mathcal L_U}$ and choose $M$ so that
$-1+f'(U)+M\ge0$. The positive-operator calculation in
\eqref{eq:display-4-73}--\eqref{eq:linearized-positive-semigroup-bound}
gives
\begin{equation}\label{cs:eq:stationary-semigroup}
 T_th\ge e^{-Mt}\sum_{n=0}^\infty
               \frac{t^n}{n!}J^{*n}*h,\qquad h\in L^2(\R^d),\quad h\ge0.
\end{equation}
By \eqref{eq:convolution-semigroup-positive}, $T_th>0$ almost everywhere
for $t>0$ and nonzero $h\ge0$.

If $\lambda_U=0$, self-adjointness makes $T_t$ an $L^2(\R^d)$ contraction.
Since $T_tv=v$ and $T_t$ is positive, $|v|\le T_t|v|$.
Contractivity forces equality, and hence
$T_tv_+=v_+$ and $T_tv_-=v_-$, where $v_-=(-v)_+$.
Both functions are nonzero, so positivity in
\eqref{cs:eq:stationary-semigroup} contradicts their disjoint supports.
We conclude that $\lambda_U>0$.

Recall $a_U=1-f'(U)$ and $a_\infty=1-f'(0)>1$ from
\eqref{cs:eq:hessian}, so $A_U=-\mathcal L_U$.
For every $\lambda\ge0$, apply the factorization and compactness
argument of Lemma~\ref{cs:lem:fredholm}
with $a_U+\lambda$ and $a_\infty+\lambda$ in place of
$a_U$ and $a_\infty$.  It follows that
$\lambda I-\mathcal L_U$ is Fredholm of index zero.
Indeed, $p_\lambda=(a_U+\lambda)^{-1}-(a_\infty+\lambda)^{-1}$
is bounded and tends to zero at infinity, so the cutoff argument
in \eqref{cs:eq:compact-multiplier} applies; moreover
$a_\infty+\lambda>1$.
Since $\lambda_U\in\sigma(\mathcal L_U)$, the Fredholm property
and self-adjointness give $\lambda_UI-\mathcal L_U$ a nonzero
finite-dimensional kernel. Thus $\lambda_U$ is an eigenvalue.

If $h$ is a real eigenfunction at $\lambda_U$, nonnegativity of $J$
gives
\[
 \langle\mathcal L_U|h|,|h|\rangle
 \ge\langle\mathcal L_Uh,h\rangle
 =\lambda_U\|h\|_2^2.
\]
By the definition of $\lambda_U$, equality holds. The spectral
theorem for $\lambda_UI-\mathcal L_U\ge0$ then makes $|h|$ an
eigenfunction, so we may choose $\phi_U\ge0$.
Equation \eqref{cs:eq:stationary-semigroup} makes it positive almost
everywhere, and
\[
 (a_U+\lambda_U)\phi_U=J*\phi_U,\qquad
 \|\phi_U\|_\infty
 \le(m_f+\lambda_U)^{-1}\|J\|_2\|\phi_U\|_2
\]
gives boundedness and continuity.
Here continuity follows from translations of $J$ in $L^2(\R^d)$.
The convolution $J*\phi_U$ is positive at every point, so the
continuous representative satisfies $\phi_U>0$ everywhere.

If the eigenspace had dimension greater than one, it would contain
a nonzero real $h$ orthogonal to $\phi_U$.
Such an $h$ changes sign.
The preceding absolute-value argument makes $|h|$,
$h_+$, and $h_-$ eigenfunctions at $\lambda_U$.
Their evolution under $T_t$ and
\eqref{cs:eq:stationary-semigroup} again contradict their disjoint
supports.
Hence the eigenvalue is simple.
\end{proof}

\subsection{One-dimensional limits}

In one dimension, $B_R=(-R,R)$, so the critical radius in
Proposition~\ref{prop:sharpradial} is a critical half-length.
Lemma~\ref{lem:peakidentity} gives an identity for the value of the
stationary limit at the origin. In every dimension,
Lemma~\ref{lem:duality} shows that $v^R=1-u_R$ solves
\eqref{eq:main-equation} with reaction $f^\sharp$, as used in
Corollary~\ref{cor:main_negative}.

\begin{proposition}\label{prop:sharp1d}
Assume $d=1$, (KM), \textup{(F1)--(F2)}, \textup{(F4)}, and $F(1)>0$.  Set $L_*=R_*(f,J)$ as in Proposition~\ref{prop:sharpradial}.  Then $u^{L_*}$ satisfies
\begin{equation}\label{eq:one-dimensional-critical-convergence}
 \|u^{L_*}(t,\cdot)-U_*\|_2+
 \|u^{L_*}(t,\cdot)-U_*\|_\infty\to0,
\end{equation}
where $U_*\in\mathcal S_0(J,f)\cap L^2(\R)$ is even and
nonincreasing on $[0,\infty)$, with $0<U_*<1$ on $\R$.
\end{proposition}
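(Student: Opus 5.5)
This is essentially the specialization of Proposition~\ref{prop:sharpradial} to $d=1$, plus the stated qualitative properties of the limit.

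First, Proposition~\ref{prop:sharpradial} applies with $d=1$. Its hypotheses are (KM), \textup{(F1)--(F2)}, \textup{(F4)} and $F(1)>0$, all of which are assumed here. It gives $R_{\rm ext}=R_{\rm prop}=L_*$ and a radial nonincreasing $U_*\in\mathcal S_0(J,f)\setminus\{0\}$ with $u^{L_*}(t,\cdot)\to U_*$ in $X=L^2(\R)\cap L^\infty(\R)$. Since $\|v\|_X=\|v\|_2+\|v\|_\infty$, this is exactly \eqref{eq:one-dimensional-critical-convergence}. Membership $U_*\in L^2(\R)$ is part of the statement $U_*\in X$; it also follows from Lemma~\ref{cs:lem:stationary-integrability}.

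Second, for $d=1$, "radial nonincreasing" means that $U_*$ has a representative $U_*(x)=V(|x|)$ with $V$ nonincreasing on $[0,\infty)$. Hence the continuous representative from Lemma~\ref{lem:steadyregularity} is even and nonincreasing on $[0,\infty)$; the two representatives agree a.e. and both are continuous, so they coincide.

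Third, the strict bounds $0<U_*<1$ come from the trichotomy in Lemma~\ref{cs:lem:stationary-basic} with $g=f$: either $U_*\equiv0$, or $U_*\equiv1$, or $0<U_*<1$ everywhere. The first case is excluded because $U_*\not\equiv0$. The second is excluded because $U_*(x)\to0$ as $|x|\to\infty$. Alternatively, Lemma~\ref{cs:lem:stationary-values} gives $\max U_*<1$ directly.

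There is no substantive obstacle. The only point needing care is the identification of representatives: the limit in Proposition~\ref{prop:sharpradial} is a class in $X$, while the pointwise statements $0<U_*<1$ and monotonicity refer to the continuous representative. As noted above, a.e. equality of two continuous functions forces pointwise equality. Optionally, one can add $U_*(0)>\beta$ from Theorem~\ref{cs:thm:stationary-summary}, which applies since \textup{(F4)} implies \eqref{cs:eq:reaction-regularity}, but that is not part of the claim.
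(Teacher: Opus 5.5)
Your proposal is correct and follows the paper's proof essentially verbatim. It uses Proposition~\ref{prop:sharpradial} for the convergence, radial monotonicity together with the continuous representative from Lemma~\ref{lem:steadyregularity} for evenness and monotonicity on $[0,\infty)$, and the trichotomy of Lemma~\ref{cs:lem:stationary-basic} with $g=f$ for $0<U_*<1$. One minor wording point: a radial nonincreasing representative $V(|x|)$ need not be continuous a priori. Still, almost-everywhere equality with the continuous representative transfers evenness and monotonicity by density, so your conclusion stands.
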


\begin{proof}
By Proposition~\ref{prop:sharpradial},
\eqref{eq:one-dimensional-critical-convergence} holds for
$U_*\in\mathcal S_0(J,f)\setminus\{0\}$. In one dimension, radial
monotonicity gives evenness and monotonicity on $[0,\infty)$;
Lemma~\ref{lem:steadyregularity} gives a continuous representative.

Since $U_*\in\mathcal S_0(J,f)\setminus\{0\}$,
Lemma~\ref{cs:lem:stationary-basic} with $g=f$ gives $0<U_*<1$ on $\R$.
\end{proof}

\begin{proof}[Proof of Corollary~\ref{thm:1dcompletefull}]
By Proposition~\ref{prop:sharp1d} with $L_*=R_*$, \eqref{eq:display-1-19} holds for the solution in Corollary~\ref{thm:1dcompletefull}, and the stationary limit $U_*$ satisfies $0<U_*<1$ on $\R$.
\end{proof}

\subsection{Proofs of the threshold results}

\begin{proof}[Proof of Theorem~\ref{thm:main}]
Proposition~\ref{prop:radialinterval} proves \textup{(i)}, including
finiteness of the radii, the exact extinction and spreading intervals,
and \eqref{eq:extinction-radius-bounds}.
The lower bound follows from Lemma~\ref{lem:smallseed}; the upper bound
uses \eqref{eq:negative-ball-energy} and energy dissipation
\eqref{eq:energy-dissipation}. For the $C^1$ reaction in \textup{(F1)},
\eqref{eq:measurable-seed-spreading} gives spreading for large $R$
by comparison with a polynomial reaction.

For \textup{(ii)}, assume \textup{(KM)} and \textup{(F4)}.
Proposition~\ref{prop:transitionconverges} gives
\eqref{eq:whole-orbit-convergence} in
$L^2(\R^d)\cap L^\infty(\R^d)$ for every
$R\in[R_{\rm ext},R_{\rm prop}]$.
Lemma~\ref{lem:radialunstable} gives the asserted positive eigenvalue
and nonnegative radial eigenfunction of $\mathcal L_U$ for each
stationary solution $U$ specified in \textup{(ii)}.
By Proposition~\ref{prop:sharpradial},
$R_{\rm ext}=R_{\rm prop}=:R_*$ and
$U_*=U_{R_*}\in\mathcal S_0(J,f)\setminus\{0\}$ is the critical limit.
\end{proof}

\begin{proof}[Proof of Corollary~\ref{cor:main_negative}]
By Lemma~\ref{lem:duality}, $f^\sharp$ satisfies
\textup{(F1)--(F2)} with unstable zero $1-\vartheta$ and the same $m_f$;
\eqref{eq:reflected-potential} gives $F^\sharp(1)=-F(1)>0$.
The lemma also identifies $v^R=1-u_R$ as the unique global solution of
\eqref{eq:main-equation} with reaction $f^\sharp$ and initial datum
$\mathbf1_{B_R}$. Theorem~\ref{thm:main}\textup{(i)} therefore gives
the two radii and their bounds, with
$T_{f^\sharp}=m_f^{-1}\log(4/(1-\vartheta))$.
Since $u_R=1-v^R$, extinction and spreading of $v^R$ give,
respectively, uniform convergence of $u_R$ to $1$ and local uniform
convergence to $0$; the converse implications follow from the same identity.

Under \textup{(KM)} and \textup{(F4)}, Lemma~\ref{lem:duality}
gives \textup{(F4)} for $f^\sharp$.
Applying Theorem~\ref{thm:main}\textup{(ii)} with this reaction yields
$R_{\rm ext}^\sharp=R_{\rm prop}^\sharp=R_*^\sharp$,
convergence of $v^{R_*^\sharp}(t,\cdot)$ to
$U_*^\sharp\in\mathcal S_0(J,f^\sharp)\setminus\{0\}$ in
$L^2(\R^d)\cap L^\infty(\R^d)$, and the spectral conclusion.
Applying Lemma~\ref{lem:duality} to the stationary solution
$U_*^\sharp$ with reaction $f^\sharp$ gives
$1-U_*^\sharp\in\mathcal S(J,f)$, since $(f^\sharp)^\sharp=f$.
Finally,
$u_{R_*^\sharp}(t,\cdot)-(1-U_*^\sharp)
=-(v^{R_*^\sharp}(t,\cdot)-U_*^\sharp)$,
so this difference tends to zero in
$L^2(\R^d)\cap L^\infty(\R^d)$ as $t\to\infty$.
\end{proof}

\begin{corollary}\label{cor:sandwich}
Assume (KR), \textup{(F1)--(F2)}, and $F(1)>0$.  Let $R_{\rm ext}$ and $R_{\rm prop}$ be as in Proposition~\ref{prop:radialinterval}.  Let $u_0\in L^\infty(\R^d)$ satisfy $0\le u_0\le1$, and let $u$ be the unique global solution of \eqref{eq:main-equation} with initial datum $u_0$.  If
\begin{equation}\label{eq:extinction-sandwich-condition}
 0\le u_0\le \mathbf1_{B_R}
 \quad\text{for some }0<R<R_{\rm ext},
\end{equation}
then $u(t,\cdot)$ converges uniformly to $0$ as $t\to\infty$.  If
\begin{equation}\label{eq:propagation-sandwich-condition}
 \mathbf1_{B_R}\le u_0\le1
 \quad\text{for some }R>R_{\rm prop},
\end{equation}
then $u(t,\cdot)$ converges locally uniformly to $1$ as $t\to\infty$.
\end{corollary}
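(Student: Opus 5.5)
Both assertions follow from the order-preserving property of the semiflow in Lemma~\ref{lem:semiflow}, combined with the characterization of the extinction and spreading intervals in Proposition~\ref{prop:radialinterval}. No new estimate is needed; the plan is to sandwich $u$ between $0$, $1$ and the solutions $u^R$ from the ball family.

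For the first assertion, take $u_0$ with $0\le u_0\le\mathbf1_{B_R}$ and $R<R_{\rm ext}$. By \eqref{eq:threshold-open-intervals}, $R\in\mathcal E_d=(0,R_{\rm ext})$, so $\|u^R(t,\cdot)\|_\infty\to0$. Comparison in Lemma~\ref{lem:semiflow}, applied first to $0\le u_0$ and then to $u_0\le\mathbf1_{B_R}$, gives
\[
 0\le u(t,\cdot)\le u^R(t,\cdot)\qquad\text{almost everywhere, for every }t\ge0.
\]
Taking essential suprema yields $\|u(t,\cdot)\|_\infty\to0$. Since convergence is in $L^\infty(\R^d)$, this is uniform convergence for the a.e.\ representative.

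For the second assertion, take $\mathbf1_{B_R}\le u_0\le1$ with $R>R_{\rm prop}$. By \eqref{eq:threshold-open-intervals}, $R\in\mathcal P_d$, so $u^R(t,\cdot)\to1$ in $L^\infty(B_r)$ for every $r>0$. Comparison gives $u^R(t,\cdot)\le u(t,\cdot)\le1$ almost everywhere, where the upper bound uses the constant solution $1$. Hence
\[
 \|u(t,\cdot)-1\|_{L^\infty(B_r)}\le\|u^R(t,\cdot)-1\|_{L^\infty(B_r)}\to0
\]
for every $r$, which is local uniform convergence to $1$.

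There is no real obstacle; the only point to check is that the hypotheses match. Proposition~\ref{prop:radialinterval} is stated under (KR), \textup{(F1)--(F2)} and $F(1)>0$, exactly the assumptions of the corollary, and Lemma~\ref{lem:semiflow} needs only (K0). The openness of $\mathcal E_d$ and $\mathcal P_d$ is what allows the strict inequalities $R<R_{\rm ext}$ and $R>R_{\rm prop}$ to place $R$ inside the respective sets.
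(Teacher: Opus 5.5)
Your proposal is correct and is essentially the paper's proof: sandwich $u$ between $0$ (or $u^R$) and $u^R$ (or $1$) using order preservation from Lemma~\ref{lem:semiflow}, then invoke $\mathcal E_d=(0,R_{\rm ext})$ and $\mathcal P_d=(R_{\rm prop},\infty)$ from Proposition~\ref{prop:radialinterval}. Your remark that openness of these sets is what lets the strict inequalities on $R$ place $R$ in the respective sets is exactly the point the paper's argument relies on.
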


\begin{proof}
By Lemma~\ref{lem:semiflow}, \eqref{eq:extinction-sandwich-condition}
gives $0\le u(t,\cdot)\le u^R(t,\cdot)$, whereas
\eqref{eq:propagation-sandwich-condition} gives
$u^R(t,\cdot)\le u(t,\cdot)\le1$.
The extinction and spreading alternatives in
Proposition~\ref{prop:radialinterval} give the asserted limits.
\end{proof}

\section{Remarks and examples}\label{sec:remarks}

\subsection{Remarks on the assumptions}

Throughout this subsection, $f$ satisfies \textup{(F1)--(F2)}.

\begin{remark}
Assumption (K1) permits kernels with unbounded support.
The second moment in (KR) is used in the radial barriers of Fife and
Wang \cite{FifeWang1998}; the first moment in (K1), in Theorem~A
and Lemma~\ref{lem:ballenergy}.
The bounds in Theorem~\ref{thm:main}\textup{(i)} depend on
$\|J\|_\infty$ and $M_1(J)$.
\end{remark}

\begin{remark}
When $F(1)=0$, \eqref{eq:directional-speed-identity} gives $c_e=0$
in every direction under (K1) and \textup{(F3)}.
Lemma~\ref{lem:smallseed} still gives extinction for sufficiently
small $L^1(\R^d)$ mass under (K0) and $J\in L^\infty(\R^d)$.
The expanding lower barrier of Fife and Wang \cite{FifeWang1998}
requires positive speed, while the bulk energy term $-F(1)|B_R|$ vanishes.
\end{remark}

\begin{lemma}\label{lem:kernel-monotonicity-necessity}
Assume \textup{(F1)--(F2)} and (KR).  Then (KM) is equivalent to the following property: for every $R>0$ and $t>0$, the function $u^R(t,\cdot)$ has a radial nonincreasing representative.
\end{lemma}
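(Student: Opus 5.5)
The forward implication is Lemma~\ref{lem:radialmonotone}: under \textup{(KM)}, the datum $\mathbf1_{B_R}$ is radial nonincreasing, so $u^R(t,\cdot)$ has a radial nonincreasing representative for every $t\ge0$. The work is in the converse. I will assume that $u^R(t,\cdot)$ is radial nonincreasing for all $R,t>0$ and prove that $j$ is nonincreasing. Radiality of $J$ is already part of \textup{(KR)}, so only the monotonicity of $j$ is at stake.

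The plan is a short-time expansion. By Lemma~\ref{lem:semiflow}, $u^R\in C^1([0,\infty);L^\infty(\R^d))$, and since $f(0)=f(1)=0$ we have $f(\mathbf1_{B_R})=0$. Hence
\[
 u^R(t,\cdot)=\mathbf1_{B_R}+t\bigl(J*\mathbf1_{B_R}-\mathbf1_{B_R}\bigr)+o(t)
 \quad\text{in }L^\infty(\R^d),\qquad t\downarrow0.
\]
Restrict to the exterior region $|x|>R$, where $\mathbf1_{B_R}=0$. There $t^{-1}u^R(t,x)\to(J*\mathbf1_{B_R})(x)$ uniformly. A uniform limit of functions that are a.e.\ radial nonincreasing on $\{|x|>R\}$ is again a.e.\ radial nonincreasing there, and $J*\mathbf1_{B_R}$ is continuous. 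It follows that $r\mapsto(J*\mathbf1_{B_R})(re_1)$ is nonincreasing on $(R,\infty)$, for every $R>0$.

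Next I scale $R\downarrow0$. For fixed $r>0$ and $R<r$, continuity of $J$ (available since $J\in C^2$ under \textup{(KR)}) gives
\[
 |B_R|^{-1}(J*\mathbf1_{B_R})(re_1)=|B_R|^{-1}\int_{B_R}J(re_1-y)\,dy\to j(r)
\]
as $R\to0$. Take $0<r_1<r_2$ and let $R<r_1$. The exterior monotonicity gives $(J*\mathbf1_{B_R})(r_1e_1)\ge(J*\mathbf1_{B_R})(r_2e_1)$; dividing by $|B_R|$ and letting $R\to0$ yields $j(r_1)\ge j(r_2)$. Continuity then extends this to $r_1=0$, so $j$ is nonincreasing on $[0,\infty)$, which is \textup{(KM)}.

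The delicate step is turning the almost-everywhere monotonicity of $u^R(t,\cdot)$ into pointwise monotonicity of the limit $J*\mathbf1_{B_R}$. I would handle it like this: for $R<r_1<r_2$, average $t^{-1}u^R(t,\cdot)$ over thin shells around radii $r_1$ and $r_2$. The ordering survives the averaging, because a radial nonincreasing representative orders the shell averages. Passing to the limit, first $t\to0$ using uniform convergence and then shrinking the shell width using continuity of $J*\mathbf1_{B_R}$, gives the pointwise inequality. The $o(t)$ remainder is uniform in $x$, so it is harmless.
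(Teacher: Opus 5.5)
Your proposal is correct and follows the paper's proof step for step. Both arguments use the short-time expansion $u^R(t,\cdot)=\mathbf1_{B_R}+t(J*\mathbf1_{B_R}-\mathbf1_{B_R})+o(t)$ in $L^\infty$, pass to the limit of $t^{-1}u^R$ on $\{|x|>R\}$, use that the continuous radial profile of $J*\mathbf1_{B_R}$ is nonincreasing there, and then let $R\downarrow0$ in $|B_R|^{-1}J*\mathbf1_{B_R}$. The only difference is that you handle the a.e.-to-pointwise step with shell averages, whereas the paper argues via polar coordinates that radial nonincreasing functions form a closed subset of $L^\infty(\{|x|>R\})$; both are valid.
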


\begin{proof}
Lemma~\ref{lem:radialmonotone} proves the forward implication.
Conversely, suppose that $u^R(t,\cdot)$ has a radial nonincreasing
representative for every $R,t>0$.
Fix $R>0$ and set $G_R=J*\mathbf1_{B_R}$.
By Lemma~\ref{lem:semiflow}, $u^R$ is differentiable at $t=0$
in $L^\infty(\R^d)$; since $f(0)=f(1)=0$,
\eqref{eq:main-equation} gives
\[
 u^R(t,\cdot)=\mathbf1_{B_R}
       +t\bigl(G_R-\mathbf1_{B_R}\bigr)+o(t)
 \quad\text{in }L^\infty(\R^d)
 \qquad(t\downarrow0).
\]
On $\{|x|>R\}$ the indicator vanishes, so this expansion gives
$t^{-1}u^R(t,\cdot)\to G_R$ in $L^\infty(\{|x|>R\})$
as $t\downarrow0$.
Functions admitting radial nonincreasing representatives form a closed subset
of this space:
in polar coordinates, essential uniform convergence of the profiles
on $(R,\infty)$ preserves their ordering outside a null set of radii,
and that ordering extends to a nonincreasing representative.
Thus $G_R$ has such a representative on $\{|x|>R\}$.
Translation continuity of $J$ in $L^1(\R^d)$ makes $G_R$ uniformly
continuous, and radiality of $J$ makes it radial. Its continuous
radial profile is therefore nonincreasing on $(R,\infty)$.

For $e_1=(1,0,\ldots,0)$ and $0<R<r<s$, this monotonicity gives
\[
 \frac1{|B_R|}\int_{B_R}J(re_1-y)\,dy
 \ge
 \frac1{|B_R|}\int_{B_R}J(se_1-y)\,dy.
\]
By continuity of $J$, letting $R\downarrow0$ gives
$J(re_1)\ge J(se_1)$.
Thus its radial profile is nonincreasing on $(0,\infty)$;
continuity at zero extends this to $[0,\infty)$, proving (KM).
\end{proof}

\begin{remark}
For any $a>1$, define
\[
 J_a(x)=\frac{(1+a|x|^2)e^{-|x|^2}}
              {\pi^{d/2}(1+ad/2)},\qquad x\in\R^d.
\]
The Gaussian integral and its second moment give
\[
 \int_{\R^d}e^{-|x|^2}\,dx=\pi^{d/2},
 \qquad
 \int_{\R^d}|x|^2e^{-|x|^2}\,dx=\frac d2\pi^{d/2},
\]
so $\int_{\R^d}J_a=1$.
The kernel is positive, radial and smooth, with integrable derivatives
through order two and finite second moment, hence satisfies (KR).
Define $j_a:[0,\infty)\to(0,\infty)$ by $J_a(x)=j_a(|x|)$.
Differentiation gives
\[
 j_a'(r)=\frac{2r(a-1-ar^2)e^{-r^2}}
                  {\pi^{d/2}(1+ad/2)}>0
 \quad\text{for }0<r<\sqrt{\frac{a-1}{a}},
\]
which shows that (KM) fails.

For $J=J_a$, suitable $R>0$ give solutions $u^R(t,\cdot)$
with no radial nonincreasing representative for all small $t>0$.
Indeed, choose $e_1=(1,0,\ldots,0)$ and
$0<r<s<\sqrt{(a-1)/a}$. The formula for $j_a'$ gives
$J_a(re_1)<J_a(se_1)$.
By continuity, $|B_R|^{-1}(J_a*\mathbf1_{B_R})$ converges to $J_a$
at these two points as $R\downarrow0$.
For all sufficiently small $R<r$, therefore,
$G_R=J_a*\mathbf1_{B_R}$ satisfies $G_R(re_1)<G_R(se_1)$.
Continuity and radiality of $G_R$ then give nonempty bounded open
annuli $A_1,A_2\subset\{|x|>R\}$, with every radius in $A_1$
smaller than every radius in $A_2$, and $\delta>0$ such that
\[
 \inf_{x\in A_2}G_R(x)-\sup_{x\in A_1}G_R(x)>\delta.
\]
By the expansion $u^R(t,\cdot)=tG_R+o(t)$ in
$L^\infty(\{|x|>R\})$ from the proof of
Lemma~\ref{lem:kernel-monotonicity-necessity}, there exists $t_0>0$ such that
\[
 \operatorname*{ess\,inf}_{x\in A_2}u^R(t,x)
 >\operatorname*{ess\,sup}_{x\in A_1}u^R(t,x)
 \qquad(0<t<t_0).
\]
This strict ordering on annuli of positive measure excludes a radial
nonincreasing representative for $0<t<t_0$.
For $f(s)=s(1-s)(s-\tfrac14)$, we have
$\sup_{[0,1]}f'=13/48$ and $F(1)=1/24>0$;
thus $(f,J_a)$ satisfies \textup{(F1)--(F2)} and (KR).

Lemma~\ref{lem:kernel-monotonicity-necessity} characterizes (KM)
by radial monotonicity of every $u^R(t,\cdot)$.
Whether $R_{\rm ext}<R_{\rm prop}$ can occur under
\textup{(F1)--(F2)}, $F(1)>0$ and (KR) remains a separate question.
\end{remark}

\subsection{Stationary states and reaction regularity}

For the remainder of this section, assume \textup{(F1)--(F2)}
and $F(1)>0$. Assume also \textup{(KM)} except where a different
kernel is explicitly specified.

\begin{remark}\label{cs:rem:kernel-shape}
The reflection inequality \eqref{cs:eq:reflection-kernel} uses
radial decrease of $J$. In one dimension, take
\[
 \widehat J(x)=
 \frac{e^{-(x-6)^2}+e^{-(x+6)^2}}{2\sqrt\pi},\qquad
 f(s)=s(s-\tfrac14)(1-s),\qquad
 u_0=\mathbf1_{[-1,1]}.
\]
Here $\widehat J$ is an even smooth strictly positive probability
density with all moments.
The reaction assumptions follow by direct differentiation and
integration; the values are recorded in \eqref{cs:eq:cubic-constants}.
The equation with $J=\widehat J$, interpreted in $L^\infty(\R)$,
gives $u_t(0,\cdot)=\widehat J*u_0$ almost everywhere outside
$[-1,1]$.
Direct integration bounds give
\[
 (\widehat J*u_0)(6)\ge\frac{e^{-1}}{\sqrt\pi},\qquad
 (\widehat J*u_0)(2)
 \le\frac{e^{-9}+e^{-49}}{\sqrt\pi}.
\]
By continuity of $\widehat J*u_0$, there is $\varepsilon\in(0,1)$
such that its infimum on $(6-\varepsilon,6+\varepsilon)$ exceeds
its supremum on $(2-\varepsilon,2+\varepsilon)$.
The expansion $u(t,\cdot)=u_0+t u_t(0,\cdot)+o(t)$ in $L^\infty(\R)$
as $t\downarrow0$ therefore gives
\[
 \essinf_{|x-6|<\varepsilon}u(t,x)
 >\esssup_{|x-2|<\varepsilon}u(t,x)
\]
for all sufficiently small $t>0$.

\end{remark}

\begin{remark}\label{cs:rem:regularity-examples}
The two regularity requirements can be illustrated by explicit
reactions.  Set
\[
 p(s)=s(s-\tfrac14)(1-s),\qquad
 \chi_1(s)=
 \begin{cases}
  e^{-1/(1-s)^2},&0\le s<1,\\
  0,&s=1,
 \end{cases}
 \qquad f_*(s)=p(s)\bigl(1+\tfrac14\chi_1(s)\bigr).
\]
Extend $f_*$ to $\R$ by the same formula near zero and by a
$C^1$ extension beyond one.  Then $f_*\in C^\infty([0,1])$ and is analytic near every
$[0,A]$ with $A<1$.
All derivatives of $f_*-p$ vanish at $1$, while
$f_*(s)>p(s)$ for $1/4<s<1$; thus $f_*$ is nonanalytic at $1$.
The positive factor preserves the three zeros and their derivative signs.
On $[0,1]$,
$|p|\le3/16$, $\max p'=13/48$,
$0\le\chi_1\le e^{-1}$, and $|\chi_1'|\le1$.
Consequently
\[
 \max_{[0,1]}f_*'
 \le \frac{13}{48}
       +\frac14\left(\frac{3}{4e}+\frac3{16}\right)<1,
 \qquad
 \int_0^1f_*(s)\,ds
 \ge\frac1{24}-\frac3{64e}>0.
\]
Here $|p'|\le3/4$ gives the first perturbation bound, and
$\int_0^1p=1/24$ gives the second.
By Lemma~\ref{cs:lem:uniform-height}, $f_*$ satisfies the analytic
alternative of Theorem~\ref{cs:thm:dynamics} for every kernel in
\textup{(KM)}.

For the stationary theorem, fix $\alpha\in(0,1)$ and choose
$\chi_0\in C^\infty([0,1])$ with $0\le\chi_0\le1$,
$\chi_0=1$ on $[0,1/16]$, and $\chi_0=0$ on $[1/8,1]$.
For sufficiently small $\varepsilon>0$, the function
\[
 f_{\varepsilon,\alpha}(s)=p(s)+
       \varepsilon\chi_0(s)s^{1+\alpha}
\]
satisfies \eqref{cs:eq:reaction-regularity} and \textup{(F1)--(F2)}
after linear extension beyond both endpoints with the endpoint slopes.
Indeed, the perturbation tends to zero in $C^1$ as
$\varepsilon\downarrow0$, preserves the zeros and their derivative
signs, and is supported where $p(s)/s$ is strictly negative.
Near zero its second derivative equals
$p''(s)+\varepsilon\alpha(1+\alpha)s^{\alpha-1}$.
Thus the weighted bound in \eqref{cs:eq:reaction-regularity} holds
although $f_{\varepsilon,\alpha}''(s)\to+\infty$ as $s\downarrow0$.
\end{remark}

\begin{remark}\label{cs:rem:reaction-regularity}
Assume also \eqref{cs:eq:reaction-regularity}.
Recall the numbers $a(J,f)$ and $\bar a(J,f)$ defined in
\eqref{cs:eq:profile-height}.
By Lemmas~\ref{cs:lem:uniform-height} and
\ref{cs:lem:stationary-existence}, together with
Theorem~\ref{cs:thm:stationary-summary},
$\beta<a(J,f)\le\bar a(J,f)<1$.
Thus analyticity near $[0,a(J,f)]$ is a sufficient scalar condition
for Theorem~\ref{cs:thm:dynamics}.
In particular, this condition holds when $f$ is analytic at every
point of $[0,1)$, with an analytic extension through zero.

Every $f\in C^2([0,1])$ satisfies \eqref{cs:eq:reaction-regularity},
the additional regularity required in Theorem~\ref{cs:thm:stationary-summary}.
Its role in the second derivative estimate is given explicitly in
\eqref{cs:eq:stationary-second-derivative}--\eqref{cs:eq:stationary-gradient-bound}.
For the dynamics theorem, analyticity and
\eqref{cs:eq:translation-kernel} are two sufficient conditions for
the local estimate \eqref{cs:eq:gradient-inequality}.
The question of whether \textup{(F1)--(F2)} and
\textup{(KM)}, together with $f\in C^2([0,1])$, imply the
same convergence conclusion is therefore the question of obtaining
convergence for every compact orbit furnished by
Lemma~\ref{cs:lem:compact-orbit} under these scalar hypotheses alone.

For $U(x)=V(|x-x_*|)$ in \eqref{cs:eq:stationary-summary},
$V'(r)<0$ makes $V^{-1}$ a $C^2$ function on $(0,V(0))$.
The stationary equation gives
\[
 f(s)=s-(J*U)\bigl(x_*+V^{-1}(s)e_1\bigr),
 \qquad e_1=(1,0,\ldots,0).
\]
Since $J*U\in C^2$, this identity gives
$f\in C^2((0,V(0)))$.
\end{remark}

\begin{remark}\label{cs:rem:profile-question}
Assume also \eqref{cs:eq:reaction-regularity}.
For fixed $J$ and $f$, define
\[
 \mathcal R(J,f)=
 \left\{V\in C^2([0,\infty)):
   \begin{array}{l}
    x\mapsto V(|x|)\text{ belongs to }\mathcal S_0(J,f)\setminus\{0\},\\
    V'(0)=0,\quad V'(r)<0\text{ for }r>0
   \end{array}\right\}.
\]
By Theorem~\ref{cs:thm:stationary-summary} and
Lemma~\ref{cs:lem:stationary-existence},
\begin{equation}\label{cs:eq:stationary-parameterization}
 \mathcal S_0(J,f)
 =\{0\}\cup
   \{x\mapsto V(|x-a|):V\in\mathcal R(J,f),\ a\in\R^d\},
 \qquad \mathcal R(J,f)\ne\varnothing.
\end{equation}
By \eqref{cs:eq:stationary-parameterization}, uniqueness up to
translation of a positive decaying stationary state is equivalent to
$\#\mathcal R(J,f)=1$.
Another question for stationary solutions in $[0,1]$ is to determine
\[
 \mathcal S_\vartheta(J,f)
 =\{U\in\mathcal S(J,f):U(x)\to\vartheta
                   \text{ as }|x|\to\infty\}.
\]
Example~\ref{cs:ex:periodic} constructs members of
$\mathcal S(J,f)$ that are periodic in one coordinate for the
Gaussian kernel and cubic reaction defined there.
\end{remark}

\subsection{Local stationary profiles and a periodic example}

\begin{remark}\label{cs:rem:local-classification}
For the local equation considered by Du and Matano~\cite{DuMatano2010},
let $f$ satisfy \textup{(F1)--(F2)}.
Every $C^2$ solution of
\begin{equation}\label{cs:eq:local-stationary}
 Q''+f(Q)=0\quad\text{on }\R,\qquad 0\le Q\le1,
\end{equation}
is a constant $0$, $\vartheta$, or $1$, a translate of a unique
positive even function $Q_0$ satisfying
\begin{equation}\label{cs:eq:local-zero-limit}
 Q_0(0)=\beta,\qquad Q_0'(x)<0\ (x>0),\qquad
 Q_0(x)\to0\quad(|x|\to\infty),
\end{equation}
or a nonconstant periodic function.
Up to translation, the periodic functions are indexed by
$c\in(F(\vartheta),0)$.
Their minimum $a_c$, maximum $b_c$, and period $T_c$ are given by
\begin{equation}\label{cs:eq:local-period}
\begin{gathered}
 0<a_c<\vartheta<b_c<\beta,\qquad F(a_c)=F(b_c)=c,\\
 T_c=\sqrt2\int_{a_c}^{b_c}\frac{ds}{\sqrt{c-F(s)}}.
\end{gathered}
\end{equation}

Multiplication of \eqref{cs:eq:local-stationary} by $Q'$ gives
the first integral
\begin{equation}\label{cs:eq:local-first-integral}
 \tfrac12(Q')^2+F(Q)=c.
\end{equation}
By the bistable signs, $F$ decreases from $0$ to $F(\vartheta)$
and increases thereafter to $F(1)>0$.
For $F(\vartheta)<c<0$, \eqref{cs:eq:local-first-integral} therefore
gives a closed orbit between the simple roots $a_c,b_c$.
Integration along its two branches gives \eqref{cs:eq:local-period};
initial-value uniqueness gives uniqueness up to translation.
At $c=0$, the orbit through $(\beta,0)$ satisfies
\begin{equation}\label{cs:eq:local-quadrature}
 x=\int_{Q_0(x)}^\beta\frac{ds}{\sqrt{-2F(s)}},\qquad x\ge0.
\end{equation}
Since $F(s)=f'(0)s^2/2+o(s^2)$ at zero, the integral diverges there.
Even reflection therefore gives the unique profile in
\eqref{cs:eq:local-zero-limit}.
For $c<F(\vartheta)$ there is no admissible value of $Q$, while
$c=F(\vartheta)$ gives $Q\equiv\vartheta$.
For $0<c\le F(1)$, every nonconstant orbit crosses $Q=0$ in finite
time in one direction: its only possible turning point lies above
$\beta$, and $|Q'|=\sqrt{2c}$ at zero.
For $c>F(1)$, $|Q'|$ is bounded away from zero on $[0,1]$,
so an endpoint is reached in finite time.
Finally, uniqueness at $(Q,Q')=(0,0)$ and $(1,0)$ gives the
remaining constant solutions.  These cases exhaust
\eqref{cs:eq:local-first-integral}.
\end{remark}

Bhowmik, Duncan, Grinfeld and Lord
\cite{BhowmikDuncanGrinfeldLord2011} studied periodic stationary
bifurcations for convolution equations with a Gaussian kernel and
cubic reaction.  The following example gives an explicit local
branch for the reaction in \eqref{cs:eq:gaussian-cubic}.

\begin{example}\label{cs:ex:periodic}
For every $d\ge1$, set
\begin{equation}\label{cs:eq:gaussian-cubic}
 J(x)=(4\pi)^{-d/2}e^{-|x|^2/4},\qquad
 f(s)=s(s-\tfrac14)(1-s).
\end{equation}
These functions satisfy \eqref{cs:eq:reaction-regularity},
\textup{(F1)--(F2)}, and \textup{(KM)}.
There are $s_0>0$, a real analytic function
$k:(-s_0,s_0)\to(0,\infty)$, and nonconstant smooth members
$U_s\in\mathcal S(J,f)$ for $0<|s|<s_0$ such that
\begin{equation}\label{cs:eq:periodic-branch}
 k(0)=\sqrt{\log(16/13)},\qquad
 U_s(x)=\tfrac14+s\cos(k(s)x_1)+O(s^2).
\end{equation}
The estimate in \eqref{cs:eq:periodic-branch} is uniform on $\R^d$
as $s\to0$.
Each $U_s$ is periodic in $x_1$, independent of the other coordinates,
and satisfies $0<U_s<1$.
\end{example}

\begin{proof}
For the polynomial in \eqref{cs:eq:gaussian-cubic},
direct calculation gives
\begin{equation}\label{cs:eq:cubic-constants}
 \vartheta=\tfrac14,\qquad F(1)=\tfrac1{24},\qquad
 f'(0)=-\tfrac14,\quad f'(\vartheta)=\tfrac3{16},\quad
 f'(1)=-\tfrac34,\qquad \max_{[0,1]}f'=\tfrac{13}{48}.
\end{equation}
Together with the Gaussian formula, these identities verify all
the assumptions.

Let $\mathcal A$ be the real Banach algebra of even
$2\pi$-periodic functions with absolutely summable cosine coefficients:
\[
 w(y)=a_0+\sum_{n=1}^\infty a_n\cos(ny),\qquad
 \|w\|_{\mathcal A}=|a_0|+\sum_{n=1}^\infty|a_n|.
\]
The cosine product identity gives
$\|vw\|_{\mathcal A}\le\|v\|_{\mathcal A}\|w\|_{\mathcal A}$.
For $k>0$, define $T_k:\mathcal A\to\mathcal A$ by
\[
 T_kw=a_0+\sum_{n=1}^\infty e^{-n^2k^2}a_n\cos(ny).
\]
Integration in the transverse coordinates and the Fourier transform
of the Gaussian show that
$U(x)=\tfrac14+w(kx_1)$ is stationary exactly when
\begin{equation}\label{cs:eq:periodic-equation}
 \mathcal F(w,k):=T_kw-w+f(\tfrac14+w)=0.
\end{equation}
Absolute convergence justifies termwise convolution.

For each positive $k$, choose a small complex neighborhood on which
$\operatorname{Re}(k^2)>0$, where $\operatorname{Re}$ denotes the real part.
The multipliers $e^{-n^2k^2}$ are uniformly bounded in $n$ there;
Cauchy's estimates on a smaller neighborhood give uniform bounds
for their Taylor coefficients.
Thus $k\mapsto T_k$ is analytic in operator norm, and
\eqref{cs:eq:periodic-equation} defines an analytic map
$\mathcal F:\mathcal A\times(0,\infty)\to\mathcal A$.
By \eqref{cs:eq:cubic-constants}, its linearization at $w=0$
has multipliers
\begin{equation}\label{cs:eq:periodic-eigenvalues}
 \lambda_n(k)=e^{-n^2k^2}-\mu,\qquad
 \mu=\tfrac{13}{16},\qquad n=0,1,2,\ldots.
\end{equation}
At $k_0=\sqrt{\log(16/13)}$, only $\lambda_1(k_0)$ vanishes:
$\lambda_0=3/16>0$ and
$\lambda_n(k_0)\le\mu^4-\mu<0$ for $n\ge2$.

Set $e(y)=\cos y$ and define the bounded projection by
\[
 P\!\left(a_0+\sum_{n=1}^\infty a_n\cos(ny)\right)=a_1e.
\]
Write $\mathcal A=\R e\oplus Z$, where $Z=\ker P$.
By \eqref{cs:eq:periodic-eigenvalues}, the restriction of
$D_w\mathcal F(0,k_0)$ to $Z$ has a bounded inverse.
The analytic implicit-function theorem applied to
\[
 (I-P)\mathcal F(se+z,k)=0
\]
gives a unique analytic $z=z(s,k)\in Z$ near $(0,k_0)$.
By $\mathcal F(0,k)=0$ and uniqueness, $z(0,k)=0$.
Differentiation in $s$, using the invariant Fourier subspaces in
\eqref{cs:eq:periodic-eigenvalues}, gives $\partial_sz(0,k)=0$.
Hence $z(s,k)=O(s^2)$ in $\mathcal A$, uniformly for $k$ near $k_0$.

Define the real analytic scalar function $r$ by
$P\mathcal F(se+z(s,k),k)=r(s,k)e$.
Since $r(0,k)=0$, it factors as
$r(s,k)=s\widetilde r(s,k)$, where
\[
 \widetilde r(s,k)=\int_0^1\partial_sr(\tau s,k)\,d\tau.
\]
By \eqref{cs:eq:periodic-eigenvalues},
\[
 \widetilde r(0,k)=e^{-k^2}-\mu,\qquad
 \partial_k\widetilde r(0,k_0)=-2k_0\mu\ne0.
\]
A second application of the analytic implicit-function theorem
gives $s_0>0$ and an analytic
$k:(-s_0,s_0)\to(0,\infty)$ with $\widetilde r(s,k(s))=0$.
Thus $w_s=se+z(s,k(s))$ solves \eqref{cs:eq:periodic-equation};
its first cosine coefficient $s\ne0$ makes it nonconstant.
Since $\|w_s\|_\infty\le\|w_s\|_{\mathcal A}\le|s|+O(s^2)$,
decreasing $s_0$ gives $0<\tfrac14+w_s<1$.
Setting $U_s(x)=\tfrac14+w_s(k(s)x_1)$ yields
\eqref{cs:eq:periodic-branch}.
Repeated differentiation of $U_s=H^{-1}(J*U_s)$, using integrability
of every Gaussian derivative, gives smoothness.
\end{proof}

\begingroup
\small
\linespread{0.96}\selectfont
\setlength{\parskip}{0pt}
\setlength{\itemsep}{0pt}

\endgroup

\end{document}